\newtheorem{theorem}{Theorem}[section]
\newtheorem{proposition}[theorem]{Proposition}
\newtheorem{lemma}[theorem]{Lemma}
\newtheorem{corollary}[theorem]{Corollary}
\theoremstyle{definition}
\newtheorem{definition}[theorem]{Definition}
\newtheorem{example}[theorem]{Example}
\theoremstyle{remark}
\newtheorem{remark}[theorem]{Remark}
\renewcommand\subsubsection{\@startsection{subsubsection}{3}%
\z@{.5\linespacing\@plus.7\linespacing}{-.5em}%
{\normalfont\bfseries\itshape}} 
\DeclareMathOperator{\SO}{SO}
\DeclareMathOperator{\Id}{Id}
\DeclareMathOperator{\PSL}{PSL}
\DeclareMathOperator{\SL}{SL}
\DeclareMathOperator{\Ad}{Ad}
\DeclareMathOperator{\BP}{BP}
\DeclareMathOperator{\Stab}{Stab}
\DeclareMathOperator{\diam}{diam}
\let\U\relax
\DeclareMathOperator{\U}{U}
\DeclareMathOperator{\supp}{supp}
\DeclareMathOperator{\Lip}{Lip}
\DeclareMathOperator{\Li}{Li}
\DeclareMathOperator{\T}{\operatorname{T}^1}
\DeclareMathOperator{\F}{F}
\let\C\relax
\newcommand{\C}{\mathbb{C}}
\newcommand{\N}{\mathbb{N}}
\newcommand{\R}{\mathbb{R}}
\newcommand{\Z}{\mathbb{Z}}
\renewcommand{\bf}[1]{\mathbf{#1}}
\newcommand{\cal}[1]{\mathcal{#1}}
\newcommand{\dd}{d}
\newcommand{\calM}{\cal M}
\newcommand{\calH}{\cal H}
\newcommand{\calL}{\cal L}
\newcommand{\calR}{\cal R}
\newcommand{\calP}{\cal P}
\newcommand{\calQ}{\cal Q}
\newcommand{\LieG}{\mathfrak{g}}
\newcommand{\LieA}{\mathfrak{a}}
\newcommand{\LieN}{\mathfrak{n}}
\newcommand{\LieM}{\mathfrak{m}}
\newcommand{\HS}{\mathbb{H}^{d+1}}
\newcommand{\Ret}{\mathscr{R}} 
\newcommand{\FRet}{\mathscr{F}}
\newcommand{\Hol}{\mathscr{H}}
\newcommand{\GHol}{\mathscr{G}}
\newcommand{\m}{m^{\mathrm{BMS}}}
\newcommand{\Leb}{m^{\mathrm{Leb}}}
\newcommand{\jj}{j_0}
\renewcommand{\L}{L}
\newcommand{\jI}{{\bm j}}
\Crefname{subsection}{Subsection}{Subsections}
\title[Exponential mixing of frame flows] {Exponential mixing of frame flows for geometrically finite hyperbolic manifolds}
\author{Jialun Li}
\address{Institut f\"ur Mathematik,
Universit\"at Z\"urich,
Winterthurerstrasse 190,
CH-8057 Z\"urich, Switzerland; 
\textit{Current}: CNRS-Centre de math\'ematiques Laurent Schwartz,
\'Ecole Polytechnique,
91128 PALAISEAU, France }
\email{jialun.li@polytechnique.edu} 
\author{Wenyu Pan}
\address{Department of Mathematics, University of Toronto, 40 St George St, Toronto, ON M5S 2E4, Canada}
\email{wenyup.pan@utoronto.ca}
\author{Pratyush Sarkar}
\address{Department of Mathematics, UC San Diego, 9500 Gilman Drive, San Diego, CA 92093, USA}
\email{psarkar@ucsd.edu}
\date{\today}
\begin{document}

\begin{abstract}
As a final work to establish that frame flows for geometrically finite hyperbolic manifolds of arbitrary dimensions are exponentially mixing with respect to the Bowen--Margulis--Sullivan measure, this paper focuses on the case with \emph{cusps}. To prove this, we utilize the countably infinite symbolic coding of the geodesic flow of Li--Pan and perform a frame flow version of Dolgopyat's method \`{a} la Sarkar--Winter and Tsujii--Zhang. This requires the local non-integrability condition and the non-concentration property but the challenge in the presence of cusps is that the latter holds only on a large \emph{proper} subset of the limit set. To overcome this, we use a large deviation property for symbolic recurrence to the large subset. It is proved by studying the combinatorics of cusp excursions and using an effective renewal theorem as in the work of Li; the latter uses the exponential decay of the transfer operators for the geodesic flow of Li--Pan. 
\end{abstract}

\maketitle

\setcounter{tocdepth}{1}
\tableofcontents

\section{Introduction}
\label{sec:Introduction}
Let $\HS$ be the $(d + 1)$-dimensional hyperbolic space for any $d \in \N$. Let $G = \SO(d+1,1)^{\circ}$ endowed with a left $G$-invariant and right $K$-invariant Riemannian metric, which can be identified with the group of orientation-preserving isometries of $\HS$. Let $\Gamma < G$ be a torsion-free discrete subgroup. Consider the hyperbolic manifold $X = \Gamma \backslash \HS \cong \Gamma \backslash G/K$ whose unit tangent bundle is $\T(X) \cong \Gamma \backslash G/M$ and whose (oriented orthonormal) frame bundle is $\F(X) \cong \Gamma \backslash G$ where $M < K$ are compact subgroups of $G$. Let $A = \{a_t: t \in \R\}$ be the one-parameter subgroup of semisimple elements whose right translation action gives the geodesic flow on $\Gamma \backslash G/M$ and the frame flow on $\Gamma \backslash G$. We take our frame flow invariant measure $m^{\mathrm{BMS}}$ to be the Bowen--Margulis--Sullivan probability measure on $\Gamma \backslash G$ which is supported on the non-wandering set. It is the $M$-invariant lift of the Bowen--Margulis--Sullivan probability measure on $\Gamma \backslash G/M$ which is known to be the unique probability measure of maximal entropy. The entropy coincides with the critical exponent $\delta \in [0, d]$ of $\Gamma$.

If $\Gamma < G$ is a lattice, i.e., if $X$ is of \emph{finite} volume, then ${m}^{\mathrm{BMS}}$ coincides with the $G$-invariant probability measure and it is well-known in the literature that the frame flow is exponentially mixing \cite{Rat87,Moo87}. Its proof makes extensive use of spectral gap and representation theory.

In recent times, there has been significant progress regarding the mixing properties with respect to ${m}^{\mathrm{BMS}}$ for hyperbolic manifolds of \emph{infinite} volume. Under the natural hypothesis that $\Gamma < G$ is non-elementary and ${m}^{\mathrm{BMS}}$ is finite, mixing of the geodesic flow is due to Rudolph \cite{Rud82} and Babillot \cite{Bab02}. For frame flows, it is necessary to assume that $\Gamma < G$ is Zariski dense because otherwise it is not even ergodic. Under the natural hypothesis that $\Gamma < G$ is Zariski dense and ${m}^{\mathrm{BMS}}$ is finite, mixing of the frame flow is due to Winter \cite{Win15} (see also \cite{BP74,FS90}). Let us now turn to \emph{exponential} mixing. If $\Gamma < G$ is Zariski dense and geometrically finite, representation theoretic techniques were extended by Mohammadi--Oh \cite{MO15}, so long as the critical exponent $\delta$ is large so that a certain spectral gap holds, to prove exponential mixing of the frame flow (see also \cite{EO21} for the geodesic flow). However, when the critical exponent $\delta$ is small, such a spectral gap does not exist. Hence, it is more fruitful to apply techniques which are more dynamical. If $\Gamma < G$ is non-elementary and geometrically finite, exponential mixing of the geodesic flow is due to Stoyanov \cite{Sto11} when $\Gamma$ does not contain parabolic elements (i.e., convex cocompact) and due to the first two authors Li--Pan \cite{LP22} (see also the work of Khalil \cite{Kha21}) when $\Gamma$ contains parabolic elements. If $\Gamma < G$ is Zariski dense and geometrically finite without parabolic elements, exponential mixing of the frame flow is known by a joint work of the third author Sarkar--Winter \cite{SW21} (see also \cite{CS22}).

The purpose of this paper is to complete the remaining case for exponential mixing of frame flows for geometrically finite hyperbolic manifolds and thereby establish \cref{thm:ExponentialMixing}. That is, we prove \cref{thm:ExponentialMixing} for Zariski dense torsion-free geometrically finite subgroups $\Gamma < G$ with \emph{parabolic elements}. For any $\alpha \in (0, 1]$, we denote by $C^\alpha(\Gamma \backslash G, \R)$ the space of $\alpha$-H\"{o}lder continuous functions on $\Gamma \backslash G$.

\begin{theorem}
\label{thm:ExponentialMixing}
Let $G = \SO(d + 1, 1)^\circ$ for any $d \in \N$ and $\Gamma < G$ be a Zariski dense torsion-free geometrically finite subgroup. Let $\alpha \in (0, 1]$. There exist $\eta_\alpha > 0$ and $C > 0$ (independent of $\alpha$) such that for all $\phi, \psi \in C^\alpha(\Gamma \backslash G, \R)$ and $t > 0$, we have
\begin{align*}
\left|\int_{\Gamma \backslash G} \phi(x) \psi(x a_t) \, dm^{\mathrm{BMS}} - m^{\mathrm{BMS}}(\phi) \cdot m^{\mathrm{BMS}}(\psi)\right| \leq C\|\phi\|_{C^\alpha} \|\psi\|_{C^\alpha} e^{-\eta_\alpha t}.
\end{align*}
\end{theorem}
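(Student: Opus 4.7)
The plan is to establish \cref{thm:ExponentialMixing} by proving uniform spectral bounds for a family of twisted transfer operators associated to a symbolic model of the frame flow, and then converting these bounds to time-domain decay via a standard Paley--Wiener contour-shift argument. As the starting point I would invoke the countable Markov shift $(\Sigma, \sigma)$ with roof function $\tau$ constructed by Li--Pan \cite{LP22} that codes the geodesic flow on the non-wandering set of $\Gamma \backslash G/M$; here $\tau$ is unbounded because of cusp excursions. The frame flow is then encoded as a skew-product over this suspension with $M$-holonomy cocycle $\vartheta \colon \Sigma \to M$, and for $\xi = a + ib \in \C$ and an irreducible unitary representation $\rho$ of $M$ the relevant transfer operator is
\begin{align*}
\mathcal{L}_{\xi,\rho} f(x) = \sum_{\sigma y = x} e^{-\xi \tau(y)} \rho(\vartheta(y)) f(y).
\end{align*}
The desired exponential mixing reduces to showing that $\|\mathcal{L}_{\xi,\rho}^n\|$ decays exponentially in $n$, uniformly in $a$ close to $\delta$, in $|b|$ large, and in the non-trivial $\rho$.

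Next I would implement the frame-flow Dolgopyat method of Sarkar--Winter and Tsujii--Zhang in this setting. The two analytic inputs are the \emph{local non-integrability condition} (LNIC), which produces phase oscillation in $\mathcal{L}_{\xi,\rho}^n$ out of the interaction between the unstable foliation and the $M$-holonomy cocycle, and the \emph{non-concentration property} (NCP) of the Patterson--Sullivan measure, which lets one upgrade pointwise oscillation to a genuine $L^2$-contraction via a Dolgopyat projector. The fundamental new difficulty in the presence of parabolic elements is that (NCP) is known only on a proper compact subset $K_0$ of the limit set, since Patterson--Sullivan mass becomes too concentrated along directions pointing into the cusps and the standard proof of (NCP) breaks down there.

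To handle this I would prove a large deviation estimate for the symbolic time an orbit spends outside $K_0$: there exist $\theta \in (0,1)$ and $c > 0$ so that for all sufficiently large $n$,
\begin{align*}
\mu\Bigl(\bigl\{x \in \Sigma : \#\{0 \leq k < n : \sigma^k x \notin K_0\} \geq \theta n\bigr\}\Bigr) \leq e^{-cn},
\end{align*}
where $\mu$ is the Gibbs measure in the Patterson--Sullivan class. Following the strategy of Li, I would index each cusp excursion by a parabolic word, set up a renewal equation over the resulting combinatorial structure on excursion types, and apply an effective renewal theorem whose proof relies on the exponential decay of the (scalar) transfer operators already established by Li--Pan \cite{LP22}. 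This large deviation bound isolates an exponentially generic set of ``good'' orbits on which (NCP) is applicable and along which the Dolgopyat contraction argument can therefore be run.

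Combining these ingredients, I would iterate a Dolgopyat projector along the good orbits to produce a uniform $L^2$-contraction $\|\mathcal{L}_{\xi,\rho}^N\|_{L^2} \leq \gamma < 1$ for some $N$ and $\gamma$ independent of large $|b|$ and of the non-trivial $\rho$, and a standard perturbation argument upgrades this to the corresponding spectral bound for $\mathcal{L}_{\xi,\rho}$ on a Banach space of H\"{o}lder functions on $\Sigma$. The main obstacle I anticipate is the compatibility of scales: the large deviation estimate operates on the symbolic scale, while the Dolgopyat cancellation operates at the metric scale $|b|^{-1}$, and arranging that the fraction $\theta$ of ``bad'' times allowed by the large deviation bound is small enough not to destroy the cancellation produced on good times --- uniformly in $|b|$ and in the family of non-trivial $\rho$ --- is the technical crux of the argument. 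Once the uniform spectral bound is in place, a standard Paley--Wiener contour-shift converts it into the H\"{o}lder time-domain decay claimed in \cref{thm:ExponentialMixing}.
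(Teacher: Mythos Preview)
Your proposal is correct and follows essentially the same architecture as the paper: Li--Pan coding, transfer operators twisted by holonomy, LNIC plus NCP for the Dolgopyat cancellation, an LDP proved via cusp-excursion combinatorics and an effective renewal theorem (which in turn uses the scalar spectral gap from \cite{LP22}), and the Tsujii--Zhang stochastic-dominance mechanism to absorb the bad set into the $L^2$ contraction. One point to sharpen: the ``good'' set on which NCP holds is not a single fixed $K_0$ but a family $\Omega(t,R_0)$ depending on the frequency scale $t=\log\|\rho_b\|$, and the LDP must be proved \emph{uniformly} in $t$---this is exactly the ``compatibility of scales'' issue you flag, and it is indeed the technical heart of the argument.
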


As indicated above, the proof is a dynamical one and is based on a combination of the works of Stoyanov \cite{Sto11}, Sarkar--Winter \cite{SW21}, Li--Pan \cite{LP22}, and Tsuji--Zhang \cite{TZ23} which build on the framework introduced in the work of Dolgopyat \cite{Dol98}, now commonly called Dolgopyat's method. We emphasize that even when $\Gamma < G$ is a \emph{lattice} with parabolic elements such as the familar settings $\PSL_2(\Z) < \PSL_2(\R)$ or $\PSL_2(\Z[i]) < \PSL_2(\C)$, the work of Li--Pan \cite{LP22} and the work of this paper are the first \emph{dynamical proofs}, to the authors' best knowledge, for exponential mixing of the geodesic flow and the frame flow respectively.

\subsection{Connections, applications, and further directions}
In broad context, frame flows have attracted substantial attention since they serve as typical examples of partially hyperbolic systems. Consider $(\mathcal{M},g)$ a smooth closed oriented Riemannian manifold of dimension $n\geq 3$ with negative sectional curvature. Brin conjectured that if $(\mathcal{M},g)$ is strictly $1/4$-pinched, the frame flow is ergodic. In \cite{BG80}, Brin--Gromov verified the case when $n$ is odd and $n\neq 7$. Recently, Ceki\'c--Lefeuvre--Moroianu--Semmelmann \cite{CLMS21} made progress on the case when $n$ is even or $n=7$. For the quantitative theory, Dolgopyat \cite{Dol02} treated the mixing properties of compact group extensions of hyperbolic diffeomorphisms which are discrete-time versions of frame flows. He proved the equivalence between an infinitesimal non-integrability condition and the exponential mixing of compact group extensions of expanding maps on closed manifolds. In \cite{Sid22}, Siddiqi considered the compact extensions of a certain class of Anosov flows, where he translated the accessibility properties of the extension into Dolgopyat's non-integrability condition. Besides dynamical approaches, there are works using further analytic tools to study frame flows. For example, Guillarmou--K\"uster employed semiclassical or microlocal analysis to study the spectrum of frame flows for $3$-dimensional closed hyperbolic manifolds \cite{GK21}.

More specifically in the context of homogeneous dynamics, studying frame flows have proven to be fruitful due to numerous applications which have been derived in prior works. We state the following selection of them here for the convenience of the reader: decay of matrix coefficients with exponential error term; exponential equidistribution of holonomies; effective equidistribution of horospheres. Let $\Gamma < G$ be as in \cref{thm:ExponentialMixing} for the rest of the subsection.

Fix a Haar measure on $G$. It induces a right $G$-invariant measure on $\Gamma\backslash G$ and also the unstable and stable Burger--Roblin measures on $\Gamma\backslash G$ denoted by $m^{\mathrm{BR}}$ and $m^{\mathrm{BR}_*}$, respectively. Using an effective version of Roblin’s transverse intersection argument as in \cite[Theorem 5.8]{OW16}, \cref{thm:ExponentialMixing} implies the following theorem. The original (ineffective) argument is in \cite{Rob03} (see also \cite{OS13}).

\begin{theorem}
\label{thm:MatrixCoefficient}
Let $\alpha \in (0, 1]$. There exists $\eta_\alpha > 0$ such that for all $\phi, \psi \in C_{\mathrm{c}}^\alpha(\Gamma \backslash G, \R)$, there exists $C > 0$ (depending only on $\supp(\phi)$ and $\supp(\psi)$, and independent of $\alpha$) such that for all $t > 0$, we have
\begin{align*}
\left|e^{(d - \delta)t}\int_{\Gamma \backslash G} \phi(x) \psi(x a_t) \, dx - m^{\mathrm{BR}}(\phi) \cdot m^{\mathrm{BR}_*}(\psi)\right| \leq C\|\phi\|_{C^\alpha} \|\psi\|_{C^\alpha} e^{-\eta_\alpha t}.
\end{align*}
\end{theorem}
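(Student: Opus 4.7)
The plan is to derive \cref{thm:MatrixCoefficient} from \cref{thm:ExponentialMixing} via the effective transverse intersection argument of \cite[Theorem~5.8]{OW16}. In horospherical coordinates $N^- \times A \times N$ anchored at a point in the non-wandering set, the Haar, BMS, and Burger--Roblin measures admit compatible local product decompositions: Haar is Lebesgue in all three factors, $m^{\mathrm{BMS}}$ replaces both horospherical Lebesgue factors by Patterson--Sullivan measures, and $m^{\mathrm{BR}}$ (resp.\ $m^{\mathrm{BR}_*}$) replaces the $N$ (resp.\ $N^-$) factor by Lebesgue while retaining the PS factor on the opposite side. The conversion factor between the Haar and BMS matrix coefficients along a time-$t$ orbit is exactly $e^{(d-\delta)t}$, reflecting the $a_t$-expansion of $N$ by $e^{t}$ combined with the $\delta$-conformal density defining the PS factor.

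Concretely, I would choose nonnegative bump functions $\rho_\varepsilon^{\pm}$ of mass one supported on $\varepsilon$-balls in $N^{\mp}$, and form the thickenings
\begin{equation*}
\phi_\varepsilon(x) = \int_{N^-} \phi(xu^-) \rho_\varepsilon^{-}(u^-)\,du^-, \qquad \psi_\varepsilon(x) = \int_N \psi(xu)\rho_\varepsilon^{+}(u)\,du.
\end{equation*}
H\"older continuity gives $\phi_\varepsilon = \phi + O(\varepsilon^\alpha \|\phi\|_{C^\alpha})$ and analogously for $\psi_\varepsilon$, while the H\"older norms of the thickened functions blow up at most polynomially in $\varepsilon^{-1}$. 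Using the local product decompositions and the exponential contraction of $N^-$ under $a_t$, the Haar matrix coefficient rewrites (up to a normalization by PS densities of $\varepsilon$-balls meeting $\supp\phi \cup \supp\psi$) as a BMS matrix coefficient of $\phi_\varepsilon$ and $\psi_\varepsilon$. Applying \cref{thm:ExponentialMixing} then produces $m^{\mathrm{BMS}}(\phi_\varepsilon)\,m^{\mathrm{BMS}}(\psi_\varepsilon) + O(\varepsilon^{-C'} e^{-\eta_\alpha t})$, and disintegrating the BMS measure along the product structure recovers $m^{\mathrm{BR}}(\phi)\,m^{\mathrm{BR}_*}(\psi)$ as the leading term. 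Choosing $\varepsilon = e^{-ct}$ for sufficiently small $c > 0$ balances the thickening error $\varepsilon^\alpha$ against the mixing error $\varepsilon^{-C'}e^{-\eta_\alpha t}$ and yields a new (possibly smaller) exponential rate, which we still denote $\eta_\alpha$.

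The main obstacle is the non-uniformity of $\mu^{\mathrm{PS}}$ on small horospherical balls in the geometrically finite with cusps setting: the local dimension of the Patterson--Sullivan measure varies near parabolic fixed points, so the Radon--Nikodym densities relating the Haar, BMS, and BR decompositions deteriorate as one approaches the cusps. This is precisely why $C$ is allowed to depend on $\supp(\phi)$ and $\supp(\psi)$, which are compact: one covers the supports by finitely many flow boxes of uniform size bounded away from the cusps, obtains doubling-type estimates for $\mu^{\mathrm{PS}}$ on $\varepsilon$-balls meeting these boxes, and absorbs the resulting polynomial losses in $\varepsilon$ into the final constant $C$ after the balancing step. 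The remaining bookkeeping is routine and follows \cite{OW16} line by line, the essential new quantitative input being \cref{thm:ExponentialMixing}.
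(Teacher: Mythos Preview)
Your proposal is correct and matches the paper's approach exactly: the paper does not give a detailed proof but simply states that \cref{thm:MatrixCoefficient} follows from \cref{thm:ExponentialMixing} via the effective version of Roblin's transverse intersection argument as in \cite[Theorem~5.8]{OW16}. Your outline is a faithful elaboration of that argument, including the correct identification of why the constant $C$ must depend on the supports in the presence of cusps.
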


For all $T > 0$, define
\begin{align*}
\mathcal{G}(T) = \#\{\gamma: \gamma \text{ is a primitive closed geodesic in } \Gamma \backslash \mathbb H^n \text{ with length at most } T\}.
\end{align*}
For all primitive closed geodesics $\gamma$ in $\Gamma \backslash \mathbb H^n$, its \emph{holonomy} is a conjugacy class $h_\gamma$ in $M$ induced by parallel transport along $\gamma$. Fix the Haar probability measure on $M$. Recall the function $\Li: (2, \infty) \to \mathbb R$ defined by $\Li(x) = \int_2^x \frac{1}{\log(t)} \, dt$ for all $x \in (2, \infty)$. Denote by $\|\cdot\|_{\mathcal{S}^k}$ the $L^2$ Sobolev norm of order $k \in \N$. Since \cite[Lemma 3.8]{DFSU21} holds in our setting, in light of the remark in \cite[Section 1]{SW21}, \cref{thm:ExponentialMixing} implies the following theorem due to the work of Margulis--Mohammadi--Oh \cite{MMO14}.

\begin{theorem}
\label{thm:Equidistribution}
There exist $k \in \N$ and $\eta > 0$ such that for all class functions $\phi \in C^\infty(M, \R)$, we have
\begin{align*}
\sum_{\gamma \in \mathcal{G}(T)} \phi(h_\gamma) = \Li\bigl(e^{\delta T}\bigr) \int_M \phi(m) \, dm + O\bigl(e^{(\delta - \eta)T}\bigr) \qquad \text{as $T \to +\infty$}
\end{align*}
where the implied constant depends only on $\|\phi\|_{\mathcal{S}^k}$.
\end{theorem}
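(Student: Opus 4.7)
The plan is to deduce \cref{thm:Equidistribution} from \cref{thm:ExponentialMixing} following the argument of Margulis--Mohammadi--Oh \cite{MMO14}. First, using the Peter--Weyl theorem, I would decompose the class function $\phi \in C^{\infty}(M,\R)$ as a series over the irreducible characters $\chi$ of $M$, reducing the task to proving, for each such $\chi$, an asymptotic of the form
\begin{align*}
\sum_{\gamma \in \mathcal{G}(T)} \chi(h_\gamma) = \Li\bigl(e^{\delta T}\bigr)\int_M \chi(m)\,dm + O\bigl(e^{(\delta - \eta) T}\bigr),
\end{align*}
with an implied constant polynomial in the Laplace eigenvalue of $\chi$; summing these estimates against the Fourier coefficients of $\phi$ then produces the Sobolev dependence and the exponent $k$ in the statement.

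Second, I would encode the left-hand side as traces of iterates of an $M$-twisted transfer operator acting on sections of the homogeneous vector bundle over $\Gamma \backslash G/M$ associated to $\chi$: the length of a primitive closed geodesic appears through the iteration count while the holonomy appears through the twist. A standard Tauberian argument then converts a spectral gap for this operator at $\Re(s) = \delta$ into the required counting asymptotic. The crucial spectral input is that, for every nontrivial $\chi$, the spectral radius on the critical line is strictly less than $1$ with effective, quantitative dependence on $\chi$; this follows from \cref{thm:ExponentialMixing} by testing it against observables built from matrix coefficients of $\chi$.

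Third, to transport the argument of \cite{MMO14}, originally carried out in the convex cocompact or lattice setting, to our cusped geometrically finite setting, I would verify that \cite[Lemma 3.8]{DFSU21} --- which supplies the horospherical equidistribution/decay statement used to close the Tauberian loop --- remains valid here. The remark in \cite[Section 1]{SW21} documents that once this lemma and effective frame flow mixing are in hand, the counting argument goes through without further modification.

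The main obstacle, were one to redo the proof from scratch, would be handling cusp excursions inside the twisted Tauberian argument: parabolic elements force unbounded symbolic return times, and the twisted transfer operators must be controlled in $\chi$-dependent norms uniformly across these excursions. However, this difficulty is exactly what \cref{thm:ExponentialMixing} --- proved in this paper via the countably infinite symbolic coding of \cite{LP22} and a frame flow Dolgopyat method --- is designed to absorb, so the remaining steps reduce to the bookkeeping already carried out in \cite{MMO14}.
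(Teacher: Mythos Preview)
Your proposal is correct and matches the paper's approach: the paper also deduces \cref{thm:Equidistribution} from \cref{thm:ExponentialMixing} by invoking the work of Margulis--Mohammadi--Oh \cite{MMO14}, noting (via the remark in \cite[Section 1]{SW21}) that the only additional geometric input needed in the cusped setting is \cite[Lemma 3.8]{DFSU21}. Your sketch of the Peter--Weyl decomposition and the twisted-transfer-operator/Tauberian machinery is simply an unpacking of what \cite{MMO14} does, so there is nothing to add.
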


Let $\mathcal{C}_0 \subset \Gamma \backslash G$ be a fixed compact subset from a thick-thin decomposition. For all $\epsilon \in (0, 1)$ and $t_0 \geq 1$, we say that $x \in \Gamma \backslash G$ is $(\epsilon, t_0)$-Diophantine if any of its lift in $G$ has backward endpoint in the limit set $\Lambda_\Gamma$ and $d(\mathcal{C}_0, xa_{-t}) < (1 - \epsilon)t$ for all $t \geq t_0$. Let $N^+ < G$ be the unstable horospherical subgroup endowed with the induced Riemannian metric. Denote by $N_{\mathrm{max},R}^+ \subset N^+$ the image under $\exp: \mathfrak{n}^+ \to N^+$ of the ball of radius $R > 0$ centered at $0$ with respect to the max norm. Fix the Haar measure on $N^+$ compatible with the one on $G$. For any $x \in \Gamma \backslash G$, denote by $\Leb_{xN^+}$ the corresponding Lebesgue measure on $xN^+$ and by $\mu_{xN^+}^{\mathrm{PS}}$ the Patterson--Sullivan measure on $xN^+$. \Cref{thm:ExponentialMixing} implies the following theorem due to the work of Tamam--Warren \cite{TW22}. The latter result in \cref{thm:EffectiveEquidistribution} was earlier obtained by Edwards \cite{Edw22} when $d = 2$, i.e., for surfaces.

\begin{theorem}
\label{thm:EffectiveEquidistribution}
There exists $k \in \N$ such that for all $\epsilon \in (0, 1)$, there exists $\eta > 0$ such that for all $t_0 \geq 1$, the following holds. For all $\phi \in C_{\mathrm{c}}^\infty(\Gamma \backslash G, \R)$, there exists $C > 0$ (depending only on $\supp(\phi)$) such that:
\begin{enumerate}
\item for all $(\epsilon, t_0)$-Diophantine $x \in \Gamma \backslash G$ and $R \gg_{\epsilon} t_0$, we have
\begin{align*}
\left|\frac{1}{\mu_{xN^+}^{\mathrm{PS}}\bigl(xN_{\mathrm{max},R}^+\bigr)} \int_{xN_{\mathrm{max},R}^+} \phi \, d\mu_{xN^+}^{\mathrm{PS}} - m^{\mathrm{BMS}}(\phi)\right| \leq C\|\phi\|_{\mathcal{S}^k} R^{-\eta};
\end{align*}
\item for all $(\epsilon, t_0)$-Diophantine $x \in \Gamma \backslash G$ and $R \gg_{\epsilon, \supp(\phi)} t_0$, we have
\begin{align*}
\left|\frac{1}{\mu_{xN^+}^{\mathrm{PS}}\bigl(xN_{\mathrm{max},R}^+\bigr)} \int_{xN_{\mathrm{max},R}^+} \phi \, d\Leb_{xN^+} - m^{\mathrm{BR}}(\phi)\right| \leq C\|\phi\|_{\mathcal{S}^k} R^{-\eta}.
\end{align*}
\end{enumerate}
\end{theorem}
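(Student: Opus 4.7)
The plan is to deduce both statements from our main results by a quantitative Margulis-type thickening argument, following the framework of Tamam--Warren~\cite{TW22}. Part (1) will take Theorem~\ref{thm:ExponentialMixing} as its exponential mixing input and part (2) will take the matrix-coefficient version, Theorem~\ref{thm:MatrixCoefficient}; what remains after those are in hand is essentially geometric.

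For part (1), I would first reduce the long horospherical integral over $xN_{\mathrm{max}, R}^+$ to a correlation of flow-time $T := \log R$. Using the conjugation relation $a_{-T} n a_T \in N_{\mathrm{max}, e^{-T}}^+$ for $n \in N_{\mathrm{max}, 1}^+$, one has $xN_{\mathrm{max}, R}^+ a_{-T} = (xa_{-T})N_{\mathrm{max}, 1}^+$ and the PS-mass rescales by $e^{\delta T}$. I would then thicken transversely along the stable direction $N^-$ and the centralizer $AM$ by a smooth approximate identity $\psi_\epsilon$ supported on a flow box of radius $\epsilon$ around $xa_{-T}$, so that
\begin{align*}
\mu_{xN^+}^{\mathrm{PS}}\bigl(xN_{\mathrm{max}, R}^+\bigr)^{-1} \int_{xN_{\mathrm{max}, R}^+} \phi \, d\mu_{xN^+}^{\mathrm{PS}} \approx \int_{\Gamma \backslash G} \psi_\epsilon \cdot (\phi \circ a_T) \, d\m,
\end{align*}
up to H\"older errors of order $\epsilon^\alpha \|\phi\|_{C^\alpha}$ from the thickening, using the local product structure of $\m$ as PS on unstables, PS on stables, and Lebesgue on $AM$. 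Applying Theorem~\ref{thm:ExponentialMixing} to this correlation produces the main term $\m(\phi)$ up to an error of order $\epsilon^{-\beta} \|\phi\|_{\mathcal{S}^k} e^{-\eta_\alpha T}$ for some $\beta > 0$ depending only on dimension. Optimizing $\epsilon$ as a small negative power of $R$ then yields the stated $R^{-\eta}$ decay.

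The main obstacle is that the thickening flow box may self-intersect when the orbit passes deep into a cusp, where the injectivity radius decays exponentially; the $(\epsilon, t_0)$-Diophantine hypothesis is precisely what controls this. The bound $d(\mathcal{C}_0, xa_{-s}) < (1 - \epsilon)s$ guarantees an injectivity radius at least $e^{-(1-\epsilon)s}$ at $xa_{-s}$, so one may pick $\epsilon \sim e^{-(1 - \epsilon/2)T}$ uniformly along the orbit segment and still retain a net exponential gain; the same condition keeps $\mu_{xN^+}^{\mathrm{PS}}\bigl(xN_{\mathrm{max}, R}^+\bigr) \asymp e^{\delta T}$ uniformly via the shadow lemma. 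Part (2) then follows by the parallel argument with $\m$ replaced by the unstable Burger--Roblin measure $m^{\mathrm{BR}}$ and Theorem~\ref{thm:ExponentialMixing} replaced by Theorem~\ref{thm:MatrixCoefficient}, since $\Leb_{xN^+}$ is the local factor of $m^{\mathrm{BR}}$ along unstables: the prefactor $e^{(d - \delta)T}$ in Theorem~\ref{thm:MatrixCoefficient} exactly compensates the relative rescaling of Lebesgue versus PS under the $a_T$-conjugation and produces the $m^{\mathrm{BR}}(\phi)$ main term. Tracking constants through the Tamam--Warren argument verbatim and checking that our inputs satisfy the hypotheses there then completes the proof.
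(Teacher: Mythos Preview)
Your proposal is correct and matches the paper's approach: the paper does not give an independent proof but simply states that Theorem~\ref{thm:ExponentialMixing} implies Theorem~\ref{thm:EffectiveEquidistribution} via the work of Tamam--Warren~\cite{TW22} (with the case $d=2$ of part (2) earlier due to Edwards~\cite{Edw22}). Your sketch accurately summarizes the Margulis thickening argument carried out in~\cite{TW22}, with Theorems~\ref{thm:ExponentialMixing} and~\ref{thm:MatrixCoefficient} supplying the required mixing inputs.
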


We now mention some further directions which is outside the scope of this paper. Recently, Chow--Sarkar \cite{CS22} extended the main theorem of \cite{SW21} to convex cocompact rank one locally symmetric spaces. The natural further work is to generalize and combine the techniques in \cite{LP22}, \cite{CS22}, and this paper, to simultaneously extend the main theorems to geometrically finite rank one locally symmetric spaces. Another direction is the question of uniform exponential mixing which was addressed in the convex cocompact setting initially for hyperbolic surfaces by Oh--Winter \cite{OW16}, and later for hyperbolic manifolds by Sarkar \cite{Sar22a}. The natural further work is to treat the geometrically finite setting. In the work of Avila--Gou\"{e}zel--Yoccoz \cite{AGY06}, they proved exponential mixing of the Teichm\"{u}ller geodesic flow which was then shown to imply that the $\SL_2(\mathbb{R})$-action on the moduli space of Abelian differentials has a spectral gap (see also \cite{AG13}). Inspired by this, it would be interesting to study whether one can deduce a spectral gap for the $G$-action on $L^2(\Gamma\backslash G)$ from \cref{thm:ExponentialMixing}. Last but not least, inspired by results regarding resonance-free half plane for the resolvent of the Laplacian obtained from exponential mixing of the geodesic flow (see \cite{OW16,Sar22a,LP22}), one could explore analogues for exponential mixing of the frame flow.

\subsection{Outline of the proof}
As mentioned before, we prove \cref{thm:ExponentialMixing} for Zariski dense torsion-free geometrically finite subgroups $\Gamma < G$ with \emph{parabolic elements} using dynamical techniques, namely, using Dolgopyat's method \cite{Dol98,Dol02}. We recall that Dolgopyat's method was originally developed for the finite symbolic setting. It was later adapted to the countable symbolic setting in the works of many authors such as Baladi--Vall\'{e}e \cite{BV05}, Avila--Gou\"{e}zel--Yoccoz \cite{AGY06}, and Ara\'{u}jo--Melbourne \cite{AM16}. These techniques were recently utilized by Li--Pan \cite{LP22} for geodesic flows for geometrically finite hyperbolic manifolds with cusps. Similarly, our proof begins by utilizing the countably infinite coding constructed in \cite{LP22}. The countably infinite coding is given by $\Delta_{\sqcup} := \bigsqcup_{j \in \mathcal{J}} \Delta_j$ which is a full Patterson--Sullivan measure subset of the fundamental domain $\Delta_0$ of one of the parabolic fixed points. It also comes with an associated (piecewise) expanding map $T: \Delta_{\sqcup} \to \Delta_0$ which satisfies some important properties. Using this coding, we wish to proceed as in \cite{SW21} by performing a frame flow version of Dolgopyat's method using transfer operators twisted by holonomy. 

As in \cite{SW21}, the cancellations in the summands of the transfer operators are derived using the \emph{local non-integrability condition (LNIC)} and the \emph{non-concentration property (NCP)}. Non-integrability type conditions have been in extensive use since the works of Chernov \cite{Che98} and Dolgopyat \cite{Dol98} and LNIC is the appropriate generalization for frame bundles. We first describe NCP \emph{as stated in \cite[Proposition 6.6]{SW21}} for $\Gamma$ without parabolic elements and why it is required, and then describe its generalization for $\Gamma$ with parabolic elements. When dealing with frame flows, accessibility properties of the dynamical system play an important role in the analysis, sometimes without obstructions. In our setting, the non-wandering set of the frame flow is typically fractal in nature and consequently, there are obstructions. Roughly speaking, the non-integrability can be measured using certain ideal frames which are accessible starting from a reference frame via the strong stable and strong unstable foliations of the frame flow and we would like them to be contained in the non-wandering set. The latter condition is typically not satisfied by the ideal frames but at worst, we may use approximate frames which are close to the ideal ones where the closeness is measured precisely by an angular bound in the Lie algebra of the horospherical subgroup. NCP is a property for the limit set $\Lambda_\Gamma$ which very roughly says that such an angular bound holds for vectors in $\Lambda_\Gamma$, or in other words, $\Lambda_\Gamma$ does not concentrate along any proper affine subspace. This ensures that the approximate frames are contained in the non-wandering set. Note that if $\Gamma$ is a lattice, there are no obstructions in accessibility---this can be seen either from the fact that the non-wandering set is all of $\Gamma \backslash G$ or the fact that NCP is trivial since the limit set is all of $\partial \HS$, and so we may use the ideal frames directly.

Throughout the literature, there has been technical difficulties in various contexts due to the parabolic elements of non-maximal rank (see \cite{FHP91,Gui06,GM12,DFSU21,TW22}). Similarly, in our setting, the main difficulty for $\Gamma$ with parabolic elements lies in the fact that if there are parabolic elements of non-maximal rank, then NCP \emph{as stated in \cite[Proposition 6.6]{SW21}} does \emph{not} hold---$\Lambda_\Gamma$ gets concentrated along a proper affine subspace near the parabolic fixed points of non-maximal rank (see \cref{exa:non concentration})---which is insufficient to obtain the aforementioned cancellations (see \cref{sec:Dolgopyat'sMethod}). Nevertheless, an appropriate generalization of NCP still holds where we simply restrict to a large proper subset $\Omega_t \cap \Lambda_\Gamma \subset \Delta_0 \cap \Lambda_\Gamma$ which is bounded away from the parabolic fixed points, associated to large frequencies $t \gg 1$. Consequently, we can only obtain cancellations on $\Omega_t$. In order to tackle the small bad set $\Delta_0 - \Omega_t$, we turn to the recent techniques of Tsujii--Zhang \cite{TZ23}. They developed a less stringent version of Dolgopyat's method where they use stochastic dominance to show that it is sufficient to have cancellations inside a set with the \emph{large deviation property (LDP)}. Here LDP is with respect to the expanding map $T$ and so it is convenient to work with $\Lambda_+ = \{x\in \Lambda_{\Gamma}: T^n \text{ is defined at } x \text{ for all } n \in \mathbb{N}\}$ which is a Patterson--Sullivan full measure subset of $\Delta_0$. Using this version of Dolgopyat's method, we need an extra step in our setting which is to show that $\Omega_t \cap \Lambda_+$ satisfies LDP \emph{uniformly} in $t \gg 1$. There are existing techniques to derive LDP for $\Omega_t \cap \Lambda_+$ for a fixed $t$ but the challenge comes from the uniformity in $t \gg 1$. The larger the parameter $t \gg 1$, the more irregular the set $\Omega_t \cap \Lambda_+$, making it unclear how to obtain LDP for the expanding map at first glance. The proof is based on the interplay of two different dynamics: for $x\in \Lambda_+$ and $n \in \N$, we see whether the points $T^n(x)$ lying in $\Omega_t$ is roughly equivalent to the corresponding unit tangent vectors $u_{T^n(x)}$ (on a fixed piece of an unstable horosphere) returning to a fixed compact subset of $\T(X)$ under the geodesic flow $a_t$. From here, we study the combinatorics of cusp excursions. One of the key propositions is to estimate the probability of a ``random walk'' on $\R$ with a large residual waiting time. It is proved using an effective renewal theorem as in the work of Li \cite{Li22} which uses the exponential decay of the transfer operators for the geodesic flow of Li--Pan \cite{LP22}. We think of this in the form of the following interesting observation: for $\Gamma$ with parabolic elements of non-maximal rank, proving exponential mixing of the geodesic flow is \emph{required first} as an intermediate step before proving exponential mixing of the frame flow.

\subsection{Organization of the paper}
First we provide the necessary background in \cref{sec:Preliminaries,sec:SymbolicModel}. We then introduce the transfer operators with holonomy for the countably infinite symbolic coding in \cref{sec:TransferOperators}. \Cref{sec:LNIC,sec:NCP,sec:LDP} are independent of each other and provide the three key ingredients, LNIC, NCP, and LDP, required for Dolgopyat's method in \cref{sec:Dolgopyat'sMethod}. We finish briefly with \cref{sec:ExponentialMixing} to convert the spectral bounds to exponential mixing. We refer to \cref{fig:Sections} for a comprehensive diagram of the structure of the paper.

\begin{figure}[h]
\centering
\begin{tikzpicture}[>=stealth]
\node[] at (1.5, 0) {\small \ref{sec:Introduction}};
\node[] at (3, 0) {\small \ref{sec:Preliminaries}};
\node[] at (4.5, 0) {\small \ref{sec:SymbolicModel}};
\node[] at (6, 0.3) {\small \ref{sec:TransferOperators}};
\node[] at (6, -0.3) {\small \ref{sec:LNIC}};
\node[] at (6, -0.9) {\small \ref{sec:NCP}};
\node[] at (7.5, 0.3) {\small \ref{sec:LDP}};
\node[] at (9, 0) {\small \ref{sec:Dolgopyat'sMethod}};
\node[] at (10.5, 0) {\small \ref{sec:ExponentialMixing}};
\node[] at (6, 0.9) {\small \ref{sec:appendix}};

\begin{scope}[decoration={markings, mark=at position 0.5 with {\arrow{>}}}] 
\draw[black,postaction={decorate}] (3.15, 0) to (4.35, 0);
\draw[black,postaction={decorate}] (4.65, 0) to[out=0, in=180] (5.85, 0.3);
\draw[black,postaction={decorate}] (4.65, 0) to[out=0, in=180] (5.85, -0.3);
\draw[black,postaction={decorate}] (4.65, 0) to[out=0, in=180] (5.85, -0.9);
\draw[black,dashed,postaction={decorate}] (6.15, 0.3) to (7.35, 0.3);
\draw[black,postaction={decorate}] (7.65, 0.3) to[out=0, in=180] (8.85, 0);
\draw[black,postaction={decorate}] (6.15, -0.3) to[out=0, in=180] (8.85, 0);
\draw[black,postaction={decorate}] (6.15, -0.9) to[out=0, in=180] (8.85, 0);
\draw[black,dashed,postaction={decorate}] (9.15, 0) to (10.35, 0);
\draw[black,dashed,postaction={decorate}] (6.15, 0.9) to[out=0, in=180] (7.35, 0.3);
\draw[black,postaction={decorate}] (6.15, 0.3) to[out=0, in=180] (8.25, 0.7) to[out=0, in=180] (10.35, 0);
\end{scope}
\end{tikzpicture}
\caption{This diagram shows the dependence between sections. The solid (resp. dashed) lines are strong (resp. weak) dependence.}
\label{fig:Sections}
\end{figure}
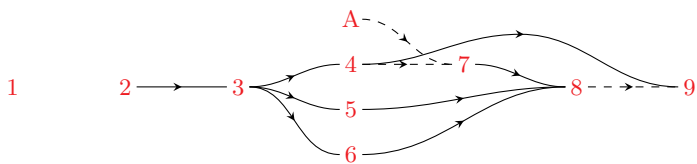

\subsection*{Acknowledgements} We would like to thank Zhiyuan Zhang for explaining to us the paper \cite{TZ23}. Part of this work was done while the authors attended the conference ``Ergodic geometry, number theory and Margulis legacy: the next generation'' in June 2022. We would like to thank the organizers for providing the opportunity to collaborate.

\section{Preliminaries}
\label{sec:Preliminaries}
\subsection{Hyperbolic spaces}
\label{subsec:hyperbolic spaces}
Let $\HS$ be the $(d + 1)$-dimensional hyperbolic space for any $d \in \N$ endowed with the Riemannian hyperbolic metric. We will often use the upper half space model:
\begin{equation*}
\HS=\{x=(x_1,\ldots,x_{d+1})\in \mathbb{R}^{d+1}: x_{d+1}>0\}.
\end{equation*}
Any complete connected $(d + 1)$-dimensional hyperbolic manifold is then of the form $\Gamma \backslash \HS$ for some torsion-free discrete subgroup $\Gamma < \SO(d+1,1)^{\circ}$ where the latter can be identified with the group of orientation-preserving isometries of $\HS$. We denote its unit tangent bundle by $\T(\Gamma \backslash \HS)$ and its (oriented orthonormal) frame bundle by $\F(\Gamma \backslash \HS)$.

\subsubsection{\texorpdfstring{Identifying $\HS$, $\T(\HS)$, and $\F(\HS)$ with homogeneous spaces}{Identifying ℍᵈ⁺¹, T¹(ℍᵈ⁺¹), and F(ℍᵈ⁺¹) with homogeneous spaces}}
Let $(e_1, \dotsc, e_{d+1})$ be the standard basis of $\R^{d+1}$. Fix the reference point $o := e_{d+1} \in \HS$, the reference vector $v_o := (e_{d+1}, -e_{d+1}) \in \operatorname{T}_o^1(\HS)$, and the reference frame $F_o := ((e_{d+1}, e_1), \dotsc, (e_{d+1}, e_{d-1}), (e_{d+1}, e_d), \allowbreak (e_{d+1}, -e_{d+1})) \in \F_o(\HS)$. Let $G = \SO(d+1,1)^{\circ}$ which we endow with a left $G$-invariant and right $K$-invariant Riemannian metric. We denote by $\langle \cdot, \cdot\rangle$ the inner product and by $\|\cdot\|$ the norm on any tangent space of $G$ or its quotient spaces. Similarly, we denote by $d$ the induced distance function on $G$ or its quotient spaces. Let $K:=\Stab_G(o)$ and $M:=\Stab_G(v_o) < K$. Note that $\Stab_G(F_o)$ is trivial. We can assume that the Riemannian metric on $G$ was chosen so that we have identifications via the isomorphisms $\HS \cong G/K$, $\T(\HS) \cong G/M$, and $\F(\HS) \cong G$. For any torsion-free discrete subgroup $\Gamma < G$, we can extend these identifications to the quotient spaces: $\Gamma\backslash \HS \cong \Gamma\backslash G/K$, $\T(\Gamma\backslash \HS) \cong \Gamma\backslash G/M$, and $\F(\Gamma\backslash \HS) \cong \Gamma\backslash G$. In particular, the Riemannian metric on $G$ descends to the Riemannian hyperbolic metric on $\HS$ and $\Gamma\backslash \HS$, and $d$ denotes the induced hyperbolic distance function on $\HS$ and $\Gamma\backslash \HS$.

Let $A=\{a_t:t\in \mathbb{R}\} < C_G(M)$ be the one-parameter subgroup of semisimple elements such that for any torsion-free discrete subgroup $\Gamma < G$, the geodesic flow $\{\mathcal{G}_t\}_{t \in \R}$ on $\T(\Gamma\backslash \HS)$ and the frame flow $\{\mathcal{F}_t\}_{t \in \R}$ on $\F(\Gamma\backslash \HS)$ are represented by the right translation action of $A$ on $\Gamma\backslash G/M$ and $\Gamma\backslash G$, respectively. Exploiting the above identifications, for any $u \in \T(\Gamma\backslash \HS)$ and $F \in \F(\Gamma\backslash \HS)$, we will often write the application of the geodesic flow and the frame flow for time $t \in \R$ as $ua_t$ and $Fa_t$, respectively. We denote by $N^+$ and $N^-$ the associated unstable and stable horospherical subgroups, respectively:
\begin{equation*}
N^{\pm}=\{g\in G: a_t ga_{-t}\to e \text{ as } t\to \pm\infty\}.
\end{equation*}
They are abelian groups isomorphic to $\mathbb{R}^d$.

\subsubsection{Visual boundary}
Let $\partial \HS$ be the visual boundary of $\HS$. The compactification of $\HS$ is $\overline{\HS} = \HS \cup \partial \HS$. On $\partial \HS=\mathbb{R}^d\cup \{\infty\} \cong \mathbb{S}^d$, we have the spherical metric $d_{\mathbb{S}^d}$. We also have the (extended) Euclidean metric $d_{\operatorname{E}}$ defined by $d_{\operatorname{E}}(x,x') = \|x-x'\|$ for any $x,x'\in \partial\HS$. 

The $G$-action on $\HS$ induces a $G$-action on $\partial \HS$ by conformal maps. For $x\in \partial \HS$, let 
$\|(dg)_x\|$ be the operator norm of the differential $(dg)_x:\operatorname{T}_x(\partial \HS)\to \operatorname{T}_x (\partial \HS)$ with respect to the Euclidean metric. Let $\|(dg)_x\|_{\mathbb{S}^d}$ be the operator norm with respect to the spherical metric. We have the relations
\begin{equation*}
\|(dg)_x\|_{\mathbb{S}^d}=\frac{1+\|x\|^2}{1+\|gx\|^2}\|(dg)_x\| = e^{-\beta_x(g^{-1}o,o)},
\end{equation*}
where $\beta: \partial \HS \times \HS \times \HS \to \R$ is the Busemann function given by 
$\beta_x(z,z')=\lim_{t\to \infty}(d(z,x_t)-d(z',x_t))$
with $\{x_t \in \HS: t>0\}$ an arbitrary geodesic ray tending to $x \in \partial \HS$.

\subsubsection{\texorpdfstring{Hopf parametrization of $\T(\HS)$}{Hopf parametrization of T¹(ℍᵈ⁺¹)}}
For all $v \in \T(\HS) \cong G/M$ and $F \in \F(\HS) \cong G$, denote by $v^+, F^+ \in \partial \HS$ (resp. $v^-, F^- \in \partial \HS$) their forward endpoints (resp. backward endpoints), and denote by $v_*, F_* \in \HS$ their basepoints. Set $\partial^2(\HS)=\partial \HS\times \partial \HS -\operatorname{Diagonal}(\partial \HS)$. The Hopf parametrization is the identification via the diffeomorphism
\begin{align}
\nonumber
\T(\HS) &\to \partial^2(\HS)\times \mathbb{R} \\
\label{eqn:Hopf parametrization}
v &\mapsto (v^+,v^-,s=\beta_{v^+}(o,v_*)).
\end{align}
The geodesic flow in the Hopf parametrization is simply the translation action on the $\mathbb{R}$-coordinate.

Given a torsion-free discrete subgroup $\Gamma < G$, it acts on $\T(\HS)$, and hence it acts on $\partial^2(\HS)\times \mathbb{R}$, which is given by the formula:
\begin{equation}
\label{action on the boundary}
\gamma(v^+,v^-,s)=(\gamma v^+,\gamma v^-, s-\beta_{v^+}(o,\gamma^{-1}o)).
\end{equation}

\subsection{Geometrically finite subgroups}
We now cover some fundamentals of geometrically finite subgroups. Let $\Gamma < G$ be a discrete subgroup.

The limit set of $\Gamma$ is the set $\Lambda_{\Gamma} \subset \overline{\HS}$ of all limit points of the orbit $\Gamma o \subset \overline{\HS}$. It is independent of $o \in \HS$ and discreteness of $\Gamma$ implies $\Lambda_{\Gamma} \subset \partial \HS$.

A limit point $x\in \Lambda_{\Gamma}$ is {\it{conical}} if there exists a geodesic ray tending to $x$ and a sequence of elements $\{\gamma_n\}_{n \in \N} \in \Gamma$ such that $\lim_{n \to \infty} \gamma_n o = x$, and the distance between $\gamma_n o$ and the geodesic ray is bounded for all $n \in \N$. A subgroup $\Gamma' < \Gamma$ is {\it{parabolic}} if it fixes only one point in $\partial \HS$. A point $x\in \Lambda_{\Gamma}$ is called a {\it{parabolic fixed point}} if $\Stab_{\Gamma}(x)$ is parabolic. It is said to be {\it{bounded}} if the quotient $\Stab_{\Gamma}(x)\backslash (\Lambda_{\Gamma}-\{x\})$ is compact.

The subgroup $\Gamma < G$ is {\it{non-elementary}} if $\#\Lambda_{\Gamma} \geq 3$ and hence $\#\Lambda_{\Gamma} = \infty$. Moreover, such a subgroup is {\it{geometrically finite}} if $\Lambda_\Gamma$ consists only of conical limit points and parabolic fixed points, or equivalently, $\Lambda_\Gamma$ consists only of conical limit points and bounded parabolic fixed points \cite{Bow93,KL19}.

\subsection{Structure of cusps}
\label{subsec:StructureOfCusps}
Let $\Gamma < G$ be a geometrically finite subgroup with parabolic elements and suppose $\infty \in \partial\HS$ is a parabolic fixed point of $\Gamma$. Let $\Gamma'_{\infty}=\Stab_{\Gamma}(\infty) <\Gamma$ be the parabolic subgroup fixing $\infty$. Then $\Gamma'_{\infty}$ acts on $\mathbb{R}^d \subset \partial \HS$ by Euclidean isometries. The following result of Bieberbach gives a more precise characterization of the $\Gamma'_{\infty}$-action on $\mathbb{R}^d$ (see \cite[Section 7.5]{Rat19}).

\begin{lemma}[Bieberbach]
\label{lem:Bieberbach}
There exist a maximal normal abelian subgroup $\Gamma_{\infty} < \Gamma'_{\infty}$ of finite index $m \in \N$ and a $\Gamma'_{\infty}$-invariant affine subspace $Z\subset \mathbb{R}^d$ of dimension $k \in \N$ such that $\Gamma_{\infty}$ acts on $Z$ as a group of translations of rank $k$. Consequently, decomposing $\mathbb{R}^d = Y \oplus Z$ into orthogonal affine subspaces and viewing the later as vector spaces in their own right, we can write the $\Gamma'_{\infty}$-action on $\mathbb{R}^d$ in the following form: for all $\gamma \in \Gamma'_{\infty}$, there exist
\begin{align*}
A_{\gamma} &\in \operatorname{O}(Y), & R_{\gamma} &\in \operatorname{O}(Z), & b_{\gamma} &\in Z,
\end{align*}
where $R_{\gamma}^m = \Id$ and moreover $R_{\gamma} = \Id$ if $\gamma \in \Gamma_{\infty}$, such that
\begin{equation*}
\gamma(y,z)=(A_{\gamma} y, R_{\gamma}z+b_{\gamma}) \qquad \text{for all $(y, z) \in Y \oplus Z$}.
\end{equation*}
\end{lemma}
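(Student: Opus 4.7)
The strategy is to reduce the lemma to the classical Bieberbach theorems on Euclidean crystallographic groups. Since $\infty$ is a parabolic fixed point of $\Gamma$, every element of $\Gamma'_\infty$ must act as a parabolic isometry of $\HS$ (a hyperbolic element would produce a second fixed point on $\partial\HS$) and therefore restricts to a Euclidean isometry of $\mathbb{R}^d$; write $\gamma(x) = A_\gamma x + b_\gamma$ with $A_\gamma \in \operatorname{O}(d)$ and $b_\gamma \in \mathbb{R}^d$. The assignment $L: \gamma \mapsto A_\gamma$ is a group homomorphism whose kernel is the subgroup $T$ of pure translations in $\Gamma'_\infty$, and discreteness of $\Gamma$ in $G$ forces $\Gamma'_\infty$ to be discrete in $\operatorname{Isom}(\mathbb{R}^d)$.

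The first key step is to show that $T$ has finite index in $\Gamma'_\infty$. Here I would use that $\infty$ is a \emph{bounded} parabolic fixed point (an equivalent characterization of geometric finiteness recalled in the preliminaries), which guarantees that $\Gamma'_\infty$ acts cocompactly on some $k$-dimensional invariant affine subspace $Z \subset \mathbb{R}^d$. On $Z$, Bieberbach's first theorem (as in \cite[Chapter 7]{Rat19}) applies and yields that $T$ is free abelian of rank $k$ and of finite index, say $m := [\Gamma'_\infty : T]$. I would then set $\Gamma_\infty := T$; it is normal as a kernel and abelian as a subgroup of $\mathbb{R}^d$, and a short additional check---showing that any normal abelian subgroup must centralize the lattice of translations and hence itself consist only of translations parallel to $Z$---establishes maximality among normal abelian subgroups.

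Next I would verify $\Gamma'_\infty$-invariance of $Z$ using normality of $\Gamma_\infty$: for $\gamma \in \Gamma'_\infty$ and $\tau \in \Gamma_\infty$ with translation vector $b_\tau$, the conjugate $\gamma\tau\gamma^{-1} \in \Gamma_\infty$ is a translation by $A_\gamma b_\tau$, so $A_\gamma$ preserves the direction subspace of $Z$ and, being orthogonal, also preserves $Y := Z^\perp$. Choosing an origin inside $Z$ to promote it to a linear subspace and decomposing $\mathbb{R}^d = Y \oplus Z$, one has $A_\gamma = A_\gamma|_Y \oplus A_\gamma|_Z$; since $\gamma(0) = b_\gamma$ must lie in the invariant linear subspace $Z$, we also have $b_\gamma \in Z$. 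Writing $A_\gamma$ for $A_\gamma|_Y$ and $R_\gamma$ for $A_\gamma|_Z$ then yields the stated formula $\gamma(y, z) = (A_\gamma y, R_\gamma z + b_\gamma)$. The remaining two properties follow by direct computation on a composition $\gamma_1\gamma_2$: $R$ is a group homomorphism $\Gamma'_\infty \to \operatorname{O}(Z)$, $R_\tau = \Id$ for $\tau \in \Gamma_\infty$ (translations have trivial linear part), and $\gamma^m \in \Gamma_\infty$ for every $\gamma$, so $R_\gamma^m = R_{\gamma^m} = \Id$.

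I expect the main obstacle to be the first step---invoking Bieberbach in the non-cocompact ambient setting---where the bounded-parabolic hypothesis is essential in order to extract the invariant affine subspace $Z$ on which $\Gamma'_\infty$ acts cocompactly; once this is in place, the remainder of the proof becomes a careful exercise in linear algebra and choice of origin.
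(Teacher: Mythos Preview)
The paper does not actually prove this lemma: it is stated with attribution to Bieberbach and a citation to \cite[Section 7.5]{Rat19}, and is used as a black box throughout. So there is no ``paper's own proof'' to compare against.

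Your sketch is a reasonable outline of the standard argument from Ratcliffe. One small correction: you write that ``every element of $\Gamma'_\infty$ must act as a parabolic isometry of $\HS$'' because a hyperbolic element would have a second fixed point. You should also rule out elliptic elements; this follows here because $\Gamma$ is assumed torsion-free. With that addition, the elements of $\Gamma'_\infty$ indeed act on $\mathbb{R}^d$ by Euclidean isometries (similarity ratio $1$), and the rest of your reduction to the classical Bieberbach theorem goes through. Also, the step ``$\gamma(0) = b_\gamma \in Z$'' requires knowing that $\Gamma'_\infty$ preserves the affine subspace $Z$ itself (not just its direction); you should note that $Z$ can be taken to be the affine hull of a $\Gamma_\infty$-orbit, which is then $\Gamma'_\infty$-invariant by normality.
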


The dimension $k$ in \cref{lem:Bieberbach} is also called the {\it{rank}} of the parabolic fixed point $\infty$.

Consider the orthogonal decomposition $\mathbb{R}^d=Y \oplus Z$ from \cref{lem:Bieberbach}. Since $\Gamma_{\infty}$ acts on $Z$ as a group of translations, it admits a fundamental domain $\Delta'_{\infty} \subset Z$ which is an open $k$-dimensional parallelotope. Since $\Gamma$ is geometrically finite, $\infty$ is a bounded parabolic fixed point. By definition, the quotient $\Gamma'_{\infty}\backslash (\Lambda_{\Gamma}-\{\infty\})$ is compact and so the quotient $\Gamma_{\infty}\backslash (\Lambda_{\Gamma}-\{\infty\})$ is also compact as $\Gamma_{\infty} < \Gamma'_{\infty}$ is of finite index. Therefore, there exists a constant $C_\infty>0$ such that the set $B_{Y}(C_\infty)=\{y\in Y: \|y\|<C_\infty\}$ has the property that 
\begin{equation}
\label{limit set inclusion}
\Lambda_{\Gamma} - \{\infty\} \subset \bigcup_{\gamma\in \Gamma_{\infty}} \gamma \bigl(\overline{B_{Y}(C_\infty)\times \Delta_{\infty}'}\bigr).
\end{equation}
We call the open set $\Delta_{\infty}:=B_Y(C_\infty)\times \Delta'_{\infty}$ a {\it{fundamental domain}} for the parabolic fixed point $\infty$.

\begin{figure}[h]
\begin{center}
\def\svgwidth{10cm}
\begingroup%
  \makeatletter%
  \providecommand\color[2][]{%
    \errmessage{(Inkscape) Color is used for the text in Inkscape, but the package 'color.sty' is not loaded}%
    \renewcommand\color[2][]{}%
  }%
  \providecommand\transparent[1]{%
    \errmessage{(Inkscape) Transparency is used (non-zero) for the text in Inkscape, but the package 'transparent.sty' is not loaded}%
    \renewcommand\transparent[1]{}%
  }%
  \providecommand\rotatebox[2]{#2}%
  \newcommand*\fsize{\dimexpr\f@size pt\relax}%
  \newcommand*\lineheight[1]{\fontsize{\fsize}{#1\fsize}\selectfont}%
  \ifx\svgwidth\undefined%
    \setlength{\unitlength}{420.43622457bp}%
    \ifx\svgscale\undefined%
      \relax%
    \else%
      \setlength{\unitlength}{\unitlength * \real{\svgscale}}%
    \fi%
  \else%
    \setlength{\unitlength}{\svgwidth}%
  \fi%
  \global\let\svgwidth\undefined%
  \global\let\svgscale\undefined%
  \makeatother%
  \begin{picture}(1,0.65078187)%
    \lineheight{1}%
    \setlength\tabcolsep{0pt}%
    \put(0,0){\includegraphics[width=\unitlength,page=1]{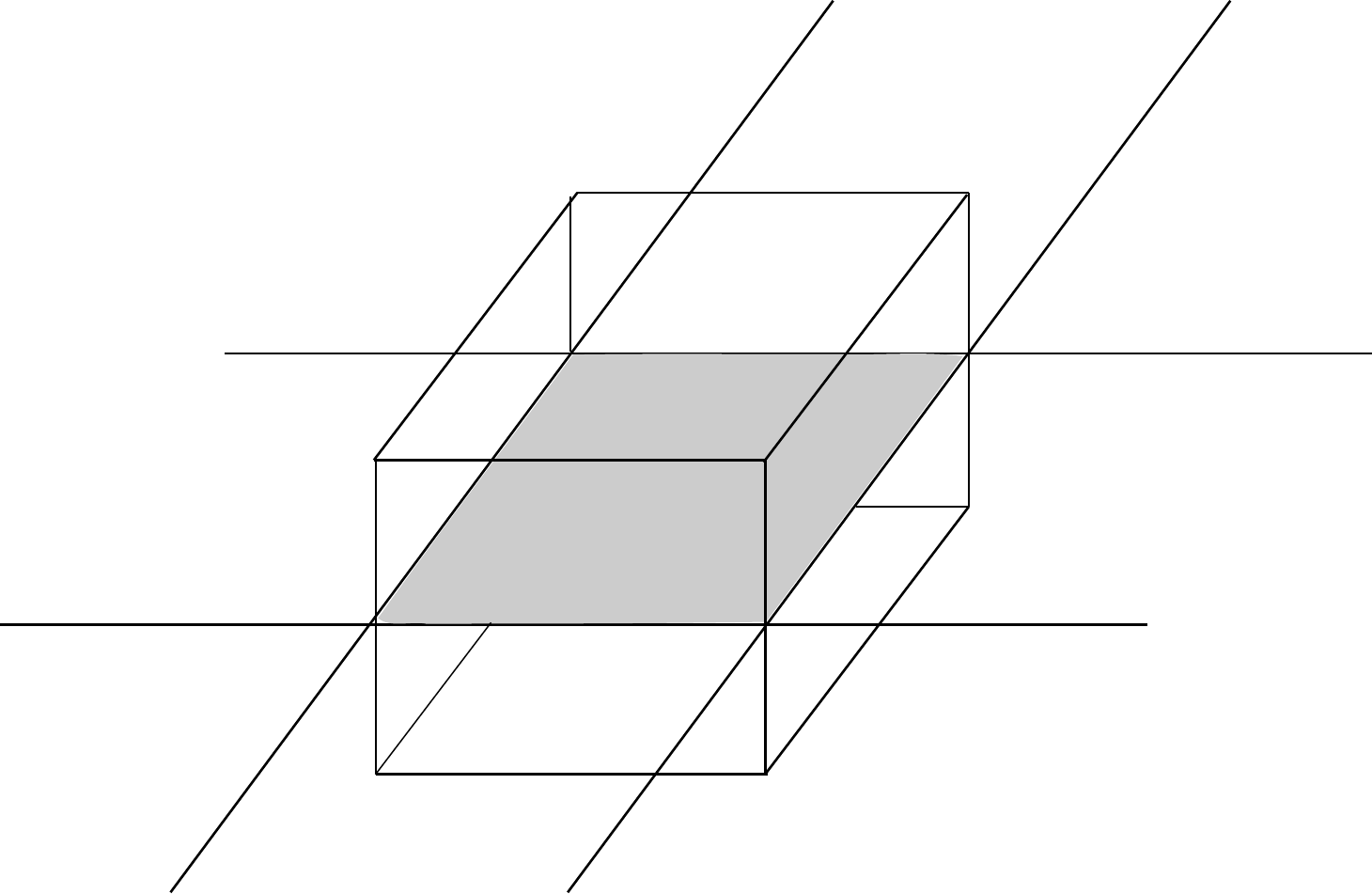}}%
    \put(0.42109719,0.34865532){\makebox(0,0)[lt]{\lineheight{1.25}\smash{\begin{tabular}[t]{l}$\Delta_\infty'$\end{tabular}}}}%
    \put(0.75081546,0.40652781){\makebox(0,0)[lt]{\lineheight{1.25}\smash{\begin{tabular}[t]{l}$\mathbb{R}^k$\end{tabular}}}}%
  \end{picture}%
\endgroup%

\end{center}
\caption{The intersection $\Lambda_{\Gamma}\cap \mathbb{R}^d$ has bounded distance to $\mathbb{R}^k$.}
\end{figure}

\subsection{PS measure and BMS measure}
\label{subsec:PS measure and BMS measure}
Let $\Gamma < G$ be a geometrically finite subgroup. Denote by $\delta \in (0, d]$ the critical exponent of $\Gamma$, i.e., the abscissa of convergence of the Poincar\'{e} series $s \mapsto \sum_{\gamma\in \Gamma}e^{-sd(o,\gamma o)}$. 

\subsubsection{Patterson--Sullivan measure}
Patterson and later Sullivan constructed a $\Gamma$-invariant conformal density $\bigl\{\mu_y^{\mathrm{PS}}\bigr\}_{y\in \HS}$ of dimension $\delta$, i.e., a set of finite Borel measures on $\partial \HS$ such that for any $y,z\in \HS$, $x\in \partial \HS$, and $\gamma\in \Gamma$, we have
\begin{equation}
\label{PS measure}
\frac{d\mu_{y}^{\mathrm{PS}}}{d\mu_{z}^{\mathrm{PS}}}(x)=e^{-\delta \beta_x(y,z)} \qquad \text{and} \qquad \gamma_{*} \mu_y^{\mathrm{PS}}=\mu_{\gamma y}^{\mathrm{PS}},
\end{equation}
where the pushforward is defined by $\gamma_{*}\mu_y^{\mathrm{PS}}(E)=\mu_y^{\mathrm{PS}}(\gamma^{-1}E)$ for any Borel subset $E \subset \partial \HS$. Such a set of measures is unique up to homothety and the $\Gamma$-action on $\partial \HS$ is ergodic relative to the corresponding measure class; in particular, the measures are supported on $\Lambda_{\Gamma} \subset \partial \HS$. These measures are called Patterson--Sullivan measures (or PS measures).

Since the PS measures are absolutely continuous with respect to each other, it is convenient to fix $\mu := \mu_o^{\mathrm{PS}}$ for the rest of the paper. It enjoys the {\it{quasi-invariance}} property: for any Borel subset $E \subset \partial \HS$ and any $\gamma\in \Gamma$,
\begin{equation}
\label{eqn:quasi invariance}
\mu(\gamma E)=\int_{E} \|(d\gamma)_x\|_{\mathbb{S}^d}^{\delta} \, d\mu(x).
\end{equation}

\subsubsection{Bowen--Margulis--Sullivan measure}
Using the Hopf parametrization, the Bowen--Margulis--Sullivan measure (or BMS measure) on $\T(\HS)$ is defined by
\begin{equation*}
d\tilde{m}^{\operatorname{BMS}}(v^+,v^-,s)=e^{\delta \beta_{v^+}(o,v_*)} e^{\delta \beta_{v^-}(o,v_*)} \, d\mu(v^+) \, d\mu(v^-) \, ds.
\end{equation*}
It induces the following measures which are also called BMS measures. Note that $\tilde{m}^{\operatorname{BMS}}$ is left $\Gamma$-invariant because of the conformality of the PS measure (\cref{PS measure}) and the formula for $\Gamma$-action on $\partial^2(\HS)\times \mathbb{R}$ (\cref{action on the boundary}). Hence $\tilde{m}^{\operatorname{BMS}}$ induces a measure $m^{\operatorname{BMS}}$ on $\T(\Gamma\backslash \HS)$. Since $\Gamma$ is geometrically finite, $m^{\operatorname{BMS}}(\T(\Gamma\backslash \HS))$ is finite and so we may normalize $m^{\operatorname{BMS}}$ to a probability measure. Using the identifications $\T(\HS)\cong G/M$ and $\F(\HS)\cong G$, and the Haar probability measure on $M$, we lift the measure $\tilde{m}^{\operatorname{BMS}}$ to a right $M$-invariant measure on $\F(\HS)$, which we also denote by $\tilde{m}^{\operatorname{BMS}}$ by abuse of notation. Similarly, for the quotient spaces, we lift the measure $m^{\operatorname{BMS}}$ on $\T(\Gamma\backslash \HS)\cong \Gamma\backslash G/M$ to a right $M$-invariant measure $m^{\operatorname{BMS}}$ on $\F(\Gamma\backslash \HS)\cong \Gamma\backslash G$. It follows from definitions that the BMS measures are invariant under the geodesic flow $\{\mathcal{G}_t\}_{t \in \R}$ on the respective unit tangent bundles and invariant under the frame flow $\{\mathcal{F}_t\}_{t \in \R}$ on the respective frame bundles.

\section{Symbolic model for the frame flow}
\label{sec:SymbolicModel}
For the rest of the paper, let $\Gamma<G$ be a Zariski dense torsion-free geometrically finite subgroup with parabolic elements. Furthermore, conjugating $\Gamma$ by an element in $G$ if necessary, we can assume that $\infty \in \partial \HS$ is a parabolic fixed point of $\Gamma$. Denote $X := \Gamma \backslash \HS$.

\subsection{Expanding map on the boundary}
\label{subsec:expanding map}
Let us first recall the symbolic model for the geodesic flow constructed by Li--Pan \cite[Proposition 4.1]{LP22}.

One approach in symbolic dynamics to tackle the geodesic flow is to construct a Poincar\'{e} section $S \subset \T(X)$. It is a $2d$-dimensional submanifold with boundary which is transverse to the geodesic flow and has good recurrence properties. The challenge lies in finding a return time map $\Ret$ defined on a full measure subset $S_0 \subset S$, such that the map $v \mapsto \mathcal{G}_{\Ret(v)}(v)$ on $S_0$ is hyperbolic and can be modeled by a full shift of a countably infinite alphabet. To overcome this difficulty, Li--Pan constructed a countably infinite symbolic coding on $\partial \HS$ equipped with an expanding map and connected it to the return time map.

Fix $\Delta_0 := \Delta_\infty$ to be the fundamental domain for the parabolic fixed point $\infty$.

\begin{proposition}[{\cite[Proposition 4.1]{LP22}}]
\label{prop:Coding}
There exist $C_1 > 0$, $\lambda \in (0, 1)$, $\epsilon_0 \in (0, 1)$, a countably infinite set $\{\Delta_j\}_{j \in \mathcal{J}}$ of mutually disjoint open subsets of $\Delta_0$, and an expanding map $T: \Delta_{\sqcup} \to \Delta_0$ where $\Delta_{\sqcup} := \bigsqcup_{j \in \mathcal{J}} \Delta_j$, such that:
\begin{enumerate}
\item $\mu(\Delta_0) = \sum_{j \in \mathcal{J}} \mu(\Delta_j)$;
\item for all $j \in \mathcal{J}$, there exists $\gamma_j \in \Gamma$ such that $\Delta_j = \gamma_j \Delta_0$ and $T|_{\Delta_j} = \gamma_j^{-1}$;
\item for all $j \in \mathcal{J}$, $\gamma_j$ is a uniform contraction: we have $\|(d\gamma_j)_x\|\leq \lambda$ for all $x \in \Delta_0$;
\item for all $j \in \mathcal{J}$, we have $\|\left(d(\log\|d\gamma_j\|)\right)_x\|\leq C_1$ for all $x \in \Delta_0$, where $\left(d(\log\|d\gamma_j\|)\right)_x$ is the derivative of the map $z\mapsto \log \|(d\gamma_j)_z\|$ at $x$;
\item the return time map $\Ret: \Delta_{\sqcup} \to \R$ defined by $\Ret(x) = \log\|(dT)_x\|$ for all $x \in \Delta_{\sqcup}$ satisfies the exponential tail property: $e^{\epsilon_0 \Ret} \in L^1(\Delta_{\sqcup}, \mu)$.
\end{enumerate}
\end{proposition}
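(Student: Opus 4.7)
The plan is to construct a Bowen--Series-type symbolic model centered on the geometry at the distinguished parabolic fixed point $\infty \in \partial \HS$. Since $\infty$ is a bounded parabolic fixed point, the fundamental domain $\Delta_0 = \Delta_\infty$ of \cref{subsec:StructureOfCusps} captures essentially all of the limit set modulo the $\Gamma_\infty$-action (cf.\ \cref{limit set inclusion}). First I would identify the branches of $T$ with the first return of backward geodesics to the region above $\Delta_0$. Concretely, the countable family $\{\gamma_j\}_{j \in \mathcal{J}} \subset \Gamma$ is chosen to consist of one representative from each appropriate $\Gamma_\infty$-coset for which $\gamma_j \overline{\Delta_0} \subsetneq \Delta_0$, and $\Delta_j := \gamma_j \Delta_0$ then encodes those $x \in \Delta_0$ whose backward geodesic exits the $\infty$-cusp horoball, performs exactly one excursion encoded by $\gamma_j$, and returns to $\Delta_0$.

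Given this construction, properties (1)--(4) should follow relatively cleanly. Full $\mu$-measure and disjointness (property (1)) come from the quasi-invariance formula \cref{eqn:quasi invariance} together with $\Gamma$-ergodicity on $\partial \HS$: the complement $\Delta_0 \setminus \bigsqcup_j \Delta_j$ supports only non-returning orbits, which is $\mu$-null. Property (2) is immediate. Properties (3) and (4) rely on the conformal derivative formula $\|(dg)_x\|_{\mathbb{S}^d} = e^{-\beta_x(g^{-1}o, o)}$: because each $\gamma_j \overline{\Delta_0}$ sits strictly inside $\Delta_0$, one derives both uniform contraction $\|(d\gamma_j)_x\| \leq \lambda$ and a uniform bound on the logarithmic derivative from the bounded hyperbolic displacement of $\gamma_j^{-1}o$ seen from $\Delta_0$.

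The main obstacle will be the exponential tail property (5). On $\Delta_j$ one has $e^{\Ret(x)} = \|(dT)_x\| \asymp \|(d\gamma_j)_{Tx}\|^{-1} =: \lambda_j^{-1}$ by the distortion bound (4), while quasi-invariance gives $\mu(\Delta_j) \asymp \lambda_j^\delta \mu(\Delta_0)$. Thus
\begin{equation*}
\int_{\Delta_{\sqcup}} e^{\epsilon_0 \Ret} \, d\mu \asymp \sum_{j \in \mathcal{J}} \lambda_j^{\delta - \epsilon_0} \asymp \sum_{j \in \mathcal{J}} e^{-(\delta - \epsilon_0) d(o, \gamma_j o)},
\end{equation*}
reducing matters to a Poincar\'{e}-type series over the chosen representatives. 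Here the geometric finiteness hypothesis is crucial: the global measure formula of Stratmann--Velani together with the Sullivan shadow lemma should show that the selected $\gamma_j$'s parameterize cusp excursions sparsely enough that the series converges strictly at exponent $\delta$. In particular, finiteness of $m^{\mathrm{BMS}}$ forces $2\delta > k_p$ for every parabolic rank $k_p$, providing exactly the gap needed to extract a uniform $\epsilon_0 > 0$; carrying this out explicitly via a careful analysis of the cusp combinatorics will be the technical heart of the argument.
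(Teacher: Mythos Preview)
The paper does not prove this proposition at all: it is quoted verbatim as \cite[Proposition~4.1]{LP22} and used as a black box throughout. There is therefore no ``paper's own proof'' to compare your sketch against; the construction of the coding, including the delicate exponential tail estimate, is carried out entirely in the cited reference of Li--Pan.

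As a sketch of how \cite{LP22} proceeds, your outline captures the right shape---a Bowen--Series-type scheme anchored at the parabolic fixed point, with the tail reduced to a Poincar\'e-type series controlled by $2\delta > k_p$---but the branch selection is considerably more subtle than ``one representative per $\Gamma_\infty$-coset with $\gamma_j\overline{\Delta_0}\subsetneq\Delta_0$.'' In \cite{LP22} the inverse branches are built by an inductive acceleration procedure that groups together long cusp excursions (across all parabolic fixed points, not just $\infty$) precisely so that the tail series converges; a naive first-return coding would in general fail the exponential tail property when there are multiple cusps or when the rank is close to $2\delta$. If you want to reconstruct the argument, that acceleration step and the accompanying combinatorics of multi-cusp excursions are where the real work lies.
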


Denote by $\calH=\{\gamma_j\}_{j\in \mathcal{J}}$ the set of inverse branches of $T$. Let
\begin{equation*}
\calH^n=\{\gamma_{j_1}\cdots \gamma_{j_n}: j_k \in \mathcal{J} \text{ for all } 1\leq k\leq n\}
\end{equation*}
be the set of inverse branches of \emph{length} $n \in \N$. For any $\gamma\in \bigcup_{n \in \N}\calH^n$, set
\[\|d\gamma\|:=\sup\{\|(d\gamma)_x\|:x\in \Delta_0\}.\]
Define
\begin{equation*}
\Lambda_+ = \Lambda_{\Gamma} \cap \bigcap_{n \in \N} T^{-n}(\Delta_0) = \{x\in \Lambda_{\Gamma}: T^n \text{ is defined at } x \text{ for all } n \in \mathbb{N}\}.
\end{equation*}
This is the space that admits a countably infinite coding.

The expanding map $T$ gives a contraction action in a neighborhood of $\infty$.
\begin{lemma}[{\cite[Lemma 4.8]{LP22}}]
\label{lambda-}
There exist $0<\lambda<1$ and a neighborhood $\Lambda_-$ of $\infty$ in $\Lambda_{\Gamma}$ such that $\Lambda_{-}$ is disjoint from $\overline{\Delta_0}$ and for any $\gamma\in \calH$ and any $y,y'\in \Lambda_-$,
\begin{equation*}
\gamma^{-1}(\Lambda_-)\subset \Lambda_-,\,\,\,d_{\mathbb{S}^d}(\gamma^{-1}y, \gamma^{-1}y')\leq \lambda d_{\mathbb{S}^d}(y,y').
\end{equation*}
\end{lemma}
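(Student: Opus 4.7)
The plan is to take $\Lambda_-$ to be a sufficiently small open spherical neighborhood of $\infty$ in $\Lambda_{\Gamma}$ disjoint from $\overline{\Delta_0}$, and verify the invariance and the spherical contraction separately, using the conformal $\Gamma$-action on $\partial \HS$.

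For the invariance $\gamma^{-1}(\Lambda_-)\subset \Lambda_-$, the defining property $\gamma(\overline{\Delta_0})\subset \overline{\Delta_0}$ from \cref{prop:Coding}(2), on taking complements in $\mathbb{S}^d$, immediately yields $\gamma^{-1}(\mathbb{S}^d\setminus\overline{\Delta_0})\subset \mathbb{S}^d\setminus\overline{\Delta_0}$. Intersecting with the $\Gamma$-invariant limit set gives the weaker inclusion $\gamma^{-1}(\Lambda_{\Gamma}\setminus\overline{\Delta_0})\subset \Lambda_{\Gamma}\setminus\overline{\Delta_0}$; this, together with the contraction of the next step and the fact that $\gamma^{-1}(\infty)\in \Lambda_{\Gamma}\setminus\overline{\Delta_0}$, will upgrade to invariance of the smaller $\Lambda_-$ we ultimately choose.

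For the contraction, I would start from the conformal derivative identity $\|(dg)_y\|_{\mathbb{S}^d}=e^{-\beta_y(g^{-1}o,o)}$ recalled in \cref{subsec:hyperbolic spaces}. Applied to $g=\gamma^{-1}$ at $y=\infty$ in the upper half-space model, where $\beta_\infty(z,o)=-\log h(z)$ for $h$ the vertical height, it yields the clean identity $\|(d\gamma^{-1})_\infty\|_{\mathbb{S}^d}=h(\gamma o)$. I would then argue $h(\gamma o)\leq \lambda<1$ uniformly in $\gamma\in\mathcal{H}$ by using that $\gamma$ maps $\overline{\Delta_0}\subset\mathbb{R}^d$ strictly inside itself, so through the $KAK$ decomposition $\gamma=k_1a_s k_2$ the rotation $k_1$ must point the vertical geodesic at $o$ downward toward $\overline{\Delta_0}$, forcing $h(\gamma o)$ strictly below $1$; a uniform bound then comes by combining this with the uniform Euclidean contraction in \cref{prop:Coding}(3). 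To extend contraction from the single point $\infty$ to a small spherical neighborhood $\Lambda_-$, I use the $\Gamma$-equivariance identity $\|(d\gamma^{-1})_y\|_{\mathbb{S}^d}=\|(d\gamma)_{\gamma^{-1}y}\|_{\mathbb{S}^d}^{-1}$ together with quantitative M\"{o}bius conformal distortion estimates and the spherical translation of the derivative bound on $\Delta_0$ obtained via the conformal factor $(1+\|x\|^2)/(1+\|\gamma x\|^2)$ (uniformly bounded thanks to boundedness of $\Delta_0$).

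The hardest step will be establishing the uniformity across the countably infinite family $\mathcal{H}$. I plan to split $\mathcal{H}$ by cusp-excursion depth: for ``deep'' excursions $\gamma=h\gamma_\infty^k$ with $\gamma_\infty\in\Gamma_{\infty}$ parabolic at $\infty$ and $k$ large, $\gamma^{-1}$ is approximately a long parabolic translation, so $\gamma^{-1}(\infty)$ drifts to lie spherically close to $\infty$ and contraction on a fixed neighborhood of $\infty$ is automatic; for ``shallow'' excursions (bounded $k$ drawn from a finite set of coset representatives $h$ modulo $\Gamma_{\infty}$) the required bound follows from a finite case analysis. Combining the two regimes yields a single $\lambda\in(0,1)$ and a common spherical neighborhood $\Lambda_-$ fulfilling both conclusions of the lemma.
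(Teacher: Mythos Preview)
The paper does not prove this lemma; it simply cites \cite[Lemma 4.8]{LP22}, so there is no argument here to compare against directly.

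Your proposal has a genuine gap. You attempt to deduce the spherical contraction of $\gamma^{-1}$ near $\infty$ from the Euclidean contraction of $\gamma$ on $\Delta_0$ (Property~(3) of \cref{prop:Coding}), but this implication fails in general. Take $d=1$, $\Delta_0=(10,11)$, and the orientation-preserving M\"obius map $\gamma(x)=10.5-3/(x+1)=(10.5x+7.5)/(x+1)$. Then $\gamma(\overline{\Delta_0})\subset\Delta_0$ with $\|\gamma'\|\le 3/121$ on $\Delta_0$, yet $h(\gamma o)=(ad-bc)/|ci+d|^2=3/2>1$, so $\|(d\gamma^{-1})_\infty\|_{\mathbb{S}^d}=h(\gamma o)>1$ and $\gamma^{-1}$ \emph{expands} at $\infty$. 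Moreover $\gamma^{-1}(\infty)=-1$, which is spherically far from $\infty$, so the invariance $\gamma^{-1}(\Lambda_-)\subset\Lambda_-$ also fails for any small $\Lambda_-$. Thus neither your $KAK$ argument for $h(\gamma o)\le\lambda$ nor the inverse-derivative identity $\|(d\gamma^{-1})_y\|_{\mathbb{S}^d}=\|(d\gamma)_{\gamma^{-1}y}\|_{\mathbb{S}^d}^{-1}$ (which for $y\in\Lambda_-$ puts $\gamma^{-1}y$ back in $\Lambda_-$, not in $\Delta_0$ where you have bounds) can succeed from \cref{prop:Coding} alone.

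What is actually required is the explicit construction of the inverse branches in \cite{LP22}, which guarantees both that $\gamma^{-1}(\infty)$ stays spherically close to $\infty$ and that $\gamma^{-1}$ contracts there. Your final paragraph, with the decomposition $\gamma=h\gamma_\infty^k$, is pointing in the right direction---but this structural input must be the \emph{core} of the argument for each $\gamma$, not merely a device to upgrade an already-established single-$\gamma$ contraction to uniformity.
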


Using \cref{prop:Coding}, it can be shown as in \cite[Lemma 2]{You98} that there exists a $T$-invariant ergodic probability measure $\nu$ on $\Delta_0$ such that
\begin{equation}
\label{eqn:nu_and_mu}
d\nu = f_0 \, d\mu
\end{equation}
where $f_0: \Delta_0 \to \R$ is a positive Lipschitz density function which is bounded away from $0$ and $+\infty$. Note that as a result, $\Lambda_+ \subset \Delta_0$ is a full measure subset with respect to the equivalent measures $\mu$ and $\nu$.

\begin{definition}[Cylinder]
We call subsets of the form $\gamma\Delta_0 \subset \Delta_0$ for some $\gamma \in \calH^n$ and $n \in \N$ \emph{cylinders} of \emph{length} $n$. We regard $\Delta_0$ as a cylinder of length $0$. These will typically be denoted by $\mathtt{C}$ or other typewriter style letters.
\end{definition}
In \cref{good partitions}, we will use cylinders to construct $\mu$-measurable partitions of $\Delta_0$. 

Following \cite[Section 5]{Sto11}, we introduce another distance function $D$ on $\Delta_0$ by
\begin{align*}
D(x, y) = \min \{\diam(\mathtt{C}): \text{cylinders } \mathtt{C} \subset \Delta_0 \text{ such that } x,y\in\overline{\mathtt{C}}\}
\end{align*}
for $x, y \in \Delta_0$ with $x\neq y$, and $D(x,y)=0$ otherwise. The proof of the following lemma is straightforward.

\begin{lemma}
The following holds:
\begin{enumerate}
\item $D$ is a distance function on $\Delta_0$;
\item for all $x,y\in \Delta_0$, we have $d_{\mathrm{E}}(x,y)\leq D(x,y)$;
\item for all $n\in \mathbb{N}$, $\gamma\in \calH^n$, and $x,y\in \Delta_0$, we have $D(\gamma x,\gamma y)\leq \lambda^n D(x,y)$.
\end{enumerate}
\end{lemma}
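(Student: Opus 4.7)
The plan is to prove the three properties in the order (2), (3), (1), since the separation axiom in (1) will rely on (2). Throughout, I will use the nested-tree structure of cylinders: for any $\gamma_1 \in \calH^{n_1}$ and $\gamma_2 \in \calH^{n_2}$, the cylinders $\gamma_1 \Delta_0$ and $\gamma_2 \Delta_0$ are either nested or have disjoint interiors, since $\calH^{n_1+n_2} = \calH^{n_1} \calH^{n_2}$. Combined with the chain rule applied to \cref{prop:Coding}(3), any $\gamma \in \calH^n$ satisfies $\|d\gamma\| \leq \lambda^n$, and hence $\diam(\gamma \Delta_0) \leq \lambda^n \diam(\Delta_0)$, so cylinder diameters shrink geometrically with length.

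For (2), in the nontrivial case $x \neq y$, any cylinder $\mathtt{C}$ with $x, y \in \overline{\mathtt{C}}$ satisfies $d_{\mathrm{E}}(x, y) \leq \diam(\overline{\mathtt{C}}) = \diam(\mathtt{C})$, and taking the minimum over such $\mathtt{C}$ yields $d_{\mathrm{E}}(x, y) \leq D(x, y)$. For (3), fix $\gamma \in \calH^n$ and assume $x \neq y$. Choose a minimizing cylinder $\mathtt{C} = \gamma' \Delta_0$ for $D(x, y)$ (with $\gamma' = e$ if $\mathtt{C} = \Delta_0$). Then $\gamma \mathtt{C} = (\gamma \gamma') \Delta_0$ is itself a cylinder contained in $\Delta_0$, continuity of $\gamma$ gives $\gamma(\overline{\mathtt{C}}) \subseteq \overline{\gamma \mathtt{C}}$ and thus $\gamma x, \gamma y \in \overline{\gamma \mathtt{C}}$, and the chain-rule contraction bound produces $\diam(\gamma \mathtt{C}) \leq \lambda^n \diam(\mathtt{C})$. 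Therefore $D(\gamma x, \gamma y) \leq \diam(\gamma \mathtt{C}) \leq \lambda^n D(x, y)$.

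For (1), symmetry and non-negativity are immediate from the definition, and the separation axiom follows from (2): if $x \neq y$ then $D(x, y) \geq d_{\mathrm{E}}(x, y) > 0$, while $D(x, x) = 0$ by convention. The main step, which I expect to be the only real obstacle, is the triangle inequality. For distinct $x, y, z$, let $\mathtt{C}_{xy}$ and $\mathtt{C}_{yz}$ be minimizing cylinders for $D(x, y)$ and $D(y, z)$, respectively (which exist because only finitely many depths can accommodate both endpoints of a fixed pair, as their $d_{\mathrm{E}}$-distance is positive while cylinder diameters shrink to zero). Since both contain $y$ in their closures, the nested-tree property forces one to be contained in the other whenever their interiors intersect; say $\mathtt{C}_{yz} \subseteq \mathtt{C}_{xy}$, so that $z \in \overline{\mathtt{C}_{yz}} \subseteq \overline{\mathtt{C}_{xy}}$ and therefore $D(x, z) \leq \diam(\mathtt{C}_{xy}) = D(x, y) \leq D(x, y) + D(y, z)$. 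In fact this argument yields the stronger ultrametric inequality $D(x, z) \leq \max(D(x, y), D(y, z))$. The only subtlety is the boundary subcase in which the two minimizing cylinders have disjoint interiors meeting only at $y$ on their shared boundary; here I will use the freedom in selecting minimizing cylinders (or pass to a smallest common ancestor in the cylinder tree) and appeal to the specific geometry from \cref{prop:Coding} to conclude, without needing any new ideas beyond the nested-tree structure.
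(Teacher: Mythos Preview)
The paper itself omits this proof, declaring it ``straightforward,'' so there is no detailed argument to compare against; your approach is the natural one and is essentially what the authors have in mind.

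Your handling of (2) and (3) is clean and correct. For (1), the ultrametric argument via nested cylinders is also the right idea, and you are right to isolate the boundary subcase as the only point requiring care. That said, your proposed resolution there (``use the freedom in selecting minimizing cylinders, or pass to a smallest common ancestor'') does not obviously close the gap: if $y$ lies on the common boundary of two disjoint length-one cylinders $\mathtt{C}_{xy}$ and $\mathtt{C}_{yz}$, the smallest common ancestor is $\Delta_0$, whose diameter can exceed $\diam(\mathtt{C}_{xy})+\diam(\mathtt{C}_{yz})$ since children of a cylinder can be arbitrarily small relative to the parent. So read literally on all of $\Delta_0$, the triangle inequality is not obviously salvaged by the tree structure alone.

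This does not matter for the paper. The metric $D$ is only ever used on $\Lambda_+=\Lambda_\Gamma\cap\bigcap_{n}T^{-n}(\Delta_0)$, and every point of $\Lambda_+$ lies in the \emph{interior} of a (unique) cylinder at every depth. Hence for $x,y,z\in\Lambda_+$ the minimizing cylinders $\mathtt{C}_{xy}$ and $\mathtt{C}_{yz}$ both contain $y$ in their interior, the nested--or--disjoint dichotomy forces nesting, and your ultrametric argument goes through without the boundary caveat. If you want to make the write-up airtight, either restrict the statement of (1) to $\Lambda_+$, or note explicitly that the triangle inequality is only needed (and only claimed) there.
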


The following lemma records basic estimates for cylinders which is used throughout the paper. The estimate for measures holds more generally for Borel subsets.
\begin{lemma}
\label{lem:CylinderEstimate}
For all cylinders $\mathtt{C} \subset \Delta_0$, Borel subsets $E \subset \Delta_0$, and $\gamma \in \bigcup_{n\in \mathbb{N}}\calH^n$, we have
\begin{align*}
\diam(\gamma\mathtt{C}) &\asymp \|d\gamma\|\diam(\mathtt{C}), & \mu(\gamma E) &\asymp \|d\gamma\|^{\delta} \mu(E), & \nu(\gamma E) &\asymp \|d\gamma\|^{\delta} \nu(E),
\end{align*}
with some implicit constant $C_{\mathrm{cyl}} > 1$.
\end{lemma}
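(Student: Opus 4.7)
The plan is to extract all three statements from a single \emph{bounded distortion} estimate: there exists a constant $C_{\mathrm{dist}} > 1$ such that
\begin{equation*}
C_{\mathrm{dist}}^{-1}\|d\gamma\| \leq \|(d\gamma)_x\| \leq \|d\gamma\|
\qquad \text{for all } x \in \Delta_0,\ n \in \mathbb{N},\ \gamma \in \calH^n.
\end{equation*}
Once this is in hand the three estimates all follow routinely.

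To prove bounded distortion I would write $\gamma = \gamma_{j_1}\cdots\gamma_{j_n}$ and use that elements of $G$ act on $\partial \HS$ as conformal maps, so that the chain rule gives $\|(d\gamma)_x\| = \prod_{k=1}^n \|(d\gamma_{j_k})_{x_k}\|$ with $x_k := \gamma_{j_{k+1}}\cdots\gamma_{j_n}(x) \in \Delta_0$. Taking logs and subtracting the analogous expression for $y \in \Delta_0$,
\begin{equation*}
\bigl|\log\|(d\gamma)_x\| - \log\|(d\gamma)_y\|\bigr| \leq \sum_{k=1}^{n} \bigl|\log\|(d\gamma_{j_k})_{x_k}\| - \log\|(d\gamma_{j_k})_{y_k}\|\bigr|.
\end{equation*}
Item (4) of \cref{prop:Coding} bounds each summand by $C_1 d(x_k, y_k)$, and iterating the contraction in item (3) via the mean value inequality gives $d(x_k, y_k) \lesssim \lambda^{n-k} \diam(\Delta_0)$. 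Summing the resulting geometric series yields a uniform bound, and exponentiating delivers bounded distortion.

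With this, the three estimates drop out directly. For the diameter, the upper bound $\diam(\gamma\mathtt{C}) \leq \|d\gamma\|\diam(\mathtt{C})$ is immediate from the mean value inequality; for the lower bound I pick $x_0, y_0 \in \overline{\mathtt{C}}$ realising $\diam(\mathtt{C})$ and apply the mean value inequality again to get $d(\gamma x_0, \gamma y_0) \gtrsim \inf_{z\in\mathtt{C}}\|(d\gamma)_z\|\cdot \diam(\mathtt{C})$, which by bounded distortion is $\gtrsim \|d\gamma\|\diam(\mathtt{C})$. For the PS measure estimate, I invoke the quasi-invariance formula \eqref{eqn:quasi invariance}:
\begin{equation*}
\mu(\gamma E) = \int_E \|(d\gamma)_x\|_{\mathbb{S}^d}^{\delta}\,d\mu(x).
\end{equation*}
Since both $x$ and $\gamma x$ lie in the bounded set $\Delta_0$, the conformal factor $\frac{1+\|x\|^2}{1+\|\gamma x\|^2}$ relating $\|\cdot\|_{\mathbb{S}^d}$ to $\|\cdot\|$ is uniformly bounded above and below, and bounded distortion then gives $\|(d\gamma)_x\|_{\mathbb{S}^d}^\delta \asymp \|d\gamma\|^\delta$ uniformly on $E$; pulling this factor out of the integral yields the claim. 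Finally, the $\nu$-estimate is automatic from \eqref{eqn:nu_and_mu} and the assumption that $f_0$ is bounded away from $0$ and $+\infty$, giving $\nu \asymp \mu$ on $\Delta_0$. Taking $C_{\mathrm{cyl}}>1$ to be the maximum of the three implicit constants completes the proof.

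The main step requiring care is the bounded distortion argument itself: one has to ensure that the Lipschitz estimate from item (4) is compatible with the metric in which the contraction in item (3) is measured, and that all intermediate points $x_k$ indeed remain in $\Delta_0$ so that item (4) applies. Both are straightforward here because each $\gamma_{j_k} \in \calH$ maps $\Delta_0$ into some $\Delta_{j_k} \subset \Delta_0$, and the exponential decay $\lambda^{n-k}$ makes the telescoping sum converge uniformly in $n$ and in the choice of word $\gamma$.
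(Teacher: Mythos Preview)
Your bounded distortion argument and the two measure estimates are correct and match the paper's approach essentially verbatim. The gap is in the \emph{lower} bound for $\diam(\gamma\mathtt{C})$.

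You write ``apply the mean value inequality again to get $d(\gamma x_0,\gamma y_0)\gtrsim \inf_{z\in\mathtt{C}}\|(d\gamma)_z\|\cdot\diam(\mathtt{C})$''. The mean value inequality only gives \emph{upper} bounds; to obtain this lower bound you must apply it to $\gamma^{-1}$ along the straight segment $[\gamma x_0,\gamma y_0]$, which requires that segment to lie in $\gamma\Delta_0$ so that its preimage stays in $\Delta_0$ where bounded distortion is available. But $\gamma$ is a M\"{o}bius transformation and $\Delta_0$ is not a ball, so $\gamma\Delta_0$ need not be convex and the segment can escape. Outside $\gamma\Delta_0$ you have no control on $\|(d\gamma^{-1})_w\|$.

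The paper repairs this by exploiting conformality more carefully. It fixes once and for all a Euclidean ball $B\subset\Delta_0$, writes $\mathtt{C}=\beta\Delta_0$ with $\beta\in\calH^m$, and observes that $\gamma\beta B\subset\gamma\mathtt{C}$ is again a ball (M\"{o}bius maps send balls to balls), hence convex. Now the mean value inequality applied to $(\gamma\beta)^{-1}$ on this ball is legitimate and gives
\[
\diam(B)\ \ll\ \|d\gamma\|^{-1}\|d\beta\|^{-1}\diam(\gamma\beta B)\ \leq\ \|d\gamma\|^{-1}\|d\beta\|^{-1}\diam(\gamma\mathtt{C}).
\]
Combined with the already-proven upper bound $\diam(\mathtt{C})=\diam(\beta\Delta_0)\leq\|d\beta\|\diam(\Delta_0)$, this yields $\|d\gamma\|\diam(\mathtt{C})\ll\diam(\gamma\mathtt{C})$ with a constant depending only on $\diam(\Delta_0)/\diam(B)$.
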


\begin{proof}
Let $\mathtt{C}$ and $E$ be as in the lemma and $\gamma=\gamma_{j_1}\cdots \gamma_{j_n} \in \calH^n$. We first estimate $\mu(\gamma E)$ and $\nu(\gamma E)$. Using Properties~(3) and (4) in \cref{prop:Coding} and the chain rule, for any $x,y \in \Delta_0$, we have
\begin{align*}
\left|\log \|(d\gamma)_x \|-\log \|(d\gamma)_y \|\right| &\leq \sum_{l = 1}^{n - 1} C_1 d_{\mathrm{E}}(\gamma_{j_{l+1}}\cdots \gamma_{j_n} x,\gamma_{j_{l+1}}\cdots \gamma_{j_n} y) \\
&\leq \frac{C_1}{1-\lambda}\diam(\Delta_0).
\end{align*}
Then for any $x\in\Delta_0$, we have
\begin{equation}\label{equ:d gamma x}
\|(d \gamma)_x\|\asymp\|d\gamma\|. 
\end{equation}
Due to \cref{eqn:quasi invariance,eqn:nu_and_mu}, for any Borel subset $E \subset \Delta_0$, we have
\[\nu(\gamma E)\asymp \mu(\gamma E)\asymp \mu(E)\|d\gamma\|^{\delta}. \]

Next, we estimate $\diam(\gamma \mathtt{C})$. We first show the latter part of the inequality. For any $y\in \gamma\Delta_0$, by writing $y=\gamma x$, we have
\begin{equation}\label{eq:gammainverse}
\|(d\gamma^{-1})_y\|=\|(d\gamma)_x\|^{-1}\asymp \|\gamma\|^{-1}.
\end{equation}
Note that for any points $x,y \in \mathtt{C}$, the straight line $\overline{xy}$ is contained in $\Delta_0$ by convexity. Now, $d_{\mathrm{E}}(\gamma x,\gamma y)$ is bounded by the length of the curve $\gamma \cdot \overline{xy}$. Hence, using the inequality  $\|(d \gamma)_z\|\leq \|d\gamma\|$ for all $z \in \Delta_0$ gives
\[ d_{\mathrm{E}}(\gamma x,\gamma y)\leq \|d\gamma\|d_{\mathrm{E}}(x,y). \]
By taking the supremum of $d(\gamma x,\gamma y)$ over all points $x,y \in \mathtt{C}$, we obtain
\[\diam(\gamma\mathtt{C})\leq \|d\gamma\|\diam(\mathtt{C}). \]

Now, we show the former part of the inequality. Fix any ball $B \subset \Delta_0$. We write $\mathtt{C}=\beta\Delta_0$ for some $\beta\in\calH^m$, where $m$ is the length of the cylinder. The images $\gamma\beta B \subset \beta B \subset \Delta_0$ are still balls due to the conformality of the $G$-action on $\partial \HS$. Thus, using convexity of the balls and \cref{eq:gammainverse} for $\gamma^{-1}$ and $\beta^{-1}$, we can apply a similar argument as in the previous paragraph to obtain
\[\diam(B)\ll \frac{\diam(\gamma\beta B)}{\|d\gamma\|\cdot\|d\beta\|} \leq \frac{\diam(\gamma\mathtt{C})}{\|d\gamma\|\cdot\|d\beta\|}  .  \]
From the previous paragraph, we also have
\[\diam(\mathtt{C})=\diam(\beta\Delta_0)\leq \|d\beta\|\diam(\Delta_0) \ll \|d\beta\|\diam(B). \]
Combining these two inequalities, we obtain
\[\|d\gamma\| \diam(\mathtt{C})\ll\diam(\gamma\mathtt{C}). \]
\end{proof}

Fix a constant
\begin{align}
\label{eqn:ConstantcDelta0}
C_{\Delta_0} \geq C_{\mathrm{cyl}} \cdot \max\{1, \diam(\Delta_0), \mu(\Delta_0)\} > 1.
\end{align}

\subsection{Symbolic model for the geodesic flow}
Recall the subsets $\Lambda_+,\Lambda_- \subset \Lambda_{\Gamma}$ from \cref{subsec:expanding map}.
Define $\hat{T}: \Lambda_+ \times \Lambda_- \to \Lambda_+ \times \Lambda_-$ by
\begin{equation*}
\hat{T}(x,y) = \bigl(\gamma_j^{-1}x,\gamma_j^{-1}y\bigr) \qquad \text{if $x\in \Delta_j$, for all $y \in \Lambda_+$ and $j \in \mathcal{J}$}.
\end{equation*}

Let $\Ret:\Delta_{\sqcup}\to \mathbb{R}$ be the function given in \cref{prop:Coding}. By abusing notation, define $\Ret: \Delta_{\sqcup} \times \Lambda_- \to \R$ by setting $\Ret(x,y)=\Ret(x)$ for all $(x,y) \in \Delta_{\sqcup} \times \Lambda_-$. We define the space
\begin{equation*}
\Lambda^\Ret=\{(x,y,s)\in \Lambda_+\times \Lambda_-\times \R:0\leq s<\Ret(x, y)\}.
\end{equation*}
We use the notation $\Ret_0 = 0$ and
\begin{align}
\label{eqn:BirkhoffSum}
\Ret_n=\sum_{j=0}^{n-1}\Ret\circ \hat{T}^j \qquad \text{for all $n \in \N$}.
\end{align}
The hyperbolic skew product semiflow $\{\hat{T}_t\}_{t\geq 0}$ is defined by
\begin{equation*}
\hat{T}_t(x,y,s)=(\hat{T}^n(x,y),s+t-\Ret_n(x, y))
\end{equation*}
for all $(x,y,s) \in \Lambda_+\times \Lambda_- \times \R$, where $n \in \Z_{\geq 0}$ such that $0\leq s+t-\Ret_n(x,y)<\Ret(\hat{T}^n(x,y))$. 

\subsubsection{\texorpdfstring{The map from $\Lambda^\Ret$ to $\T(X)$}{The map from Λᴿ to T¹(X)}}
We introduce the following embedding:
\begin{align}
\label{eqn:embedding}
\tilde{\Phi}: \Delta_0\times \Lambda_-\times \{0\}&\to \partial^2(\HS)\times \R\\
\nonumber
(x,y,0)&\mapsto (x,y,\log(1+\|x\|^2)).
\end{align}
We can see from the formula of the Hopf parametrization (\cref{eqn:Hopf parametrization}) that $\tilde{\Phi}$ maps $\Delta_0\times \{\infty\}\times \{0\}$ to the unstable horosphere based at $\infty$ which contains $o=e_{d + 1} \in \HS \subset \R^{d + 1}$. 
Abusing notation, we define the following time change map:
\begin{align*}
\tilde{\Phi}: \Lambda^\Ret&\to \partial^2(\HS)\times \R\\
(x,y,s)&\mapsto (x,y,s+\log(1+\|x\|^2)).
\end{align*}
For all $x \in \Lambda_+$ and $y,y'\in \Lambda_-$, the points $\tilde{\Phi}(x,y,0)$ and $\tilde{\Phi}(x,y',0)$ lie on the same stable horosphere based at $x$.

Using the identification $\T(X) \cong \Gamma\backslash (\partial^2(\HS)\times \R)$,  the map $\tilde{\Phi}$ induces a map $\Phi:\Lambda^\Ret\to \T(X)$. We have that $\Phi$ defines a semiconjugacy between two semiflows:
\begin{equation*}
\Phi\circ \hat{T}_t=\mathcal{G}_t\circ \Phi \qquad \text{for all $t \geq 0$}.
\end{equation*}

\subsection{Symbolic model for the frame flow}
\label{subsec:SymbolicModelForFrameFlows}
Fix a reference point $x_0\in \Lambda_+ \subset \Delta_0$. We define a section
\begin{equation*}
\tilde{F}: \Delta_0 \times \Lambda_- \to \F(\HS)
\end{equation*}
which is \emph{smooth} in the first argument in the following fashion: 
\begin{itemize}
\item Fix a frame $\tilde{F}(x_0,\infty) \in \F(\HS)$ based at the tangent vector $\tilde{\Phi}(x_0, \infty, 0)$.
\item Extend the section $\tilde{F}$ such that for any $x,x' \in \Delta_0$, the frames $\tilde{F}(x,\infty)$ and $\tilde{F}(x',\infty)$ are backward asymptotic, i.e.,
\begin{equation*}
\lim_{t\to -\infty} d(\tilde{F}(x,\infty)a_t,\tilde{F}(x',\infty)a_t)=0.
\end{equation*}
Then, we must have $\tilde{F}(x',\infty)=\tilde{F}(x,\infty)n^+$ for some unique $n^+\in N^+$.
\item Extend the section $\tilde{F}$ such that for any $x \in \Delta_0$ and $y,y'\in \Lambda_-$, the frames $\tilde{F}(x,y)$ and $\tilde{F}(x,y')$ are forward asymptotic, i.e.,
\begin{equation*}
\lim_{t\to \infty}d(\tilde{F}(x,y)a_t,\tilde{F}(x,y')a_t)=0.
\end{equation*}
Then, we must have $\tilde{F}(x,y')=\tilde{F}(x,y)n^-$ for some unique $n^-\in N^-$.
\end{itemize}
Then, $\tilde{F}$ induces a map $F: \Delta_0 \times \Lambda_- \to \F(X)$.
\begin{definition}[Holonomy]
\label{holonomy}
The \emph{holonomy} is a map $\Hol: \Delta_{\sqcup} \times \Lambda_- \to M$ such that for all $(x,y) \in \Delta_{\sqcup} \times \Lambda_-$, we have
\begin{equation*}
F(x,y)a_{\Ret(x,y)}=F\bigl(\hat{T}(x,y)\bigr)\Hol(x,y)^{-1}.
\end{equation*}
\end{definition}

\begin{definition}[Generalized holonomy]
We call the combined map
\begin{align*}
\GHol: \Delta_{\sqcup} \times \Lambda_- &\to AM \\
(x, y) &\mapsto a_{\Ret(x, y)} \Hol(x, y)
\end{align*}
the \emph{generalized holonomy}.
\end{definition}

The following lemma can be deduced using the construction of $F$ (see \cite[Lemma 4.2]{SW21} for its proof).

\begin{lemma}
\label{lem:stable direction}
For all $x \in \Delta_{\sqcup}$, the maps $\Ret|_{\{x\} \times \Lambda_-}$, $\Hol|_{\{x\} \times \Lambda_-}$, and $\GHol|_{\{x\} \times \Lambda_-}$ are constant.
\end{lemma}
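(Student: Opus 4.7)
The plan is to dispose of the three claims in order of increasing substance. The statements for $\Ret$ and $\GHol$ both reduce to the statement for $\Hol$, which contains all of the content.

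For $\Ret$, the extension from $\Delta_{\sqcup}$ to $\Delta_{\sqcup} \times \Lambda_-$ was defined by the convention $\Ret(x, y) := \Ret(x)$ introduced just before the space $\Lambda^\Ret$, so constancy in $y$ is immediate from definitions. Granted the claim for $\Hol$, constancy of $\GHol(x, y) = a_{\Ret(x, y)} \Hol(x, y)$ in $y$ follows at once as a product of two factors constant in $y$.

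It therefore remains to show that for fixed $x \in \Delta_j$, the holonomy $\Hol(x, y)$ is independent of $y \in \Lambda_-$. The plan is to combine the two asymptotic properties written into the construction of $\tilde{F}$ with two structural facts about $G$: namely, that $A$ normalizes $N^-$, and that $M$ normalizes $N^-$ with $M \cap N^- = \{e\}$. Fixing $y, y' \in \Lambda_-$, the third bullet in the construction of $\tilde{F}$ yields some $n \in N^-$ with $\tilde{F}(x, y') = \tilde{F}(x, y) n$. Since $\hat{T}(x, y) = (\gamma_j^{-1} x, \gamma_j^{-1} y)$ and $\hat{T}(x, y') = (\gamma_j^{-1} x, \gamma_j^{-1} y')$ share the first coordinate, the same bullet applied at these two points produces an element $n' \in N^-$ with $F(\hat{T}(x, y')) = F(\hat{T}(x, y)) n'$. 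Writing Definition \ref{holonomy} at both $(x, y)$ and $(x, y')$, substituting these two relations, and using $a_{-\Ret(x)} n a_{\Ret(x)} \in N^-$ to commute the $N^-$ factor past $a_{\Ret(x)}$, the factor $F(\hat{T}(x, y))$ cancels and the computation collapses to a single identity in $G$ of the form
\begin{equation*}
\Hol(x, y)^{-1} \tilde{n} = n' \Hol(x, y')^{-1}, \qquad \tilde{n}, n' \in N^-.
\end{equation*}

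The last step, where the only genuine point lies, is to deduce $\Hol(x, y) = \Hol(x, y')$ from this identity. Using that $M$ normalizes $N^-$, I would rewrite the left-hand side as $n_0 \Hol(x, y)^{-1}$ for some $n_0 \in N^-$, then multiply on the right by $\Hol(x, y)$ to obtain $(n')^{-1} n_0 = \Hol(x, y')^{-1} \Hol(x, y)$. The left-hand side lies in $N^-$ while the right-hand side lies in $M$, so the triviality of $M \cap N^-$ forces both to equal $e$, giving the desired equality. I do not expect serious obstacles; the only care needed is in tracking the order of multiplication when sliding $N^-$ elements past $A$ and $M$ factors.
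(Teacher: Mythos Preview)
Your approach is the natural one and matches what the paper has in mind; the paper itself gives no proof here and simply defers to \cite[Lemma 4.2]{SW21}, where the argument is along exactly these lines.

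One point deserves care. The defining relation in Definition~\ref{holonomy} lives in $\Gamma\backslash G$, so when you say ``the factor $F(\hat{T}(x,y))$ cancels and the computation collapses to a single identity in $G$'', that step is not automatic: from $\Gamma g h_1=\Gamma g h_2$ one only gets $g h_1 h_2^{-1} g^{-1}\in\Gamma$, and since $h_1 h_2^{-1}\in N^-M$ (not compact) the torsion-free discreteness of $\Gamma$ does not by itself force $h_1=h_2$. The clean fix is to observe that the holonomy relation lifts to $G$ as
\[
\tilde{F}(x,y)\,a_{\Ret(x)}=\gamma_j\,\tilde{F}(\hat{T}(x,y))\,\Hol(x,y)^{-1},
\]
with the \emph{same} $\gamma_j$ for every $y\in\Lambda_-$ (this is exactly the element implementing $T|_{\Delta_j}=\gamma_j^{-1}$, and one checks it realizes the semiconjugacy $\Phi\circ\hat{T}_t=\mathcal{G}_t\circ\Phi$ at the level of tangent vectors). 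Once you work with this lifted equation, the factor $\gamma_j\tilde{F}(\hat{T}(x,y))$ genuinely cancels in $G$, and your final $M\cap N^-=\{e\}$ step goes through verbatim.
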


We define the space
\begin{equation*}
\Lambda^\Ret\times M=\{(x,y,m,s)\in \Lambda_+\times \Lambda_-\times M\times \R:0\leq s<\Ret(x,y)\}.
\end{equation*}
We use the notations $\Hol_0 = \GHol_0 = e$ and
\begin{align*}
\begin{aligned}
\Hol_n &= \Hol \cdot \bigl(\Hol\circ \hat{T}\bigr) \cdots \bigl(\Hol\circ\hat{T}^{n-1}\bigr), \\
\GHol_n &= \GHol \cdot \bigl(\GHol\circ \hat{T}\bigr) \cdots \bigl(\GHol\circ\hat{T}^{n-1}\bigr),
\end{aligned}
\qquad \text{for all $n \in \N$}.
\end{align*}
The symbolic frame semiflow $\{\hat{T}_t\}_{t\geq 0}$ is defined by
\begin{equation*}
\hat{T}_t(x,y,m,s)=(\hat{T}^n(x,y), \Hol_n(x,y)^{-1}m, s+t-\Ret_n(x,y))
\end{equation*}
for all $(x,y,m,s) \in \Lambda_+\times \Lambda_- \times M \times \R$, where $n \in \Z_{\geq 0}$ such that $0\leq s+t-\Ret_n(x,y)<\Ret(\hat{T}^n(x,y))$.

\subsubsection{\texorpdfstring{The map from $\Lambda^\Ret\times M$ to $\F(X)$}{The map from Λᴿ✕M to F(X)}}
Abusing notation, we define the map
\begin{align*}
\tilde{\Phi}:\Lambda^\Ret\times M&\to \F(\HS)\\
(x,y,m,s)&\mapsto \tilde{F}(x,y)a_sm.
\end{align*}
This map is well-defined, and it induces a map $\Phi:\Lambda^\Ret\times M\to \F(X)$. In fact, we have the relation
\begin{equation*}
\Phi(x,y,m,a_s)=F(x,y)a_sm.
\end{equation*}
We claim that $\Phi$ defines a semiconjugacy between two semiflows:
\begin{equation}
\label{flow conj}
\Phi\circ \hat{T}_t = \mathcal{F}_t\circ \Phi \qquad \text{for all } t\geq 0.
\end{equation}
To see this, note that for any $(x,y,m,s)\in \Lambda^\Ret\times M$, we have the expression
\begin{align*}
\hat{T}_t(x,y,m,s) = \bigl(\hat{T}^n(x,y),\Hol_n(x,y)^{-1}m,s+t-\Ret_n(x,y)\bigr).
\end{align*}
Then on the one hand, we have
\begin{align*}
\Phi\circ \hat{T}_t &= \Phi\bigl(\hat{T}^n(x,y),\Hol_n(x,y)^{-1}m,s+t-\Ret_n(x,y)\bigr)\\
&= F\bigl(\hat{T}^n(x,y)\bigr)\Hol_n(x,y)^{-1}ma_{s+t-\Ret_n(x,y)}.
\end{align*}
On the other hand, we have
\begin{align*}
\mathcal{F}_t\circ \Phi &= \mathcal{F}_t\circ \Phi(x,y,m,s)\\
&= F(x,y)ma_{s+t}\\
&= F\bigl(\hat{T}^n(x,y)\bigr)\Hol_n(x,y)^{-1}ma_{s+t-\Ret_n(x,y)} \qquad \text{(by \cref{holonomy})}.
\end{align*}
This finishes the justification of \cref{flow conj}.

\subsubsection{\texorpdfstring{Relating $\hat{\nu}^\Ret\otimes m^{\mathrm{Haar}}$ with $m^{\operatorname{BMS}}$}{Relating ν̂ᴿ⊗mᴴᵃᵃʳ with mᴮᴹˢ}}
It is proved in \cite[Proposition 4.11]{LP22} that there exists a unique $\hat{T}$-invariant ergodic probability measure $\hat{\nu}$ on $\Lambda_+\times\Lambda_-$ which projects to the measure $\nu$ on $\Lambda_+$. We equip $\Lambda^{\Ret}$ with the $\{\hat{T}_t\}_{t \geq 0}$-invariant measure $d\hat{\nu}^{\Ret}:=d\hat{\nu}\, d\Leb/\bar{\Ret}$, where $\Leb$ is the Lebesgue measure and $\bar{\Ret} = \hat{\nu}(\Ret)$. Fix the Haar probability measure $m^{\mathrm{Haar}}$ on $M$, which is implicitly used within integrals. On $\Lambda^\Ret\times M$, we consider the product measure $\hat{\nu}^\Ret\otimes m^{\mathrm{Haar}}$: for any bounded continuous function $f: \Lambda^\Ret\times M \to \R$,
\begin{equation*}
\bigl(\hat{\nu}^\Ret\otimes m^{\mathrm{Haar}}\bigr)(f)=\int_{\Lambda^\Ret}\int_{M}f(x,y,m,s) \, dm \, d\hat{\nu}^\Ret(x,y,s).
\end{equation*}

Recall the following result proved in \cite{LP22}.

\begin{proposition}[{\cite[Proposition 4.15]{LP22}}]
\label{prop:FactorMap}
The map
\begin{equation*}
\Phi:\bigl(\Lambda^\Ret,\{\hat{T}_t\}_{t \geq 0}, \hat{\nu}^\Ret\bigr)\to \linebreak \bigl(\T(X),\{\mathcal{G}_t\}_{t \in \R},\m\bigr)
\end{equation*}
is a factor map, i.e.,
\begin{equation*}
\Phi_{*}\hat{\nu}^\Ret=\m \qquad \text{and} \qquad \Phi\circ \hat{T}_t=\mathcal{G}_t\circ \Phi \qquad \text{for all $t\geq0$}.
\end{equation*}
\end{proposition}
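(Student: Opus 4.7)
The semiconjugacy $\Phi \circ \hat{T}_t = \mathcal{G}_t \circ \Phi$ was already derived directly above the proposition from the Hopf parametrization (both sides act by translation in the $\mathbb{R}$ coordinate after quotienting by $\Gamma$), so the real content of the statement is the measure-theoretic identity $\Phi_* \hat{\nu}^\Ret = m^{\mathrm{BMS}}$.

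The cleanest plan is to invoke uniqueness of $m^{\mathrm{BMS}}$ as the unique measure of maximal entropy for the geodesic flow on the non-wandering set of $\T(X)$. First I would verify that $\Phi_* \hat{\nu}^\Ret$ is a $\mathcal{G}_t$-invariant Borel probability measure on $\T(X)$: invariance is immediate from the semiconjugacy together with $\hat{T}_t$-invariance of $\hat{\nu}^\Ret$, while the total mass equals $1$ provided $\Phi$ is injective on a full $\hat{\nu}^\Ret$-measure subset of $\Lambda^\Ret$; the full-measure disjointness $\mu(\Delta_0) = \sum_{j \in \mathcal{J}} \mu(\Delta_j)$ from \cref{prop:Coding}(1) ensures that the only possible overcounting under $\Phi$ comes from cylinder boundaries, which are $\hat{\nu}^\Ret$-null. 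Next I would compute the Kolmogorov--Sinai entropy. Abramov's formula gives $h_{\hat{\nu}^\Ret}(\hat{T}_1) = h_{\hat{\nu}}(\hat{T})/\bar{\Ret}$, and since $\hat{T}$ is the natural extension of $T$ with respect to $\nu$, we have $h_{\hat{\nu}}(\hat{T}) = h_\nu(T)$. The key identity is that $\nu$ is the equilibrium state for the potential $-\delta \Ret = -\delta \log\|dT\|$ with zero Gurevich pressure, which is essentially a reformulation of $\delta$ being the critical exponent via Patterson--Sullivan conformality (\cref{PS measure}); hence $h_\nu(T) = \delta \int \Ret \, d\nu = \delta \bar{\Ret}$, giving $h_{\hat{\nu}^\Ret}(\hat{T}_1) = \delta$. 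Since $m^{\mathrm{BMS}}$ is the unique measure of maximal entropy with entropy $\delta$ (via Otal--Peign\'e for geometrically finite manifolds), we conclude $\Phi_* \hat{\nu}^\Ret = m^{\mathrm{BMS}}$.

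The hardest step will be working the thermodynamic formalism on a countably infinite alphabet: the standard Ruelle--Perron--Frobenius theory does not apply off the shelf, and one must exploit the exponential tail property $e^{\epsilon_0 \Ret} \in L^1(\Delta_{\sqcup}, \mu)$ from \cref{prop:Coding}(5) to invoke Sarig-style thermodynamic formalism for countable Markov shifts and obtain the requisite equilibrium-state uniqueness for $-\delta \Ret$. An alternative, more hands-on route would be to compute $\Phi_* \hat{\nu}^\Ret$ by disintegration: write $\hat{\nu}$ as a product of $\nu$ in the $x$-coordinate with an $\hat{T}$-invariant conditional family in the $y$-coordinate, identify these conditionals with stable PS conditionals using conformality and $\hat{T}$-invariance, and verify that under $\tilde{\Phi}$ the suspension recovers the explicit density $e^{\delta \beta_x(o, v_*)} e^{\delta \beta_y(o, v_*)} \, d\mu(x) \, d\mu(y) \, ds$ of $\tilde{m}^{\mathrm{BMS}}$; this has the additional benefit of pinning down the Lipschitz density $f_0$ in $d\nu = f_0 \, d\mu$ explicitly as an integral of Busemann weights along stable leaves.
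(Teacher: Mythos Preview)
The present paper does not prove this proposition at all: it is quoted verbatim from \cite{LP22} (see the sentence ``Recall the following result proved in \cite{LP22}'' immediately preceding the statement), and no argument is supplied here. So there is no proof in this paper to compare your proposal against.

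That said, your sketch is reasonable in outline but relies on heavy external machinery whose applicability is not automatic. The entropy-maximality route needs three nontrivial ingredients you only gesture at: (i) essential injectivity of $\Phi$ on a full $\hat{\nu}^\Ret$-measure set (cylinder-boundary issues are not the only obstruction---one must also rule out that distinct $(x,y)$-pairs in $\Lambda_+\times\Lambda_-$ map to the same $\Gamma$-orbit in $\partial^2(\HS)\times\R$ on a positive-measure set, which involves the structure of $\Gamma_\infty$ and the choice of fundamental domain); (ii) the variational/equilibrium identification $h_\nu(T)=\delta\bar\Ret$, which in the countable-alphabet setting requires verifying Sarig's BIP or summability hypotheses and is not a one-line consequence of conformality; and (iii) invoking Otal--Peign\'e, which is correct but is itself a deep result. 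Your ``alternative, more hands-on route'' via disintegration of $\hat\nu$ along stable leaves and direct comparison with the Hopf-parametrized BMS density is almost certainly closer to what \cite{LP22} actually does, and is the more self-contained argument: it avoids entropy theory entirely and uses only the conformality of PS measures and the explicit form of $\tilde\Phi$.
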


\begin{corollary}\label{cor:factormap}
The map
\begin{equation*}
\Phi:\bigl(\Lambda^\Ret\times M,\{\hat{T}_t\}_{t \geq 0}, \hat{\nu}^\Ret\otimes m^{\mathrm{Haar}}\bigr)\to \bigl(\F(X),\{\mathcal{F}_t\}_{t \in \R},\m\bigr)
\end{equation*}
is a factor map, i.e.,
\begin{equation*}
\Phi_{*}\bigl(\hat{\nu}^\Ret\otimes m^{\mathrm{Haar}}\bigr)=\m \qquad \text{and} \qquad \Phi\circ \hat{T}_t=\mathcal{F}_t\circ \Phi \qquad \text{for all $t\geq0$}.
\end{equation*}
\end{corollary}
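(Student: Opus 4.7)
The plan is to leverage the already-established semiconjugacy of equation~(\ref{flow conj}) together with Proposition~\ref{prop:FactorMap}, reducing the statement to a measure-theoretic bookkeeping argument. Since $\Phi \circ \hat{T}_t = \mathcal{F}_t \circ \Phi$ for all $t \geq 0$ is verified in the display just before the corollary, the only substantive task is to show that
\begin{equation*}
\Phi_*\bigl(\hat{\nu}^\Ret\otimes m^{\mathrm{Haar}}\bigr) = m^{\mathrm{BMS}} \quad \text{on } \F(X).
\end{equation*}

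The approach is to exploit the defining property of $m^{\mathrm{BMS}}$ on $\F(X) \cong \Gamma\backslash G$ recorded in \cref{subsec:PS measure and BMS measure}: it is the unique right $M$-invariant lift of the BMS measure on $\T(X) \cong \Gamma\backslash G/M$ obtained from the Haar probability measure on $M$. Hence it is enough to verify two properties of $\Phi_*(\hat{\nu}^\Ret\otimes m^{\mathrm{Haar}})$: (a) it is right $M$-invariant, and (b) its image under the natural projection $p: \F(X) \to \T(X)$ coincides with $m^{\mathrm{BMS}}$ on $\T(X)$.

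For (a), I would note that since $\Phi(x,y,m,s) = F(x,y)a_sm$ and $a_s$ commutes with $M$, right multiplication by any $m' \in M$ on the target pulls back through $\Phi$ to the map $(x,y,m,s) \mapsto (x,y,mm',s)$ on the source. Right-invariance of $m^{\mathrm{Haar}}$ then yields invariance of $\hat{\nu}^\Ret\otimes m^{\mathrm{Haar}}$, and hence of its pushforward, under this action. For (b), observe that $p\circ \Phi(x,y,m,s) = F(x,y)a_s M$, which under the identifications is exactly the image of $(x,y,s)$ under the geodesic-flow factor map of \cref{prop:FactorMap}. Thus, letting $q:\Lambda^\Ret\times M \to \Lambda^\Ret$ denote the projection forgetting the $M$-coordinate and temporarily writing $\Phi_{\mathrm{geod}}$ for the factor map of \cref{prop:FactorMap}, one has $p\circ \Phi = \Phi_{\mathrm{geod}}\circ q$. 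Since $m^{\mathrm{Haar}}$ is a probability measure, $q_*(\hat{\nu}^\Ret\otimes m^{\mathrm{Haar}}) = \hat{\nu}^\Ret$, and \cref{prop:FactorMap} gives $(\Phi_{\mathrm{geod}})_*\hat{\nu}^\Ret = m^{\mathrm{BMS}}$ on $\T(X)$, so (b) follows.

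There is no serious obstacle; the content of the corollary is essentially the observation that the Haar factor on $M$ in the source matches the $M$-invariant lift in the definition of $m^{\mathrm{BMS}}$ on $\F(X)$. The only care required is in tracking the two roles played by the symbol $\Phi$ (once for the geodesic semiflow on $\Lambda^\Ret$ and once for the frame semiflow on $\Lambda^\Ret\times M$) and the correspondingly two incarnations of $m^{\mathrm{BMS}}$.
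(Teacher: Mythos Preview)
Your proposal is correct and follows essentially the same approach as the paper: both reduce the measure identity to Proposition~\ref{prop:FactorMap} via the Haar-averaging relation between $m^{\mathrm{BMS}}$ on $\F(X)$ and on $\T(X)$. The paper carries this out as a direct test-function computation (average $f$ over $M$ to get $\bar f$ on $\T(X)$, then apply Proposition~\ref{prop:FactorMap}), whereas you package the same content as ``right $M$-invariance plus correct projection implies equality by uniqueness of the lift''; these are two phrasings of the same argument.
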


\begin{proof}
Given a bounded function $f\in C(\F(X), \R)$, we define the function $\bar{f}:\T(X) \to \R$ by 
\begin{equation*}
\bar{f}(\Gamma gM)=\int_{M}f(\Gamma gm) \, dm.
\end{equation*}
We have $\bar{f}\in C(\T(X), \R)$ which is also bounded. Note that for any $(x,y,s)\in \Lambda^\Ret$, we have
\begin{equation*}
(\bar{f}\circ \Phi)(x,y,s)=\int_{M}f(F(x,y)a_sm) \, dm.
\end{equation*}
Hence, using \cref{prop:FactorMap}, we have
\begin{align*}
\m(f) &= \int_{\T(X)}\int_{M}f(\Gamma gm) \, dm \, d\m(\Gamma g M)=\int_{\T(X)} \bar{f} \, d\m\\
&= \int_{\Lambda^\Ret}(\bar{f}\circ \Phi)(x,y,s) \, d\hat{\nu}^\Ret(x,y,s)\\
&= \int_{\Lambda^\Ret}\int_{M}f(F(x,y)a_sm) \, dm \, d\hat{\nu}^\Ret(x,y,s)=\bigl(\hat{\nu}^\Ret\otimes m^{\mathrm{Haar}}\bigr)(f).
\end{align*}
\end{proof}

\section{Transfer operators with holonomy}
\label{sec:TransferOperators}
In this section, we introduce the transfer operator with holonomy associated to the countably infinite coding. The main technical objective in this paper is to obtain spectral bounds for these operators in \cref{sec:Dolgopyat'sMethod}.

Recall that we have fixed the Haar probability measure on $M$. Also recall that $L^2(M, \C)$ is a Hilbert space equipped with the standard inner product $\langle \cdot, \cdot \rangle$ defined by $\langle f, g\rangle = \int_M fg$. As usual, we denote by $\|\cdot\|_2$ the corresponding $L^2$ norm on $L^2(M, \C)$ and any of its subspaces.

In the proof of exponential mixing, we need to deal with the function space
\begin{align*}
C(\Lambda^\Ret\times M, \C) \cong C(\Lambda^\Ret, C(M, \C)) \subset C(\Lambda^\Ret, L^2(M, \C)).
\end{align*}
Let $\widehat{M}$ be the unitary dual of $M$. Denote the trivial irreducible representation by $1 \in \widehat{M}$. Define $\widehat{M}_0 = \widehat{M} - \{1\}$. Due to the above, it is natural to use the Peter--Weyl theorem and obtain the Hilbert space decomposition
\begin{align*}
L^2(M, \C) = \widehat{\bigoplus}_{\rho \in \widehat{M}} V_\rho^{\oplus \dim(\rho)}
\end{align*}
corresponding to the decomposition $\varrho = \widehat{\bigoplus}_{\rho \in \widehat{M}} \rho^{\oplus \dim(\rho)}$ of the left regular representation $\varrho: M \to \U(L^2(M, \C))$.

For all $b \in \mathbb R$ and $\rho \in \widehat{M}$, we define the tensored unitary representation $\rho_b: AM \to \U(V_\rho)$ by
\begin{align*}
\rho_b(a_tm)(z) = e^{ibt}\rho(m)(z) \qquad \text{for all $z \in V_\rho$, $t \in \mathbb R$, and $m \in M$}.
\end{align*}

We introduce some notations related to Lie algebras. We denote Lie algebras corresponding to Lie groups by the corresponding Fraktur letters: $\mathfrak{g} = \operatorname{T}_e(G)$, $\mathfrak{a} = \operatorname{T}_e(A)$, $\mathfrak{m} = \operatorname{T}_e(M)$, and $\mathfrak{n}^\pm = \operatorname{T}_e(N^\pm)$. For any unitary representation $\rho: M \to \U(V)$ for some Hilbert space $V$, we denote the differential at $e \in M$ by $d\rho = (d\rho)_e: \mathfrak{m} \to \mathfrak{u}(V)$, and define the norm
\begin{align*}
\|\rho\| = \sup_{z \in \mathfrak{m}, \|z\| = 1} \|d\rho(z)\|_{\mathrm{op}}
\end{align*}
and similarly for any unitary representation $\rho: AM \to \U(V)$.

The following are useful facts regarding the Lie theoretic norms (see \cite{SW21} for their proofs).

\begin{lemma}[{\cite[Lemma 4.3]{SW21}}]
\label{lem:LieTheoreticNormBounds}
For all $b \in \mathbb R$ and $\rho \in \widehat{M}$, we have
\begin{align*}
\sup_{a \in A, m \in M} \sup_{\substack{z \in \operatorname{T}_{am}(AM),\\ \|z\| = 1}} \|(d\rho_b)_{am}(z)\|_{\mathrm{op}} = \|\rho_b\|
\end{align*}
and $\max\{|b|, \|\rho\|\} \leq \|\rho_b\| \leq |b| + \|\rho\|$.
\end{lemma}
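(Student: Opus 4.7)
The first identity is essentially a consequence of the fact that $\rho_b$ is a unitary representation of $AM$ together with left invariance of the Riemannian metric on $G$. For any $g = am \in AM$ and $z \in \operatorname{T}_{am}(AM)$, I would differentiate the cocycle identity $\rho_b(gh) = \rho_b(g)\rho_b(h)$ with respect to $h$ at $h = e$ to obtain
\begin{equation*}
(d\rho_b)_{am}(z) = \rho_b(am) \circ (d\rho_b)_e\bigl(L_{(am)^{-1}\ast} z\bigr),
\end{equation*}
where $L_g$ denotes left translation by $g \in G$. Since the metric on $G$ is left $G$-invariant (hence in particular left $AM$-invariant), $L_{(am)^{-1}\ast} \colon \operatorname{T}_{am}(AM) \to \operatorname{T}_e(AM) = \mathfrak{am}$ is an isometry of inner product spaces. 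Since $\rho_b(am) \in \U(V_\rho)$ is unitary on $V_\rho$, it preserves the operator norm on $\operatorname{End}(V_\rho)$. Combining these two observations, the supremum of $\|(d\rho_b)_{am}(z)\|_{\mathrm{op}}$ over unit $z \in \operatorname{T}_{am}(AM)$ equals the supremum over unit $w \in \mathfrak{am}$ of $\|(d\rho_b)_e(w)\|_{\mathrm{op}}$, which is $\|\rho_b\|$ by definition; this is independent of $am$, so the outer supremum is harmless.

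For the two-sided bound on $\|\rho_b\|$, I would use the orthogonal decomposition $\mathfrak{am} = \mathfrak{a} \oplus \mathfrak{m}$. Write a unit vector $z \in \mathfrak{am}$ as $z = z_a + z_m$ with $\|z_a\|^2 + \|z_m\|^2 = 1$. From the definition $\rho_b(a_t m)(v) = e^{ibt}\rho(m)(v)$, the differential at $e$ decomposes as
\begin{equation*}
(d\rho_b)_e(z) = (d\rho_b)_e(z_a) + (d\rho_b)_e(z_m),
\end{equation*}
where on $\mathfrak{a}$ the differential is pure imaginary scalar multiplication by $ib$ (with respect to the arclength-normalized generator of $A$), and on $\mathfrak{m}$ it agrees with $d\rho$. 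Hence $\|(d\rho_b)_e(z_a)\|_{\mathrm{op}} = |b|\cdot\|z_a\|$ and $\|(d\rho_b)_e(z_m)\|_{\mathrm{op}} \leq \|\rho\|\cdot\|z_m\|$. The triangle inequality now gives $\|(d\rho_b)_e(z)\|_{\mathrm{op}} \leq |b|\|z_a\| + \|\rho\|\|z_m\| \leq |b| + \|\rho\|$, proving the upper bound. The lower bound follows by testing on the two axes: setting $z_m = 0$ with $z_a$ a unit vector yields $\|\rho_b\| \geq |b|$, while setting $z_a = 0$ and choosing $z_m \in \mathfrak{m}$ to nearly realize $\|\rho\|$ yields $\|\rho_b\| \geq \|\rho\|$.

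Neither step presents a genuine obstacle; this is essentially bookkeeping with definitions. The only subtle point is making sure one uses the correct normalization: the generator of $\mathfrak{a}$ corresponding to the geodesic flow parametrization is unit-norm for the chosen Riemannian metric, so that the eigenvalue $ib$ of $d\rho_b|_{\mathfrak{a}}$ indeed produces the term $|b|$ rather than an extra scaling factor. Because $A$ and $M$ commute and their Lie algebras sit orthogonally inside $\mathfrak{g}$, no cross term appears in $(d\rho_b)_e$, which is what makes the argument this clean.
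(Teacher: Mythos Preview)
Your argument is correct. The paper does not give its own proof of this lemma; it simply cites \cite[Lemma 4.3]{SW21} and refers the reader there, so there is nothing to compare against beyond noting that your sketch is the standard one and matches what is done in that reference.
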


\begin{lemma}[{\cite[Lemmas 4.4]{SW21}}]
\label{lem:maActionLowerBound}
There exists $\varepsilon_1 \in (0, 1)$ such that for all $b \in \mathbb R$, $\rho \in \widehat{M}$, and $\omega \in V_\rho^{\oplus \dim(\rho)}$ with $\|\omega\|_2 = 1$, there exists $z \in \mathfrak{a} \oplus \mathfrak{m}$ with $\|z\| = 1$ such that $\|d\rho_b(z)(\omega)\|_2 \geq \varepsilon_1 \|\rho_b\|$.
\end{lemma}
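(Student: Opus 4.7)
The strategy is to split into two cases based on whether $|b|$ or $\|\rho\|$ dominates $\|\rho_b\|$. By \cref{lem:LieTheoreticNormBounds}, $\max(|b|, \|\rho\|) \leq \|\rho_b\| \leq |b| + \|\rho\|$, so at least one of these two quantities exceeds $\|\rho_b\|/2$.

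When $|b| \geq \|\rho_b\|/2$, I would take $z$ to be the unit generator $a_0 \in \mathfrak{a}$ (so $\exp(t a_0) = a_t$). Because $\rho_b(a_t m) = e^{ibt}\rho(m)$, one has $d\rho_b(a_0) = ib \cdot \Id$, and hence $\|d\rho_b(a_0)(\omega)\|_2 = |b| \cdot \|\omega\|_2 = |b| \geq \|\rho_b\|/2$, which handles this case with any $\varepsilon_1 \leq 1/2$.

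When $\|\rho\| \geq \|\rho_b\|/2$, the plan is to exploit the Casimir operator $C_{\mathfrak{m}} = -\sum_i X_i^2$ for an orthonormal basis $\{X_i\}_{i=1}^{\dim(\mathfrak{m})}$ of $\mathfrak{m}$. Since the inner product on $\mathfrak{m}$ is $\Ad(M)$-invariant (the Riemannian metric on $G$ being right $K$-invariant and $M \subset K$), the symmetric tensor $\sum_i X_i \otimes X_i$ is $\Ad(M)$-invariant, so $d\rho(C_{\mathfrak{m}})$ commutes with $\rho(M)$. Schur's lemma applied to the irreducible $\rho$ then yields $d\rho(C_{\mathfrak{m}}) = \lambda_\rho \Id_{V_\rho}$ for some $\lambda_\rho \geq 0$; using skew-adjointness of $d\rho(X_i)$, this rewrites as the identity $\sum_i \|d\rho(X_i) v\|^2 = \lambda_\rho$ for every unit $v \in V_\rho$.

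The key estimate is the lower bound $\lambda_\rho \geq \|\rho\|^2$. I would prove it by choosing unit vectors $z_0 = \sum_i c_i X_i \in \mathfrak{m}$ and $v_0 \in V_\rho$ with $\|d\rho(z_0) v_0\| = \|\rho\|$ (possible by compactness), and then applying Cauchy--Schwarz: $\|\rho\|^2 = \bigl\|\sum_i c_i\, d\rho(X_i) v_0\bigr\|^2 \leq \bigl(\sum_i c_i^2\bigr) \bigl(\sum_i \|d\rho(X_i) v_0\|^2\bigr) = \lambda_\rho$. For a general unit $\omega = (\omega_k)_{k=1}^{\dim(\rho)} \in V_\rho^{\oplus \dim(\rho)}$, summing Schur's identity over the components $\omega_k$ yields $\sum_i \|d\rho_b(X_i)(\omega)\|_2^2 = \lambda_\rho \geq \|\rho\|^2$, so pigeonhole produces an index $i$ with $\|d\rho_b(X_i)(\omega)\|_2 \geq \|\rho\|/\sqrt{\dim(\mathfrak{m})} \geq \|\rho_b\|/(2\sqrt{\dim(\mathfrak{m})})$. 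Taking $\varepsilon_1 := \min\{1/2,\, 1/(2\sqrt{\dim(\mathfrak{m})})\}$ (a constant depending only on the fixed group $G$) finishes the proof. The main obstacle is the Cauchy--Schwarz lower bound on $\lambda_\rho$; the only other subtlety is verifying the $\Ad(M)$-invariance that licenses Schur's lemma, which relies on the specific choice of bi-$K$-invariant metric on $G$.
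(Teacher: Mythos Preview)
Your proof is correct. The paper does not give its own proof of this lemma; it simply cites \cite[Lemma 4.4]{SW21} and refers the reader there. Your argument --- splitting according to whether $|b|$ or $\|\rho\|$ is at least $\|\rho_b\|/2$, handling the $\mathfrak{a}$-direction case by the explicit formula $d\rho_b(a_0) = ib\cdot\Id$, and handling the $\mathfrak{m}$-direction case via the Casimir element together with the Cauchy--Schwarz lower bound $\lambda_\rho \geq \|\rho\|^2$ --- is the standard route and is essentially what the cited proof does. One minor remark: in the Casimir step you should say explicitly that for non-unit $v$ one has $\sum_i \|d\rho(X_i)v\|^2 = \lambda_\rho \|v\|^2$ by homogeneity, so that summing over the components $\omega_k$ of $\omega$ yields $\sum_i \|d\rho_b(X_i)(\omega)\|_2^2 = \lambda_\rho \sum_k \|\omega_k\|^2 = \lambda_\rho$; you wrote this conclusion but the intermediate homogeneity step is worth stating.
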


The source of the oscillations needed in Dolgopyat's method is provided by the \emph{local non-integrability condition (LNIC)} which will be introduced in \cref{sec:LNIC} and the oscillations themselves are propagated when $\|\rho_b\|$ is sufficiently large. But this occurs precisely when $|b|$ is sufficiently large or $\rho \in \widehat{M}$ is nontrival. Let $b_0 > 0$ which we fix later. This motivates us to define
\begin{align*}
\widehat{M}_0(b_0) = \{(b, \rho) \in \mathbb R \times \widehat{M}: |b| > b_0 \text{ or } \rho \neq 1\}.
\end{align*}
We fix the related constant $\delta_{\varrho} = \inf_{b \in \mathbb R, \rho \in \widehat{M}_0} \|\rho_b\| = \inf_{\rho \in \widehat{M}_0} \|\rho\|$ which is positive because $M$ is a compact connected Lie group (recall from \cite[Example 3.1.4]{Lub10} that compact Lie groups have property (T)). Then, $\inf_{(b, \rho) \in \widehat{M}_0(b_0)} \|\rho_b\| \geq \min\{b_0, \delta_{\varrho}\}$. Thus, we also fix
\begin{align}
\label{eqn:Constantdelta1varrho}
\delta_{1, \varrho} = \min\{1, \delta_{\varrho}\}.
\end{align}

\begin{definition}[Transfer operator with holonomy]
For all $\xi = a + ib \in \mathbb C$ with $a > -\epsilon_0$ and $\rho \in \widehat{M}$, the \emph{transfer operator with holonomy} $\mathcal{M}_{\xi\Ret, \rho}: C\bigl(\Lambda_+, V_\rho^{\oplus \dim(\rho)}\bigr) \to C\bigl(\Lambda_+, V_\rho^{\oplus \dim(\rho)}\bigr)$ is defined by
\begin{align*}
\mathcal{M}_{\xi\Ret, \rho}(H)(x) &= \sum_{\gamma \in \calH} e^{-\xi\Ret(\gamma x)} \rho(\Hol(\gamma x)^{-1}) H(\gamma x) \\
&= \sum_{\gamma \in \calH} e^{-a\Ret(\gamma x)} \rho_b(\GHol(\gamma x)^{-1}) H(\gamma x)
\end{align*}
for all $x \in \Lambda_+$ and $H \in C\bigl(\Lambda_+, V_\rho^{\oplus \dim(\rho)}\bigr)$ with $\|H\|_\infty < \infty$.
\end{definition}

The above is well-defined due to the exponential tail property (see Property~(5) in \cref{prop:Coding}). Denote $\mathcal{L}_{\xi\Ret} := \mathcal{M}_{\xi\Ret, 1}$.

Let $(V, \|\cdot\|)$ be any normed vector space over $\R$ or $\C$. Let $d$ be any distance function on $\Delta_0$; in particular, $d = d_{\mathrm{E}}$ or $d = D$. For any function $H: \Lambda_+\to V$, denote
\begin{align*}
\|H\|_{\infty} &= \sup\{\|H(x)\|: x\in \Lambda_+\},\\
\Lip_d(H)&=\sup\left\{\frac{\|H(x) - H(x')\|}{d(x, x')}: x, x'\in \Lambda_+, x\neq x'\right\},\\
\|H\|_{\Lip(d)}&=\|H\|_{\infty}+\Lip_d(H).
\end{align*}
Denote by $\Lip_d(\Lambda_+, V)$ the space of functions $H:\Lambda_+\to V$ with $\|H\|_{\Lip(d)} < \infty$. We omit $d$ from the above notations if $d = d_{\mathrm{E}}$. In particular, we will work with the function spaces $\Lip\bigl(\Lambda_+, V_\rho^{\oplus \dim(\rho)}\bigr)$ and $\Lip_D(\Lambda_+, \R)$ corresponding to the normed vector spaces $\bigl(V_\rho^{\oplus \dim(\rho)}, \|\cdot\|_2\bigr)$ for some $\rho \in \widehat{M}$ and $(\R, |\cdot|)$.

For any function $H:\Delta_0\to V$, denote
\begin{equation*}
\Lip_d^{\mathrm{e}}(H)=\sup\left\{\frac{\|H(x) - H(x')\|}{d(x, x')}: x, x'\in \Delta_j, x\neq x', j \in \mathcal{J}\right\}.
\end{equation*}

Define the PS measure $\mu_{\mathrm{E}}$ on $\Delta_0$ with respect to the Euclidean metric by
\begin{equation}
\label{equ:mu E}
d\mu_{\mathrm{E}}(x)=(1+\|x\|^2)^{\delta} \, d\mu(x).
\end{equation}
Note that $\Lambda_+ \subset \Delta_0$ is a full measure subset with respect to $\mu_{\mathrm{E}}$. Using the quasi-invariance of the PS measure $\mu$, a straightforward computation gives $\mathcal{L}_{\delta \Ret}^*(\mu_{\mathrm{E}}) = \mu_{\mathrm{E}}$.

By \cref{lem:analyticity}, the family $\xi\mapsto \mathcal{L}_{(\delta+\xi)\Ret}$ of operators on $\Lip(\Lambda_+,\mathbb{C})$ is analytic on $\{\xi=a+ib\in \mathbb{C}: a>-\frac{\epsilon_0}{2}\}$. It can be shown as in \cite[Proposition A]{You98} that $\mathcal{L}_{\delta\Ret}|_{\Lip(\Lambda_+,\mathbb{C})}$ has a spectral gap with a maximal simple eigenvalue $\lambda_0 = 1$ with a corresponding eigenfunction $h_0:\Lambda_+\to \mathbb{R}$ given by $h_0(x)=(1+\|x\|^2)^{-\delta}f_0(x)$, where $f_0$ is the density function defined in \cref{eqn:nu_and_mu}. It satisfies $\int_{\Lambda_+} h_0 \, d\mu_{\mathrm{E}} = 1$. Moreover, by perturbation theory of operators (see \cite[Chapter 7]{Kat95}), there exists $a_0' \in (0, \frac{\epsilon_0}{2})$ and analytic maps
\begin{itemize}
\item $[-a_0', a_0'] \to \R$ denoted by $a \mapsto \lambda_a$,
\item $[-a_0', a_0'] \to \Lip(\Lambda_+, \R)$ denoted by $a \mapsto h_a$,
\end{itemize}
such that $\mathcal{L}_{(\delta + a)\Ret}h_a = \lambda_a h_a$, and $h_a$ is bounded away from $0$ and $+\infty$ and normalized such that $\int_{\Lambda_+} h_a \, d\mu_{\mathrm{E}}=1$.

Recall the measures $\mu$ and $\nu$ from \cref{subsec:PS measure and BMS measure,subsec:expanding map}, respectively, and also \cref{eqn:nu_and_mu}. Combining with \cref{equ:mu E} and the definition of $h_0$, we have
\begin{equation}
d\nu = h_0 \, d\mu_{\mathrm{E}}.
\end{equation}

Define the function
\begin{align*}
\FRet^{(a)} = -(\delta + a)\Ret - \log(\lambda_a) + \log \circ h_a - \log \circ h_a \circ T
\end{align*}
which is cohomologous to $-(\delta + a)\Ret - \log(\lambda_a)$. Due to Property~(4) in \cref{prop:Coding}, we can fix some $a_0' > 0$ and
\begin{align}
\label{eqn:ConstantC1'C2}
C_1' &> \max\left(\Lip^{\mathrm{e}}(\Ret), \Lip^{\mathrm{e}}(\GHol), \sup_{|a| \leq a_0'} \Lip^{\mathrm{e}}\bigl(\FRet^{(a)}\bigr)\right), & C_2 &= \frac{C_1'}{1 - \lambda},
\end{align}
where $\Lip^{\mathrm{e}}(\GHol)$ is defined similarly using the Riemannian metric on $AM$.

For all $\xi = a + ib \in \mathbb C$ with $a > -\epsilon_0$ and $\rho \in \widehat{M}$, we \emph{normalize} the transfer operator with holonomy as
\begin{align*}
\mathcal{M}_{\xi, \rho} = m_{\lambda_a h_a}^{-1} \circ \mathcal{M}_{(\delta + \xi)\Ret, \rho} \circ m_{h_a}
\end{align*}
where $m_h: C\bigl(\Lambda_+, V_\rho^{\oplus \dim(\rho)}\bigr) \to C\bigl(\Lambda_+, V_\rho^{\oplus \dim(\rho)}\bigr)$ denotes the multiplication operator by $h \in C(\Lambda_+, \R)$. For all $k \in \N$, its $k$-th iteration is simply given by
\begin{align*}
\mathcal{M}_{\xi, \rho}^k(H)(x) = \sum_{\gamma \in \calH^k} e^{\FRet_k^{(a)}(\gamma x)} \rho_b(\GHol^k(\gamma x)^{-1}) H(\gamma x)
\end{align*}
for all $x \in \Lambda_+$ and $H \in C\bigl(\Lambda_+, V_\rho^{\oplus \dim(\rho)}\bigr)$. Denote $\calL_\xi:=\calM_{\xi,1}$. Due to the above normalization, it satisfies $\cal{L}_0^*(\nu) = \nu$.

\section{Local non-integrability condition}
\label{sec:LNIC}
In this section we establish the first key property called the \emph{local non-integrability condition (LNIC)}.

Let $\Gamma_{\mathrm{b}} < \Gamma$ be the subsemigroup generated by $\{\gamma^{-1} \in \Gamma: \gamma\in \calH\}$. Let $\Lambda_{\mathrm{b}} \subset \Lambda_\Gamma \subset \partial \HS$ be the limit set of $\Gamma_{\mathrm{b}}$. Recall the following results established in \cite{LP22}.

\begin{lemma}[{\cite[Lemma 6.62]{LP22}}]
\label{lem:LambdabZDense}
The limit set $\Lambda_{\mathrm{b}}$ is not contained in any generalized sphere in $\partial \HS = \R^d\cup \{\infty\}$.
\end{lemma}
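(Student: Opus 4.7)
The plan is to argue by contradiction using the Zariski density of $\Gamma$. Suppose $\Lambda_{\mathrm{b}}$ is contained in some generalized sphere $S \subset \partial\HS$. By replacing $S$ with the minimal generalized subsphere of $\partial\HS$ containing $\Lambda_{\mathrm{b}}$, I may assume that $\Lambda_{\mathrm{b}}$ is in general position in $S$. For any $\gamma \in \Gamma_{\mathrm{b}}$, the image $\gamma\Lambda_{\mathrm{b}} \subset \Lambda_{\mathrm{b}}$ lies in both $S$ and $\gamma S$; if $\gamma S \neq S$, then $\gamma\Lambda_{\mathrm{b}} \subset S \cap \gamma S$ would be contained in a proper subsphere of $\gamma S$, contradicting the general position of $\gamma\Lambda_{\mathrm{b}}$ in $\gamma S$ (as the conformal image of a general-position set in $S$). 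Hence $\gamma S = S$, so the semigroup $\Gamma_{\mathrm{b}}$, and therefore the subgroup $\langle \calH \rangle \leq \Gamma$ it generates, is contained in the proper algebraic subgroup $\Stab_G(S) \subsetneq G$.

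Next, I would extract as much of $\Lambda_\Gamma$ as possible into $S$ by exploiting the expanding dynamics. Since $\langle \calH \rangle \subset \Stab_G(S)$, every loxodromic product $\gamma_{j_1}\cdots\gamma_{j_n} \in \langle\calH\rangle$ has both its attracting and repelling fixed points in $S$. For any $x \in \Lambda_+ \cap \Delta_0$, the $T$-itinerary of $x$ provides a sequence $(j_n)_{n \in \N}$ with $T^n(x) \in \Delta_{j_n}$, and the nested images $\gamma_{j_1}\cdots\gamma_{j_n}\Delta_0$ contract to $\{x\}$ at rate $\lambda^n$ by \cref{prop:Coding}(3). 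Each such nested image contains the unique attracting fixed point of $\gamma_{j_1}\cdots\gamma_{j_n}$, so these fixed points converge to $x$, placing $x \in S$. Thus $\Lambda_+ \cap \Delta_0 \subset S$; by density of $\Lambda_+$ in $\overline{\Delta_0} \cap \Lambda_\Gamma$ and closedness of $S$, we deduce $\overline{\Delta_0} \cap \Lambda_\Gamma \subset S$. Applying the minimality of the $\Gamma$-action on $\Lambda_\Gamma$ to the nonempty relatively open subset $\overline{\Delta_0} \cap \Lambda_\Gamma$, we conclude $\Lambda_\Gamma \subset \bigcup_{\gamma \in \Gamma}\gamma S$.

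To finish, I would invoke the standard fact that the PS measure $\mu$ of a Zariski dense geometrically finite subgroup $\Gamma$ assigns zero mass to every proper algebraic subvariety of $\partial\HS$: the Zariski closure of a $\mu$-positive $\Gamma$-invariant subset is $G$-invariant by Zariski density, forcing it to be all of $\partial\HS$. In particular $\mu(\gamma S) = 0$ for every $\gamma \in \Gamma$, and countable additivity yields $\mu(\Lambda_\Gamma) \leq \sum_{\gamma}\mu(\gamma S) = 0$, contradicting $\mu(\Lambda_\Gamma) > 0$.

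The main obstacle is the concluding measure-theoretic input, namely that $\mu$ gives zero mass to proper algebraic subvarieties, which although classical requires some care to invoke cleanly at this level of generality. An alternative route would be to bypass this by directly showing $\langle \calH \rangle$ is Zariski dense in $G$ using the rank-one dichotomy that a non-elementary discrete subgroup of $\SO(d+1,1)^\circ$ is either Zariski dense or contained in a conjugate of a proper $\SO(m,1)^\circ$ subgroup; applied to $\langle \calH \rangle$, which contains infinitely many loxodromic elements $\gamma_j$ with attracting fixed points spread through $\Delta_0$ and repelling fixed points in $\Lambda_-$, this would immediately contradict containment in $\Stab_G(S)$.
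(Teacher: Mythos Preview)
The paper does not prove this lemma; it is quoted from \cite[Lemma~6.62]{LP22} without argument, so there is no in-paper proof to compare against.

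Your first two paragraphs are correct. The minimality argument showing that each $\gamma\in\Gamma_{\mathrm{b}}$ (hence the whole group $\langle\calH\rangle$) preserves the minimal sphere $S\supset\Lambda_{\mathrm{b}}$ is clean, and the approximation of each $x\in\Lambda_+$ by attracting fixed points of the loxodromic contractions $\gamma_{j_1}\cdots\gamma_{j_n}\in\langle\calH\rangle$ (whose fixed points lie in $S$ since these elements preserve $S$ and $|S|\geq 3$) correctly yields $\Lambda_\Gamma\cap\overline{\Delta_0}\subset S$.

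The gap is in your third paragraph. The fact you invoke --- that the PS measure of a Zariski dense geometrically finite group charges no proper sphere --- is true, but your one-line justification is not: $S$ is not $\Gamma$-invariant, and the union $\bigcup_{\gamma}\gamma S$ is $\Gamma$-invariant but no longer a subvariety, so the ``Zariski closure of a $\mu$-positive $\Gamma$-invariant set'' step does not apply as written. More importantly, the detour is unnecessary. Once you have $\Lambda_\Gamma\cap\Delta_0\subset S$ with $S$ a proper generalized sphere, you are already done: Zariski density of $\Gamma$ forbids $\Lambda_\Gamma$ from being locally contained in any proper generalized sphere, which is exactly \cite[Proposition~3.12]{Win15}, invoked in this paper in the proof of \cref{prop:NCP}. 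Your entire third paragraph can therefore be replaced by that single citation. Your proposed alternative via a rank-one dichotomy for $\langle\calH\rangle$ leads to the same reduction and is not a genuine shortcut.
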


\begin{lemma}[{\cite[Lemma 6.64]{LP22}}]
\label{lem:BackwardTopologicalMixing}
Let $\xi \in \Lambda_{\mathrm{b}}$. For any $\epsilon>0$, there exists $n_{\xi}\in \N$ such that for all $n \geq n_{\xi}$, there exists $\gamma \in \calH^n$ satisfying
\begin{equation*}
d_{\mathbb{S}^d}(\gamma^{-1}\infty,\xi)\leq \epsilon.
\end{equation*}
\end{lemma}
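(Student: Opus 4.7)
The plan is to combine density of $\Gamma_\mathrm{b}$-orbits in $\Lambda_\mathrm{b}$ with a padding argument that exploits the uniform contraction of \cref{lambda-}.

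I start with two consequences of \cref{lambda-}. Fix any generator $\tau_0 = \gamma_*^{-1}$ of $\Gamma_\mathrm{b}$; it has a unique attracting fixed point $\xi_0 \in \Lambda_-$, with $\tau_0^k\infty\to\xi_0$ at exponential rate $\lambda^k$, so $\xi_0\in\Lambda_\mathrm{b}$. More generally, $\infty\in\Lambda_-$ and every generator preserves $\Lambda_-$ with contraction factor $\lambda$, so by induction $\Gamma_\mathrm{b}\cdot\infty\subset\Lambda_-$, and any $\eta\in\Gamma_\mathrm{b}$ of word length $m$ satisfies $\diam(\eta(\Lambda_-))\leq\lambda^m\diam(\Lambda_-)$.

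The crux is to establish that $\overline{\Gamma_\mathrm{b}\cdot\xi_0}\supset\Lambda_\mathrm{b}$. I would use the standard fact that attracting fixed points of elements of $\Gamma_\mathrm{b}$ are dense in $\Lambda_\mathrm{b}$ (a consequence of the non-elementary nature of the action, guaranteed by \cref{lem:LambdabZDense}), combined with the observation that for any such fixed point $y$ of some $\eta_0\in\Gamma_\mathrm{b}$, the iterates $\eta_0^k\xi_0\to y$ place $y$ in $\overline{\Gamma_\mathrm{b}\cdot\xi_0}$. Applied to $\xi\in\Lambda_\mathrm{b}$, this produces $\eta\in\Gamma_\mathrm{b}$ of some length $m'$ with $d_{\mathbb{S}^d}(\eta\xi_0,\xi)<\epsilon/2$. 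Since $\tau_0^k\infty\to\xi_0$ and $\eta$ is continuous on $\partial\HS$, there is $k_0$ such that $d_{\mathbb{S}^d}(\eta\tau_0^k\infty,\eta\xi_0)<\epsilon/2$ for all $k\geq k_0$, and hence $d_{\mathbb{S}^d}(\eta\tau_0^k\infty,\xi)<\epsilon$. Setting $n_\xi = m'+k_0$, for each $n\geq n_\xi$ the element $\eta\tau_0^{n-m'}\in\Gamma_\mathrm{b}$ has word length exactly $n$ and still satisfies the $\epsilon$-approximation, so $\gamma\in\calH^n$ defined by $\gamma^{-1} = \eta\tau_0^{n-m'}$ is the required element.

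The main obstacle is the density step. For a discrete subgroup it would follow immediately from minimality of the action of a non-elementary subgroup on its limit set, but for the subsemigroup $\Gamma_\mathrm{b}$ one must argue via classical hyperbolic dynamics: sequences $\eta_k\in\Gamma_\mathrm{b}$ with $\eta_k o\to y\in\Lambda_\mathrm{b}$ whose translation lengths tend to infinity have attracting fixed points converging to $y$, and the existence of such sequences follows from $y$ being a conical-type limit point for the semigroup, which itself rests on \cref{lem:LambdabZDense}. Once the orbit-density of $\xi_0$ is in hand, the rest of the argument is the short padding computation above.
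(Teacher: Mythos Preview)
The paper does not give its own proof of this lemma: it is quoted from \cite[Lemma~6.64]{LP22} and used as a black box, so there is no in-paper argument to compare against.

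On the substance: your padding step is correct and is the natural mechanism for upgrading a single good word length to all sufficiently large ones. The density step you flag as the obstacle can indeed be filled in, and in fact streamlined so that the auxiliary point $\xi_0$ is unnecessary. Every $\eta\in\Gamma_\mathrm{b}$ is loxodromic with repelling fixed point in $\overline{\Delta_0}$ (since $\eta^{-1}$ contracts $\Delta_0$ by \cref{prop:Coding}) and attracting fixed point in $\eta(\Lambda_-)\subset\Lambda_-$ (by \cref{lambda-}), and its translation length is at least $|\eta|\cdot|\log\lambda|$. Since both $o$ and $\infty$ lie at positive spherical distance from the compact set $\overline{\Delta_0}$, the standard north--south estimate for loxodromics gives $d_{\mathbb S^d}(\eta_k o,\eta_k\infty)\to 0$ whenever $|\eta_k|\to\infty$; hence $\eta_k o\to\xi$ (which is the definition of $\xi\in\Lambda_\mathrm{b}$) already forces $\eta_k\infty\to\xi$. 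This yields orbit density of $\infty$ directly, and for the padding one can simply observe that $\eta\tau_0^j\infty\in\eta(\Lambda_-)$ for every $j\geq 0$ while $\diam(\eta(\Lambda_-))\leq\lambda^{|\eta|}\diam(\Lambda_-)$ is already small. Your route through $\xi_0$ and the density of attracting fixed points works too (the caveat $\xi_0\neq y^-$ is automatic since $\xi_0\in\Lambda_-$ and repelling fixed points lie in $\overline{\Delta_0}$), but the sketch you give for that density would need exactly the convergence-group argument above to become a proof.
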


\begin{remark}
It follows from \cref{lambda-} that $\Lambda_{\mathrm{b}}\subset\Lambda_-$. Moreover, the construction of $\gamma\in \calH$ yields that $\infty \in \Lambda_{\mathrm{b}}$ (see \cite[Section 6]{LP22}).
\end{remark}

Recall the reference point $x_0 \in \Lambda_+$ from \cref{subsec:SymbolicModelForFrameFlows}. We start with a definition similar to \cite[Definition 6.1]{SW21} which was inspired by Brin--Pesin moves \cite{BP74,Bri82}.

\begin{definition}[Associated sequence of frames]
For any sequence
\begin{align}
\label{eqn:SequenceOnBoundary^2}
((x_0, \infty), (x_0, y), (x, y), (x, \infty), (x_0, \infty)) \in (\Delta_0 \times \Lambda_-)^5
\end{align}
we define a unique \emph{associated sequence of frames} $(g_1, g_2, \dotsc, g_5) \in \F(\mathbb H^n)^5 \cong G^5$ by
\begin{align*}
g_1 &= \tilde{F}(x_0, \infty), \\
g_2 &= \tilde{F}(x_0, y) \in g_1N^- \text{ such that } g_2M = \tilde{\Phi}(x_0, y, 0) \in \T(\mathbb H^n) \cong G/M, \\
g_3 &\in g_2N^+ \text{ such that } g_3a_tM = \tilde{\Phi}(x, y, 0) \in \T(\mathbb H^n) \cong G/M \text{ for some } t \in \R, \\
g_4 &\in g_3N^- \text{ such that } g_4a_tM = \tilde{\Phi}(x, \infty, 0) \in \T(\mathbb H^n) \cong G/M \text{ for some } t \in \R, \\
g_5 &\in g_4N^+ \text{ such that } g_5a_tM = \tilde{\Phi}(x_0, \infty, 0) \in \T(\mathbb H^n) \cong G/M \text{ for some } t \in \R.
\end{align*}
\end{definition}

We continue using the notation in the above definition. Define the subsets
\begin{align*}
N_1^+ &= \{n^+ \in N^+: F(x_0, \infty)n^+ \in F(\Delta_0 \times \{\infty\})\} \subset N^+, \\
N_1^- &= \{n^- \in N^-: F(x_0, \infty)n^- \in F(\{x_0\} \times \Lambda_{\mathrm{b}})\} \subset N^-,
\end{align*}
where the first is open and the second is compact. Define $\tilde{N}_1^- \subset N^-$ to be an open neighborhood of $N_1^-$. Now, if the sequence in \cref{eqn:SequenceOnBoundary^2} corresponds to some $n^+ \in N_1^+$ and $n^- \in N_1^-$ such that $F(x, \infty) = F(x_0, \infty)n^+$ and $F(x_0, y) = F(x_0, \infty)n^-$ respectively, then we can define the map
\begin{align*}
\Xi: N_1^+ \times N_1^- &\to AM \\
(n^+, n^-) &\mapsto g_5^{-1}g_1.
\end{align*}
To view it as a function of only the first argument for a fixed $n^- \in N_1^-$, we write $\Xi_{n^-}: N_1^+ \to AM$.

Now we relate this to the generalized holonomy $\GHol$ and LNIC. Following definitions and using \cref{lambda-}, for all $\gamma \in \bigcup_{n\in \mathbb{N}}\calH^n$, there exists $n_\gamma \in N_1^-$ such that
\begin{align*}
F(x_0, \gamma^{-1}(\infty)) = F(x_0, \infty)n_\gamma.
\end{align*}

We want to show that the generalized holonomy $\GHol$ is in some suitable sense rapidly oscillating. We begin by relating it to the map $\Xi$ and the associated Lie theory as follows.

\begin{lemma}
\label{lem:BrinPesinInTermsOfHolonomy}
Let $\gamma \in \calH^n$ for some $n \in \N$ and $n^- = n_\gamma \in N_1^-$. Let $x \in \Delta_0$ and $n^+ \in N_1^+$ such that $F(x, \infty) = F(x_0, \infty)n^+$. Then, we have
\begin{align*}
\Xi(n^+, n^-) = \GHol^n(\gamma(x_0), \infty)^{-1}\GHol^n(\gamma(x), \infty).
\end{align*}
\end{lemma}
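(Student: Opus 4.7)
The plan is to relate the quadrilateral construction defining $\Xi$ to the symbolic generalized holonomy by iterating the defining semiconjugacy of $\GHol$ backwards along $\hat{T}^n$, and then to track the $N^{\pm}$ and $AM$ components through the four-link chain $(g_1,\dots,g_5)$.

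First, I will lift the relation $F(x,y) a_{\Ret(x,y)} = F(\hat{T}(x,y)) \Hol(x,y)^{-1}$ from $\F(X)$ to $\F(\HS) \cong G$. Matching backward endpoints of the basepoints on both sides forces the lift to take the form $\tilde{F}(\hat{T}(x,y)) = \gamma_j^{-1} \tilde{F}(x,y) \GHol(x,y)$ whenever $x \in \Delta_j$, and induction then gives
\begin{equation*}
\tilde{F}(\hat{T}^n(x,y)) = \gamma^{-1} \tilde{F}(x,y) \GHol_n(x,y)
\end{equation*}
for any $\gamma = \gamma_{j_1} \cdots \gamma_{j_n} \in \calH^n$ coding the first $n$ iterates of $(x,y)$. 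Applying this at $(\gamma(x_0), \infty)$ and $(\gamma(x), \infty)$, which respectively satisfy $\hat{T}^n(\gamma(x_0), \infty) = (x_0, y)$ and $\hat{T}^n(\gamma(x), \infty) = (x, y)$ with $y = \gamma^{-1}(\infty)$, yields
\begin{align*}
\tilde{F}(x_0, y) &= \gamma^{-1} \tilde{F}(\gamma(x_0), \infty) \alpha, \\
\tilde{F}(x, y) &= \gamma^{-1} \tilde{F}(\gamma(x), \infty) \beta,
\end{align*}
where $\alpha := \GHol_n(\gamma(x_0), \infty)$ and $\beta := \GHol_n(\gamma(x), \infty)$ both lie in $AM$.

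By construction of $\tilde{F}$, the frames $\tilde{F}(\gamma(x_0), \infty)$ and $\tilde{F}(\gamma(x), \infty)$ are backward-asymptotic on the common horosphere based at $\infty$ through $o$, so there exists $\tilde{n} \in N^+$ with $\tilde{F}(\gamma(x), \infty) = \tilde{F}(\gamma(x_0), \infty) \tilde{n}$. Combining the two identities then gives
\begin{equation*}
\tilde{F}(x_0, y)^{-1} \tilde{F}(x, y) = \alpha^{-1} \tilde{n} \beta = (\alpha^{-1} \tilde{n} \alpha)(\alpha^{-1} \beta) \in N^+ \cdot AM,
\end{equation*}
a unique decomposition with $N^+$-part $\alpha^{-1}\tilde{n}\alpha$ (using that $AM$ normalizes $N^+$) and $AM$-part $\alpha^{-1}\beta$. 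Now, writing $g_3 = \tilde{F}(x_0, y) \tilde{n}_3^+ = \tilde{F}(x, y) r_3$ with $\tilde{n}_3^+ \in N^+$ and $r_3 \in AM$, this decomposition forces $r_3 = \beta^{-1}\alpha$. Writing $g_4 = g_3 \tilde{n}_4^- = \tilde{F}(x, \infty) r_4$ and using $\tilde{F}(x, y) = \tilde{F}(x, \infty) n_{xy}^-$ (the forward-asymptotic relation) together with $AM$ normalizing $N^-$, the same uniqueness argument yields $r_4 = \beta^{-1}\alpha$. Finally, $g_5 = g_4 \tilde{n}_5^+ = g_1 r_5$ combined with $\tilde{F}(x, \infty) = \tilde{F}(x_0, \infty) n^+$ gives $r_5 = \beta^{-1}\alpha$, so $g_5 = g_1 \beta^{-1}\alpha$, and therefore
\begin{equation*}
\Xi(n^+, n^-) = g_5^{-1} g_1 = \alpha^{-1}\beta = \GHol_n(\gamma(x_0), \infty)^{-1} \GHol_n(\gamma(x), \infty).
\end{equation*}

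The main obstacle is the careful bookkeeping in tracing the chain: at each link one must invoke uniqueness of the $N^\pm$-$AM$ decomposition after conjugating by elements of $AM$, which relies crucially on the fact that $AM$ normalizes both $N^+$ and $N^-$. Once the $N^+ \cdot AM$ decomposition of $\tilde{F}(x_0, y)^{-1} \tilde{F}(x, y)$ is established, the $AM$ factor $\beta^{-1}\alpha$ simply propagates unchanged through each successive link of the chain, yielding the claim.
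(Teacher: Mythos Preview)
Your proof is correct and follows essentially the same route as the argument referenced by the paper, which does not give its own proof but points to \cite[Lemma 6.2]{SW21}; your lifting of the holonomy relation to $G$, the computation of $\tilde{F}(x_0,y)^{-1}\tilde{F}(x,y)$ via the iterated identity, and the propagation of the $AM$-factor $\beta^{-1}\alpha$ through the chain using uniqueness of the $N^{\pm}\cdot AM$ decomposition is exactly the standard mechanism. One cosmetic remark: your identification of the lifting element as $\gamma_j^{-1}$ is justified more cleanly by noting that $\Gamma$ acts freely on $G$, so the relation in $\Gamma\backslash G$ lifts uniquely, and then matching forward and backward endpoints (both of which are preserved by right $AM$-multiplication) pins it down.
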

This lemma is analogous to \cite[Lemma 6.2]{SW21} and can be proved in a similar fashion.

Let $\pi: \LieG \to \LieA \oplus \LieM$ be the projection map with respect to the decomposition $\LieG = \LieA \oplus \LieM \oplus \LieN^+ \oplus \LieN^-$. The following lemma can be proven exactly as in \cite[Lemma 6.3]{SW21}.

\begin{lemma}
\label{lem:BrinPesinDerivativeImageIsAdjointProjection}
For all $n^- \in N_1^-$, we have
\begin{align*}
(d\Xi_{n^-})_e = \pi \circ \Ad_{n^-}|_{\LieN^+} \circ (dh_{n^-})_e
\end{align*}
where $h_{n^-}: N_1^+ \to N^+$ is a diffeomorphism onto its image which is also smooth in $n^- \in \tilde{N}_1^-$ and satisfies $h_e = \Id_{N_1^+}$. Consequently, its image is $(d\Xi_{n^-})_e(\LieN^+) = \pi(\Ad_{n^-}(\LieN^+))\subset \LieA \oplus \LieM$.
\end{lemma}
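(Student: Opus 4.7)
The strategy I would follow is that of \cite[Lemma 6.3]{SW21}: expand $\Xi$ as a product corresponding to the four sides of the Brin--Pesin rectangle, use a local Bruhat-type decomposition to organise that factorisation, and then differentiate at $n^+ = e$, letting the constraint that $\Xi^{-1}$ lands in $AM$ kill all horospherical components.

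Unwinding the definition of the associated frames $(g_1,\dotsc,g_5)$ gives unique elements $\tilde n^+, n'^+ \in N^+$ and $n'^- \in N^-$ with $g_2 = g_1 n^-$, $g_3 = g_2 \tilde n^+$, $g_4 = g_3 n'^-$, $g_5 = g_4 n'^+$, so that
\[
\Xi(n^+, n^-)^{-1} = g_1^{-1}g_5 = n^-\,\tilde n^+\,n'^-\,n'^+ \in AM.
\]
Reading this identity as a constraint via the local product decomposition $G = N^+ A M N^-$, the implicit function theorem provides smooth dependence of $(\tilde n^+, n'^-, n'^+)$ on $(n^+, n^-)$, and also on $n^- \in \tilde N_1^-$. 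The first coordinate defines $h_{n^-}(n^+) := \tilde n^+$. The boundary values $h_e = \Id_{N_1^+}$ and $h_{n^-}(e) = e$ follow by direct inspection: at $n^- = e$ one has $g_2 = g_1$, so the forward-endpoint condition on $g_3$ forces $\tilde n^+ = n^+$; at $n^+ = e$ one has $x = x_0$, and the transversality $N^\pm \cap AM = \{e\}$ forces $g_3 = g_2$, $g_4 = g_1$, $g_5 = g_1$, giving $(\tilde n^+, n'^-, n'^+) = (e, n^{-,-1}, e)$.

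Differentiating along $n^+(s) = \exp(sZ)$ with $Z \in \LieN^+$, and writing $\tilde Z = (dh_{n^-})_e(Z) \in \LieN^+$ together with $W^\pm \in \LieN^\pm$ for the initial velocities of $n'^\pm(s)$, the Leibniz rule applied to $n^-\tilde n^+(s)\,n'^-(s)\,n'^+(s)$ at $s = 0$ yields
\[
\frac{d}{ds}\bigg|_{s=0}\Xi(\exp(sZ), n^-)^{-1} = \Ad_{n^-}(\tilde Z) + W^- + W^+.
\]
Since $\Xi^{-1}(s) \in AM$ for all $s$ and $\Xi^{-1}(0) = e$, this derivative lies in $\LieA \oplus \LieM$. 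Decomposing with respect to $\LieG = \LieN^+ \oplus (\LieA \oplus \LieM) \oplus \LieN^-$ forces $W^\pm$ to absorb precisely the $\LieN^\pm$-components of $\Ad_{n^-}(\tilde Z)$, leaving $\pi\bigl(\Ad_{n^-}((dh_{n^-})_e(Z))\bigr)$ as the $\LieA \oplus \LieM$ component. Translating back to $\Xi$ itself (with sign conventions as in \cite{SW21}) gives the claimed identity $(d\Xi_{n^-})_e = \pi \circ \Ad_{n^-}|_{\LieN^+} \circ (dh_{n^-})_e$. The image assertion is then immediate: because $h_{n^-}$ is a diffeomorphism onto its image with $h_{n^-}(e) = e$, the differential $(dh_{n^-})_e$ is a linear automorphism of $\LieN^+$, so $(d\Xi_{n^-})_e(\LieN^+) = \pi(\Ad_{n^-}(\LieN^+)) \subset \LieA \oplus \LieM$.

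I expect the only delicate step to be establishing smoothness and uniqueness of the Brin--Pesin factorisation via the implicit function theorem and the local Bruhat structure; after that, the derivative computation is routine Lie-algebra bookkeeping in which the single geometric fact $\Xi^{-1} \in AM$ does all of the work of suppressing the horospherical directions.
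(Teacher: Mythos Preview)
Your proposal is correct and follows exactly the approach the paper intends: the paper does not give its own argument but states that the lemma ``can be proven exactly as in \cite[Lemma 6.3]{SW21}'', and your outline---factor $\Xi^{-1}$ through the Brin--Pesin rectangle, invoke the local Bruhat/implicit-function structure to define $h_{n^-}$, differentiate the product at $n^+=e$, and use the $AM$-constraint to project onto $\LieA\oplus\LieM$---is precisely that argument.
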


The following lemma can also be proven as in \cite[Lemma 6.4]{SW21} using \cref{lem:LambdabZDense} which replaces the fact that the whole limit set $\Lambda_{\Gamma}$ is not contained in any generalized sphere in $\partial \HS = \mathbb{R}^d\cup \{\infty\}$.

\begin{lemma}
\label{lem:am_ProjectionOfAdjointImage}
There exist $n_1^{-},n_2^{-},\ldots,n_{\jj}^-\in N_1^-$ for some $\jj \in \N$ and $\epsilon > 0$ such that if $\eta_1^{-},\eta_2^-,\ldots,\eta_{\jj}^-\in N_1^-$ with $d_{N^-}(\eta_j^-,n_j^-)\leq \epsilon$ for all $1\leq j\leq \jj$, then
\begin{equation*}
\sum_{j=1}^{\jj} \pi\bigl(\Ad_{\eta_j^-}(\LieN^+)\bigr)=\LieA\oplus \LieM.
\end{equation*}
\end{lemma}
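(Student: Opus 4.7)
The plan is to follow the strategy of \cite[Lemma 6.4]{SW21} directly, with \cref{lem:LambdabZDense} playing the role that was previously played by the analogous Zariski-density type statement for the full limit set. The argument has three parts: (1) show that as $n^-$ ranges over $N_1^-$, the images $\pi(\Ad_{n^-}(\LieN^+))$ collectively span all of $\LieA \oplus \LieM$; (2) extract a finite subcollection whose sum already equals $\LieA \oplus \LieM$; (3) upgrade this to a stable statement under $\epsilon$-perturbations by an openness argument.

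For step (1), define
\[
W = \Span\bigl\{\pi(\Ad_{n^-}(\LieN^+)) : n^- \in N_1^-\bigr\} \subset \LieA \oplus \LieM,
\]
and suppose for contradiction that $W$ is a proper subspace. Pick a nonzero linear functional $\ell \in (\LieA \oplus \LieM)^*$ vanishing on $W$. Then the equation $\ell\bigl(\pi(\Ad_{n^-}(X))\bigr) = 0$ holds for every $n^- \in N_1^-$ and every $X \in \LieN^+$. Writing $n^- = \exp(Y)$ with $Y \in \LieN^-$, this is a nontrivial polynomial condition in $Y$ coming from the structure of $\Ad_{\exp(Y)} = e^{\ad_Y}$ and the fact that $\pi$ is linear. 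Under the identification of $N^-$ (acting on $\tilde F(x_0, \infty)$) with $\partial \HS - \{\infty\}$ via forward endpoint, this polynomial condition cuts out a generalized sphere $\Sigma \subset \partial \HS = \R^d \cup \{\infty\}$ containing the forward endpoints of all frames in $F(\{x_0\} \times \Lambda_{\mathrm{b}})$, i.e., containing $\Lambda_{\mathrm{b}}$. This contradicts \cref{lem:LambdabZDense}. Hence $W = \LieA \oplus \LieM$.

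For step (2), since $\LieA \oplus \LieM$ is finite-dimensional and $W$ is spanned by the subspaces $\pi(\Ad_{n^-}(\LieN^+))$, we can select finitely many $n_1^-, \dotsc, n_{\jj}^- \in N_1^-$ such that $\sum_{j = 1}^{\jj} \pi(\Ad_{n_j^-}(\LieN^+)) = \LieA \oplus \LieM$. For step (3), observe that the map
\[
N^- \to \Hom(\LieN^+, \LieA \oplus \LieM), \qquad n^- \mapsto \pi \circ \Ad_{n^-}|_{\LieN^+},
\]
is smooth. Fix a basis $X_1, \dotsc, X_r$ of $\LieN^+$. Then the condition $\sum_{j = 1}^{\jj} \pi(\Ad_{\eta_j^-}(\LieN^+)) = \LieA \oplus \LieM$ is equivalent to the $\dim(\LieA \oplus \LieM) \times (\jj r)$ matrix whose columns are the vectors $\pi(\Ad_{\eta_j^-}(X_i))$ (for $1 \le j \le \jj$, $1 \le i \le r$) having maximal rank. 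Maximal rank is an open condition in $(\eta_1^-, \dotsc, \eta_{\jj}^-) \in (N^-)^{\jj}$, so by continuity of the above map there exists $\epsilon > 0$ such that the conclusion holds whenever $d_{N^-}(\eta_j^-, n_j^-) \le \epsilon$ for all $j$.

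The main obstacle is step (1): translating the purely algebraic vanishing condition on $\ell \circ \pi \circ \Ad$ into the geometric statement that $\Lambda_{\mathrm{b}}$ lies in a generalized sphere. This requires a careful but standard computation exploiting the explicit isomorphism $N^- \cong \R^d$ and the conformal action of $N^-$ on $\partial \HS$, so that polynomial vanishing of the $(\LieA \oplus \LieM)$-component of $\Ad_{n^-}(X)$ for all $X \in \LieN^+$ pulls back to an equation of the form $\sum a_{ij} y_i y_j + \sum b_i y_i + c = 0$ on the boundary coordinate, i.e., to a generalized sphere. Once this translation is in place, \cref{lem:LambdabZDense} closes the contradiction.
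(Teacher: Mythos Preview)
Your proposal is correct and follows exactly the approach the paper indicates: both defer to \cite[Lemma 6.4]{SW21}, with \cref{lem:LambdabZDense} replacing the analogous statement for the full limit set. One minor slip: the identification of $N_1^-$ with $\Lambda_{\mathrm{b}}$ is via the \emph{backward} endpoint of $\tilde F(x_0,\infty)n^-$ (frames in the same $N^-$-orbit are forward asymptotic and hence share the forward endpoint $x_0$), not the forward endpoint; this does not affect the argument.
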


We can now state the LNIC appropriate to our setting. Given any pair of inverse branches $\alpha, \beta \in \calH^m$ for some $m\in \mathbb{N}$, define the map $\BP_{\alpha, \beta}:\Delta_0\times \Delta_0\to AM$ by
\begin{align*}
\BP_{\alpha, \beta}(x,y) = \GHol^m(\alpha x, \infty)^{-1}\GHol^m(\alpha y, \infty)\GHol^m(\beta y, \infty)^{-1}\GHol^m(\beta x, \infty)
\end{align*}
for all $x, y \in \Delta_0$.

\begin{proposition}[LNIC]
\label{prop:LNIC}
There exist $\epsilon\in (0,1)$, $m_0\in \N$, $\jj\in \N$, such that for all $m\geq m_0$, there exist $\{\alpha_j\}_{j = 0}^{\jj} \subset \calH^m$ such that for all $x \in \Delta_0$ and $\omega \in \LieA \oplus \LieM$ with $\|\omega\| = 1$, there exist $1\leq j\leq \jj$ and $Z\in \operatorname{T}_x(\Delta_0)$ with $\|Z\| = 1$ such that
\begin{equation*}
|\langle  (d\BP_{j, x})_x(Z),\omega\rangle|\geq \epsilon
\end{equation*}
where we denote $\BP_{j}:=\BP_{\alpha_0,\alpha_j}$ and $\BP_{j, x} := \BP_j(x, \cdot)$ for all $x \in \Delta_0$ and $1\leq j\leq \jj$. 
\end{proposition}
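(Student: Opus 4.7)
The plan is to reduce the LNIC at $x = x_0$ to the Lie-theoretic spanning of \cref{lem:am_ProjectionOfAdjointImage} via the Brin--Pesin calculus of \cref{lem:BrinPesinInTermsOfHolonomy,lem:BrinPesinDerivativeImageIsAdjointProjection}, and then extend by smoothness to all of $\Delta_0$. First, apply \cref{lem:am_ProjectionOfAdjointImage} to fix $\jj \in \N$, $\epsilon_0 > 0$, and $n_1^-, \dotsc, n_{\jj}^- \in N_1^-$ with the stated spanning property; each $n_j^-$ corresponds to a boundary point $\xi_j := (F(x_0, \infty) n_j^-)^- \in \Lambda_{\mathrm{b}}$ via the inverse of the parameterization $n^- \mapsto (F(x_0, \infty) n^-)^-$. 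Using $\infty \in \Lambda_{\mathrm{b}}$ (remark after \cref{lem:BackwardTopologicalMixing}), additionally set $\xi_0 := \infty$, which corresponds to $e \in N_1^-$. Applying \cref{lem:BackwardTopologicalMixing} to each $\xi_j$ with a sufficiently small $\epsilon$ then yields $m_0 \in \N$ such that for every $m \geq m_0$ and every $j \in \{0, 1, \dotsc, \jj\}$ there exists $\alpha_j \in \calH^m$ with $d_{\mathbb{S}^d}(\alpha_j^{-1}(\infty), \xi_j) < \epsilon$. The smoothness of the boundary-to-$N^-$ correspondence near each $\xi_j$ converts this into $d_{N^-}(n_{\alpha_j}^-, n_j^-) < \epsilon_0$; in particular $n_{\alpha_0}^-$ is close to $e$.

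Next, I would compute the derivative of $\BP_j := \BP_{\alpha_0, \alpha_j}$ at $y = x_0$. A direct check of the associated sequence of frames shows $\Xi(e, n^-) = e$, so \cref{lem:BrinPesinInTermsOfHolonomy} simplifies to
\begin{align*}
\BP_{\alpha_0, \alpha_j}(x_0, y) = \Xi(n_y^+, n_{\alpha_0}^-) \, \Xi(n_y^+, n_{\alpha_j}^-)^{-1}.
\end{align*}
Differentiating at $y = x_0$ (where $n_y^+ = e$) via \cref{lem:BrinPesinDerivativeImageIsAdjointProjection} and a straightforward Lie-group computation yields
\begin{align*}
(d\BP_{j, x_0})_{x_0}(Z) = (\eta_{\alpha_0} - \eta_{\alpha_j})\bigl((d\tau)_{x_0}(Z)\bigr),
\end{align*}
where $\tau: y \mapsto n_y^+$ is a smooth diffeomorphism onto its image and $\eta_{\alpha_i} := (d\Xi_{n_{\alpha_i}^-})_e = \pi \circ \Ad_{n_{\alpha_i}^-} \circ (dh_{n_{\alpha_i}^-})_e$ has image $V_i := \pi(\Ad_{n_{\alpha_i}^-}(\LieN^+)) \subset \LieA \oplus \LieM$.

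The key observation for the quantitative bound is that since $n_{\alpha_0}^-$ is close to $e$ and $\pi|_{\LieN^+} = 0$, the operator $\eta_{\alpha_0}$ has norm controlled by a constant times $\epsilon_0$. Meanwhile \cref{lem:am_ProjectionOfAdjointImage} gives $\sum_{j=1}^{\jj} V_j = \LieA \oplus \LieM$ with a uniform joint-surjectivity gap $c_1 > 0$. For $\epsilon_0$ chosen small enough relative to $c_1$, a standard perturbation argument then forces $\sum_{j=1}^{\jj} U_j = \LieA \oplus \LieM$ with comparable gap, where $U_j := (\eta_{\alpha_j} - \eta_{\alpha_0})(\LieN^+)$, and a pigeonhole over $j$ produces the desired $\epsilon \in (0, 1)$ so that for each unit $\omega \in \LieA \oplus \LieM$ some $j$ and unit $Z \in \operatorname{T}_{x_0}(\Delta_0)$ satisfy $|\langle (d\BP_{j, x_0})_{x_0}(Z), \omega \rangle| \geq \epsilon$. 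To extend this to arbitrary $x \in \Delta_0$, one reruns the Brin--Pesin construction with reference point $x$; since the boundary points $\xi_j = \alpha_j^{-1}(\infty)$ are intrinsic, the rebased analogues $n_{\alpha_j}^{-, x}$, $V_j^x$, and $\eta_{\alpha_i}^x$ vary smoothly with $x$, so both the smallness of $\eta_{\alpha_0}^x$ and the spanning of $\sum_j V_j^x$ persist on the compact closure $\overline{\Delta_0}$, yielding a uniform $\epsilon$. The main technical obstacle is this last step: \cref{lem:BrinPesinDerivativeImageIsAdjointProjection} is stated only at $n^+ = e$, so the rebasing must be justified with care, and should the spanning gap degenerate for some $x$, one must refine $\overline{\Delta_0}$ by a finite open cover and correspondingly enlarge $\jj$.
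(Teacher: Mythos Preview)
Your argument through the bound at $x_0$, and its persistence on a neighborhood $U$ of $x_0$ by smoothness, is essentially the paper's \cref{prop:PreLNIC} (the local LNIC), which is proved there by reference to \cite[Proposition 6.5]{SW21}. The divergence---and the gap---is in the passage from $U$ to all of $\Delta_0$.

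Your proposed extension by ``rebasing the Brin--Pesin construction at each $x$'' and invoking compactness does not close. Smoothness of $x \mapsto n_{\alpha_j}^{-,x}$ only guarantees that the spanning condition $\sum_j \pi(\Ad_{n_{\alpha_j}^{-,x}}(\LieN^+)) = \LieA \oplus \LieM$ persists on a \emph{small} neighborhood of $x_0$; nothing prevents it from failing somewhere on the large set $\overline{\Delta_0}$, as you yourself note. Your fallback of covering $\overline{\Delta_0}$ and enlarging $\jj$ would require, at each cover center $x_k$, analogues of \cref{lem:BrinPesinInTermsOfHolonomy,lem:BrinPesinDerivativeImageIsAdjointProjection} (both stated only at $x_0$), together with a fresh application of \cref{lem:am_ProjectionOfAdjointImage} at $x_k$---none of which is carried out, and the latter is not even formulated for general base points.

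The paper sidesteps all of this with a composition trick. Given the local branches $\{\tilde{\alpha}_j\}_{j=0}^{\jj} \subset \calH^{\tilde m}$ from \cref{prop:PreLNIC}, one invokes \cref{lem:CylinderInOpenSet} to find $\gamma \in \calH^n$ with $\gamma\Delta_0 \subset U$, and sets $\alpha_j := \tilde{\alpha}_j\gamma \in \calH^{\tilde m + n}$. A direct computation from the definition of $\BP$ (using that $A$ is central in $AM$) yields the conjugation identity
\[
\BP_{\alpha_0,\alpha_j}(x,x') = C_{\Hol^n(\gamma x,\infty)^{-1}}\bigl(\BP_{\tilde{\alpha}_0,\tilde{\alpha}_j}(\gamma x,\gamma x')\bigr),
\]
hence $(d\BP_{j,x})_x = \Ad_{\Hol^n(\gamma x,\infty)^{-1}} \circ (d\widetilde{\BP}_{j,\gamma x})_{\gamma x} \circ (d\gamma)_x$. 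Since $\gamma x \in U$, the local bound applies at $\gamma x$; since the inner product on $\LieA \oplus \LieM$ is $\Ad_K$-invariant, the conjugation by $\Hol^n(\gamma x,\infty)^{-1} \in M$ is harmless; and the only loss is the factor $\inf_{y\in\Delta_0}\|(d\gamma)_y\| > 0$ from the chain rule. This produces the global $\epsilon$ without any rebasing, covering, or change in $\jj$.
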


\Cref{prop:LNIC} will be derived from \cref{prop:PreLNIC}. 

\begin{proposition}
\label{prop:PreLNIC}
There exist $\epsilon\in (0,1)$, $m_0\in \N$, $\jj\in \N$, and an open neighborhood $U \subset \Delta_0$ of $x_0$ such that for all $m\geq m_0$, there exist $\{\alpha_j\}_{j = 0}^{\jj} \subset \calH^m$ such that for all $x \in U$ and $\omega \in \LieA \oplus \LieM$ with $\|\omega\| = 1$, there exist $1\leq j\leq \jj$ and $Z\in \operatorname{T}_x(U)$ with $\|Z\|=1$ such that
\begin{equation*}
|\langle  (d\BP_{j, x})_x(Z),\omega\rangle|\geq \epsilon
\end{equation*}
using the same notation as in \cref{prop:PreLNIC}.
\end{proposition}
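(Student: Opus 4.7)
The plan is to express $(d\BP_{j,x})_x$ at $x = x_0$ in terms of the Brin--Pesin map $\Xi$ using \cref{lem:BrinPesinInTermsOfHolonomy,lem:BrinPesinDerivativeImageIsAdjointProjection}, apply \cref{lem:am_ProjectionOfAdjointImage} to span $\LieA \oplus \LieM$, realize the required inverse branches via the backward topological mixing of \cref{lem:BackwardTopologicalMixing}, and finally propagate to a neighborhood $U$ of $x_0$ by continuity. Setting $F_\gamma(x) := \GHol^m(\gamma x_0, \infty)^{-1} \GHol^m(\gamma x, \infty)$ for $\gamma \in \calH^m$, so that $F_\gamma(x_0) = e$, a telescoping identity gives
\[\BP_{\alpha_0, \alpha_j}(x, y) = F_{\alpha_0}(x)^{-1} F_{\alpha_0}(y) F_{\alpha_j}(y)^{-1} F_{\alpha_j}(x),\]
and by \cref{lem:BrinPesinInTermsOfHolonomy}, $F_\gamma(x) = \Xi(J(x), n_\gamma)$, where $J: \Delta_0 \to N_1^+$, $x \mapsto n_x^+$, is smooth with $J(x_0) = e$ and $(dJ)_{x_0}$ a linear isomorphism onto $\LieN^+$. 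Since $\BP_{j,x}(x) = e$ and $F_\gamma(x_0) = e$, a direct Leibniz rule computation at $x = x_0$ yields
\[(d\BP_{j, x_0})_{x_0}(Z) = \bigl[(d\Xi_{n_{\alpha_0}})_e - (d\Xi_{n_{\alpha_j}})_e\bigr] \circ (dJ)_{x_0}(Z).\]

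To choose the $\alpha_j$, I would apply \cref{lem:am_ProjectionOfAdjointImage} to fix $n_1^-, \dotsc, n_{\jj}^- \in N_1^-$ and $\epsilon' > 0$ such that $\sum_{j=1}^{\jj} \pi(\Ad_{\eta_j^-}(\LieN^+)) = \LieA \oplus \LieM$ whenever each $\eta_j^-$ lies within $\epsilon'$ of $n_j^-$, and let $y_j \in \Lambda_{\mathrm{b}}$ correspond to $n_j^-$ via $F(x_0, y_j) = F(x_0, \infty) n_j^-$. Since $\infty, y_1, \dotsc, y_{\jj} \in \Lambda_{\mathrm{b}}$, \cref{lem:BackwardTopologicalMixing} gives $m_0 \in \mathbb{N}$ such that for all $m \geq m_0$ one can choose $\alpha_0, \alpha_1, \dotsc, \alpha_{\jj} \in \calH^m$ with $\alpha_0^{-1}(\infty)$ arbitrarily close to $\infty$ and $\alpha_j^{-1}(\infty)$ arbitrarily close to $y_j$ for $j \geq 1$. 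By continuity of $\gamma \mapsto n_\gamma$, this forces $n_{\alpha_0}$ close to $e$ and $n_{\alpha_j}$ close to $n_j^-$. Applying \cref{lem:BrinPesinDerivativeImageIsAdjointProjection}, $(d\Xi_{n_{\alpha_0}})_e$ is close to $\pi|_{\LieN^+} = 0$, while $(d\Xi_{n_{\alpha_j}})_e$ is close to $A_j^\infty := \pi \circ \Ad_{n_j^-}|_{\LieN^+} \circ (dh_{n_j^-})_e$; since each $(dh_{n_j^-})_e$ is a linear automorphism of $\LieN^+$, we have $\sum_j \operatorname{Image}(A_j^\infty) = \sum_j \pi(\Ad_{n_j^-}(\LieN^+)) = \LieA \oplus \LieM$.

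A finite compactness argument on the unit sphere in $\LieA \oplus \LieM$ then yields some $\epsilon > 0$ (independent of $m$) such that for every unit $\omega \in \LieA \oplus \LieM$, there exist $j \in \{1, \dotsc, \jj\}$ and a unit $Z \in \operatorname{T}_{x_0}(\Delta_0)$ with $|\langle (d\BP_{j, x_0})_{x_0}(Z), \omega \rangle| \geq 2\epsilon$. Since $\Xi$, $J$, and the associated sequence of frames depend smoothly on their arguments, taking $U$ to be a sufficiently small open neighborhood of $x_0$ preserves this bound with $\epsilon$ in place of $2\epsilon$ uniformly for all $x \in U$ with the same branches $\alpha_j$. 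The main obstacle will be the joint control: one must verify that the perturbations $n_{\alpha_j} \to n_j^-$ produced by \cref{lem:BackwardTopologicalMixing} and $x \to x_0$ both induce only small perturbations of $(d\Xi_{n_{\alpha_j}})_{J(x)}$ relative to $A_j^\infty$. This reduces to a routine continuity argument once $m$ is chosen large enough and $U$ is chosen small enough.
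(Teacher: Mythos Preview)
Your proposal is correct and follows essentially the same approach as the paper indicates (namely, the argument of \cite[Proposition 6.5]{SW21} adapted using \cref{lem:BrinPesinInTermsOfHolonomy,lem:BrinPesinDerivativeImageIsAdjointProjection,lem:am_ProjectionOfAdjointImage}, with \cref{lem:BackwardTopologicalMixing} substituted for the topological mixing of the Markov section). The telescoping identity for $\BP_{\alpha_0,\alpha_j}$, the derivative computation at $x_0$ yielding $[(d\Xi_{n_{\alpha_0}})_e - (d\Xi_{n_{\alpha_j}})_e]\circ (dJ)_{x_0}$, and the use of $\infty \in \Lambda_{\mathrm{b}}$ to force $(d\Xi_{n_{\alpha_0}})_e \to 0$ are exactly the ingredients of that argument; your handling of the uniformity of $U$ in $m$ via joint smoothness of $\Xi$ on the compact parameter set $\{e,n_1^-,\dotsc,n_{\jj}^-\}$ is also the standard way to close this.
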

\cref{prop:PreLNIC} can be proven as in \cite[Proposition 6.5]{SW21} using \cref{lem:BrinPesinDerivativeImageIsAdjointProjection,lem:am_ProjectionOfAdjointImage} with only notational changes. \Cref{lem:BackwardTopologicalMixing} is also required as a replacement for the topological mixing property of the Markov section. 

We need the following lemma.
\begin{lemma}
\label{lem:CylinderInOpenSet}
Let $U \subset \Delta_0$ be an open subset with $U \cap \Lambda_\Gamma \neq \varnothing$. There exists $m_0 \in \N$ such that for all integers $m \geq m_0$, there exists $\gamma \in \calH^m$ such that $\gamma\Delta_0 \subset U$.
\end{lemma}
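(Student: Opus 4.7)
The plan is to locate a point in $U \cap \Lambda_+$ and then exploit the exponential shrinking of the cylinders of increasing length containing it. Since $\Lambda_\Gamma$ is the topological support of the Patterson--Sullivan measure $\mu$ and $U \cap \Lambda_\Gamma \neq \varnothing$, we have $\mu(U \cap \Delta_0) > 0$. On the other hand, Property~(1) of \cref{prop:Coding} gives $\mu(\Delta_0 \setminus \Delta_{\sqcup}) = 0$, and iterating this under $T$ yields that $\Lambda_+ \subset \Delta_0$ has full $\mu$-measure. Hence $\mu(U \cap \Lambda_+) > 0$, and in particular one can pick some $\xi \in U \cap \Lambda_+$.

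Next, I would track the itinerary of $\xi$ under $T$ to produce the desired cylinders. Since $\xi \in \Lambda_+$, the iterates $T^k(\xi)$ are defined for every $k \in \N$ and each lies in a unique $\Delta_{j_{k+1}}$ from the disjoint family $\{\Delta_j\}_{j \in \mathcal{J}}$. Composing the corresponding inverse branches gives, for every $m \in \N$, a unique element $\gamma^{(m)} = \gamma_{j_1} \gamma_{j_2} \cdots \gamma_{j_m} \in \calH^m$ with $\xi \in \gamma^{(m)} \Delta_0$. By the chain rule and Property~(3) of \cref{prop:Coding}, $\|d\gamma^{(m)}\| \leq \lambda^m$, so applying \cref{lem:CylinderEstimate} to the cylinder $\Delta_0$ of length $0$ yields
\begin{equation*}
\diam\bigl(\gamma^{(m)} \Delta_0\bigr) \leq C_{\mathrm{cyl}}\,\|d\gamma^{(m)}\|\,\diam(\Delta_0) \leq C_{\mathrm{cyl}}\,\lambda^m\,\diam(\Delta_0).
\end{equation*}

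To finish, since $U$ is open and $\xi \in U$, there exists $r > 0$ with $B_{\mathrm{E}}(\xi, r) \subset U$. Choosing $m_0 \in \N$ such that $C_{\mathrm{cyl}}\,\lambda^{m_0}\,\diam(\Delta_0) < r$, we get for every $m \geq m_0$ that $\gamma^{(m)} \Delta_0 \ni \xi$ has diameter less than $r$, hence $\gamma^{(m)} \Delta_0 \subset B_{\mathrm{E}}(\xi, r) \subset U$, and we may take $\gamma = \gamma^{(m)}$. No single step is a serious obstacle; the only point requiring any care is the initial non-emptiness of $U \cap \Lambda_+$, which is handled by the combination of $\supp \mu = \Lambda_\Gamma$ and the full $\mu$-measure of $\Lambda_+$ in $\Delta_0$.
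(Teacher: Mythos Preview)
Your proof is correct and follows essentially the same idea as the paper's: both exploit the uniform contraction $\lambda < 1$ so that cylinders of length $m$ have diameter $\ll \lambda^m$, and then fit such a cylinder into a ball contained in $U$. The only difference is how the cylinder is located. You first single out a point $\xi \in U \cap \Lambda_+$ (using that $\Lambda_+$ has full $\mu$-measure) and then take the unique length-$m$ cylinder containing $\xi$; the paper instead takes any $x \in U \cap \Lambda_\Gamma$, observes $\mu(B(x,\epsilon)) > 0$, and uses the covering property $\mu(\Delta_0) = \sum_{\gamma \in \calH^m} \mu(\gamma\Delta_0)$ to find \emph{some} length-$m$ cylinder meeting $B(x,\epsilon)$, which is then trapped in $B(x,2\epsilon)$. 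Your version is slightly more constructive (one fixed itinerary works for all $m$), while the paper's avoids invoking the full-measure property of $\Lambda_+$; both are equally short.
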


\begin{proof}
Let $U \subset \Delta_0$ be an open subset with $U \cap \Lambda_\Gamma \neq \varnothing$. Let $x \in U \cap \Lambda_\Gamma$ and $\epsilon > 0$ such that $B(x,{2\epsilon}) \subset U$. Note that $\mu(B(x,\epsilon)) > 0$. Fix $m_0 \in \N$ such that $\lambda^{m_0}\diam(\Delta_0) < \epsilon$. Let $m \geq m_0$ be an integer. Since $\mu(\Delta_0) = \sum_{\gamma \in \calH^n} \mu(\gamma\Delta_0)$ for any $n \in \N$, there exists $\gamma \in \calH^m$ such that $B(x,\epsilon) \cap \gamma\Delta_0 \neq \varnothing$. By Property~(3) in \cref{prop:Coding}, we also have $\diam(\gamma\Delta_0) \leq \lambda^m \diam(\Delta_0) \leq \lambda^{m_0} \diam(\Delta_0) < \epsilon$ and hence $\gamma\Delta_0 \subset B(x,{2\epsilon}) \subset U$.
\end{proof}

\begin{proof}[Proof that \cref{prop:PreLNIC} implies \cref{prop:LNIC}]
Let $\tilde{\epsilon}$, $\tilde{m}_0$, $\jj$, and $U$ be the $\epsilon$, $m_0$, $\jj$, and $U$ from \cref{prop:PreLNIC}. By \cref{lem:CylinderInOpenSet}, we can fix some $\gamma \in \calH^n$ for some $n \in \N$ such that $\gamma\Delta_0 \subset U$. Fix $\epsilon \in (0, \tilde{\epsilon} \cdot \inf\{\|(d\gamma)_x\|: x \in \Delta_0\})$ which is possible due to Property~(4) in \cref{prop:Coding}. Fix $m_0 = \tilde{m}_0 + n$. Let $m \geq m_0$ be an integer. Let $\tilde{m} = m - n \geq \tilde{m}_0$. Let $\{\tilde{\alpha}_j\}_{j = 0}^{\jj}$ be the inverse branches provided by \cref{prop:PreLNIC} and $\{\alpha_j\}_{j = 0}^{\jj} = \{\tilde{\alpha}_j \gamma\}_{j = 0}^{\jj}$. Denote $\widetilde{\BP}_j := \BP_{\tilde{\alpha}_0, \tilde{\alpha}_j}$ and $\BP_j := \BP_{\alpha_0, \alpha_j}$. Denote by $C_g: G \to G$ the conjugation map by $g \in G$. For all $1 \leq j \leq \jj$, using definitions and the fact that the group $A$ commutes with the group $AM$, we calculate that
\begin{align*}
\BP_j(x, x') = C_{\Hol^n(\gamma x, \infty)^{-1}}(\widetilde{\BP}_j(\gamma x, \gamma x')) \qquad \text{for all $x, x' \in \Delta_0$},
\end{align*}
so taking the differential gives
\begin{align*}
(d\BP_{j, x})_x = \Ad_{\Hol^n(\gamma x, \infty)^{-1}} \circ (d\widetilde{\BP}_{j, \gamma x})_{\gamma x} \circ (d\gamma)_x \qquad \text{for all $x \in \Delta_0$}.
\end{align*}
Let $x \in \Delta_0$ and $\omega \in \LieA \oplus \LieM$ with $\|\omega\| = 1$, and take $\tilde{\omega} = \Ad_{\Hol^n(\gamma x, \infty)^{-1}}^*(\omega)$. Note that we still have $\|\tilde{\omega}\| = 1$ since the inner product on $\LieG$ is left $\Ad_K$-invariant. Using $\tilde{x} := \gamma x \in \gamma\Delta_0 \subset U$ and \cref{prop:PreLNIC}, there exist $1 \leq \tilde{j} \leq \jj$ and $\tilde{Z} \in \operatorname{T}_{\tilde{x}}(U)$ with $\|\tilde{Z}\|=1$ such that
\begin{align*}
|\langle (d\widetilde{\BP}_{\tilde{j}, \tilde{x}})_{\tilde{x}}(\tilde{Z}),\tilde{\omega}\rangle| \geq \tilde{\epsilon}.
\end{align*}
Thus, taking $j = \tilde{j}$ and $Z = \frac{(dT^n)_{\tilde{x}}(\tilde{Z})}{\|(dT^n)_{\tilde{x}}(\tilde{Z})\|}$, we calculate that
\begin{align*}
|\langle  (d\BP_{j, x})_x(Z),\omega\rangle| &= |\langle (d\widetilde{\BP}_{j, \gamma x})_{\gamma x}((d\gamma)_x(Z)),  \Ad_{\Hol^n(\gamma x, \infty)^{-1}}^*(\omega)\rangle| \\
&= \|(d\gamma)_x(Z)\| \cdot |\langle (d\widetilde{\BP}_{\tilde{j}, \tilde{x}})_{\tilde{x}}(\tilde{Z}), \tilde{\omega}\rangle| \\
&\geq \tilde{\epsilon} \cdot \inf\{\|(d\gamma)_x\|: x \in \Delta_0\} > \epsilon.
\end{align*}
\end{proof}

Fix $\varepsilon_2 \in (0, 1)$, $m_0 \in \N$, and $\jj \in \N$ to be the $\epsilon$, $m_0$, and $\jj$ provided by \cref{prop:LNIC} for the rest of the paper.

We finish this section with an approximation lemma which will be used in \cref{sec:Dolgopyat'sMethod}. Fix $\delta_{AM} > 0$ such that any pair of points in $B_{AM}(e,\delta_{AM}) \subset AM$ has a unique geodesic through them. Fix a constant $C_{\BP} > 0$ such that
\begin{equation*}
C_{\BP} \geq \sup\{\|(d\BP_{j, x})_y\|_{\mathrm{op}}:x, y \in \Delta_0, j \in \{1, 2, \dotsc, \jj\}\}
\end{equation*}
for any $m\geq m_0$ and corresponding maps $\{\BP_j\}_{j=1}^{\jj}$ provided by \cref{prop:LNIC}. It can be checked from the proofs of \cref{prop:PreLNIC,prop:LNIC} (see \cite[Proposition 6.5]{SW21}, in particular, \cite[Eq. (6)]{SW21}) that $C_{\BP}$ can be chosen independently of the inverse branches and their length provided by \cref{prop:LNIC}. \Cref{lem:ComparingExpWithBP} can be proved as in \cite[Lemma 7.1]{SW21}. 

\begin{lemma}
\label{lem:ComparingExpWithBP}
Let $m\geq m_0$ and $\{\BP_j\}_{j=1}^{\jj}$ be the corresponding maps provided by \cref{prop:LNIC}. Then there exists $C_{\exp, \BP} > 0$ such that for all $1 \leq j \leq \jj$ and $x, y \in \Delta_0$ with $\|x - y\| < \frac{\delta_{AM}}{C_{\BP}}$, we have
\begin{align*}
d_{AM}(\exp(Z), \BP_j(x, y)) \leq C_{\exp, \BP}\|x - y\|^2
\end{align*}
where $Z = (d\BP_{j, x})_x(y-x)$.
\end{lemma}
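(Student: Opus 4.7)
The plan is to apply a second-order Taylor expansion after verifying that $\BP_{j, x}$ sends $x$ to the identity $e \in AM$. Directly from the definition,
\[ \BP_j(x, x) = \GHol^m(\alpha_0 x, \infty)^{-1}\GHol^m(\alpha_0 x, \infty)\GHol^m(\alpha_j x, \infty)^{-1}\GHol^m(\alpha_j x, \infty) = e, \]
since adjacent factors cancel directly and no commutativity in $M$ is needed. Thus $\BP_{j, x}: \Delta_0 \to AM$ is a smooth map satisfying $\BP_{j, x}(x) = e$ with differential $(d\BP_{j, x})_x$ at $x$.

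I would then compare the two smooth maps $y \mapsto \BP_{j, x}(y)$ and $y \mapsto \exp((d\BP_{j, x})_x(y - x))$ from $\Delta_0$ into $AM$. Both equal $e$ at $y = x$, and both have differential $(d\BP_{j, x})_x$ at $y = x$ (using $(d\exp)_0 = \operatorname{Id}_{\LieA \oplus \LieM}$ for the second). Pulled back to the Lie algebra via $\exp^{-1}$ near $e$, this is the classical setup for a second-order Taylor estimate: the two maps agree to first order at $x$, so they differ by $O(\|y - x\|^2)$ as soon as a uniform bound on second derivatives in a neighborhood is available. The smallness assumption $\|x - y\| < \delta_{AM}/C_{\BP}$ enters only to ensure $\|Z\| \leq C_{\BP}\|y - x\| < \delta_{AM}$, placing $\exp(Z)$ inside $B_{AM}(e, \delta_{AM})$ where $\exp$ is a diffeomorphism and $d_{AM}$ is comparable to the norm in the Lie algebra; the same is then true (after possibly shrinking the radius by a harmless constant) of $\BP_{j, x}(y)$, so the comparison in exponential coordinates is valid.

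For the required uniform second-derivative bound, I would use that $\{\BP_j\}_{j = 1}^{\jj}$ is a finite family of smooth maps on the bounded domain $\Delta_0 \times \Delta_0$, each built from finitely many smooth generalized holonomies $\GHol$, which are themselves smooth in their first argument via the construction of $\tilde{F}$ together with the smoothness of $\Ret$ and $\Hol$. Taking $C_{\exp, \BP}$ to be the resulting Taylor constant yields the claim. I do not expect any serious obstacle here; the argument is a routine uniform Taylor expansion in the spirit of \cite[Lemma 7.1]{SW21}, and the only point worth attention is the simple verification that $\BP_{j, x}(x) = e$.
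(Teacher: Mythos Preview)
Your proposal is correct and follows essentially the same route the paper indicates: the paper does not spell out a proof but refers to \cite[Lemma 7.1]{SW21}, which is precisely the second-order Taylor argument you describe, hinging on the verification $\BP_{j,x}(x)=e$ and uniform $C^2$ bounds for the finite family $\{\BP_j\}$ on the bounded domain $\Delta_0\times\Delta_0$.
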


\section{Non-concentration property}
\label{sec:NCP}
In this section we establish the second key property called the \emph{non-concentration property (NCP)}. The main result, \cref{prop:NCP}, is the appropriate generalization of NCP in \cite[Proposition 6.6]{SW21} for geometrically finite hyperbolic manifolds with cusps.

We start with some notations. Recall the choice of reference vector $v_o\in \T(\HS)$ from \cref{subsec:hyperbolic spaces}. With this choice, $a_t$ acts on $\HS \subset \R^{d+1}$ simply by scaling by a factor of $e^{-t}$ for all $t \in \R$. We parametrize the unstable horospherical subgroup
\begin{equation*}
N^+=\{n_x^+:x\in \mathbb{R}^d\}
\end{equation*}
so that for all $x \in \R^d$, we have the forward endpoint $(n_x^+)^+ = x$. Define the map $u_\bullet: \Delta_0 \to \T(X)$ by $u_x = \pi\circ\tilde\Phi(x, \infty, 0)$ for all $x \in \Delta_0$, where $\tilde{\Phi}$ is the embedding defined as in \cref{eqn:embedding} and $\pi$ is the projection from $\partial^2(\HS)\times \mathbb{R}\cong \mathrm{T}^1(\HS)$ to $\mathrm{T}^1(X)$. Note that $u_{\Delta_0}$ is then the immersion of a subset of the horosphere corresponding to $\Delta_0 \times \{\infty\} \times \{0\}$. Recalling \cref{eqn:ConstantcDelta0}, we define the compact region
\begin{align}
\label{compact set}
\Omega_R \subset \T(X)
\end{align}
to be the closed $(R + \log(C_{\Delta_0}))$-neighborhood of $u_{\Delta_0}$.

\begin{proposition}[NCP]
\label{prop:NCP}
Let $R > 0$. There exists $\eta \in (0, 1)$ such that for all $\epsilon \in (0, 1)$, $x \in \Lambda_\Gamma \cap \Delta_0 - \overline{B(\partial \Delta_0,\epsilon)}$ with $u_x a_{-\log(\epsilon)} \in \Omega_R$, and $w \in \R^d$ with $\|w\| = 1$, there exists $y \in \Lambda_\Gamma \cap B(x,\epsilon) \subset \Delta_0$ such that $|\langle y - x, w \rangle| \geq \epsilon \eta$.
\end{proposition}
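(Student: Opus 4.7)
The plan is to prove \cref{prop:NCP} by contradiction, using the self-similarity of $\Lambda_\Gamma$ under the $\Gamma$-action to convert a small-scale concentration into a unit-scale statement and then invoking \cref{lem:LambdabZDense} to rule it out. Suppose the proposition fails. Then there exist sequences $\eta_n \to 0^+$, $\epsilon_n \in (0,1)$, points $x_n \in \Lambda_\Gamma \cap \Delta_0 - \overline{B(\partial \Delta_0, \epsilon_n)}$ with $u_{x_n} a_{-\log(\epsilon_n)} \in \Omega_R$, and unit vectors $w_n \in \R^d$ such that $\Lambda_\Gamma \cap B(x_n, \epsilon_n)$ is contained in the slab $\{y \in \R^d : |\langle y - x_n, w_n\rangle| < \epsilon_n \eta_n\}$.

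For each $n$, I would produce from the hypotheses a group element $\tilde\gamma_n \in \Gamma$ with the following properties: $\|(d\tilde\gamma_n)_{x_n}\|_{\mathbb S^d} \asymp \epsilon_n^{-1}$ with constants depending only on $R$ and $\Delta_0$, the image $\tilde\gamma_n(x_n)$ lies in a fixed compact subset of $\partial \HS - \{\infty\}$, and the restriction of $\tilde\gamma_n$ to $B(x_n, \epsilon_n)$ is close to a similarity. Concretely, lifting $u_{x_n} a_{-\log(\epsilon_n)}$ to a vector in $\T(\HS)$ whose basepoint sits at Euclidean height comparable to $\epsilon_n$ above $x_n$, the inclusion $u_{x_n} a_{-\log(\epsilon_n)} \in \Omega_R$ allows us to choose $\tilde\gamma_n^{-1}$ carrying this lift into a fixed compact fundamental set for $\T(X)$; conformality of the $G$-action on $\partial \HS$ then yields the derivative bound. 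Applying $\tilde\gamma_n$ to the slab inclusion and using \cref{lem:ComparingExpWithBP}-type second-order estimates for $\tilde\gamma_n$, the slab condition converts into: $\Lambda_\Gamma \cap B(\tilde\gamma_n(x_n), r)$ lies in an $O(\eta_n)$-neighborhood of an affine hyperplane through $\tilde\gamma_n(x_n)$ with unit normal $\tilde w_n$, for some $r > 0$ independent of $n$.

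Next, I would extract a subsequence along which $\tilde\gamma_n(x_n) \to x_* \in \Lambda_\Gamma$ and $\tilde w_n \to w_*$ with $\|w_*\| = 1$. Since $\eta_n \to 0$, passing to the limit yields $\Lambda_\Gamma \cap \overline{B(x_*, r/2)} \subset H_*$, where $H_* = \{y : \langle y - x_*, w_*\rangle = 0\}$ is an affine hyperplane. The point $x_*$ is then necessarily a conical limit point, so there is a sequence $g_k \in \Gamma$ expanding around $x_*$ while keeping $g_k(x_*)$ in a bounded region. Each $g_k$ sends $\Lambda_\Gamma \cap \overline{B(x_*, r/2)}$ into $\Lambda_\Gamma \cap g_k(H_*)$, so arbitrarily large pieces of $\Lambda_\Gamma$ lie on the generalized spheres $g_k(H_*)$ (conformal images of hyperplanes). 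After a further subsequence extraction, the sequence $g_k(H_*)$ converges in the Hausdorff topology to a limiting generalized sphere $S_\infty \subset \partial \HS$, forcing $\Lambda_\Gamma \subset S_\infty$ and contradicting \cref{lem:LambdabZDense}.

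The main obstacle I expect is the clean execution of the rescaling step: converting the quantitative condition $u_{x_n} a_{-\log(\epsilon_n)} \in \Omega_R$ into the existence of $\tilde\gamma_n$ with the stated derivative and image bounds. This requires carefully unpacking the thick-thin decomposition together with the structure of cusps from \cref{subsec:StructureOfCusps}. The hypothesis that $x_n$ is at distance at least $\epsilon_n$ from $\partial \Delta_0$ ensures the vertical geodesic from $\infty$ toward $x_n$ remains over $\Delta_0$ down to height $\asymp \epsilon_n$, while $u_{x_n} a_{-\log(\epsilon_n)} \in \Omega_R$ prevents the projected geodesic in $X$ from excursing into cusps other than the one at $\infty$; the combination is what forces $\tilde\gamma_n^{-1}$ to exist with the correct word-length and derivative behavior, and here one must use \cref{lambda-} together with the coding of \cref{prop:Coding} to realize $\tilde\gamma_n$ as a bounded modification of some $\gamma \in \calH^k$ with $\|d\gamma\|\asymp \epsilon_n$.
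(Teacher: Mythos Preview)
Your overall strategy is sound and would succeed, but it is considerably more roundabout than the paper's argument, and the ``main obstacle'' you flag is in fact not an obstacle at all once you exploit the homogeneous structure directly.

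The paper's proof runs the same contradiction setup but then observes that since $u_{x_j} = \Gamma n_{x_j}^+ M$ and $u_{x_j} a_{t_j} \in \Omega_R$ with $t_j = -\log \epsilon_j$, one can write $n_{x_j}^+ a_{t_j} = \beta_j g_j$ with $\beta_j \in \Gamma$ and $g_j$ in a fixed compact lift $\tilde\Omega_R \subset G$. The point is that $a_{-t_j} n_{-x_j}^+$ acts on $\R^d \subset \partial\HS$ as the \emph{exact} affine similarity $y \mapsto \epsilon_j^{-1}(y - x_j)$, so applying $\beta_j^{-1} = g_j a_{-t_j} n_{-x_j}^+$ to the slab inclusion gives, with no error terms whatsoever,
\[
\Lambda_\Gamma \cap g_j B(0,1) \subset g_j\{y \in \R^d : |\langle y, w_j\rangle| \le 1/j\}.
\]
A single subsequence extraction ($g_j \to g$, $w_j \to w$) then yields $\Lambda_\Gamma \cap gB(0,1) \subset g\{\langle y,w\rangle = 0\}$, which contradicts \cite[Proposition 3.12]{Win15} (an open piece of the limit set of a Zariski dense group cannot lie in a sphere).

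So: your appeal to second-order estimates, to \cref{lem:ComparingExpWithBP} (which concerns the $\BP$ maps, not arbitrary $\Gamma$-elements), and to realizing $\tilde\gamma_n$ through the coding $\calH^k$ are all unnecessary. The element $\beta_j$ comes for free from compactness of $\Omega_R$, and the rescaling is exactly linear on the boundary. Your second blow-up step via conical limit points to promote local containment in a hyperplane to global containment in a sphere is also avoidable, since the local statement already contradicts Zariski density; note too that your claim ``$x_*$ is necessarily a conical limit point'' needs a small patch (it could be parabolic), though this is easily fixed by choosing a nearby conical point.
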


\begin{proof}
To obtain a contradiction, suppose the proposition is false. Then there exists $R > 0$ such that for all $j \in \mathbb N$, taking $\eta_j = \frac{1}{j}$, there exist $\epsilon_j \in (0, 1)$, $x_j \in \Lambda_\Gamma \cap \Delta_0 - \overline{B(\partial \Delta_0,\epsilon_j)}$ with $u_{x_j} a_{-\log(\epsilon_j)}  \in \Omega_R$, and $w_j \in \mathbb R^d$ with $\|w_j\| = 1$, such that $|\langle y - x_j, w_j \rangle| \leq \epsilon_j \eta_j = \frac{\epsilon_j}{j}$ for all $y \in \Lambda_\Gamma \cap B(x_j, \epsilon_j)$. Hence, we can rewrite this as
\begin{align}
\label{eqn:IfLemmaIsFalse}
\Lambda_\Gamma \cap B(x_j,\epsilon_j) \subset \left\{y \in \mathbb R^d: |\langle y - x_j, w_j \rangle| \leq \frac{\epsilon_j}{j}\right\} \qquad \text{for all $j \in \mathbb N$}.
\end{align}

We want to use the self-similarity property of the fractal set $\Lambda_\Gamma$. We have $\Gamma n_{x_j}^+M = u_{x_j} \in u_{\Delta_0}$. For all $j \in \mathbb N$, setting $t_j = -\log(\epsilon_j)$ we have $\Gamma n_{x_j}^+ a_{t_j}M = u_{x_j} a_{t_j} \in \Omega_R$ by hypothesis and hence $n_{x_j}^+ a_{t_j} \in \Gamma \tilde{\Omega}_R$ where $\tilde{\Omega}_R$ is some $M$-invariant lift of $\Omega_R$ which we note is compact. Thus, for all $j \in \mathbb N$, there exist $\beta_j \in \Gamma$ and $g_j \in \tilde{\Omega}_R$ such that $n_{x_j}^+ a_{t_j} = \beta_j g_j$. Now for all $j \in \mathbb N$, we have $g_j a_{-t_j} n_{-x_j}^+ = \beta_j^{-1}$. Hence, the action of $g_j a_{-t_j} n_{-x_j}^+ $ on $\partial_\infty(\mathbb H^{d+1})$ preserves $\Lambda_\Gamma$.

Now, applying $g_j a_{-t_j} n_{-x_j}^+$ in \cref{eqn:IfLemmaIsFalse} gives
\begin{align*}
\Lambda_\Gamma \cap g_j B(0,1) \subset g_j\left\{y \in \mathbb R^{d}: |\langle y, w_j \rangle| \leq \frac{1}{j}\right\} \qquad \text{for all $j \in \mathbb N$}.
\end{align*}
By compactness, we can pass to subsequences so that $\lim_{j \to \infty} w_j = w \in \mathbb R^{d}$ with $\|w\| = 1$ and $\lim_{j \to \infty} g_j = g \in \tilde{\Omega}_R$. Then in the limit $j \to \infty$, we have $\Lambda_\Gamma \cap gB(0,1) \subset g\left\{y \in \mathbb R^{d}: \langle y, w \rangle = 0\right\}$. This contradicts \cite[Proposition 3.12]{Win15} since $\Gamma < G$ is Zariski dense.
\end{proof}

It can also be deduced from the structure of cusps (see \cref{subsec:StructureOfCusps}) and the geometry of $\partial \HS$ that NCP is true without the condition involving the compact subset $\Omega_R \subset \T(X)$ if and only if all the cusps are of maximal rank (cf. \cite[Theorem 3.15]{DFSU21} and its proof). That is, the condition involving the compact subset $\Omega_R \subset \T(X)$ is necessary in the presence of cusps of non-maximal rank. We end this section with an example below which demonstrates the latter by elaborating on the comments after \cite[Theorem 3.15]{DFSU21}. As a result, the difficulty is that in Dolgopyat's method, we cannot obtain cancellations on \emph{every} set in the partition $\calP_{(b, \rho)}$ of $\Delta_0$ (see \cref{lem:PartnerPointInZariskiDenseLimitSetForBPBound,prop:CancellationCylinder}).

\begin{example}
\label{exa:non concentration}
Suppose $\Gamma < G$ is a geometrically finite subgroup with parabolic elements and $p \in \partial \HS-\{\infty\}$ is a rank $1$ parabolic fixed point for the $\Gamma$-action on $\partial \HS$. Let $g\in G$ be an element such that $gp=\infty$. Define $\tilde{\Gamma} = g\Gamma g^{-1} < G$ which is isomorphic to $\Gamma$. Then, $\infty \in \partial \HS$ is a rank $1$ parabolic fixed point for the $\tilde{\Gamma}$-action on $\partial \HS$. Recalling \cref{subsec:StructureOfCusps} and the notations there, we have a corresponding fundamental domain $\Delta_{\infty}:=B_Y(C_\infty)\times \Delta_{\infty}'$.

Let $\Delta_{Y} \subset Y \subset \R^d$ be an open $(d-1)$-dimensional parallelotope containing $B_Y(C_\infty)$. By \cref{limit set inclusion}, we have
\begin{equation*}
\Lambda_{\tilde{\Gamma}} - \{\infty\} \subset \bigcup_{\gamma\in \tilde{\Gamma}_\infty} \gamma \left(\overline{\Delta_Y\times \Delta_{\infty}'}\right),
\end{equation*}
where $\tilde{\Gamma}_\infty < \Stab_{\tilde{\Gamma}}(\infty)$ is the finite index subgroup provided by \cref{lem:Bieberbach}. Let $\{F_{j, k}: j \in \{1, 2, \dotsc, d\}, k \in \{1, 2\}\}$ be the set of $(d-1)$-dimensional generalized affine subspaces in $\R^d \cup \{\infty\}$ which contain the corresponding $(d-1)$-dimensional faces of $\Delta_Y\times \Delta_{\infty}'$. The set is ordered such that $F_{j, 1}$ and $F_{j, 2}$ are parallel for all $j \in \{1, 2, \dotsc, d\}$, and $F_{d, 1}$ and $F_{d, 2}$ are orthogonal to the $1$-dimensional subspace $Z \subset \R^d$. This set of generalized affine subspaces determine a corresponding set $\{H_{j, k}: j \in \{1, 2, \dotsc, d - 1\}, k \in \{1, 2\}\}$ of open half spaces not containing $\Lambda_{\tilde{\Gamma}}$.

Using $g^{-1}$, we conclude that $\{g^{-1}F_{j, k}: j \in \{1, 2, \dotsc, d\}, k \in \{1, 2\}\}$ consists of $(d-1)$-dimensional spheres such that $g^{-1}F_{j, 1}$ and $g^{-1}F_{j, 2}$ are mutually tangent to each other at $p$ for all $j \in \{1, 2, \dotsc, d\}$. Moreover, $g^{-1}F_{j, 1}$ is the boundary of the open ball $g^{-1}H_{j, k}$ for all $j \in \{1, 2, \dotsc, d - 1\}$ and $k \in \{1, 2\}$. In fact, we have
\begin{equation*}
\Lambda_{\Gamma}\subset \R^d \cup \{\infty\} - \bigcup_{\substack{j \in \{1, 2, \dotsc, d - 1\},\\k \in \{1, 2\}}} g^{-1}H_{j, k}.
\end{equation*}
Choose $w\in \mathbb{R}^d$ to be any unit vector based at $p$ and orthogonal to both $g^{-1}F_{j, 1}$ and $g^{-1}F_{j, 2}$ for any choice of $j \in \{1, 2, \dotsc, d - 1\}$. Note that $w$ is then automatically tangent to both $g^{-1}F_{d, 1}$ and $g^{-1}F_{d, 2}$. Then NCP as stated in \cite[Proposition 6.6]{SW21} fails at $p$ for the direction $w$, i.e., the following is \emph{false}: there exists $\eta \in (0, 1)$ such that for all $\epsilon \in (0, 1)$, there exists $y \in \Lambda_\Gamma \cap B(p, \epsilon)$ such that $|\langle y - p, w\rangle| \geq \epsilon \eta$.

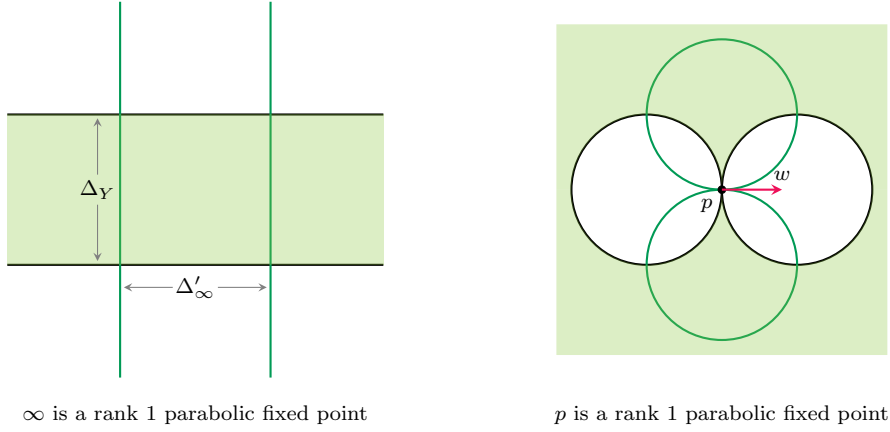
\begin{figure}[h]
\label{fig:rank 1}
\centering
\begin{tikzpicture}[>=stealth]
\draw[black, thick] (-6.5, 1) to (-1.5, 1);
\draw[black, thick] (-6.5, -1) to (-1.5, -1);
\draw[ForestGreen, thick] (-5, 2.5) to (-5, -2.5);
\draw[ForestGreen, thick] (-3, 2.5) to (-3, -2.5);
\fill[LimeGreen, fill opacity=0.3] (-6.5, 1) to (-6.5, -1) to (-1.5, -1) to (-1.5, 1)  -- cycle;
\node[] at (-5.3, 0) {\footnotesize $\Delta_Y$};
\draw[gray,->] (-5.3, 0.2) to (-5.3, 0.95);
\draw[gray,->] (-5.3, -0.2) to (-5.3, -0.95);
\node[] at (-4, -1.3) {\footnotesize $\Delta_\infty'$};
\draw[gray,->] (-4.3, -1.3) to (-4.95, -1.3);
\draw[gray,->] (-3.7, -1.3) to (-3.05, -1.3);
\node[] at (-4, -3) {\footnotesize $\infty$ is a rank $1$ parabolic fixed point};

\draw[black, thick] (2, 0) circle  [radius = 1, fill = white];
\draw[black, thick] (4, 0) circle  [radius = 1, fill = white];
\draw[ForestGreen, thick] (3, 1) circle  [radius = 1, fill = white];
\draw[ForestGreen, thick] (3, -1) circle  [radius = 1, fill = white];
\fill[LimeGreen, fill opacity=0.3] (3, 0) arc (0:180:1) to (0.8, 0) to (0.8, 2.2) to (5.2, 2.2) to (5.2, 0) to (5, 0) arc (0:180:1);
\fill[LimeGreen, fill opacity=0.3] (3, 0) arc (0:-180:1) to (0.8, 0) to (0.8, -2.2) to (5.2, -2.2) to (5.2, 0) to (5, 0) arc (0:-180:1);
\draw[fill = black] (3, 0) circle  [radius = 0.05];
\node[below left] at (3, 0) {\footnotesize $p$};
\node[] at (3, -3) {\footnotesize $p$ is a rank $1$ parabolic fixed point};

\draw[OrangeRed,thick,->] (3, 0) to (3.8, 0) ;
\node[above] at (3.8, 0) {\footnotesize $w$};
\end{tikzpicture}
\caption{An illustration of a rank $1$ parabolic fixed point in $\partial\mathbb{H}^3$. The limit sets are contained in the shaded regions.}
\end{figure}
\end{example}

\section{Large deviation property}
\label{sec:LDP}\label{subsec:CombinatoricWords}
In this section we establish the third key property called the \emph{large deviation property (LDP)}.

Recall the measures $\mu$ and $\nu$ from \cref{subsec:PS measure and BMS measure,subsec:expanding map}, respectively, and also \cref{eqn:nu_and_mu}.

For all $t > 0$ and $R > 0$, we define $\Omega^\dagger(t, R)$ to be the set consisting of maximal cylinders $\mathtt{C} \subset \Delta_0$ satisfying
\begin{align*}
e^{-R - t} \leq \diam(\mathtt{C}) \leq e^{R - t}.
\end{align*}
Define 
\[\Omega(t, R) = \bigcup_{\mathtt{C}\in \Omega^\dagger(t, R)}\mathtt{C} \subset \Delta_0.\]

The following is the relation between $\Omega(t,R) \subset \Delta_0$ and the compact subset $\Omega_R \subset \T(X)$ defined in \cref{compact set}.

\begin{lemma}
\label{lem:goodpartitioncusp}
Let $t > 0$ and $R > 0$. For all $x \in \Omega(t, R)$, we have $u_xa_t\in \Omega_R$.
\end{lemma}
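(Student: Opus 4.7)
The plan is to identify, for each $x \in \Omega(t,R)$, an explicit reference point $y_0 \in \Delta_0$ such that $u_{y_0} \in u_{\Delta_0}$ lies within hyperbolic distance $R + \log(C_{\Delta_0})$ of $u_x a_t$ in $\T(X)$. Since $x \in \Omega(t,R)$, there exists a maximal cylinder $\mathtt{C} = \gamma_0\Delta_0 \in \Omega^{\dagger}(t,R)$ with some $\gamma_0 \in \calH^n$ containing $x$, and I would take $y_0 := \gamma_0^{-1}(x) \in \Delta_0$. Writing $\tilde{u}_z := \tilde{\Phi}(z,\infty,0) \in \T(\HS)$ for the canonical lift of $u_z$, the vectors $\tilde{u}_{y_0}$ and $\gamma_0\tilde{u}_{y_0}$ both project to $u_{y_0}$ in $\T(X)$ because $\gamma_0 \in \Gamma$, so it suffices to bound $d_{\T(\HS)}(\tilde{u}_x a_t,\gamma_0\tilde{u}_{y_0})$.

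The lift $\tilde{u}_x a_t$ is based at $(x,e^{-t})$ in the upper half-space, while $\gamma_0\tilde{u}_{y_0}$ is based at $\gamma_0((y_0,1))$. Crucially, both vectors share the forward endpoint $x \in \partial\HS$ since $\gamma_0(y_0) = x$, so the distance in $\T(\HS)$ is uniformly controlled by the hyperbolic distance between their basepoints (they lie on a common weak stable leaf of the geodesic flow on which the projection to $\HS$ is bi-Lipschitz). The key step therefore reduces to proving $d_\HS\bigl((x,e^{-t}),\gamma_0((y_0,1))\bigr) \leq R + \log(C_{\Delta_0})$.

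For this I would invoke the M\"obius form of $\gamma_0$. By \cref{lambda-}, $\eta := \gamma_0^{-1}(\infty) \in \Lambda_-$; after possibly shrinking $\Lambda_-$ I may assume $\|y_0 - \eta\|$ is uniformly bounded below. Writing $\gamma_0$ as an inversion in a sphere of some radius $r > 0$ centered at $(\eta, 0)$ composed with an affine similarity, a direct computation gives that $\gamma_0((y_0,1))$ has height $h = r^2/(\|y_0-\eta\|^2 + 1)$ and horizontal displacement from $x$ of order $r^2/\|y_0-\eta\|^3$, both comparable to $\|(d\gamma_0)_{y_0}\| = r^2/\|y_0-\eta\|^2$. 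Bounded distortion from Property~(4) of \cref{prop:Coding} together with the diameter estimate of \cref{lem:CylinderEstimate} then yields $\|(d\gamma_0)_{y_0}\| \asymp \diam(\mathtt{C})/\diam(\Delta_0) \asymp e^{R-t}$, with multiplicative constants depending only on $C_{\mathrm{cyl}}$ and $\diam(\Delta_0)$.

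Plugging these bounds into the upper half-space formula for $\cosh d_\HS$ bounds each relevant term by a constant multiple of $e^R$, so $d_\HS \leq R + O(1)$. The resulting $O(1)$ depends only on $C_{\mathrm{cyl}}$, $\diam(\Delta_0)$, and the Euclidean separation between $\Lambda_-$ and $\overline{\Delta_0}$, all of which can be absorbed into $\log(C_{\Delta_0})$ by enlarging $C_{\Delta_0}$ if needed, which is permitted by the $\geq$ in \cref{eqn:ConstantcDelta0}. The main obstacle will be carrying out the M\"obius computation cleanly enough to extract the sharp prefactor $R$ (rather than $cR$ for some $c > 1$) and to verify that all auxiliary constants are truly independent of $n$, $\gamma_0$, and $t$.
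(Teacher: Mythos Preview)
Your approach is correct in spirit but considerably more laborious than the paper's, and the obstacle you flag at the end is real. The paper sidesteps the explicit M\"obius computation entirely by working with the return time $\Ret_n(x) = -\log\|(d\gamma_0)_{y_0}\|$. The two-line argument is: (i) the derivative bound of \cref{lem:DerivativeEstimate} together with \cref{equ:d gamma x} gives $|\Ret_n(x)-t|\le R+\log C_{\Delta_0}$; (ii) since $u_x a_{\Ret_n(x)}$ lies within bounded distance of $u_{T^n(x)}\in u_{\Delta_0}$ (they sit on the same strong stable leaf with backward endpoints $\gamma_0^{-1}(\infty)\in\Lambda_-$ and $\infty$, and the separation of $\Lambda_-$ from $\overline{\Delta_0}$ controls this distance uniformly), one obtains $d(u_xa_t,u_{\Delta_0})\le |t-\Ret_n(x)|+O(1)$, which is the desired bound after absorbing the $O(1)$ into $C_{\Delta_0}$.

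Your route of first bounding $d_{\HS}$ and then lifting to $d_{\T(\HS)}$ via the weak stable leaf is where the sharp prefactor is in danger: the projection from a weak stable leaf to $\HS$ is bi-Lipschitz but not an isometry, so naively you get $d_{\T(\HS)}\le c\, d_{\HS}$ with $c>1$, hence $cR+O(1)$ rather than $R+O(1)$. The way to recover the sharp constant is to split the $\T(\HS)$-distance into a geodesic-flow component of length exactly $|t-\Ret_n(x)|$ (matching $s$-coordinates in the Hopf parametrization) plus a strong stable component which is uniformly bounded---but that decomposition \emph{is} the paper's argument. So your proposal would work, but only after importing the key observation it was designed to avoid.
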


To prove \cref{lem:goodpartitioncusp}, we need a quick estimate which follows from \cref{lem:CylinderEstimate}.
\begin{lemma}
\label{lem:DerivativeEstimate}
Let $t > 0$ and $R > 0$. For any cylinder $\mathtt{C}\in \Omega^\dagger(t,R)$, write $\mathtt{C}=\gamma \Delta_0$ with $\gamma\in \bigcup_{n\in \mathbb{N}}\calH^n$. We have
\begin{equation*}
C_{\Delta_0}^{-1} e^{-R-t}\leq \lVert d\gamma\rVert\leq C_{\Delta_0} e^{R-t}.
\end{equation*}
\end{lemma}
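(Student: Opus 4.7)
The plan is to derive both inequalities by applying Lemma \ref{lem:CylinderEstimate} with the Borel set taken to be $\Delta_0$ itself. That lemma immediately yields the two-sided comparison
\[
C_{\mathrm{cyl}}^{-1}\,\|d\gamma\|\,\diam(\Delta_0)\ \leq\ \diam(\gamma\Delta_0)\ \leq\ C_{\mathrm{cyl}}\,\|d\gamma\|\,\diam(\Delta_0).
\]
Since $\mathtt{C}=\gamma\Delta_0$ lies in $\Omega^\dagger(t,R)$, by definition it satisfies $e^{-R-t}\leq \diam(\mathtt{C})\leq e^{R-t}$. Plugging this into the comparison above and dividing through by $\diam(\Delta_0)$ gives
\[
\frac{e^{-R-t}}{C_{\mathrm{cyl}}\,\diam(\Delta_0)}\ \leq\ \|d\gamma\|\ \leq\ \frac{C_{\mathrm{cyl}}}{\diam(\Delta_0)}\,e^{R-t}.
\]

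The second step is simply to absorb the $\diam(\Delta_0)$-factors into the single constant $C_{\Delta_0}$. By the choice in \eqref{eqn:ConstantcDelta0}, we have $C_{\Delta_0}\geq C_{\mathrm{cyl}}\cdot\max\{1,\diam(\Delta_0)\}$, so in particular $C_{\Delta_0}\geq C_{\mathrm{cyl}}\diam(\Delta_0)$ (handling the lower bound) and $C_{\Delta_0}\geq C_{\mathrm{cyl}}\geq C_{\mathrm{cyl}}/\diam(\Delta_0)$ whenever $\diam(\Delta_0)\geq 1$ (handling the upper bound, with the complementary case already contained in the inequality $C_{\Delta_0}>1$ used together with $C_{\mathrm{cyl}}\diam(\Delta_0)\leq C_{\Delta_0}$). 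Combining these yields
\[
C_{\Delta_0}^{-1}e^{-R-t}\ \leq\ \|d\gamma\|\ \leq\ C_{\Delta_0}\,e^{R-t},
\]
which is exactly the asserted estimate.

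There is no substantive obstacle: the lemma is a bookkeeping consequence of Lemma \ref{lem:CylinderEstimate} together with the defining inequality for $\Omega^\dagger(t,R)$. The only point worth checking carefully is that the constant $C_{\Delta_0}$ from \eqref{eqn:ConstantcDelta0} is large enough to absorb both the factor $C_{\mathrm{cyl}}\diam(\Delta_0)$ (appearing in the lower bound) and the factor $C_{\mathrm{cyl}}/\diam(\Delta_0)$ (appearing in the upper bound) simultaneously; this is precisely why the definition of $C_{\Delta_0}$ incorporates a $\max$ with both $1$ and $\diam(\Delta_0)$.
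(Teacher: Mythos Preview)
Your approach is exactly the paper's: it simply says the estimate follows from Lemma~\ref{lem:CylinderEstimate}, and your argument spells this out correctly.

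One small bookkeeping hiccup: your justification for the upper bound in the case $\diam(\Delta_0)<1$ does not actually go through. The two facts you invoke, $C_{\Delta_0}>1$ and $C_{\mathrm{cyl}}\diam(\Delta_0)\leq C_{\Delta_0}$, do not combine to give $C_{\mathrm{cyl}}/\diam(\Delta_0)\leq C_{\Delta_0}$ (e.g.\ take $\diam(\Delta_0)=0.01$, $C_{\mathrm{cyl}}=2$, and $C_{\Delta_0}=2$, which satisfies \eqref{eqn:ConstantcDelta0} if $\mu(\Delta_0)\leq 1$, yet $C_{\mathrm{cyl}}/\diam(\Delta_0)=200$). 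This is harmless in context: \eqref{eqn:ConstantcDelta0} only imposes a \emph{lower} bound on $C_{\Delta_0}$, so one may simply choose $C_{\Delta_0}$ large enough to also dominate $C_{\mathrm{cyl}}/\diam(\Delta_0)$. The paper does not comment on this point either.
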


\begin{proof}[Proof of \cref{lem:goodpartitioncusp}]
Let $t > 0$ and $R > 0$. Let $x \in \Omega(t, R)$. Take the cylinder $\mathtt{C} = \gamma \Delta_0 \in \Omega^\dagger(t, R)$ containing $x$ and let $n$ be its length. Recall that 
\begin{equation*}
\Ret_n(x)=\log\|(dT^n)_x\|=-\log \|(d\gamma)_{x'}\|
\end{equation*}
with $x=\gamma x'$.
Using \cref{lem:DerivativeEstimate} and \cref{equ:d gamma x}, we have
\begin{align*}
\Ret_n(x) \in [t - (R + \log(C_{\Delta_0})), t + (R + \log(C_{\Delta_0}))].
\end{align*}
Since $u_xa_{\Ret_n(x)}\in u_{\Delta_0}$, we obtain $u_xa_t \in \Omega_R$.
\end{proof}

The following proposition is the required LDP. We will show in \cref{sec:reduction_combinatoric} that it follows from \cref{prop:badword}.

\begin{proposition}[LDP]
\label{prop:LDP}
There exist $R_0>0$ and $\kappa \in (0, 1)$ such that the following holds.
For all $m,n \in \N$ and $t > 0$, we have
\begin{align*}
\nu\{x\in\Lambda_+: \#\{j \in \N: j \leq n, T^{jm}(x) \in \Omega(t,R_0) \} < \kappa n \} \leq  e^{-\kappa n}.
\end{align*}
\end{proposition}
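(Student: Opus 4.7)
The plan is to reduce Proposition \ref{prop:LDP} to a random walk estimate whose proof rests on an effective renewal theorem in the style of \cite{Li22} and on the spectral gap for the geodesic flow transfer operator established in \cite{LP22}.

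First I would reformulate the event $\{T^{jm}(x) \in \Omega(t, R_0)\}$ as a hitting condition for the Birkhoff sum $\Ret_k$. By \cref{lem:DerivativeEstimate} and \cref{lem:CylinderEstimate}, a point $y \in \Lambda_+$ lies in $\Omega(t, R_0)$ if and only if there exists $k \in \N$ such that $\Ret_k(y) \in [t - R_0', t + R_0']$, where $R_0' = R_0 + O(\log C_{\Delta_0})$. Setting $y_j = T^{jm}(x)$, the condition becomes that the ``random walk'' $\{\Ret_k(y_j)\}_{k \geq 0}$ lands in the window of width $\asymp R_0$ around $t$ for some $k$. Failure of this hitting means the walk \emph{overshoots} the target---equivalently, the orbit of $y_j$ enters a cusp excursion of controlled magnitude.

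Second I would establish the basic uniform lower bound: there exist $R_0 > 0$ and $\kappa' \in (0, 1)$ such that for all $t > 0$,
\[
\nu\bigl\{y \in \Lambda_+ : \exists\, k \in \N, \ \Ret_k(y) \in [t - R_0', t + R_0']\bigr\} \geq \kappa'.
\]
This is the quantitative statement that the renewal process defined by the return time $\Ret$ hits any target window with uniformly positive probability. The proof follows from an effective renewal theorem as in \cite{Li22}: because the transfer operator $\calL_{\delta \Ret}$ has a spectral gap (inherited via the machinery of \cite{LP22}), one can invert the resolvent and obtain the renewal measure with an exponential error term in $t$, giving the uniformity in $t$ which is the crucial feature.

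Third I would propagate this pointwise hitting probability into the sought-after exponential large deviation estimate for $\#\{j \leq n : y_j \in \Omega(t, R_0)\}$---this is the content of the intermediate Proposition \ref{prop:badword} promised in the text. The idea is that the event $\{y_j \notin \Omega(t, R_0)\}$ forces the symbolic word encoding the orbit of $y_j$ to contain a specific cusp-excursion pattern. Given a candidate set $J \subset \{1, \dotsc, n\}$ of ``bad'' indices with $|J| > (1 - \kappa) n$, one uses the combinatorics of cusp words together with the exponential tail for $\Ret$ (Property~(5) of \cref{prop:Coding}) and the mixing of $T^m$ to bound
\[
\nu\bigl\{x : y_j \notin \Omega(t, R_0) \text{ for all } j \in J\bigr\} \lesssim (1 - \kappa')^{|J|},
\]
where $\kappa'$ comes from the previous step. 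Summing the binomial $\sum_{k \geq (1 - \kappa) n} \binom{n}{k}(1 - \kappa')^k$ over all admissible $J$ and choosing $\kappa \in (0, 1)$ small enough relative to $\kappa'$ yields the bound $e^{-\kappa n}$.

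The main obstacle is precisely the uniformity in $t$. For any fixed $t$, the LDP would follow from standard thermodynamic arguments applied to the expanding map $T$, because the indicator of $\Omega(t, R_0)$ could be analyzed through a perturbation of $\calL_{\delta \Ret}$. But the sets $\Omega(t, R_0)$ become increasingly irregular as $t \to \infty$, concentrating near parabolic fixed points of non-maximal rank in a $t$-dependent manner; hence no single spectral argument uniformly controls the family. This is exactly where the effective (rather than merely qualitative) renewal theorem enters, and it is why---as foreshadowed in the introduction---exponential mixing of the geodesic flow from \cite{LP22} must be established \emph{before} we can hope to extract the uniform LDP required for the frame flow analysis.
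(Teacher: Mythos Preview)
Your first two steps are on target and match the paper: the reformulation via the stopping time/residual waiting time and the use of an effective renewal theorem (your step~2 is essentially the paper's Proposition on the measure of large residual waiting time, though the paper states it as the exponential upper bound $\nu\{x:\Ret_{l+1}(x)-t>R\}\le e^{-\epsilon R}$ rather than a lower bound on hitting).

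The gap is in step~3. The product-type bound
\[
\nu\bigl\{x:\ T^{jm}x\notin\Omega(t,R_0)\ \text{for all } j\in J\bigr\}\ \lesssim\ (1-\kappa')^{|J|}
\]
is not what one obtains, and the appeal to ``mixing of $T^m$'' does not produce it. The events $\{T^{jm}x\notin\Omega(t,R_0)\}$ are strongly positively correlated: if the geodesic trajectory of $T^{jm}x$ at time $t$ is deep in a cusp, then typically so is that of $T^{(j+1)m}x$, because they correspond to nearby times on the \emph{same} cusp excursion. So a block of consecutive bad indices may cost only one cusp event, not $|J|$ independent ones.

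What the paper actually does is introduce an equivalence relation on the bad indices---$(T^{im}x,t)$ and $(T^{rm}x,t)$ are equivalent when $l(T^{im}x,t)+im=l(T^{rm}x,t)+rm$, i.e.\ they lie in the same cusp excursion---and then partitions the bad word $w$ into subintervals $I_1,\dots,I_{\jI+1}$ accordingly. The key estimate (their \cref{prop:badword}) is
\[
\nu\{x:\ x\text{ is of type }(w,I,t,m)\}\ \le\ e^{-\epsilon\bigl((\jI+1)R+\ell(w)-\jI-1\bigr)},
\]
proved by \emph{induction on the number of subintervals} via a conditional-probability argument on cylinders: one freezes the inverse branch up to the end of the $\jI$-th excursion and then applies the single-excursion bound (which needs the exponential decay $e^{-\epsilon R}$, not merely $1-\kappa'$) to the last interval. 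Only after this does one sum over all partitions $I$ of $w$ (a binomial in $\ell(w)$) and then over bad words $w$; the factor $R$ in the exponent is what kills the partition count when $R_0$ is taken large. Your outline skips this equivalence structure and the induction that handles the correlation, which is the heart of the argument.
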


Here the parameter $m$ is the step length. For the sake of simplicity, readers may take $m=1$ on first read of this section.

\Cref{prop:LDP} is a uniform version of a classical LDP in the sense that it is proved for $\Omega(t,R_0)$ with rate $\kappa$ which is uniform over all $t>0$. The idea of its proof is similar to the proof of LDP for an i.i.d. coin flipping process (see \cite[Section 3.1]{DZ10}). The main proposition is an estimate for the probability that $\ell$ fixed bad events occur in $n$ events, which we prove is less than $e^{-\epsilon\ell}$. Then we count the number of ways $\ell$ bad events occur in $n$ events and take the sum of the probabilities. We are interested in the $\ell$'s satisfying $\ell\geq n-\kappa n$, so by taking the parameter $\kappa$ sufficiently small, we obtain the desired LDP. \Cref{prop:LDP} indicates that we need to study whether $T^{jm}(x)$ is in $\Omega(t,R_0)$, and by \cref{lem:goodpartitioncusp}, it is roughly equivalent to study whether the geodesic ray $\{u_{T^{jm}(x)}a_t \in \T(X): t > 0\}$ is in some cusp, which corresponds to a bad event for our dynamical system. The difficulty is that we need to further partition the $\ell$ bad events into a certain union of consecutive bad events, whose geometric picture is that the geodesic trajectory $u_xa_t$ remains in the same cusp without coming back to the compact part. \Cref{prop:badword} is the estimate for such consecutive bad events. We need some preparation before stating \cref{prop:badword}.

\subsection{From cusp to bounds on residual waiting time}
We first recall some constants and Lipschitz bounds which will be used often in the rest of the section. By \cref{prop:Coding}, for any $z,z'\in\Delta_0$ and $\gamma\in\calH$, we have
\begin{equation}\label{eq:gammaz}
\|\gamma z-\gamma z'\|\leq \lambda \|z-z'\|
\end{equation}
and also (see \cref{eqn:FaLipEstimate}), for all $z,z'\in\Delta_0$, $l\in \mathbb{N}$, and $\gamma\in\calH^l$, we have
\begin{equation}\label{eq:rl}
|\Ret_{l}(\gamma z)-\Ret_{l}(\gamma z')|\leq C_2\|z-z'\|,
\end{equation}
recalling the constant from \cref{eqn:ConstantC1'C2}. Fix 
\begin{align*}
\lambda_0&{}=\inf \{\Ret(x):x\in \Delta_{\sqcup}\},\\
C_3&{}=\max\{C_2,C_2\operatorname{diam}(\Delta_0)+|\log(\operatorname{diam}(\Delta_0))|\}.
\end{align*}

\begin{definition}[Stopping time, Residual waiting  time]
For all $(y,t)\in \Lambda_+\times \mathbb{R}^+$, its \emph{stopping time} is the unique $l=l(y,t)\in\N$ such that
$\Ret_{l}(y)\leq t< \Ret_{l+1}(y)$, and its \emph{residual waiting  time} is the difference $\Ret_{l+1}(y)-t$.
\end{definition}
When there is no confusion about the pair $(y,t)$, we will often write $l$ to simplify the notation. 

We give the geometric intuition of this notion. \cref{prop:badword}, one of the main estimates, is about studying points $y$ of the form $T^{im}x$. We want to investigate whether the point $y$ is in $\Omega(t,R)$, and by \cref{lem:goodpartitioncusp}, it is roughly equivalent to see whether $u_ya_t$ is in some cusp. The observation is that we can roughly identify $u_ya_{t}$ with $u_{T^ny}a_{t-\Ret_n(y)}$: it follows from the construction of the coding that $u_ya_{\Ret_n(y)}$ is bounded away from $u_{T^ny}$ and they are in the same stable leaf; hence the distance between $u_ya_{t}$ and $u_{T^ny}a_{t-\Ret_n(y)}$ are uniformly bounded for $t\geq \Ret_n(y)$. The stopping time $l = l(y,t)$ is the maximal possible value of $n$ satisfying $t\geq \Ret_n(y)$. The remaining time $t-\Ret_l(y)$ and the next return time $\Ret(T^ly)$ together tell us whether the point $u_ya_t$ is in some cusp or not. The following lemma is a quantitative version of this observation. See \cref{fig:geotrajectory} for illustration.

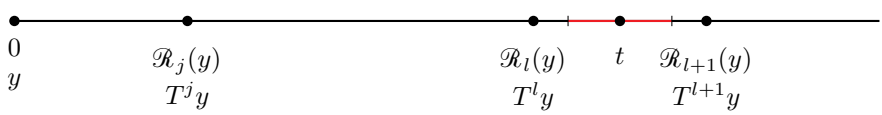
\begin{figure}[h]
\begin{center}
\begin{tikzpicture}[scale=2.3]

\draw [thick, -] (0,0.5) -- (5,0.5);
\draw [Red, thick, -] (3.2,0.5) -- (3.8,0.5);
\draw (3.2,0.53) -- (3.2,0.47);
\draw (3.8,0.53) -- (3.8,0.47);

\path[fill=black,inner sep=0pt] (0,0.5) circle [radius=0.03];
\node[below,inner sep=0pt] at (0, 0.4) {$0$};
\node[below,inner sep=0pt] at (0, 0.2) {$y$};

\draw [Blue, thick, -] (0,0.65) -- (3.5,0.65);
\path[fill=black] (3.5,0.5) circle [radius=0.03];
\node[below] at (3.5, 0.4) {$t$};

\path[fill=black] (1,0.5) circle [radius=0.03];
\node[below] at (1, 0.4) {$\Ret_j(y)$};
\node[below] at (1, 0.2) {$T^jy$};

\path[fill=black] (3,0.5) circle [radius=0.03];
\node[below] at (3, 0.4) {$\Ret_l(y)$};
\node[below] at (3, 0.2) {$T^ly$};

\path[fill=black] (4,0.5) circle [radius=0.03];
\node[below] at (4, 0.4) {$\Ret_{l+1}(y)$};
\node[below] at (4, 0.2) {$T^{l+1}y$};

\end{tikzpicture}
\end{center}
\caption{This is a timeline. For a time of the form $\Ret_j(y)$, we add a point $T^jy$ below, which can be regarded as a point in the geodesic trajectory $u_ya_{\Ret_j(y)}\approx u_{T^jy}$. The blue segment is of length $t$. The red segment is the interval of time $s$ whose corresponding point $u_ya_s$ is inside the cusp region. }\label{fig:geotrajectory}
\end{figure}

\begin{lemma}\label{lem:cuspreturn}
For all $y\notin \Omega(t,R )$, $t>0$, and $R>2C_3$, we have a stronger inequality
\[\Ret_{l}(y)+(R -C_3)< t< \Ret_{l+1}(y)-(R -C_3). \]
\end{lemma}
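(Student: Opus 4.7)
The plan is to convert the cylinder-diameter condition $y \notin \Omega(t,R)$ into a Birkhoff-sum condition on the return times $\Ret_k(y)$, using the tight relationship between the length of a cylinder containing $y$ and the corresponding return time. The crucial intermediate estimate I would establish is
\begin{equation*}
|\log \diam(\mathtt{C}_k) + \Ret_k(y)| \leq C_3 \qquad \text{for every $k \in \N$,}
\end{equation*}
where $\mathtt{C}_k = \gamma_k \Delta_0$ denotes the unique cylinder of length $k$ containing $y$. This follows by combining the diameter comparison $\diam(\mathtt{C}_k) \asymp \|d\gamma_k\|\diam(\Delta_0)$ from \cref{lem:CylinderEstimate} with the identity $\Ret_k(y) = -\log\|(d\gamma_k)_{y'}\|$ (where $y = \gamma_k y'$) and the Lipschitz-type bound $|\log\|(d\gamma_k)_x\| - \log\|(d\gamma_k)_{y'}\|| \leq C_2\diam(\Delta_0)$ extracted from Property~(4) of \cref{prop:Coding}. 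The constant $C_3 = \max\{C_2, C_2\diam(\Delta_0) + |\log\diam(\Delta_0)|\}$ is defined precisely to absorb all of these error terms uniformly.

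Next, I would unpack the hypothesis $y \notin \Omega(t,R)$. Since each inverse branch is a uniform $\lambda$-contraction, the diameters $\diam(\mathtt{C}_k)$ are strictly decreasing in $k$, so there is a unique $n \in \N$ with $\diam(\mathtt{C}_{n-1}) > e^{R-t} \geq \diam(\mathtt{C}_n)$. This $\mathtt{C}_n$ is the unique maximal cylinder containing $y$ whose diameter is at most $e^{R-t}$, so either $\mathtt{C}_n \in \Omega^{\dagger}(t,R)$ or $\diam(\mathtt{C}_n) < e^{-R-t}$; the hypothesis forces the latter.

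Feeding these two strict diameter inequalities into the intermediate estimate yields
\begin{align*}
\Ret_{n-1}(y) &\leq -\log\diam(\mathtt{C}_{n-1}) + C_3 < t - (R - C_3), \\
\Ret_n(y) &\geq -\log\diam(\mathtt{C}_n) - C_3 > t + (R - C_3).
\end{align*}
Since $R > 2C_3$, both margins are positive, and in particular $\Ret_{n-1}(y) < t < \Ret_n(y)$. By the definition of the stopping time, this forces $l = l(y,t) = n-1$, and substituting yields exactly the claimed inequality. The one mildly delicate point---and the step I expect to require the most bookkeeping---is verifying that the single constant $C_3$ simultaneously controls both the upper and lower comparisons between $\log\diam(\mathtt{C}_k)$ and $-\Ret_k(y)$; in particular, the lower diameter bound requires the convexity-type argument already used inside the proof of \cref{lem:CylinderEstimate}.
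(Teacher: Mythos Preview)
Your proof is correct and follows essentially the same approach as the paper: both arguments hinge on the intermediate estimate $|\log\diam(\mathtt{C}_k)+\Ret_k(y)|\le C_3$ together with the observation that $y\notin\Omega(t,R)$ forces the diameter of the relevant cylinder containing $y$ to miss the window $[e^{-R-t},e^{R-t}]$. The only organizational difference is that the paper starts from the stopping time $l$ and deduces the diameter bounds, whereas you first locate the critical cylinder index $n$ via the diameter threshold and then identify $l=n-1$; the content is the same.
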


\begin{proof}
Suppose $y=\gamma_1\cdots \gamma_lz \notin\Omega(t,R )$ with $z\in\Delta_0$ and $\gamma_j\in \calH$. Let $t>0$, and $R>2C_3$. Then by \cref{eq:rl}, for $x\in\Delta_0$ we have
\[ \|(d\gamma_1\cdots \gamma_l)_x\|=\exp(-\Ret_{l}(\gamma_1\cdots\gamma_lx))\in \exp(-\Ret_{l}(y))[e^{-C_2\|x-z\|},e^{C_2\|x-z\|}]. \]
Therefore, by \cref{lem:CylinderEstimate}
\[\diam(\gamma_1\cdots\gamma_l\Delta_0)\in \exp(-\Ret_{l}(y))[e^{-C_3},e^{C_3}]. \]
Since $y\notin\Omega(t,R )$, we have
\[ \diam(\gamma_1\cdots\gamma_l\Delta_0)\notin \exp(-t)[e^{-R },e^{R }]. \] 
Hence, it is impossible that the second interval is contained in the first interval. By the definition of stopping time and the assumption of the lemma, we have $\Ret_{l}(y)+C_3\leq t+R $. Therefore,
\[-\Ret_{l}(y)+C_3> -t+R , \]
which implies $t>\Ret_{l}(y)+R -C_3$. For the other side, the proof is similar.
\end{proof}

Given two points $T^{i}y, T^{r}y\notin \Omega(t,R)$, we can regard $u_{T^{i}y}a_t$ and $u_{T^{r}y}a_t$ as points on  the geodesic trajectory $\{u_ya_s \in \T(X): s > 0\}$. We introduce the following definition to detect whether they are in the same cusp.

\begin{definition}[Equivalent]
\label{def.sametype}
Let $y \in \Lambda_+$, $t > 0$, and $R > 0$. For two points $(T^{i}y, t)$ and $(T^{r}y,t)$ with $T^{i}y, T^{r}y\notin \Omega(t,R)$ for some $i, r \in \Z_{\geq 0}$, we say that they are \emph{equivalent} if 
\[ l(T^{i}y,t)+i=l(T^{r}y,t)+r ,\]
and we use $[(T^iy,t)]$ to denote its equivalence class. Another characterization is that $\Ret_{i+l_i}(y)< t+\Ret_{r}(y)<\Ret_{i+l_i+1}(y) $ with $l_i=l(T^{i}y,t)$ (see \cref{fig:sametype}). From this characterization, we deduce that the set of $k\in\N$ such that $(T^ky,t)$ is equivalent to $(T^ix,t)$ is an interval containing $i$ and $r$. 
\end{definition}

\begin{lemma}\label{lem:cuspdeep}
Let $y \in \Lambda_+$, $t > 0$, and $R > 2C_3$. If $(T^{i}y,t)$ and $(T^ry,t)$ are equivalent with $0 \leq i\leq r$,
then for the stopping time $l=l(T^{i}y,t)$, the residual waiting time satisfies
\[\Ret_{l+1}(T^{i}y)-t>\frac{R }{2}+f\lambda_0,\]
with $f=r-i$.
\end{lemma}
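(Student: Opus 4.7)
The plan is to transfer the single-point residual waiting time bound supplied by \cref{lem:cuspreturn} from the right endpoint $(T^ry,t)$ of the equivalence class back to $(T^iy,t)$, and observe that this transfer automatically produces the extra additive term $\Ret_r(y) - \Ret_i(y)$, which is bounded below by $f\lambda_0$.

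Write $l_r := l(T^ry,t)$, so by the definition of equivalence we have $l + i = l_r + r$ and hence $l + i + 1 = l_r + r + 1$. Applying the cocycle identity $\Ret_{n+k}(y) = \Ret_n(y) + \Ret_k(T^n y)$ to the pairs $(i,l+1)$ and $(r,l_r+1)$ yields
\begin{align*}
\Ret_{l+1}(T^iy) &= \Ret_{l+i+1}(y) - \Ret_i(y) \\
&= \Ret_{l_r+r+1}(y) - \Ret_i(y) \\
&= \Ret_{l_r+1}(T^ry) + \bigl(\Ret_r(y) - \Ret_i(y)\bigr).
\end{align*}

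Next, since $T^ry \notin \Omega(t,R)$ and $R > 2C_3$, \cref{lem:cuspreturn} applied at $T^ry$ gives $\Ret_{l_r+1}(T^ry) - t > R - C_3 > R/2$. Combining this with the previous display,
\begin{equation*}
\Ret_{l+1}(T^iy) - t > \frac{R}{2} + \bigl(\Ret_r(y) - \Ret_i(y)\bigr).
\end{equation*}
Finally, expanding the difference as a Birkhoff sum,
\begin{equation*}
\Ret_r(y) - \Ret_i(y) = \sum_{j=i}^{r-1} \Ret(T^j y) \geq (r-i)\lambda_0 = f\lambda_0,
\end{equation*}
using the defining lower bound $\lambda_0 = \inf_{x \in \Delta_{\sqcup}} \Ret(x)$. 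Substituting gives the desired inequality.

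There is no serious obstacle here; the only point requiring care is the index bookkeeping between the Birkhoff sums based at $y$ and those based at the shifted points $T^iy$, $T^ry$, which is precisely what makes the equivalence condition $l + i = l_r + r$ produce a clean telescoping.
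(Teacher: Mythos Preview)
Your proof is correct and follows essentially the same approach as the paper: apply \cref{lem:cuspreturn} at $T^ry$ to get $\Ret_{l_r+1}(T^ry)-t>R-C_3>R/2$, then use the cocycle identity together with the equivalence relation $l+i=l_r+r$ to convert this into the bound at $T^iy$, picking up the extra Birkhoff sum $\Ret_r(y)-\Ret_i(y)\geq f\lambda_0$. The only cosmetic difference is that you route the cocycle identity through the base point $y$, whereas the paper writes it directly as $\Ret_{l+1}(T^{i}y)=\Ret_{l+1-(r-i)}(T^{r}y)+\Ret_{r-i}(T^{i}y)$.
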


\begin{proof}
Assume the hypotheses of the lemma. By applying \cref{lem:cuspreturn} to $T^ry$, due to the fact that $l+i=l(T^ry,t)+r$, we have
\[\Ret_{l+1-(r-i)}(T^{r}y)-R > t-C_3. \]
This implies 
\[ \Ret_{l+1}(T^{i}y)=\Ret_{l+1-(r-i)}(T^{r}y)+\Ret_{(r-i)}(T^{i}y) > t+(R -C_3)+(r-i)\lambda_0. \]
The proof is complete.
\end{proof}

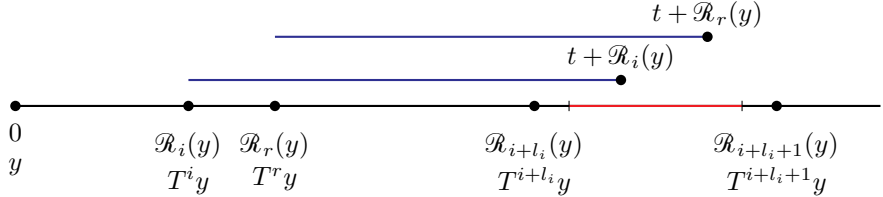
\begin{figure}[h]
\begin{center}
\begin{tikzpicture}[scale=2.3]

\draw [thick, -] (0,0.5) -- (5,0.5);
\draw [Red, thick, -] (3.2,0.5) -- (4.2,0.5);
\draw (3.2,0.53) -- (3.2,0.47);
\draw (4.2,0.53) -- (4.2,0.47);

\path[fill=black,inner sep=0pt] (0,0.5) circle [radius=0.03];
\node[below,inner sep=0pt] at (0, 0.4) {$0$};
\node[below,inner sep=0pt] at (0, 0.2) {$y$};

\path[fill=black] (1,0.5) circle [radius=0.03];
\node[below] at (1, 0.4) {$\Ret_i(y)$};
\node[below] at (1, 0.2) {$T^iy$};

\draw [Blue, thick, -] (1,0.65) -- (3.5,0.65);
\path[fill=black] (3.5,0.65) circle [radius=0.03];
\node[above] at (3.5, 0.65) {$t+\Ret_i(y)$};

\path[fill=black] (1.5,0.5) circle [radius=0.03];
\node[below] at (1.5, 0.4) {$\Ret_r(y)$};
\node[below] at (1.5, 0.2) {$T^ry$};

\draw [Blue, thick, -] (1.5,0.9) -- (4,0.9);
\path[fill=black] (4,0.9) circle [radius=0.03];
\node[above] at (4, 0.9) {$t+\Ret_r(y)$};

\path[fill=black] (3,0.5) circle [radius=0.03];
\node[below] at (3, 0.4) {$\Ret_{i+l_i}(y)$};
\node[below] at (3, 0.2) {$T^{i+l_i}y$};

\path[fill=black] (4.4,0.5) circle [radius=0.03];
\node[below] at (4.4, 0.4) {$\Ret_{i+l_i+1}(y)$};
\node[below] at (4.4, 0.2) {$T^{i+l_i+1}y$};

\end{tikzpicture}
\end{center}
\caption{The points $(T^iy,t)$ and $(T^ry,t)$ are equivalent. Here $l_i=l(T^iy,t)$, $l_r=l(T^ry,t)$, and $i+l_i=r+l_r$. The blue segments are of length $t$. The red segment is the interval of time $s$ whose corresponding point $u_ya_s$ is inside the cusp region. }\label{fig:sametype}
\end{figure}

\subsection{Reduction to combinatorial words}\label{sec:reduction_combinatoric}
Let $\bf n:=\{1,\dotsc, n\}$. A word $w$ is a subset of $\bf n$. Let $\ell(w)$ be the number of elements in $w$. For a word $w$, we will partition it into (discrete) subintervals of the form
\[w=\bigcup_{j = 1}^{\jI+1} I_j=\{i_1,\dotsc, r_1 \}\cup\cdots\cup \{i_{\jI+1},\dotsc, r_{\jI+1}\}, \]
with $\jI+1\leq n$, and $i_j\leq r_j<i_{j+1}$ for all $1 \leq j \leq \jI$, and $I_j=\{i_j,i_j+1,\dotsc, r_j\}$ for all $1 \leq j \leq \jI+1$. Here $\jI+1$ is called the number of subintervals in $w$. We denote $I := \{I_j: 1\leq j\leq \jI+1\}$. We use the pair $(w,I)$ to denote the word $w$ and its partition.
For example, if we take $n=6$, then $w = I_1 \cup I_2 =\{1,2\}\cup \{4,5,6\}$ is a word of $\bf n = \{1, \dotsc, 6\}$. Both this partition and the ones like $w=\{1,2\}\cup\{4,5\}\cup\{6 \}$ are the kind of partitions of $w$ we will study.

\begin{definition}[Type]
Let $R > 0$. Given a triple $(w,t,m)$ with $w\subset \bf n$, $t>0$, and $m\in\N$, a point $x\in \Lambda_+$ is of \emph{type} $(w,t,m)$ if
\begin{equation}\label{equ:k in n}
w = \{k\in\bf n: T^{km}x\notin \Omega(t,R)\}. 
\end{equation}

Given a quadruple $(w,I,t,m)$ with $w\subset \bf n$, a partition $I$ of $w$, $t>0$, and $m\in\N$, a point $x\in\Lambda_+$ is of \emph{type} $(w,I,t,m)$ if $x$ is of type $(w,t,m)$ and the partition $I$ of $w$ is determined by the equivalence classes of the points in $\{(T^{km}x,t): k \in w\}$ according to \cref{def.sametype}, i.e., for all $1 \leq j \leq \jI + 1$, there exists $k_j \in w$ such that
\begin{align*}
I_j = \{k \in w: (T^{km}x, t) \in [(T^{k_jm}x, t)]\}.
\end{align*}
\end{definition}

For each quadruple $(w,I,t,m)$, we estimate the measure of the set of $x\in \Lambda_+$ of type $(w,I,t,m)$ using cusp excursion. The main proposition is as follows.

\begin{proposition}
\label{prop:badword}
There exists $\epsilon>0$ such that for all $R > 0$ sufficiently large, $t>0$, and $n,m\in\N$, the following holds. For all quadruple $(w,I,t,m)$ with $w$ a word in $\mathbf{n}$ and a partition $I$ of $w$, we have
\[\nu\{x\in\Lambda_+: x\text{ is of type }(w,I,t,m) \} \leq e^{-\epsilon((\jI+1)R+\ell(w)-\jI-1 )}.\]
\end{proposition}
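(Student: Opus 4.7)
The plan is to reduce ``$x$ is of type $(w,I,t,m)$'' to an intersection of deep cusp-excursion events, one per interval $I_j$, estimate each event via the cylindrical structure together with the exponential tail of $\Ret$, and combine them using an effective renewal bound.

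The reduction step goes as follows. For $I_j = [i_j, r_j]$ with $f_j = r_j - i_j$, applying \cref{lem:cuspdeep} with $y = T^{i_jm}x$ and $f = f_jm$ shows that the residual waiting time at $T^{i_jm}x$ for time $t$ strictly exceeds $R/2 + f_j m\lambda_0$. Writing $L_j = l(T^{i_jm}x,t)$ and using $\Ret_{L_j}(T^{i_jm}x) \leq t$, this translates to
$$\Ret\bigl(T^{i_jm + L_j}x\bigr) > R/2 + f_j m\lambda_0, \qquad L_j \geq f_j m.$$
Consequently $\{x : x \text{ is of type } (w,I,t,m)\}$ is contained in the union over admissible stopping-time tuples $(L_j)_{j=1}^{\jI+1}$ of the intersection of these large-$\Ret$ conditions across $j$, together with the uniqueness constraint $\Ret_{L_j}(T^{i_jm}x) \leq t < \Ret_{L_j+1}(T^{i_jm}x)$ for each $j$.

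For fixed $(L_j)$, I would bound the $\nu$-measure by decomposing into cylinders $\gamma_{k_1}\cdots\gamma_{k_N}\Delta_0$ of length $N = i_{\jI+1}m + L_{\jI+1} + 1$. Using \cref{lem:CylinderEstimate} together with the length-uniform chain-rule approximation $\|d(\gamma_{k_1}\cdots\gamma_{k_N})\| \asymp \prod_i \|d\gamma_{k_i}\|$ (whose constant is independent of $N$, as in the proof of \cref{lem:CylinderEstimate}), the sum factorizes across the $N$ inverse-branch slots. At the $\jI+1$ slots carrying a large-$\Ret$ constraint, the exponential tail Property~(5) of \cref{prop:Coding} gives
$$\sum_{k \in \calJ,\,\|d\gamma_k\| < e^{-s}} \|d\gamma_k\|^\delta \leq C_0 e^{-\epsilon_0 s};$$
at the remaining slots, $T$-invariance of $\nu$ keeps $\sum_k \|d\gamma_k\|^\delta$ comparable to a fixed constant uniformly in the slot. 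Summing over tuples $(L_j)$ by means of a uniform-in-$t$ residual-waiting-time bound (the effective renewal theorem of Li \cite{Li22}, which relies on the exponential decay of the transfer operators for the geodesic flow from Li--Pan \cite{LP22}), and using $\sum_j f_j = \ell(w) - \jI - 1$, yields
$$\nu\{x: x\text{ is of type }(w,I,t,m)\} \leq \tilde{C}^{\jI+1}\,e^{-\epsilon_0\left((\jI+1)R/2 + m\lambda_0(\ell(w)-\jI-1)\right)}.$$
For $R \geq R_0$ sufficiently large, the prefactor $\tilde{C}^{\jI+1}$ is absorbed into the $(\jI+1)R$ exponent by replacing $\epsilon_0$ with a slightly smaller $\epsilon > 0$, giving the claimed bound.

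The main obstacle is the uniform-in-$t$ control needed when summing over the stopping-time tuples $(L_j)$: the exponential tail of $\Ret$ alone (Property~(5) of \cref{prop:Coding}) does not suffice, and classical renewal theorems supply only asymptotic decay, whereas the proposition demands constants independent of $t > 0$. This uniformity is precisely the content of Li's effective renewal theorem \cite{Li22}, whose hypothesis is the exponential decay of the transfer operators from \cite{LP22}. This is the striking structural feature emphasized in the introduction: exponential mixing of the geodesic flow must be established first, as a prerequisite for the present proposition, and hence for exponential mixing of the frame flow.
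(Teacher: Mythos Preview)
Your reduction via \cref{lem:cuspdeep} is correct. The execution has two related gaps.

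The factorization $\|d(\gamma_{k_1}\cdots\gamma_{k_N})\| \asymp \prod_i \|d\gamma_{k_i}\|$ is \emph{not} length-uniform: the proof of \cref{lem:CylinderEstimate} gives bounded distortion for the whole word $\gamma$, not a multiplicative splitting into single letters; applying Property~(4) of \cref{prop:Coding} letter by letter costs $e^{\pm NC_1\diam(\Delta_0)}$. One can get a product bound with constant $C^{\jI+1}$ (one factor per constrained slot rather than per symbol) via the transfer operator, using $\|\mathcal{L}_0(\chi_{\Ret>s})\|_\infty \ll e^{-\epsilon_0 s}$ and iterating, but even then the sum over the stopping-time tuples $(L_j)$ is not handled: each $L_j$ ranges over $O(t/\lambda_0)$ values, and the crude sum over $(t/\lambda_0)^{\jI+1}$ tuples destroys uniformity in $t$. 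The renewal input \cref{prop:onecusp} controls a \emph{single} residual-waiting-time event uniformly in $t$; you have not explained how to combine $\jI+1$ of them into a product.

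The paper does this by induction on $\jI+1$. At the inductive step it conditions on the disjoint family of cylinders $\gamma\Delta_0$ of length $k_\jI = i_\jI m + l_\jI + 1$ (\cref{lem:gammadisj}), applies \cref{lem:CylinderEstimate} \emph{once} to transfer from $\nu|_{\gamma\Delta_0}$ to $\|d\gamma\|^\delta\nu$ on $\Delta_0$ at the cost of a single factor $C_4=C_{\mathrm{cyl}}^2$, rewrites the $(\jI+1)$-th event in terms of $z=T^{k_\jI}x$ at a shifted time $t_\gamma$, and invokes \cref{prop:onecusp} for $z$. The sum over the last stopping time is thus absorbed by the single-event renewal bound, and the accumulated constant is $C_4^{\jI+1}$, swallowed by the $(\jI+1)R$ term for $R$ large. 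Conditioning on $\gamma$ perturbs the earlier constraints by the geometrically summable amounts $C_3\lambda^{k_\jI-(i_jm+l_j+1)}$ (\cref{lem:stable}), totalling at most $C_3/(1-\lambda)$, so the induction closes.
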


\begin{proof}[Proof that \cref{prop:badword} implies \cref{prop:LDP}]
Fix $\epsilon > 0$ provided by \cref{prop:badword}. Let $R >0$, $\kappa \in (0, 1)$, $t>0$, and $n,m\in\N$. Due to \cref{def.sametype}, for each point $x \in \Lambda_+$ of type $(w,t,m)$, there exists a partition $I$ of $w$ such that $x$ is of type $(w,I,t,m)$. For each word $w$, the number ways of diving $w$ into $k$ subintervals is less than $\binom{\ell(w)}{k-1}$ for $k\leq \ell(w)$. Therefore by \cref{prop:badword},
\begin{align*}
\nu\{x\in\Lambda_+: x\text{ is of type }(w,t,m) \} &\leq \sum_{k = 1}^{\ell(w)} \binom{\ell(w)}{k-1}e^{-\epsilon(kR+\ell(w) -k)}\\
&\leq e^{-\epsilon\ell(w)}(1+e^{-\epsilon (R-1)})^{\ell(w)}.
\end{align*}
Taking $R$ sufficiently large depending on $\epsilon$, we obtain
\begin{equation}\label{eq:badword}
\nu\{x\in\Lambda_+: x\text{ is of type }(w,t,m) \} \leq e^{-\epsilon\ell(w)/2}.
\end{equation}

A word $w$ is called \emph{bad} if its length satisfies $\ell(w)\geq \lfloor(1-\kappa)n\rfloor$. 
For each bad word $w$ we have the estimate of \cref{eq:badword}. To get the large deviation estimate in \cref{prop:LDP}, it remains to sum over bad words.
The number of bad words is bounded by short sum of binomial coefficients: for any $\ell\leq n$, we have
\[\sum_{k=0}^\ell\binom{n}{k}\leq \sum_{k=0}^\ell \frac{n^k}{k!}=\sum_{k=0}^\ell\frac{\ell^k}{k!}\left( \frac{n}{\ell}\right)^k\leq e^\ell \left( \frac{n}{\ell}\right)^\ell. \]
Therefore by taking $\ell=n-\lfloor(1-\kappa)n\rfloor$, we obtain
\[\sum_{k=\lfloor(1-\kappa)n\rfloor}^n\binom{n}{k}=\sum_{k=0}^{n-\lfloor(1-\kappa)n\rfloor}\binom{n}{k}\leq e^{(\kappa+|\log\kappa|\kappa)n}.   \]
Combined with \cref{eq:badword}, we obtain
\begin{align*}
&\nu\{x\in\Lambda_+: \#\{1\leq j\leq n,\ T^{jm}x\notin\Omega(t,R) \}\geq(1-\kappa)n \}\\
\leq{}&e^{-\epsilon(1-\kappa)n/2}e^{(\kappa+|\log\kappa|\kappa)n}. 
\end{align*}
Taking $\kappa>0$ sufficiently small finishes the proof.
\end{proof}

The aim of the rest of the section is to prove \cref{prop:badword}.

\subsection{Measure estimate of large residual waiting time}
Recall that for any $(x,t)\in\Lambda_+\times \R^+$, we denote by $l=l(x,t)$ the stopping time for $(x,t)$ such that $\Ret_{l}(x)\leq t< \Ret_{l+1}(x)$. The difference $\Ret_{l+1}(x)-t$ is called the residual waiting time.

\begin{proposition}\label{prop:effect}
There exists $\epsilon>0$ such that for all $R>0$ sufficiently large and $t>R$, we have
\[\nu\{x\in\Lambda_+:  \Ret_{l+1}(x)-t>R \}\leq e^{-\epsilon R}. \]
\end{proposition}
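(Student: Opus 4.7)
The plan is to view $\{\Ret_n(x)\}_{n \geq 0}$ as a renewal process driven by the ergodic system $(T, \nu)$ and deduce the tail bound from an effective renewal theorem as in \cite{Li22}. Decomposing the event $\{\Ret_{l+1}(x) - t > R\}$ by the value of the stopping time $n = l(x, t)$ (which makes the events disjoint) and using $\Ret_{n+1} = \Ret_n + \Ret \circ T^n$, the probability in question equals
\begin{align*}
P(t, R) := \sum_{n = 0}^{\infty} \int_{\Lambda_+} \mathbf{1}_{\{\Ret_n(x)\leq t\}} \cdot \mathbf{1}_{\{\Ret(T^n x) > t + R - \Ret_n(x)\}} \, d\nu(x).
\end{align*}
Introducing the joint renewal measure $Q := \sum_{n \geq 0} (x \mapsto (\Ret_n(x), T^n x))_* \nu$ on $[0, \infty) \times \Delta_0$, this rewrites as
\begin{align*}
P(t, R) = \int_0^t \int_{\Delta_0} \mathbf{1}_{\{\Ret(y) > t + R - u\}} \, dQ(u, y).
\end{align*}

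Next I would apply an effective renewal theorem to $Q$. The required analytic input is a uniform spectral gap for the normalized twisted transfer operators $\calL_{ib}$ on $\Lip(\Lambda_+, \C)$ along a vertical strip, together with resolvent bounds $\|(I - \calL_{ib})^{-1}\|$ of at-most-polynomial growth in $|b|$; this is exactly the content of the quantitative transfer-operator estimates behind the exponential mixing of the geodesic flow in \cite{LP22}. Extracting the simple pole at $\xi = 0$ via an inverse Mellin--Barnes contour shift, as carried out in \cite{Li22}, yields the asymptotic
\begin{align*}
dQ(u, y) = \tfrac{1}{\bar{\Ret}} \, du \, d\nu(y) + dE(u, y), \qquad |E|([u, u+1] \times \Delta_0) \leq C e^{-c u}
\end{align*}
for some $c > 0$, uniformly in $u \geq 0$.

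Plugging this decomposition into the expression for $P(t, R)$ and substituting $s = t + R - u$, the main term becomes
\begin{align*}
\frac{1}{\bar{\Ret}} \int_R^{t + R} \nu\{\Ret > s\} \, ds \leq \frac{C}{\bar{\Ret}\,\epsilon_0} e^{-\epsilon_0 R}
\end{align*}
by Property~(5) in \cref{prop:Coding}, while the error contribution is bounded by $C \int_0^t e^{-c u} \, \nu\{\Ret > t + R - u\} \, du$, which is $O(e^{-\epsilon R})$ upon taking $\epsilon = \tfrac{1}{2}\min(c, \epsilon_0)$ and using the hypothesis $t > R$. This completes the tail bound for $R$ sufficiently large to absorb the implicit constants.

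The main obstacle is the effective renewal estimate in the second paragraph: the pointwise spectral gap of $\calL_{ib}$ at each fixed $b \neq 0$ is standard, but what is needed is a strip bound on $(I - \calL_{ib})^{-1}$ polynomially controlled in $|b|$. This quantitative information is precisely the transfer-operator content of \cite{LP22}, and its conversion into the above renewal asymptotic follows the template of \cite{Li22}.
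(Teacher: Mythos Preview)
Your overall strategy---recast the residual waiting time as a renewal quantity, invoke an effective renewal theorem whose analytic input is the polynomial-in-$|b|$ resolvent bound for $(\Id-\calL_{ib})^{-1}$ coming from \cite{LP22}, and conclude via the exponential tail of $\Ret$---is exactly the paper's strategy. The forward decomposition $P(t,R)=\int_0^t\int_{\Delta_0}\mathbf{1}_{\{\Ret(y)>t+R-u\}}\,dQ(u,y)$ and the main-term computation are correct.

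The gap is in the error term. From the strip total-variation bound $|E|([u,u+1]\times\Delta_0)\le Ce^{-cu}$ you can only conclude
\[
\left|\int_0^t\!\int_{\Delta_0}\mathbf{1}_{\{\Ret(y)>t+R-u\}}\,dE(u,y)\right|\le \sum_{k=0}^{\lfloor t\rfloor}|E|([k,k+1]\times\Delta_0)\ll 1,
\]
not $C\int_0^t e^{-cu}\,\nu\{\Ret>t+R-u\}\,du$. The latter would require that the $y$-marginal of $E$ is absolutely continuous with respect to $\nu$ with bounded density, which is a much stronger (product-type) structure than you have asserted. So as written the error is only $O(1)$, and the bound fails.

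The paper avoids this two-variable difficulty by a duality trick. Using $\calL_0^*(\mu_{\mathrm E})=\mu_{\mathrm E}$, the forward sum over $(\Ret_n(x),T^nx)$ is rewritten as a \emph{backward} renewal sum over preimages: since $T^n(\gamma x)=x$, the awkward condition on $\Ret(T^nx)$ becomes a condition on $\Ret(x)$, and one obtains (\cref{lem:residual})
\[
\mu_{\mathrm E}\{x:\Ret_{l+1}(x)-t>R\}=\int_{\Lambda_+}\calR f_x(x,t)\,d\mu_{\mathrm E}(x),\qquad f_x=\chi_{\{R-\Ret(x)<s\le 0\}}.
\]
Now the $y$-dependence has been absorbed into the test function $f_x$, and the one-variable renewal theorem (\cref{thm:renewal}) applies \emph{pointwise in $x$}: the leading term produces $\frac{h_0(x)}{\sigma_0}\Leb(\supp f_x)\asymp(\Ret(x)-R)_+$, whose $\mu_{\mathrm E}$-integral is $O(e^{-\epsilon R})$ by the exponential tail; the error term is handled by smoothing $f_x$ and splitting on $\{\Ret(x)\lessgtr R+t/2\}$, again using the tail. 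Your approach can be repaired by proving a renewal estimate uniform over test functions in $y$ (e.g.\ $|\int\psi(y)\,dE(\cdot,y)|\ll e^{-cu}\|\psi\|$ for bounded $\psi$), but that amounts to re-deriving the paper's pointwise statement; the duality shortcut in \cref{lem:residual} is the missing idea.
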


The proof of the above proposition will be given in \cref{sec:renewal}. Let us give a direct consequence of the exponential tail property (see Property~(5) in \cref{prop:Coding}).

\begin{proposition}\label{prop:exp_tail} There exists $\epsilon>0$ such that for all $R > 0$ sufficiently large, we have
\[\nu\{x\in\Lambda_+: \Ret(x) > R\}\leq e^{-\epsilon R}. \]
\end{proposition}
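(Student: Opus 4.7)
The plan is to derive the bound as an essentially immediate consequence of Markov's inequality applied to the exponential tail property of $\Ret$, together with the absolute continuity relation between $\nu$ and $\mu$.

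First I would recall that Property~(5) in \cref{prop:Coding} gives $e^{\epsilon_0 \Ret} \in L^1(\Delta_{\sqcup}, \mu)$, so we may set
\[C_{\mathrm{tail}} := \int_{\Delta_\sqcup} e^{\epsilon_0 \Ret(x)} \, d\mu(x) < \infty.\]
By Markov's inequality applied to the nonnegative function $e^{\epsilon_0 \Ret}$, for every $R > 0$,
\[\mu\{x \in \Delta_\sqcup : \Ret(x) > R\} = \mu\{x \in \Delta_\sqcup : e^{\epsilon_0\Ret(x)} > e^{\epsilon_0 R}\} \leq C_{\mathrm{tail}}\, e^{-\epsilon_0 R}.\]

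Next I would transfer this from $\mu$ to $\nu$. Recall from \cref{eqn:nu_and_mu} that $d\nu = f_0 \, d\mu$ on $\Delta_0$, and $f_0$ is bounded above by some constant $M > 0$. Since $\Lambda_+ \subset \Delta_\sqcup$ up to a $\mu$- and $\nu$-null set (the $x$ at which some iterate of $T$ is undefined), we obtain
\[\nu\{x \in \Lambda_+ : \Ret(x) > R\} \leq M \cdot \mu\{x \in \Delta_\sqcup : \Ret(x) > R\} \leq M\, C_{\mathrm{tail}}\, e^{-\epsilon_0 R}.\]

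Finally, to produce the stated form $e^{-\epsilon R}$ with no explicit constant, I would choose any $\epsilon \in (0, \epsilon_0)$, e.g. $\epsilon = \epsilon_0/2$. For $R$ large enough that $M\, C_{\mathrm{tail}}\, e^{-(\epsilon_0 - \epsilon)R} \leq 1$, the right-hand side is bounded by $e^{-\epsilon R}$, giving the conclusion. There is no essential obstacle here: the only subtle point is bookkeeping the passage from $\mu$ to $\nu$, which is trivial because $f_0$ is a bounded density, and absorbing the multiplicative constant into the exponential by paying a small amount of decay rate and taking $R$ sufficiently large.
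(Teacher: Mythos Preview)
Your proof is correct and essentially the same as the paper's: both apply Markov/Chebyshev's inequality to $e^{\epsilon_0\Ret}$ and then absorb the resulting multiplicative constant by shrinking the exponent and taking $R$ large. The only cosmetic difference is that the paper applies the inequality directly with respect to $\nu$ (implicitly using that $f_0$ is bounded to know $e^{\epsilon_0\Ret}\in L^1(\nu)$), whereas you first work with $\mu$ and then pass to $\nu$ via the bounded density.
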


\begin{proof}
By Chebyshev's inequality
\begin{align*}
\nu\{x\in\Lambda_+: \Ret(x) > R\} \leq e^{-\epsilon_0 R}\int_{\Lambda_+} e^{\epsilon_0 \Ret(x)} \, \dd\nu(x) \qquad \text{for all $R > 0$}.
\end{align*}
Taking $R$ sufficiently large finishes the proof.
\end{proof}

\begin{proposition}\label{prop:onecusp}
There exists $\epsilon>0$ such that for all $R > 0$ sufficiently large, $i\in\N$, and $t> 0$, 
we have
\[\nu\{x\in\Lambda_+: \Ret_{l+1}(T^ix)-t>R\}\leq e^{-\epsilon R}, \]
where $l = l(T^ix,t)$ is the stopping time for $(T^ix,t)$.
\end{proposition}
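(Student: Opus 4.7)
The plan is to deduce \cref{prop:onecusp} from \cref{prop:effect} using the $T$-invariance of $\nu$, which reduces the uniform estimate over all $i \in \N$ to the case $i = 0$ already handled by \cref{prop:effect}.

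First, I would observe the set-theoretic identity
\begin{align*}
\{x \in \Lambda_+ : \Ret_{l(T^ix, t)+1}(T^ix) - t > R\} = T^{-i}\bigl(\{y \in \Lambda_+ : \Ret_{l(y,t)+1}(y) - t > R\}\bigr),
\end{align*}
since the condition on $x$ depends only on the image $y = T^ix$, and the stopping time is defined pointwise in terms of the orbit of $y$ under $T$. Recall also that $\Lambda_+$ is $T$-invariant by its very definition, so there are no measurability issues at the boundary of $\Delta_{\sqcup}$; the preimage $T^{-i}$ is defined for $\nu$-a.e.\ point.

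Second, I would invoke the $T$-invariance of $\nu$ established in the discussion of \cref{eqn:nu_and_mu} (equivalently by \cite[Lemma 2]{You98}), which gives $\nu(T^{-1}A) = \nu(A)$ for every Borel set $A \subset \Delta_0$, and hence $\nu(T^{-i}A) = \nu(A)$ for every $i \in \N$ by iteration. Applying this to $A = \{y \in \Lambda_+ : \Ret_{l(y,t)+1}(y) - t > R\}$, we get
\begin{align*}
\nu\{x \in \Lambda_+ : \Ret_{l(T^ix, t)+1}(T^ix) - t > R\} = \nu(A).
\end{align*}

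Finally, by \cref{prop:effect}, there exists $\epsilon > 0$ such that $\nu(A) \leq e^{-\epsilon R}$ for all $R$ sufficiently large and all $t > R$. The only remaining subtlety is the hypothesis $t > R$ in \cref{prop:effect} versus the weaker $t > 0$ here; but the case $0 < t \leq R$ is trivial since then the set $A$ is still controlled by the exponential tail (\cref{prop:exp_tail}) applied on each cylinder: if $t \leq R$, then $\Ret_{l+1}(y) - t > R$ forces $\Ret_{l+1}(y) > R$, so a large residual waiting time forces a single return time $\Ret(T^{l}y)$ to be larger than $R/2$ (say), giving an exponential bound via the $T$-invariance argument combined with \cref{prop:exp_tail}. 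There is no real obstacle here, as the main analytic work is already contained in \cref{prop:effect}; this proposition is essentially a soft consequence obtained by shifting the dynamics and using stationarity of $\nu$.
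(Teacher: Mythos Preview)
Your approach is essentially the same as the paper's: reduce to $i=0$ via $T$-invariance of $\nu$, apply \cref{prop:effect} when $t>R$, and invoke the exponential tail \cref{prop:exp_tail} when $t\le R$.

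One point in the $t\le R$ case deserves more care. You correctly observe that $\Ret_{l+1}(y)-t>R$ together with $\Ret_l(y)\le t$ forces $\Ret(T^l y)>R$, but here $l=l(y,t)$ depends on $y$, so you cannot directly apply $T$-invariance to the set $\{y:\Ret(T^{l(y,t)}y)>R\}$. The paper resolves this by noting that since $\Ret\ge\lambda_0$ and $\Ret_l(y)\le t\le R$, the stopping time $l$ can take only $O(R)$ values; one then union-bounds over these finitely many $l$, applies $T$-invariance plus \cref{prop:exp_tail} to each $\{y:\Ret(T^l y)>R\}$, and absorbs the polynomial factor $R$ into the exponential to get $Re^{-\epsilon R}\le e^{-\epsilon R/2}$. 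Your sketch has the right ingredients but omits this step.
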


\begin{proof}
Since the measure $\nu$ is $T$-invariant, we only need to consider the case when $i=0$.

We divide into two cases: when the time $t$ is large, we use \cref{prop:effect}; when the time $t$ is small, we use \cref{prop:exp_tail}. Fix $\epsilon > 0$ to be the minimum of the $\epsilon$'s provided by the propositions and let $R > 0$ be sufficiently large so that both the proportions hold.

If $t> R$, we can apply \cref{prop:effect} to obtain the desired bound.

Otherwise $t\leq  R$. Note that if $t\geq \Ret_{l}(x)$ and $\Ret_{l+1}(x)-t>R$, then $\Ret(T^{l+1}x)>R$. For a fixed $l$, we can use \cref{prop:exp_tail} and $T$-invariance of $\nu$ to obtain that
\begin{equation}\label{equ:rtl}
\nu\{x\in\Lambda_+: \Ret(T^{l+1}x)> R  \}=\nu\{x\in\Lambda_+: \Ret(x)> R  \}\leq e^{-\epsilon R}.
\end{equation}
Due to $\inf\{\Ret(y):y\in \Lambda_+\}>\lambda_0$ and $R\geq t\geq \Ret_{l}(x)$, the number of possible choices of such $l$ is bounded by $R$. Combined with \cref{equ:rtl}, we obtain
\[\nu\{x\in\Lambda_+: \Ret_{l+1}(x)-t>R\} \leq Re^{-\epsilon R}\leq e^{-\epsilon R/2} \]
which completes the proof by adjusting $\epsilon$.
\end{proof}

\begin{corollary}[Single cusp-excursion]\label{cor:onecusp}
There exists $\epsilon>0$ such that for all $R > 0$ sufficiently large, $t>0$, and $n,m\in\N$, we have
\[\nu\{ x\in \Lambda_+: (x,t) \text{ and } (T^{nm}x,t) \text{ are equivalent}\}\leq  e^{-\epsilon(nm\lambda_0 +R)}.\]
\end{corollary}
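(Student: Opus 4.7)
The plan is to combine \cref{lem:cuspdeep} with \cref{prop:onecusp} directly. The heuristic is that if $(x,t)$ and $(T^{nm}x,t)$ lie in the same equivalence class, the geodesic trajectory from $u_x$ is trapped in a single cusp excursion of duration at least $nm\lambda_0$, on top of the depth $R$ required just to be outside $\Omega(t,R)$ at the starting point; both contributions combine additively into the residual waiting time.

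First, fix $R > 2C_3$ large enough so that \cref{prop:onecusp} applies (with its constant $\epsilon_0 > 0$) to any threshold $\geq R/2$. Let $x \in \Lambda_+$ be such that $(x, t)$ and $(T^{nm}x, t)$ are equivalent in the sense of \cref{def.sametype}. In particular, both points lie outside $\Omega(t, R)$, and applying \cref{lem:cuspdeep} with $i = 0$ and $r = nm$ (so $f = nm$), the stopping time $l = l(x,t)$ satisfies
\[
\Ret_{l+1}(x) - t \;>\; \frac{R}{2} + nm\lambda_0.
\]
Consequently, the set in question is contained in
\[
\bigl\{\, x \in \Lambda_+ : \Ret_{l(x,t)+1}(x) - t > \tfrac{R}{2} + nm\lambda_0 \,\bigr\}.
\]

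Second, apply \cref{prop:onecusp} (with $i = 0$ and with the threshold $R$ there replaced by $R/2 + nm\lambda_0$, which is $\geq R/2$ and hence sufficiently large). This yields
\[
\nu\bigl\{ x \in \Lambda_+ : \Ret_{l+1}(x) - t > \tfrac{R}{2} + nm\lambda_0 \bigr\} \;\leq\; e^{-\epsilon_0 (R/2 + nm\lambda_0)}.
\]
Setting $\epsilon = \epsilon_0/2$, this upper bound is at most $e^{-\epsilon(R + nm\lambda_0)}$, finishing the proof.

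There is no real obstacle here since both inputs are already in place: \cref{lem:cuspdeep} converts the combinatorial equivalence hypothesis into a lower bound on the residual waiting time at a single point, and \cref{prop:onecusp} provides the measure estimate for a large residual waiting time at that point. The main thing to be careful about is ensuring the threshold $R/2 + nm\lambda_0$ is in the regime where \cref{prop:onecusp} is applicable, which is automatic once $R$ is taken sufficiently large (independently of $n$ and $m$).
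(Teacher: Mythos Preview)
Your proof is correct and follows essentially the same approach as the paper's own proof: apply \cref{lem:cuspdeep} with $i=0$, $r=nm$ to convert the equivalence hypothesis into a residual waiting time bound, then invoke \cref{prop:onecusp}. One cosmetic remark: you reuse the symbol $\epsilon_0$ for the constant from \cref{prop:onecusp}, which clashes with the paper's fixed constant $\epsilon_0$ from the exponential tail property; pick a different name to avoid confusion.
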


\begin{proof}
Let $R$, $t$, $n$, and $m$ be as in the corollary. Let $x \in \Lambda_+$. By \cref{lem:cuspdeep}, if $(x,t)$ and $(T^{nm}x,t)$ are equivalent, we obtain
\[ \Ret_{l+1}(x)-t> \frac{R}{2}+(nm-0)\lambda_0. \]
We finish the proof by using \cref{prop:onecusp}.
\end{proof}

\subsection{Conditional probability}
The way we attain the estimate of multiple cusp-excursions, which is \cref{prop:badword}, from \cref{prop:onecusp} is inspired by the language of conditional probability. 

Given any $n,m\in\N$ and $t > 0$, we consider the quadruple $(w,I,t,m)$ with $w$ a word in $\mathbf{n}$ and $I=\{I_j:1\leq j\leq \jI+1 \}$ a partition of $w$: 
\[w=\bigcup_{j = 1}^{\jI+1} I_j=\{i_1,\dotsc, r_1 \}\cup\cdots\cup \{i_{\jI+1},\dotsc, r_{\jI+1}\}\]
with $i_j\leq r_j<i_{j+1}$ for all $1\leq j\leq \jI$. The proof of \cref{prop:badword} will use induction on the number of subintervals $\jI+1$.

Given a point $x\in \Lambda_+$ of type $(w,I,t,m)$, let $l_j:=l(T^{i_jm}x,t)$ be the stopping time for $(T^{i_jm}x, t)$. For each $1\leq j\leq \jI$, as $(T^{i_jm}x,t)$ and $(T^{i_{j+1}m}x,t)$ are not equivalent, we have 
\begin{equation}\label{eq:ljij}
l_j+i_jm<l_{j+1}+i_{j+1}m. 
\end{equation}
For each $1\leq j\leq \jI+1$ and all $i\in I_j$, the points $(T^{im}x,t)$ are equivalent. Hence, when $\jI+1=1$, \cref{prop:badword} is \cref{cor:onecusp}.

To explain the idea of the proof of \cref{prop:badword}, we describe the probability model of the proposition for comparison. The pair $(\Lambda_+,\nu)$ is a probability space. The $j$-th event happens at a point $x$ if $T^{im}x \notin \Omega(t,R)$ for all $i\in I_j$ and $(T^{im}x,t)$ are equivalent (\cref{def.sametype}) for all $i\in I_j$. The difference of our model is the definition of \emph{all} $\jI+1$ events happening at a point $x\in\Lambda_+$ which requires that $x$ satisfies all $j$-th event for $1\leq j\leq\jI+1$, and \emph{additionally} $(T^{im}x,t)$ are not equivalent for $i$ in the different intervals $I_j$'s.

In order to do induction, we introduce the following definition, which extracts all the information we need from $x$ of type $(w,I,t,m)$. This definition is weaker than \cref{def.sametype}, which helps to do induction.

\begin{definition}\label{def:quartuple}
Given $i,f,m\in\N$, $R >0$, and $t>0$, we say that $x\in \Lambda_+$ satisfies $(i,t,m,R,f)$ if the stopping time $l=l(T^{im}x,t)$ satisfies
\begin{equation}
\label{eq:itrm}
\Ret_{l}(T^{im}x)+R\leq t <\Ret_{l+1}(T^{im}x)-(R+mf\lambda_0).
\end{equation}
\end{definition}

Given any $x\in\Lambda_+$, we know that the expanding map $T^n$ is defined at $x$ for any $n\in\N$, and is of the form $T^nx=\gamma^{-1}x$ for some $\gamma\in\calH^n$. We call the element $\gamma$ the inverse branch of $x$ of length $n$.

\begin{lemma}\label{lem:stable}
Given $i,f,m\in\N$, $R\geq C_3$, and $t>0$, suppose $x\in \Lambda_+$ satisfies $(i,t,m,R,f)$. Let $l$ be the stopping time for $(T^{im}x,t)$. Take the inverse branch $\gamma$ of $x$ of length $k\geq im+l+1$. Then, for all $y\in \gamma\Delta_0$, the stopping time satisfies $l(T^{im}y,t)=l(T^{im}x,t)$.
Moreover, the point $y$ satisfies $(i,t,m,R-C_3\lambda^{k-(im+l+1)} ,f)$.
\end{lemma}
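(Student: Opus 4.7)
The plan is to exploit the Lipschitz property of the Birkhoff sums $\Ret_\bullet$ along cylinders, equation \cref{eq:rl}, together with the uniform contraction of inverse branches from Property~(3) of \cref{prop:Coding}. The key observation is that if $y \in \gamma\Delta_0$ and $\gamma$ has length $k \geq im+l+1$, then $T^{im}x$ and $T^{im}y$ share the same initial sequence of $l+1$ inverse branches, so their return-time Birkhoff sums $\Ret_l, \Ret_{l+1}$ differ only by what happens in a deeply contracted cylinder.

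First I would factor $\gamma = \gamma_1 \cdots \gamma_k$ with $\gamma_j \in \calH$. Writing any $y \in \gamma\Delta_0$ as $y = \gamma z_y$ with $z_y \in \Delta_0$, and similarly $x = \gamma z_x$, we have
\[ T^{im}y = (\gamma_{im+1}\cdots \gamma_{im+l+1})(T^{im+l+1}y), \qquad T^{im}x = (\gamma_{im+1}\cdots \gamma_{im+l+1})(T^{im+l+1}x), \]
and both $T^{im+l+1}x$ and $T^{im+l+1}y$ lie in the cylinder $(\gamma_{im+l+2}\cdots\gamma_k)\Delta_0$, whose diameter is at most $\lambda^{k-(im+l+1)}\diam(\Delta_0)$ by Property~(3) of \cref{prop:Coding}. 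Applying \cref{eq:rl} with $\gamma' = \gamma_{im+1}\cdots\gamma_{im+l+1} \in \calH^{l+1}$ yields
\[|\Ret_{l+1}(T^{im}y) - \Ret_{l+1}(T^{im}x)| \leq C_2\lambda^{k-(im+l+1)}\diam(\Delta_0) \leq C_3\lambda^{k-(im+l+1)},\]
using $C_3 \geq C_2\diam(\Delta_0)$. The same reasoning applied at level $l$ (noting $\lambda^{k-(im+l)} \leq \lambda^{k-(im+l+1)}$) gives the analogous bound for $|\Ret_{l}(T^{im}y) - \Ret_{l}(T^{im}x)|$.

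Finally, plugging these two-sided bounds into the defining inequalities \cref{eq:itrm} for $x$, and writing $\epsilon = C_3\lambda^{k-(im+l+1)}$, I obtain
\begin{align*}
\Ret_l(T^{im}y) + (R - \epsilon) &\leq \Ret_l(T^{im}x) + \epsilon + R - \epsilon \leq t, \\
t &< \Ret_{l+1}(T^{im}x) - R - mf\lambda_0 \leq \Ret_{l+1}(T^{im}y) + \epsilon - R - mf\lambda_0,
\end{align*}
that is, $\Ret_l(T^{im}y) + (R-\epsilon) \leq t < \Ret_{l+1}(T^{im}y) - ((R-\epsilon) + mf\lambda_0)$. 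In particular the strict chain $\Ret_l(T^{im}y) \leq t < \Ret_{l+1}(T^{im}y)$ identifies $l$ as the stopping time for $(T^{im}y,t)$, giving $l(T^{im}y,t) = l(T^{im}x,t)$, and the same chain rewritten is exactly the assertion that $y$ satisfies $(i,t,m,R-\epsilon,f)$.

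There is no real obstacle here: the argument is a direct combination of the uniform contraction of inverse branches and the Lipschitz control on $\Ret_\bullet$, and the only care needed is to pick the larger exponent $k-(im+l+1)$ (rather than $k-(im+l)$) so that a single $\lambda^{k-(im+l+1)}$ uniformly dominates the errors at both levels $l$ and $l+1$.
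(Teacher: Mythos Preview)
The proposal is correct and follows essentially the same approach as the paper's proof: both write $x=\gamma z_x$, $y=\gamma z_y$, apply the Lipschitz bound \cref{eq:rl} to $\Ret_l$ and $\Ret_{l+1}$ evaluated at $T^{im}x,T^{im}y$, use the contraction property to bound the resulting distance by $C_3\lambda^{k-(im+l+1)}$, and then substitute into \cref{eq:itrm}. Your writeup is slightly more explicit about the factorization of $\gamma$ and the choice of the dominating exponent, but the argument is the same.
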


\begin{proof}
Assume the hypotheses of the lemma. Take $l=l(T^{im}x,t)$. As $\gamma$ is the inverse branch of $x$ of length $k$,  we can write $x=\gamma z,\ y=\gamma z'$. Due to \cref{eq:gammaz,eq:rl}, we have
\begin{align*}
|\Ret_{l}(T^{im}y)-\Ret_{l}(T^{im}x)| &= |\Ret_{l}(T^{im}(\gamma z))-\Ret_{l}(T^{im}(\gamma z'))| \\
&\leq C_2\|T^{im+l}(\gamma z)-T^{im+l}(\gamma z')\|\leq C_3\lambda^{k-(im+l)}.
\end{align*}
Similarly,
\[|\Ret_{l+1}(T^{im}y)-\Ret_{l+1}(T^{im}x) |\leq C_3\lambda^{k-(im+l+1)}. \]
By noticing \cref{eq:itrm} for the stopping time $l(T^{im}x,t)$, we have that $y$ satisfies \cref{eq:itrm} with $R$ replaced by $R-C_3\lambda^{k-(im+l+1)}$, which automatically implies that $l(T^{im}x,t)=l(T^{im}y,t)$.
\end{proof}

\begin{proof}[Proof of \cref{prop:badword}]
Let $R$, $t$, $n$, and $m$ be as in the proposition. Take any quadruple $(w,I,t,m)$. We do induction on the number $\jI+1$ of intervals of $I$.

For $\jI+1=1$, it is exactly \cref{cor:onecusp}. The constant $\lambda_0$ is absorbed into $\epsilon$ and notice that $\ell(w)=\sum_{j = 1}^{\jI+1}(f_j+1)=f_1+1$.

For $\jI+1>1$, let $R_1=R/2$ and $f_j=r_j-i_j$. 
Let
\[\cal S_1:=\{x\in \Lambda_+: x \text{ is of type }(w,I,t,m) \}.\]
For the rest of the proof, we will denote by $l_j(x)$ the stopping time $l(T^{i_jm}x,t)$. Let
\begin{equation}
\label{eq:xijlj}
\cal S_2:=\left\{\begin{array}{ll}
x\in \Lambda_+: i_{j}m+l_{j}(x)>i_{j-1}m+l_{j-1}(x) \text{ and}\\
x \text{ satisfies }(i_j,t,m,R_1,f_j) \text{ for } j=1,\dotsc, \jI+1 
\end{array}\right\}.
\end{equation}
By \cref{lem:cuspreturn,lem:cuspdeep}, \cref{eq:ljij} and the definition of type $(w,I,t,m)$, we have $\cal S_1\subset \cal S_2.$ So we only need to bound $\nu(S_2)$.

We want to compute the measure of the set given in \cref{eq:xijlj} using the language of conditional probability. We first fix the events for $j=1,\dotsc, \jI$ by fixing the inverse branch of $x$ and then compute the conditional probability of the last event $j=\jI+1$. 

We need the following before continuing the proof of the proposition. Let $k_{\jI  }(x):=i_{\jI  }m+l_{\jI  }(x)+1$. Let 
\begin{equation}
\calH(w,I,t,m):=\left\{
\begin{array}{ll}
\gamma \in \bigcup_{n \in \N}\calH^{n}:  \exists x\in\cal S_2 
\text{ such that }\\
\text{ the inverse branch of }
x\text{ of length }k_\jI(x)\text{ is } 
\gamma
\end{array}
\right\}.
\end{equation}

\begin{lemma}\label{lem:gammadisj}
The set $\{\gamma\Delta_0: \gamma\in\calH(w,I,t,m)\}$ consists of mutually disjoint cylinders, i.e., there are no $\gamma,\gamma' \in \calH(w,I,t,m)$ such that $\gamma=\gamma'\gamma''$ for some $\gamma'' \in \bigcup_{n \in \N}\calH^{n}$.
\end{lemma}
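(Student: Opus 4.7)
The plan is to argue by contradiction using \cref{lem:stable}, which forces the stopping time to be locally constant on a cylinder once the cylinder is long enough.

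First I would suppose, for contradiction, that there exist $\gamma,\gamma' \in \calH(w,I,t,m)$ and $\gamma''\in\bigcup_{n\in\N}\calH^{n}$ (necessarily of length $\geq 1$) such that $\gamma=\gamma'\gamma''$. By definition of $\calH(w,I,t,m)$, pick $x,x'\in \cal S_2$ such that $\gamma$ is the inverse branch of $x$ of length $k_\jI(x)=i_\jI m+l_\jI(x)+1$ and $\gamma'$ is the inverse branch of $x'$ of length $k_\jI(x')=i_\jI m+l_\jI(x')+1$. Note that $i_\jI$ is fixed by the partition $I$, so the fact that $\gamma$ is strictly longer than $\gamma'$ translates to
\begin{equation*}
l_\jI(x) > l_\jI(x').
\end{equation*}

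Next, since $\gamma=\gamma'\gamma''$ as a composition of inverse branches, we have $\gamma\Delta_0\subset \gamma'\Delta_0$, and in particular $x\in \gamma'\Delta_0$. Now I would apply \cref{lem:stable} to the point $x'$ (which satisfies $(i_\jI,t,m,R_1,f_\jI)$ by \cref{eq:xijlj}) with its inverse branch $\gamma'$ of length $k=k_\jI(x')=i_\jI m+l_\jI(x')+1$; the hypothesis $k\geq i_\jI m+l_\jI(x')+1$ holds with equality. The conclusion of \cref{lem:stable} gives that for every $y\in \gamma'\Delta_0$,
\begin{equation*}
l(T^{i_\jI m}y,t)=l(T^{i_\jI m}x',t)=l_\jI(x').
\end{equation*}
Applying this to $y=x\in\gamma'\Delta_0$ yields $l_\jI(x)=l_\jI(x')$, contradicting the strict inequality above.

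The argument is essentially a one-shot application of \cref{lem:stable}, so there is no real obstacle; the only thing to verify carefully is that $i_\jI$ is the same index for both $x$ and $x'$ (which is immediate because the partition $I$ is fixed in the quadruple $(w,I,t,m)$) and that the length hypothesis $k\geq im+l+1$ of \cref{lem:stable} matches exactly the length $k_\jI(x')$ we are using. Once those bookkeeping points are checked, disjointness of the cylinders $\{\gamma\Delta_0:\gamma\in\calH(w,I,t,m)\}$ follows, since cylinders in a Markov coding that are not nested and not equal are automatically disjoint.
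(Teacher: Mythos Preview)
Your proof is correct and is essentially the same argument as the paper's, just packaged differently. The paper does not invoke \cref{lem:stable} directly; instead it re-derives the key inequality inline: from $x,x'$ satisfying $(i_\jI,t,m,R_1,f_\jI)$ and $l_\jI(x')+1\leq l_\jI(x)$, it compares $\Ret_{l_\jI(x')+1}(T^{i_\jI m}x')$ with $\Ret_{l_\jI(x)}(T^{i_\jI m}x)$ via the Lipschitz bound \cref{eq:rl} to reach a contradiction with the stopping-time inequalities. Your route---observing that $x\in\gamma'\Delta_0$ and applying \cref{lem:stable} to force $l_\jI(x)=l_\jI(x')$---is the same mechanism, but cleaner because it reuses the lemma rather than repeating its content; the only thing you should make explicit is that the hypothesis $R_1\geq C_3$ of \cref{lem:stable} is available (it is, since $R_1=R/2$ with $R$ taken sufficiently large).
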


\begin{proof}
For the sake of contradiction, suppose the lemma is false. Then, we have $\gamma=\gamma'\gamma''$ for some $\gamma,\gamma'\in\calH(w,I,t,m)$ and $\gamma'' \in \bigcup_{n \in \N}\calH^{n}$. By definition, $\gamma$ (resp. $\gamma'$) is the inverse branch of some $x \in \cal S_2$ (resp. $x' \in \cal S_2$) of length 
\[k_\jI(x)=i_\jI m+l_\jI(x)+1 \qquad \text{(resp. $k_\jI(x')=i_\jI m+l_\jI(x') +1$)}.\]
We denote by $l_\jI$ (resp. $l_\jI'$) the stopping time $l_\jI(x)$ (resp. $l_\jI(x')$).
Also due to $x$ and $x'$ satisfying $(i_\jI,t,R_1,f_\jI)$, we have
\[ \Ret_{l_{\jI  }}(T^{i_{\jI  }m}x)\leq t-R_1 \qquad \text{and} \qquad \Ret_{l_{\jI  }'+1}(T^{i_{\jI  }m}x')-t>R_1+f_{\jI  }\lambda_0. \]
But by hypothesis $\gamma=\gamma'\gamma''$, and so we have $l_{\jI  }'+1\leq l_{\jI  }$. Note that $x=\gamma z=\gamma'\gamma''z$ and $x'=\gamma'z'$.
Therefore
\[\Ret_{l_{\jI  }'+1} (T^{i_{\jI  }m}\gamma'z')\leq \Ret_{l_{\jI  }'+1}(T^{i_{\jI  }m}\gamma'\gamma'' z)+C_3\leq \Ret_{l_{\jI  }}(T^{i_{\jI  }m}\gamma z)+C_3\leq t-R_1+C_3, \]
where the first inequality is due to \cref{eq:rl}. This is a contradiction.
\end{proof}
Then by definition and \cref{lem:gammadisj},
\begin{align*}
\nu(\cal S_2)&=\nu\left\{\begin{array}{ll}
x\in\Lambda_+: i_{j}m+l_{j}(x)>i_{j-1}m+l_{j-1}(x) \text{ and}\\ 
x \text{ satisfies }(i_j,t,m,R_1,f_j) \text{ for } j=1,\dotsc, \jI+1 
\end{array}\right\}\\
&\leq  \sum_{\gamma\in\calH(w,I,t,m)  }\nu\left\{
\begin{array}{ll}
\gamma z\in \gamma\Lambda_+: i_{\jI+1}m+l_{\jI+1}(\gamma z)>i_{\jI  }m+l_{\jI  }(\gamma z) \\
\text{and }\gamma z\text{ satisfies }(i_{\jI+1},t,m,R_1,f_{\jI+1}) 
\end{array}
\right\}.
\end{align*}
From here we can see that in order to estimate $\nu(\cal S_2)$, we need to estimate the following quantities:
\begin{enumerate}
\item[I.] the probability of the first $\jI$ events:
\[\sum_{\gamma\in\calH(w,I,t,m)  }\nu(\gamma\Delta_0);\]
\item[II.] the conditional probability when the first $\jI$ events happen:
\begin{equation}\label{equ:estimate 2}
\frac{1}{\nu(\gamma\Delta_0)} \cdot \nu\left\{
\begin{array}{ll}
\gamma z\in \gamma\Lambda_+: i_{\jI+1}m+l_{\jI+1}(\gamma z)>i_{\jI  }m+l_{\jI  }(\gamma z) \\
\text{and }\gamma z\text{ satisfies }(i_{\jI+1},t,m,R_1,f_{\jI+1}) 
\end{array}
\right\}.
\end{equation}
\end{enumerate}

\subsubsection{Estimate of I}
Take $\gamma\in \calH(w,I,t,m)$. There exists $x'=\gamma z'\in \cal S_2$. By \cref{lem:stable}, for all $x=\gamma z$ with $z\in\Lambda_+$, its stopping time is $l_j(x) = l_j(x')$ for all $1\leq j\leq\jI$. Such $x$ belongs to the set
\begin{equation}\label{eq:x'ij+1}
\left\{\begin{array}{ll}
x\in\gamma\Lambda_+: i_{j}m+l_{j}(x)>i_{j-1}m+l_{j-1}(x) \text{ and}\\
x\text{ satisfies }(i_j,t,m, R_1-C_3\lambda^{k_{\jI  }-(i_jm+l_j+1)},f_j) \text{ for } j=1,\dotsc, \jI\end{array}  
\right\}.
\end{equation}
Therefore, by \cref{eq:x'ij+1} and \cref{lem:gammadisj}, 
\begin{align}\label{eq:1jw}
&\sum_{\gamma\in\calH(w,I,t,m)  }\nu(\gamma\Delta_0)\nonumber\\
\leq{}& \nu \left\{\begin{array}{ll}
x\in\Lambda_+:
\ i_{j}m+l_{j}(x)>i_{j-1}m+l_{j-1}(x) \text{ and}\\
x\text{ satisfies }(i_j,t,m, R_1-C_3\lambda^{k_{\jI  }-(i_jm+l_j+1)},f_j) \text{ for } j=1,\dotsc, \jI\end{array}   \right\}.
\end{align}

\subsubsection{Estimate of II}
Now we estimate the ``conditional probability under $\gamma$". Let us be more precise. Let $\gamma\in \calH(w,I,t,m)$. 
For any $x=\gamma z$ with $z\in\Lambda_+$ satisfying the condition given in \cref{equ:estimate 2}, we rewrite the condition in terms of $z\in\Lambda_+$; then we can apply \cref{prop:onecusp}. Note that there exists $x'\in \gamma z'\in \mathcal{S}_2$. By \cref{lem:stable}, for all $x=\gamma z$ with $z\in\Lambda_+$, the stopping time $l_j(x)$ equals to $l_j(x')$ for $1\leq j\leq\jI$. We will denote the common number $l_j(x)$ by $l_j$.
Recall that $k_{\jI  }=i_{\jI}m+l_{\jI}+1$ is the length of $\gamma$.
\begin{enumerate}[label=(\roman*)]
\item If $k_{\jI  }\leq i_{\jI+1}m$, then $T^{i_{\jI+1}m}\gamma z=T^{i_{\jI+1}m-k_{\jI  }}z$. Hence, the point $z$ satisfies $\left(\frac{i_{\jI+1}m-k_{\jI  }}{m},t,m,R_1,f_{\jI+1} \right)$. Here we abuse notation. Although $\frac{i_{\jI+1}m-k_{\jI  }}{m}$ may not be an integer, $\left(\frac{i_{\jI+1}m-k_{\jI  }}{m},t,m,R_1,f_{\jI+1} \right)$ makes sense under \cref{def:quartuple}.
\item If $k_{\jI  }>i_{\jI+1}m$, let $l'=i_{\jI+1}m+l_{\jI+1}-k_{\jI  }$. Since $i_{\jI+1}m+l_{\jI+1}>i_{\jI}m+l_{\jI}$, we know $l'\geq 0$.
First fix $z_0 \in \Lambda_+$ and let $t_\gamma=t-\Ret_{k_{\jI  }-i_{\jI+1}m}(T^{i_{\jI+1}m}\gamma z_0)$.  \emph{We will verify that $l'$ is the new stopping time for $(z=T^{k_{\jI  }}x,t_\gamma)$ and $z$ satisfies $(0,t_\gamma,m,R_1-C_3,f_{\jI+1})$.}

Due to the fact that
\[\Ret_{l_{\jI+1}+1}(T^{i_{\jI+1}m}\gamma z)=\Ret_{l'+1}(z)+\Ret_{k_{\jI  }-i_{\jI+1}m}(T^{i_{\jI+1}m}\gamma z), \]
we obtain
\begin{align*}
&\Ret_{l'+1}(z)-t_\gamma\\
={}&\Ret_{l_{\jI+1}+1}(T^{i_{\jI+1}m} \gamma z)-t-(\Ret_{k_{\jI  }-i_{\jI+1}m}(T^{i_{\jI+1}m}\gamma z)-\Ret_{k_{\jI  }-i_{\jI+1}m}(T^{i_{\jI+1}m}\gamma z_0)) \\
\geq{}& R_1+f_{\jI+1}\lambda_0-C_3, 
\end{align*}
where the last inequality is due to \cref{eq:rl} and the fact that $x$ satisfies $(i_{\jI+1},t,m,R_1,f_{\jI+1})$.
Similarly,
\[  \Ret_{l'}(z)+R_1-C_3<t_\gamma. \]
\end{enumerate}

\begin{figure}[h]
\begin{center}
\begin{tikzpicture}[scale=2.3]

\draw [thick, -] (0,0.5) -- (5,0.5);

\path[fill=black,inner sep=0pt] (0,0.5) circle [radius=0.03];
\node[below,inner sep=0pt] at (0, 0.4) {$x$};
\draw [-,double] (0,0.2) -- (0,0.1);
\node[below,inner sep=0pt] at (0, 0.1) {$\gamma z$};

\path[fill=black] (1,0.5) circle [radius=0.03];
\node[below] at (1, 0.4) {$T^{i_\jI}x$};


\path[fill=black] (1.5,0.5) circle [radius=0.03];
\node[below] at (1.5, 0.4) {$T^{k_\jI}x$};
\draw [-,double] (1.5,0.2) -- (1.5,0.1);
\node[below] at (1.5, 0.1) {$ z$};

\path[fill=black] (2,0.5) circle [radius=0.03];
\node[below] at (2, 0.4) {$T^{i_{\jI+1}}x$};
\draw [-,double] (2,0.2) -- (2,0.1);
\node[below] at (2, 0.1) {$ T^{i_{\jI+1}-k_\jI } z$};

\draw [Blue, thick, -] (2,0.65) -- (4,0.65);
\path[fill=black] (4,0.65) circle [radius=0.03];

\path[fill=black] (3,0.5) circle [radius=0.03];
\node[below] at (3, 0.4) {$T^{i_{\jI+1}+l_{\jI+1}}x$};

\path[fill=black] (4.4,0.5) circle [radius=0.03];
\node[below] at (4.4, 0.4) {$T^{i_{\jI+1}+l_{\jI+1}+1}x$};

\end{tikzpicture}
\end{center}
\caption{Case (i) of estimate of II. It is similar to  \cref{fig:geotrajectory}. To simplify the notation, we only write the points $T^ix$, which also represents $\Ret_i(x)$ in the timeline. The blue segment is of length $t$. }\label{fig:case1}
\end{figure}
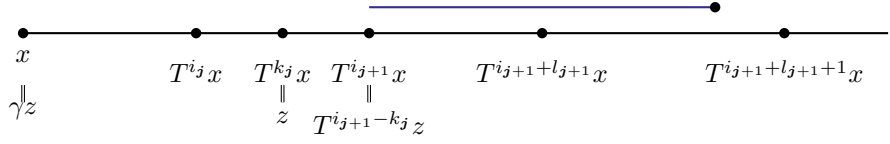

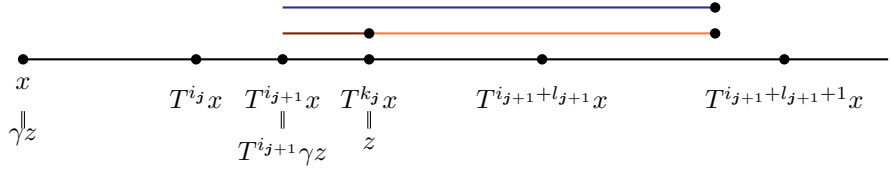
\begin{figure}[h]
\begin{center}
\begin{tikzpicture}[scale=2.3]

\draw [thick, -] (0,0.5) -- (5,0.5);

\path[fill=black,inner sep=0pt] (0,0.5) circle [radius=0.03];
\node[below,inner sep=0pt] at (0, 0.4) {$x$};
\draw [-,double] (0,0.2) -- (0,0.1);
\node[below,inner sep=0pt] at (0, 0.1) {$\gamma z$};

\path[fill=black] (1,0.5) circle [radius=0.03];
\node[below] at (1, 0.4) {$T^{i_\jI}x$};


\path[fill=black] (2,0.5) circle [radius=0.03];
\node[below] at (2, 0.4) {$T^{k_\jI}x$};
\draw [-,double] (2,0.2) -- (2,0.1);
\node[below] at (2, 0.1) {$ z$};

\path[fill=black] (1.5,0.5) circle [radius=0.03];
\node[below] at (1.5, 0.4) {$T^{i_{\jI+1}}x$};
\draw [-,double] (1.5,0.2) -- (1.5,0.1);
\node[below] at (1.5, 0.1) {$ T^{i_{\jI+1}} \gamma z$};

\draw [Blue, thick, -] (1.5,0.8) -- (4,0.8);
\path[fill=black] (4,0.8) circle [radius=0.03];

\draw [Orange, thick, -] (2,0.65) -- (4,0.65);
\path[fill=black] (4,0.65) circle [radius=0.03];

\draw [Brown, thick, -] (1.5,0.65) -- (2,0.65);
\path[fill=black] (2,0.65) circle [radius=0.03];

\path[fill=black] (3,0.5) circle [radius=0.03];
\node[below] at (3, 0.4) {$T^{i_{\jI+1}+l_{\jI+1}}x$};

\path[fill=black] (4.4,0.5) circle [radius=0.03];
\node[below] at (4.4, 0.4) {$T^{i_{\jI+1}+l_{\jI+1}+1}x$};

\end{tikzpicture}
\end{center}
\caption{Case (ii) of estimate of II. The brown line is of length approximately $\Ret_{k_{\jI  }-i_{\jI+1}}(T^{i_{\jI+1}}\gamma z_0)$ and the orange line is of length approximately $t_\gamma=t-\Ret_{k_{\jI  }-i_{\jI+1}}(T^{i_{\jI+1}}\gamma z_0)$.}\label{fig:case2}
\end{figure}

If the length of $\gamma$ satisfies case (ii), then we have
\begin{equation}\label{eq:mgammay}
\begin{split}
&\nu \left\{\begin{array}{ll}x=\gamma z\in \gamma \Lambda_+: x\text{ satisfies }(i_{\jI+1},t,R_1,f_{\jI+1})\\ \text{and }i_{\jI+1}m+l_{\jI+1}>i_{\jI  }m+l_{\jI  }\end{array}\right\}\\
\leq{}&\nu  \left(\{x=\gamma z\in \gamma \Lambda_+,\ z\text{ satisfies }(0,t_\gamma,R_1-C_3,f_{\jI+1})\}\right)\\
\leq{}&C_{\mathrm{cyl}}\|d\gamma\|^{\delta}\nu(\{ z\in\Lambda_+: z \text{ satisfies }(0,t_\gamma,R_1-C_3,f_{\jI+1})\}) \qquad \text{(by \cref{lem:CylinderEstimate})}\\
\leq{}& C_{\mathrm{cyl}}\|d\gamma\|^{\delta} e^{-\epsilon(R_1-C_3+f_{\jI+1}m\lambda_0)} \qquad \text{(by \cref{prop:onecusp})} \\
\leq{}& C_4\nu(\gamma\Lambda_+)e^{-\epsilon(R_1-C_3+f_{\jI+1}m\lambda_0)} \qquad \text{(by \cref{lem:CylinderEstimate})},
\end{split}
\end{equation}
with $C_4=C_{\mathrm{cyl}}^2$. 

If the length of $\gamma$ satisfies case (i), then the second equation above is for $z$ which $\text{satisfies }\left(\frac{i_{\jI+1}m-k_{\jI  }}{m},t,m,R_1 ,f_{\jI+1}\right) $. By the same estimate as above, we have
\begin{align}\label{eq:mgammay'}
\begin{split}
&\nu \left\{\begin{array}{ll}x=\gamma z\in \gamma \Lambda_+: x \text{ satisfies }(i_{\jI+1},t,R_1,f_{\jI+1})\\
\text{and } i_{\jI+1}m+l_{\jI+1}>i_{\jI  }m+l_{\jI  }\end{array}\right\}\\
\leq{}&  C_4\nu(\gamma\Lambda_+)e^{-\epsilon(R_1+f_{\jI+1}m\lambda_0)}.
\end{split}
\end{align}

\subsubsection{\texorpdfstring{Completing the estimate of $\nu(\cal S_2)$}{Completing the estimate of ν(\unichar{"1D4AE}₂)}}
Summing over all $\gamma\in\calH(w,I,t,m)$, we have
\begin{align*}
\nu(\cal S_2)
= {}& \sum_{\gamma\in\calH(w,I,t,m)  }\nu\left\{\begin{array}{ll}\gamma z\in \gamma \Lambda_+: \gamma z \text{ satisfies }(i_{\jI+1},t,m,R_1,f_{\jI+1})\\
\text{and } i_{\jI+1}m+l_{\jI+1}>i_{\jI  }m+l_{\jI  }\end{array} \right\}
\\ 
\leq{}&   \sum_{\gamma\in\calH(w,I,t,m)  } \nu(\gamma \Lambda_+)(C_4e^{-\epsilon(R_1-C_3+f_{\jI+1}m\lambda_0)}) \,\,\,\text{ (by \cref{eq:mgammay,eq:mgammay'})}
\\
\leq{} &\nu \left\{\begin{array}{ll}x\in \Lambda_+: x\text{ satisfies }(i_j,t, m,R_1-C_3\lambda^{k_{\jI  }-(i_jm+l_j+1)},f_j) \text{ and}\\
i_{j}m+l_{j}(x)>i_{j-1}m+l_{j-1}(x) \text{ for } j=1,\dotsc, \jI  \end{array} \right\} \\
{}&\times (C_4e^{-\epsilon(R_1-C_2+f_{\jI+1}m\lambda_0)}) \,\,\,\text{ (by \cref{eq:1jw})}.
\end{align*}

For the first term in the last equation, we use induction to estimate it. We repeat the process and reduce the number $\jI$. At each step, we will get a term similar to the second term of the last equation. The subtlety is that after the first step, the number $R_1$ for events $j=1,\dotsc, \jI$ changes, and the error is of the form $C_3\lambda^{k_{\jI  }-(i_jm+l_j+1)}$. Due to $\lambda<1$, for a fixed $j$, after $\jI+1-j$ steps, the number $R_1$ is replaced by 
\[R_1-C_3\sum_{r = j}^{\jI}  \lambda^{k_{r  }-(i_jm+l_j+1)},\]
which is greater than $ R_1-C_3/(1-\lambda)$,
where $k_r$'s are mutually distinct due to the hypothesis that $i_rm+l_r>i_{r-1}m+l_{r-1}$. 
Then, we use the argument of conditional probability to compute the measure of the $j$-th event. Altogether, we obtain
\begin{align*}
\nu(\cal S_2)\leq \prod_{j = 1}^{\jI+1} C_4e^{-\epsilon(f_{j}m\lambda_0+R_1-C_3(1+\frac{1}{1-\lambda}))}.
\end{align*}
Recall that $\ell(w)=\sum_{j = 1}^{\jI+1}(f_j+1)=\sum_{j = 1}^{\jI+1}f_j + (\jI+1)$.
Finally, we obtain \cref{prop:badword} by using $m\geq 1$, decreasing $\epsilon$ to $\epsilon \lambda_0/2$, and choosing $R_1=R/2$ sufficiently large to absorb the constants $C_3,C_4$.
\end{proof}

\subsection{Renewal theorem}\label{sec:renewal}

In this section, we will prove \cref{prop:effect}. For references of the renewal theorem, see for example \cite{Woo82} and \cite{Fel71}. We define a ``random walk" from $\Lambda_+\times \R$ to $\Lambda_+\times \R$: starting from a point $(x,t) \in \Lambda_+\times \R$, the probability to reach the next point $(y,t+\Ret(y))$ with $Ty=x$ is $e^{-\delta \Ret(y)}$. Then the integral form of the renewal theorem for this ``random walk" is given in \cref{thm:renewal}. This kind of renewal theorem can be obtained through the classical method of using transfer operators and Laplace transforms. In order to obtain an exponential error term in \cref{prop:effect}, we need a spectral gap result from \cite{LP22}.

\Cref{prop:effect} is for estimating the measure of the set of $x\in\Lambda_+$ whose residual waiting time satisfies $\Ret_{l+1}(x)-t>R$ where $l=l(x,t)$ is the stopping time. In \cref{lem:residual}, we use a classical computation to relate the residual waiting time to the integral of the renewal sum in \cref{eqn:renewal sum} which appears in \cref{thm:renewal}, the integral form of the renewal theorem. This enables us to derive \cref{prop:effect} from \cref{thm:renewal}.

Only for this subsection, denote $\L_s:=\calL_{(\delta+s)\Ret}$ for $s\in\C $ with $\Re s > -\epsilon_0$, which is the unnormalized transfer operator on $\Lip(\Lambda_+,\mathbb{C})$ defined in \cref{sec:TransferOperators}. We also use the measure $\mu_{\mathrm{E}}$ which satisfies $L^*_0(\mu_{\mathrm{E}}) = \mu_{\mathrm{E}}$. Recall $h_0 \in \Lip(\Lambda_+, \R)$ is the eigenfunction of the transfer operator $\L_{0}$ with eigenvalue $1$ and $\int_{\Lambda_+} h_0 \, \dd\mu_{\mathrm{E}}=1$. Let $\sigma_0:=\bar{\Ret}=\hat{\nu}(\Ret)=\int_{\Lambda_+} \Ret h_0 \, \dd\mu_{\mathrm{E}}$ be the Lyapunov exponent.

For any compactly supported function $f:\R \to \R$, $x\in\Lambda_+$, and $t>0$, we introduce the renewal sum
\begin{align}
\label{eqn:renewal sum}
\calR f(x,t)&:=\sum_{n=0}^\infty \sum_{\gamma \in \calH^n}e^{-\delta \Ret_{n}(\gamma x)}f(\Ret_{n}(\gamma x)-t)
\end{align}
which converges. We denote by $\|\cdot\|_1$ the $L^1$ norm.

\begin{theorem}[Renewal theorem]
\label{thm:renewal}
There exists $\epsilon>0$ such that for all $f \in C_{\mathrm{c}}^2(\R, \R)$, $x\in\Lambda_+$, and $t>0$, we have
\begin{equation*}
\calR f(x,t) =\frac{h_0(x)}{\sigma_0}\int_{-t}^\infty f \, \dd \Leb + O\bigl(e^{-\epsilon t}e^{\epsilon \Leb(\supp f)}(\|f''\|_1+\|f\|_1)\bigr)
\end{equation*}
as $t \to +\infty$.
\end{theorem}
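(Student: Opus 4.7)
The plan is to follow the classical Fourier-transform strategy for renewal theorems, exploiting the spectral gap of $\L_s := \calL_{(\delta+s)\Ret}$ established in \cite{LP22} to convert a pole/spectral-gap decomposition into the stated asymptotic with exponential error. First, write $f(u) = \frac{1}{2\pi}\int_\R \hat{f}(\xi)e^{i\xi u}\,d\xi$ and substitute $u = \Ret_n(\gamma x) - t$. Using the identity
\[
\sum_{\gamma \in \calH^n} e^{-(\delta - i\xi)\Ret_n(\gamma x)}\mathbf{1}(\gamma x) = \L_{-i\xi}^n \mathbf{1}(x),
\]
the renewal sum becomes, at least formally,
\[
\calR f(x,t) = \frac{1}{2\pi}\int_\R \hat{f}(\xi)\,e^{-i\xi t}\,\bigl((I - \L_{-i\xi})^{-1}\mathbf{1}\bigr)(x)\,d\xi,
\]
with the singularity at $\xi=0$ to be handled via contour deformation.

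Second, I would analyze $(I - \L_s)^{-1}$ on a complex strip $|\Re s|<\epsilon$. The spectral gap of $\L_0$ on $\Lip(\Lambda_+,\C)$ (simple leading eigenvalue $1$ with eigenfunction $h_0$, left eigenfunctional $\mu_{\mathrm{E}}$ with $\int h_0\,d\mu_{\mathrm{E}}=1$) and analytic perturbation theory yield, for $s$ near $0$, the spectral decomposition $\L_s = \lambda_s P_s + R_s$ with $\lambda_s = 1 - \sigma_0 s + O(s^2)$, $P_s \mathbf{1} = h_0 + O(s)$, and $R_s$ of spectral radius $\leq 1-\epsilon'$. Away from $0$, the key input is the exponential decay of $\L_{-i\xi}$ from \cite{LP22}: for $|\xi|\gg 1$ and $|\Re s|\leq \epsilon$, the norm of $\L_s^n$ on $\Lip$ decays at a rate polynomial in $|\xi|$, which is enough to render $(I-\L_s)^{-1}$ analytic and polynomially bounded in $|\xi|$ throughout the shifted contour.

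Third, I would extract the main term by isolating the pole at $\xi=0$. Writing
\[
(I - \L_{-i\xi})^{-1}\mathbf{1}(x) = \frac{P_{-i\xi}\mathbf{1}(x)}{1 - \lambda_{-i\xi}} + (I - R_{-i\xi})^{-1}\mathbf{1}(x)
\]
and using $1 - \lambda_{-i\xi} = -i\sigma_0\xi + O(\xi^2)$, a residue computation gives a contribution
\[
\frac{h_0(x)}{\sigma_0}\cdot \frac{1}{2\pi}\int_\R \hat{f}(\xi)\,\frac{e^{-i\xi t}}{-i\xi}\,d\xi = \frac{h_0(x)}{\sigma_0}\int_{-t}^{\infty} f\,d\Leb,
\]
which is the claimed leading term. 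The remaining terms (regular part of the resolvent near $0$, plus tails) are folded into the error by deforming the contour from $\R$ to $\R - i\epsilon$ (with a small semicircle around $0$). This contour shift costs a factor $e^{-\epsilon t}$ from $e^{-i\xi t}$, and the integrand on the shifted contour involves $\hat{f}(\xi - i\epsilon) = \int f(u) e^{\epsilon u} e^{-i\xi u}\,du$. Since $\supp f$ is compact, $|\hat{f}(\xi - i\epsilon)|\leq e^{\epsilon\Leb(\supp f)}\|f\|_1$, and integration by parts twice gives $|\hat{f}(\xi-i\epsilon)|\leq e^{\epsilon\Leb(\supp f)}\|f''\|_1/\xi^2$ for $|\xi|\geq 1$. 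Combined with the polynomial growth of the resolvent on the shifted contour, these yield the stated error.

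The main obstacle is the fourth step: verifying that the exponential-decay bounds of \cite{LP22} for $\L_{-i\xi}$ survive the contour shift to $\L_{-\epsilon - i\xi}$ uniformly in $|\xi|$ large, and that they are integrable against $|\hat{f}(\xi-i\epsilon)|$. Concretely, one needs a Dolgopyat-type bound on $\L_s$ that is robust to a small real perturbation of $s$; this requires the analyticity of $s\mapsto \L_s$ (cf.\ the lemma cited in \cref{sec:TransferOperators}) together with a uniform resolvent bound from \cite{LP22} that can be upgraded to the shifted contour via a standard Cauchy-integral / perturbative argument. Once this input is in place, splitting the $\xi$-integral into $|\xi|\leq 1$ and $|\xi|\geq 1$ and combining with the decay of $\hat{f}$ produces the claimed exponential error $O\bigl(e^{-\epsilon t}e^{\epsilon\Leb(\supp f)}(\|f''\|_1 + \|f\|_1)\bigr)$.
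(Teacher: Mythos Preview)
Your proposal is essentially the same as the paper's proof: both write $\calR f(x,t)$ as $\int_\R e^{-it\xi}\hat f(\xi)(\Id-\L_{-i\xi})^{-1}\mathbf{1}(x)\,d\xi$, isolate the simple pole at $\xi=0$ (contributing $\frac{h_0(x)}{\sigma_0}\int_{-t}^\infty f$), and control the remainder by a contour shift into the strip $|\Re s|<\eta$ using polynomial bounds on $(\Id-\L_s)^{-1}$. The ``main obstacle'' you flag---uniform resolvent bounds on the shifted contour---is exactly what the paper packages into \cref{lem:1-lz}, whose proof combines perturbation theory near $s=0$ with the Dolgopyat-type spectral bounds of \cite{LP22} for large $|\Im s|$; the Paley--Wiener/contour-shift step is then dispatched by citing \cite[Proposition~4.27]{Li22}.
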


By \cref{lem:analyticity}, we know that $s \mapsto \L_{s}$ is analytic on $\{s \in \C: \Re s>-\frac{\epsilon_0}{2}\}$, where $\epsilon_0$ comes from the exponential tail property in \cref{prop:Coding}. We need a lemma about the operator $(\Id-\L_{s})^{-1}=\sum_{n = 0}^\infty\L_{s}^n$. The proof is similar to the case of finite symbolic coding; see for example \cite[Propositions 7.2 and 7.3]{Lal89}. \Cref{lem:1-lz} is stronger than the results in \cite{Lal89} because we obtain a meromorphic extension to a half plane $\{s \in \C: \Re s>-\eta\}$ and the estimate of the norm of $U(s)$ is uniform up to a polynomial term in $s$.

\begin{lemma}\label{lem:1-lz}
We have a meromorphic extension of the map $s \mapsto (\Id-\L_{s})^{-1}$, which we denote by the same symbol, from the domain $\{s\in\C: \Re s>0\}$ to the domain $\{s\in\C: \Re s>-\eta\}$ for some $\eta \in (0, \frac{\epsilon_0}{2})$. Moreover, we have the decomposition
\begin{equation}\label{equ:decomposition}
(\Id-\L_{s})^{-1}=\frac{N(0)}{\sigma_0 s}+U(s) \qquad \text{for all $|\Re s| < \eta$},
\end{equation}
where $N(0)\phi = \mu_{\mathrm{E}}(\phi) h_0$ for all $\phi \in \Lip(\Lambda_+, \C)$ and $s \mapsto U(s)$ is analytic and satisfies
\begin{equation}\label{eqn:UsBound}
\|U(s)\|_{\mathrm{op}}\leq C(1+|\Im s|)^2 \qquad \text{for all $|\Re s| < \eta$}.
\end{equation}
\end{lemma}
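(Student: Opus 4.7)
The plan is to combine perturbation theory for the leading eigenvalue of $\L_0$ with the Dolgopyat-type spectral estimates from \cite{LP22}. By \cref{lem:analyticity}, the map $s \mapsto \L_s$ is analytic on the larger half-plane $\{\Re s > -\epsilon_0/2\}$. The starting point is that $\L_0$ has a simple leading eigenvalue $1$ with eigenfunction $h_0$ and left eigenmeasure $\mu_{\mathrm{E}}$, and a spectral gap on $\Lip(\Lambda_+,\C)$ (as cited from \cite[Proposition A]{You98}). By Kato's analytic perturbation theory, there exist $\eta_1 > 0$ and analytic maps $s \mapsto \lambda_s \in \C$, $s \mapsto P_s$, $s \mapsto Q_s$ on $\{|s| < \eta_1\}$ giving a decomposition
\[
\L_s = \lambda_s P_s + Q_s, \qquad P_s Q_s = Q_s P_s = 0,
\]
where $P_s$ is a rank-one projection and the spectral radius of $Q_s$ is at most $1 - \epsilon'$ for some $\epsilon' > 0$. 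At $s = 0$ we have $\lambda_0 = 1$, $P_0 \phi = \mu_{\mathrm{E}}(\phi) h_0 = N(0)\phi$. Differentiating $\L_s h_s = \lambda_s h_s$ at $s = 0$, pairing with $\mu_{\mathrm{E}}$, and using $\L_0^\ast \mu_{\mathrm{E}} = \mu_{\mathrm{E}}$ yields $\lambda_s'|_{s=0} = -\int \Ret \cdot h_0 \, d\mu_{\mathrm{E}} = -\sigma_0$, so $\lambda_s = 1 - \sigma_0 s + O(s^2)$.

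On this neighborhood the formula
\[
(\Id - \L_s)^{-1} = \frac{P_s}{1 - \lambda_s} + (\Id - P_s)(\Id - Q_s)^{-1}
\]
gives the meromorphic structure. Expanding $\frac{1}{1 - \lambda_s} = \frac{1}{\sigma_0 s}(1 + O(s))^{-1}$ and $P_s = P_0 + O(s)$, the first term splits as $\frac{N(0)}{\sigma_0 s}$ plus an analytic remainder; the second term is analytic since $\Id - Q_s$ is boundedly invertible. This establishes \cref{equ:decomposition} on a small disk around $s = 0$.

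To extend to the full strip $\{|\Re s| < \eta\}$ and obtain the polynomial bound \cref{eqn:UsBound}, I would invoke the key spectral estimate from \cite{LP22}: for some $\eta > 0$, the unnormalized transfer operators satisfy, on $\{|\Re s| < \eta\}$ with $|\Im s| \geq 1$, an inequality of the form $\|\L_s^n\|_{\Lip} \leq C (1 + |\Im s|) e^{-\eta n}$ for all $n \in \N$ (the factor $(1 + |\Im s|)$ being the standard Lasota--Yorke-type loss when bounding the Lipschitz norm by an $L^\infty$-type Dolgopyat estimate). Summing the Neumann series gives
\[
\|(\Id - \L_s)^{-1}\|_{\mathrm{op}} \leq \sum_{n \geq 0} C(1 + |\Im s|) e^{-\eta n} \leq C'(1 + |\Im s|),
\]
and since in this regime the singular contribution $N(0)/(\sigma_0 s)$ is itself bounded by $C/(1+|\Im s|)$, we obtain the claimed bound on $U(s)$. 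For the intermediate range $|\Im s| \leq 1$ with $|\Re s| < \eta$ but $s$ away from $0$, analyticity of the spectral decomposition (and absence of eigenvalues at $1$, which is ensured by the spectral gap of $\L_0$ and continuity) gives a uniform bound. Patching the three regimes by analytic continuation produces the meromorphic extension with the stated properties.

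The main obstacle is extracting the precise polynomial dependence $(1 + |\Im s|)^2$ from the spectral bounds in \cite{LP22}; the power $2$ arises from combining a power of $|\Im s|$ in the Lasota--Yorke inequality with another factor coming from the decomposition of $(\Id - \L_s)^{-1}$ against the Lipschitz seminorm (rather than merely the sup norm). A clean way to handle this is the two-step bound
\[
(\Id - \L_s)^{-1} = \sum_{n=0}^{N-1} \L_s^n + \L_s^N (\Id - \L_s)^{-1},
\]
with $N \asymp \log(1 + |\Im s|)/\eta$ chosen so that $\|\L_s^N\|_{\mathrm{op}} \leq 1/2$; estimating each $\L_s^n$ with the Lasota--Yorke inequality then yields the quadratic-in-$|\Im s|$ bound after iterating. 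The rest of the argument is bookkeeping.
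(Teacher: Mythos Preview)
Your proposal is correct and follows essentially the same approach as the paper: analytic perturbation theory near $s=0$ to isolate the simple pole $N(0)/(\sigma_0 s)$ (with the same computation of $\lambda_s'(0)=-\sigma_0$), the Dolgopyat-type spectral bounds from \cite{LP22} for large $|\Im s|$, and absence of peripheral spectrum for the bounded intermediate region. The only cosmetic difference is that the paper obtains the quadratic bound $\|\L_s^n\|_{\mathrm{op}} \leq C(1+|\Im s|)^2 \rho^n$ directly by citing \cite[Proposition 7.3]{LP22} together with the standard conversion argument in \cite{Nau05,AGY06}, and then sums the Neumann series, rather than sketching the two-step iteration you describe.
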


\begin{proof}
First, we deal with $s \in \C$ with small $|s|$. It follows from the characterization of the spectrum of $\L_{0}$ (see \cref{sec:TransferOperators}) and its spectral decomposition that it is quasi-compact (see \cite[Chapter 3]{Kat95}). By further using perturbation theory of operators (see \cite[Chapter 7]{Kat95}), we obtain 
\[ \L_{s}=\lambda_sN(s)+Q(s) \qquad \text{for all $s \in \mathcal{O}$}, \]
for some open ball $\mathcal{O} \subset \C$ centered at $0$ and analytic maps $s \mapsto \lambda_s$, $s \mapsto N(s)$, and $s \mapsto Q(s)$. Here, $\lambda_s$ is the maximal simple eigenvalue of $\L_{s}$ and the operators satisfy $N(s)Q(s)=Q(s)N(s)=0$, $N(s)^2=N(s)$ (i.e., a projection), and $\|Q(s)\|_{\mathrm{op}}\leq \rho<1$.
Therefore,
\[(\Id-\L_{s})^{-1}=\sum_{n = 0}^\infty \L_{s}^n=\sum_{n = 0}^\infty(\lambda_s^nN(s)+Q(s)^n)= \frac{N(s)}{1-\lambda_s}+\sum_{n = 0}^\infty Q(s)^n.\]
In order to obtain \cref{equ:decomposition}, we need to compute $\partial_s\lambda_s|_{s=0}$ (cf. \cite[Proposition 4.10]{PP90} and \cite[Lemma 7.22]{AGY06}). Let $h_s \in \Lip(\Lambda_+, \C)$ be the eigenfunction of $L_s$ with eigenvalue $\lambda_s$ and $\int_{\Lambda_+} h_s \, d\mu_{\mathrm{E}}=1$ so that $s \mapsto h_s$ is analytic. In the following computation, we use $\dot{L}_s$ and $\dot{\lambda}_s$ to denote their derivatives with respect to the variable $s$.
By taking the derivative of $\lambda_sh_s = \L_{s}h_s$ and integrating, we have
\begin{align*}
\int_{\Lambda_+} (\dot{\lambda}_sh_s+\lambda_s\dot{h}_s) \, \dd\mu_{\mathrm{E}} = \int_{\Lambda_+} (\dot{\L}_{s}h_s+\L_{s}\dot{h}_s) \, \dd\mu_{\mathrm{E}}.
\end{align*}
Then set $s=0$ in the above formula. Since $L^*_0(\mu_{\mathrm{E}}) = \mu_{\mathrm{E}}$, we have $\int_{\Lambda_+} \dot{h}_0 \, \dd\mu_{\mathrm{E}}=\int_{\Lambda_+} \L_0\dot{h}_0 \, \dd\mu_{\mathrm{E}}$, and so we obtain
\[\partial_s\lambda_s|_{s=0}=\dot{\lambda}_0=\int_{\Lambda_+} \dot\L_0h_0 \, \dd\mu_{\mathrm{E}}= -\int_{\Lambda_+}\L_0(\Ret h_0) \, \dd\mu_{\mathrm{E}}
=-\sigma_0.\]
Hence, the map $s \mapsto N(s)/(1-\lambda_s)$ has a simple pole at $0$ and
\[s \mapsto \frac{N(s)}{1-\lambda_s}-\frac{N(0)}{\sigma_0 s}\]
is analytic on $\mathcal{O}$. Thus, $s \mapsto U(s):=(\Id-\L_{s})^{-1}-N(0)/(\sigma_0s)$ is analytic and \cref{equ:decomposition} holds on $\mathcal{O}$. \Cref{eqn:UsBound} is trivial.

Now, we deal with $s \in \C$ with bounded $|\Re s|$ and large $|\Im s|$. Using the spectral bounds provided by \cite[Proposition 7.3]{LP22} and an argument similar to the one after \cite[Proposition 5.3]{Nau05} or the proof of \cite[Proposition 7.16]{AGY06}, we obtain the following: there exist $C > 0$, $\eta \in (0, \diam(\mathcal{O})/2)$, $b_0>0$, and $\rho \in (0, 1)$ such that
\[ \|\L_{s}^n\|_{\mathrm{op}}\leq C(1+|\Im s|)^2\rho^n \qquad \text{on $\{s\in\C: |\Re s|<\eta,\ |\Im s|>b_0\}$}. \]
This implies that $\Id-\L_{s}$ is invertible and $s \mapsto (\Id-\L_{s})^{-1}$ is analytic on $\{s\in\C: |\Re s|<\eta,\ |\Im s|>b_0\}$, and its norm satisfies
\[ \|(\Id-\L_{s})^{-1} \|_{\mathrm{op}}\leq C\frac{(1+|\Im s|)^2}{1-\rho}. \]
Therefore, $s \mapsto U(s):=(\Id-\L_{s})^{-1}-N(0)/(\sigma_0s)$ is analytic and \cref{equ:decomposition,eqn:UsBound} hold on $\{s\in\C: |\Re s|<\eta,\ |\Im s|>b_0\}$.

For the rest of the region in $\{s \in \C: |\Re s|< \eta\}$ which is bounded away from $0$ and $\infty$, 
by the same argument as in \cite[Proposition 7.3]{Lal89} and \cite[Lemma 7.21]{AGY06}, the spectral radius of the operator $L_s$ is less than $1$ and $s \mapsto (\Id-\L_{s})^{-1}$ is analytic, which finishes the proof.
\end{proof}

\begin{proof}[Proof of \cref{thm:renewal}]
Let $f$, $x$, and $t$ be as in the lemma. Using \cref{eqn:renewal sum}, the Fourier transform, and the dominated convergence theorem, we obtain
\begin{align*}
\calR f(x,t)&=\sum_{n=0}^\infty \sum_{\gamma \in \calH^n}e^{-\delta \Ret_{n}(\gamma x)}\int_{-\infty}^{+\infty} e^{i\xi(\Ret_{n}(\gamma x)-t)}\hat f(\xi) \, \dd\xi\\
&=\sum_{n=0}^\infty\int_{-\infty}^{+\infty} e^{-it\xi} \hat f(\xi)\L_{-i\xi}^n(\chi_{\Lambda_+})(x) \, \dd\xi \\
&=\int_{-\infty}^{+\infty} e^{-it\xi} \hat f(\xi) (\Id-\L_{-i\xi})^{-1}(\chi_{\Lambda_+})(x) \, \dd\xi.
\end{align*}	
Using \cref{lem:1-lz}, we continue as in \cite[Proposition 4.27]{Li22}, which is a version of the classical Paley--Wiener theorem, which finishes the proof.
\end{proof}

Before we begin the proof of \cref{prop:effect}, we need a lemma which relates the residual waiting time to the integral of the renewal sum.

\begin{lemma}\label{lem:residual}
For all $R > 0$ and $t > 0$, we have
\begin{align*}
\mu_{\mathrm{E}}\{x\in\Lambda_+: \Ret_{l+1}(x)-t>R \}=\int_{\Lambda_+} \calR f_x(x,t) \, \dd\mu_{\mathrm{E}}(x),
\end{align*}
where $f_x := \chi_{\{s \in \R: R-\Ret(x)<s\leq 0\}}$ for all $x \in \Lambda_+$.
\end{lemma}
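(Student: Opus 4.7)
The plan is to partition the left-hand side event according to the value of the stopping time $l=l(y,t)$, use the transfer-operator duality $\calL_{\delta\Ret}^{*}\mu_{\mathrm{E}}=\mu_{\mathrm{E}}$ iterated to $n$ steps in order to rewrite each piece as a sum over inverse branches in $\calH^n$, and finally recognize the resulting double sum as the renewal sum $\calR f_x$ integrated against $\mu_{\mathrm{E}}$.

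First I would decompose
\begin{equation*}
\{y\in\Lambda_+ : \Ret_{l+1}(y)-t>R\} \;=\; \bigsqcup_{n=0}^{\infty}\{y\in\Lambda_+ : \Ret_n(y)\leq t,\ \Ret_{n+1}(y)>t+R\},
\end{equation*}
observing that, because $R>0$, the stopping-time upper bound $\Ret_{n+1}(y)>t$ is automatic from the second condition. Property~(1) of \cref{prop:Coding} shows that the cylinders $\{\gamma\Delta_0 : \gamma\in\calH^n\}$ partition $\Delta_0$ modulo a $\mu$-null set, so on each cylinder I can write $y=\gamma x$ with $x=T^n y\in\Lambda_+$, and the cocycle identity $\Ret_{n+1}(\gamma x)=\Ret_n(\gamma x)+\Ret(x)$ holds.

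Next, using $\calL_{\delta\Ret}^{n}(h)(x)=\sum_{\gamma\in\calH^n}e^{-\delta\Ret_n(\gamma x)}h(\gamma x)$ together with $(\calL_{\delta\Ret}^{n})^{*}\mu_{\mathrm{E}}=\mu_{\mathrm{E}}$ (recorded in \cref{sec:TransferOperators}), each $n$-th summand rewrites as
\begin{equation*}
\sum_{\gamma\in\calH^n}\int_{\Lambda_+} e^{-\delta\Ret_n(\gamma x)}\,\chi_{\{R-\Ret(x)\,<\,\Ret_n(\gamma x)-t\,\leq\,0\}}\, d\mu_{\mathrm{E}}(x),
\end{equation*}
because the two inequalities $\Ret_n(\gamma x)\leq t$ and $\Ret_n(\gamma x)+\Ret(x)>t+R$ translate precisely to those defining $f_x$. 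Summing over $n$ and swapping sum and integral via Tonelli (all integrands are nonnegative) produces exactly the integrand $\calR f_x(x,t)$, yielding the claimed identity.

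No serious obstacle is anticipated: this is the standard unfolding that opens any renewal computation. The only point of mild care is the double role of the symbol $x$ in $\calR f_x(x,t)$—it is simultaneously the base point of the renewal sum and the parameter defining the indicator $f_x$—but this poses no ambiguity since $\calR$ is pointwise in its second variable, so specializing $f=f_x$ and evaluating the renewal sum at the same $x$ is well defined.
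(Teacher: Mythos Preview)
Your proposal is correct and follows essentially the same approach as the paper: decompose the event by the value of the stopping time, apply the duality $(\calL_{\delta\Ret}^n)^{*}\mu_{\mathrm{E}}=\mu_{\mathrm{E}}$ to unfold each piece into a sum over $\calH^n$, translate the two inequalities into the window $R-\Ret(x)<\Ret_n(\gamma x)-t\leq 0$, and use Tonelli to pass the integral inside the double sum. The paper's proof is slightly more compressed but is line-for-line the same argument.
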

\begin{proof}
Let $R > 0$ and $t > 0$. Using $\L_{0}^*(\mu_{\mathrm{E}}) = \mu_{\mathrm{E}}$, we obtain
\begin{align*}
&\mu_{\mathrm{E}}\{x \in \Lambda_+: \Ret_{l+1}(x)-t>R \}\\
={}&\sum_{n = 0}^\infty\int_{\Lambda_+} \chi_{\{y \in \Lambda_+: \Ret_{n}(y) \leq t < \Ret_{n+1}(y)-R\}}(x) \, \dd\mu_{\mathrm{E}}(x)\\
={}&\sum_{n = 0}^\infty\int_{\Lambda_+} \L_{0}^n\bigl(\chi_{\{y \in \Lambda_+: \Ret_{n}(y) \leq t < \Ret_{n+1}(y)-R\}}\bigr)(x) \, \dd\mu_{\mathrm{E}}(x)\\
={}&\sum_{n = 0}^\infty\int_{\Lambda_+} \sum_{\gamma \in \calH^n}e^{-\delta \Ret_{n}(\gamma x)} \chi_{\{y \in \Lambda_+: R-\Ret(T^ny) < \Ret_{n}(y)-t \leq 0\}}(\gamma x) \, \dd\mu_{\mathrm{E}}(x)\\
={}&\int_{\Lambda_+} \sum_{n=0}^\infty \sum_{\gamma \in \calH^n}e^{-\delta \Ret_{n}(\gamma x)}f_x(\Ret_{n}(\gamma x)-t) \, \dd\mu_{\mathrm{E}} (x).
\end{align*}
The proof is complete by \cref{eqn:renewal sum}.
\end{proof}

\begin{proof}[Proof of \cref{prop:effect}]
Fix $\epsilon > 0$ to be the minimum of the $\epsilon$'s provided by \cref{prop:Coding,prop:exp_tail,thm:renewal}. Let $R > 1$ and $t > R$. By \cref{lem:residual}, we would like to apply \cref{thm:renewal} to  $\calR f_x$, where $f_x$ is as in the lemma. Since $f_x$ is not $C^2$, we take a smooth function which is greater than $f_x$.

Let $s_x=\max\{-t/2,R-\Ret(x)\}$.
If $s_x\leq 0$, let $g_x$ be a smooth bump function supported on $[s_x-1,1]$ and equal to $1$ on $[s_x,0]$. Otherwise, let $g_x=0$. Then, $\Leb(\supp g_x)\leq \min\{t/2+2, \Ret(x)-R + 2\}$ and $\|g_x''\|_1,\|g_x\|_1 \leq t$. By \cref{thm:renewal}, we have
\begin{align*}
\int_{\Lambda_+} \calR g_x(x,t) \, \dd\mu_{\mathrm{E}}(x)&=\int_{\Lambda_+} \left(\frac{h_0(x)}{\sigma_0}\int_{\R} g_x \, \dd \Leb+O\bigl(e^{-\epsilon t/2}t\bigr)\right) \, \dd\mu_{\mathrm{E}}(x)\\
&\ll \int_{\{y \in \Lambda_+: \Ret(y)>R\}} (\Ret(x)-R + 2) \, \dd\mu_{\mathrm{E}}(x)+ e^{-\epsilon t/4} \\
&\leq \frac{1}{\epsilon_0}\int_{\Lambda_+} e^{\epsilon_0(\Ret(x)-R + 2)} \, \dd\mu_{\mathrm{E}}(x)+ e^{-\epsilon t/4} \\
&\ll e^{-\epsilon R/4}
\end{align*}
due to the exponential tail property (see Property~(5) in \cref{prop:Coding}).

Now, we can bound $\int_{\Lambda_+} \calR f_x \, d\mu_{\mathrm{E}}$. If $\Ret(x)<R+t/2$, then we have $f_x\leq g_x$ due to the construction of $g_x$.
Therefore
\begin{align}\label{eq:R<R}
\int_{\{y \in \Lambda_+: \Ret(y)<R+t/2\}}\calR f_x(x,t) \, \dd\mu_{\mathrm{E}}(x)\leq \int_{\Lambda_+} \calR g_x(x,t) \, \dd\mu_{\mathrm{E}}(x)\ll e^{-\epsilon R/4}.
\end{align}
By the exponential tail property in the form of \cref{prop:exp_tail},
\[\mu_{\mathrm{E}}\{x \in \Lambda_+: \Ret(x)\geq R+t/2 \} \ll e^{-\epsilon(R+t/2)}. \]
For the renewal sum $\calR f_x$ defined in \cref{eqn:renewal sum}, if $n\geq t/\lambda_0$, then $\Ret_{n}(y)-t>0$, so $f_x(\Ret_{n}(y)-t)=0$. We only need to sum over integers $0 \leq n \leq t/\lambda_0$. Then due to the fact that $f_x\leq 1 \ll h_0$, we have
\begin{align*}
\calR f_x(x,t)&=\sum_{n = 0}^{\lfloor t/\lambda_0\rfloor}\sum_{\gamma \in \calH^n}e^{-\delta \Ret_{n}(\gamma x)}f_x(\Ret_{n}(\gamma x)-t)\ll \sum_{n = 0}^{\lfloor t/\lambda_0\rfloor}\sum_{\gamma \in \calH^n}e^{-\delta \Ret_{n}(\gamma x)}h_0(\gamma x) \\
&=\sum_{n = 0}^{\lfloor t/\lambda_0\rfloor}\L_0^n(h_0)(x)\leq \frac{t}{\lambda_0}h_0(x) \ll t.
\end{align*}
So we have
\begin{equation}\label{eq:R>R}
\int_{\{y \in \Lambda_+: \Ret(y)\geq R+t/2\}}\calR f_x(x,t) \, \dd\mu_{\mathrm{E}}(x)\ll e^{-\epsilon(R+t/2)}t \ll e^{-\epsilon R}.
\end{equation}
We obtain the desired inequality for $\mu_{\mathrm{E}}$ by combining \cref{eq:R<R,eq:R>R}. We finish the proof by first recalling that $\nu$ is absolutely continuous with respect to $\mu_{\mathrm{E}}$ with bounded density, and then by adjusting $\epsilon$ and taking $R > 0$ sufficiently large depending on $\epsilon$ to absorb the accumulated implicit constants.
\end{proof}

\begin{remark}
With a more precise estimate, we can obtain an asymptotic formula
\[ 	\mu_{\mathrm{E}}\{x: \Ret_{l+1}(x)-t>R \}= \int_{\{y \in \Lambda_+: \Ret(y)>R\}} h_0(x)(\Ret(x)-R) \, \dd\mu_{\mathrm{E}}(x)+O\bigl(e^{-\epsilon t}\bigr)\]
for all $R > 0$ sufficiently large and $t>R$. We only need an upper bound in \cref{prop:effect} and so we do not give the proof of this more precise formula.
\end{remark}

\begin{remark}
In the proof of \cref{prop:onecusp}, we see that we only need a weaker version of \cref{prop:effect}, that is, for all $t>e^{\epsilon R/2}$. As a consequence, a version of \cref{thm:renewal} with a polynomial error is sufficient for the argument. The current version of \cref{thm:renewal} may be of independent interest, so we present this stronger version here.
\end{remark}

\section{Dolgopyat's method}
\label{sec:Dolgopyat'sMethod}
As outlined in the introduction, in this section, we will perform a version of Dolgopyat's method which is a combination of the works of Stoyanov \cite{Sto11}, Sarkar--Winter \cite{SW21}, and Tsujii--Zhang \cite{TZ23}. Our goal is to prove \cref{thm:SpectralBound}.

We start with some notations. Let $(V, \|\cdot\|)$ be any normed vector space over $\R$ or $\C$. Let $d$ be any distance function on $\Delta_0$; in particular, $d = d_{\mathrm{E}}$ or $d = D$. Let $H: \Lambda_+\to V$ be any function. Following Dolgopyat \cite{Dol98}, define a family of equivalent norms
\begin{equation*}
\|H\|_{1, b}=\|H\|_{\infty}+\frac{1}{\max\{1,|b|\}} \Lip_d(H), \qquad b\in \mathbb{R}.
\end{equation*}
The Lipschitz norm is then simply $\|H\|_{\Lip(d)} = \|H\|_{1, 1}$. Recall the measure $\nu$ from \cref{subsec:expanding map}. If $H$ is measurable, denote
\begin{align*}
\|H\|_2 = \biggl(\int_{\Lambda_+} \|H(x)\|^2 \, d\nu(x)\biggr)^{\frac{1}{2}}.
\end{align*}
We also define the function $\|H\|: \Lambda_+ \to \R$ by
\begin{align*}
\|H\|(x) = \|H(x)\| \qquad \text{for all $x \in \Lambda_+$}.
\end{align*}

\begin{theorem}
\label{thm:SpectralBound}
There exist $\eta > 0$, $C > 0$, $a_0 > 0$, and $b_0 > 0$ such that for all $\xi=a+ib \in \C$ with $|a| < a_0$, if $(b, \rho) \in \widehat{M}_0(b_0)$, then for all $k \in \N$ and $H \in \Lip\bigl(\Lambda_+, V_\rho^{\oplus \dim(\rho)}\bigr)$, we have	
\begin{align*}
\bigl\|\mathcal{M}_{\xi, \rho}^k(H)\bigr\|_2 \leq Ce^{-\eta k} \|H\|_{1, \|\rho_b\|}.
\end{align*}
\end{theorem}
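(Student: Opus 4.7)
The plan is to adapt the frame-flow Dolgopyat method of Sarkar--Winter \cite{SW21} to our countably infinite symbolic coding and close the iteration via the Tsujii--Zhang \cite{TZ23} stochastic dominance framework; the latter is what allows NCP to be used only on the proper subset $\Omega(t, R_0) \cap \Lambda_+$.

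Fix $m \geq m_0$ and the inverse branches $\{\alpha_j\}_{j=0}^{\jj}$ from \cref{prop:LNIC}. Given $(b, \rho) \in \widehat{M}_0(b_0)$, I set the frequency scale $t = \log \|\rho_b\|$ and work with ``Dolgopyat rectangles'' at scale $\|\rho_b\|^{-1}$, namely cylinders $\mathtt{C} \in \Omega^\dagger(t, R_0)$ for some $R_0 \gg 1$ to be chosen. For each such $\mathtt{C} \subset \Omega(t, R_0)$ meeting $\Lambda_+$ and each $H$ with $\|H\|_{1, \|\rho_b\|} \leq 1$, I obtain a pointwise cancellation as follows. Use \cref{lem:maActionLowerBound} to select a unit $\omega \in \LieA \oplus \LieM$ with $\|d\rho_b(\omega)(\cdot)\|_2 \geq \varepsilon_1 \|\rho_b\|$ at a representative value of $H$; use LNIC (\cref{prop:LNIC}) at some $x \in \mathtt{C} \cap \Lambda_+$ to obtain $j \in \{1, \dots, \jj\}$ and a unit $Z \in \operatorname{T}_x(\Delta_0)$ with $|\langle (d\BP_{j,x})_x Z, \omega\rangle| \geq \varepsilon_2$; and, since $u_x a_t \in \Omega_{R_0}$ by \cref{lem:goodpartitioncusp}, invoke NCP (\cref{prop:NCP}) to find a partner $y \in \Lambda_\Gamma$ with $\|y - x\| \asymp \|\rho_b\|^{-1}$ and $|\langle y - x, Z\rangle| \gtrsim \|\rho_b\|^{-1}$. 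The approximation in \cref{lem:ComparingExpWithBP} converts the Brin--Pesin holonomy around the quadrilateral $\alpha_0 x,\alpha_j x,\alpha_j y,\alpha_0 y$ into $\exp\bigl((d\BP_{j,x})_x(y-x)\bigr) \in AM$ with quadratic error $O\bigl(\|\rho_b\|^{-2}\bigr)$, so the two terms indexed by $\alpha_0, \alpha_j$ in $\mathcal{M}_{\xi, \rho}^m H(y)$ differ in phase (under $\rho_b$) by an angle $\gtrsim 1$, producing a definite cancellation analogous to \cite[Section~7]{SW21}.

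These cancellations are encoded in a Lipschitz function $\beta_t \colon \Lambda_+ \to (0, 1]$ with $\beta_t \leq e^{-\epsilon'}$ on a ``good'' portion of $\Omega(t, R_0)$ (where partner pairs have been constructed) and $\beta_t \equiv 1$ elsewhere, such that the Dolgopyat-type pointwise majorization
\begin{align*}
\bigl\|\mathcal{M}_{\xi, \rho}^m H\bigr\|(x)^2 \leq \mathcal{L}_{\Re \xi}^m\bigl(\beta_t \cdot \|H\|^2\bigr)(x)
\end{align*}
holds for all admissible $H$, via Cauchy--Schwarz against $\mathcal{L}_{\Re \xi}^m(1)(x) \asymp 1$. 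Iterating $k$ times and integrating against $\nu$ (recall $\mathcal{L}_0^* \nu = \nu$) yields
\begin{align*}
\bigl\|\mathcal{M}_{\xi, \rho}^{km} H\bigr\|_2^2 \lesssim \|H\|_{1, \|\rho_b\|}^2 \int_{\Lambda_+} \prod_{j=0}^{k-1} \beta_t\bigl(T^{jm}x\bigr) \, d\nu(x) \leq \|H\|_{1, \|\rho_b\|}^2 \int_{\Lambda_+} e^{-\epsilon' N_k(x)} \, d\nu(x),
\end{align*}
where $N_k(x) = \#\{0 \leq j < k : T^{jm} x \in \Omega(t, R_0)\}$. The uniform LDP (\cref{prop:LDP}) gives $N_k(x) \geq \kappa k$ outside a set of $\nu$-measure at most $e^{-\kappa k}$, yielding the geometric decay $\|\mathcal{M}_{\xi,\rho}^{km} H\|_2 \leq C e^{-\eta k}\|H\|_{1, \|\rho_b\|}$; general $k$ follows from a one-step operator bound on $\mathcal{M}_{\xi, \rho}$.

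The principal difficulty is the synchronization of the three ingredients at a single scale. The LNIC direction $\omega$ cannot depend smoothly on $x$, so one must cover the unit sphere in $\LieA \oplus \LieM$ by finitely many directions and assign each rectangle a single triple $(\omega, j, Z)$; the NCP partner $y$ must lie in the same rectangle of diameter $\asymp \|\rho_b\|^{-1}$, which forces exclusion of a $\|\rho_b\|^{-1}$-neighborhood of $\partial \Delta_0$ from the good set and uses the Diophantine-type hypothesis $x \notin \overline{B(\partial \Delta_0, \|\rho_b\|^{-1})}$ of \cref{prop:NCP}; and the quadratic error in \cref{lem:ComparingExpWithBP} must be dominated by the linear phase, dictating $\diam(\mathtt{C}) \asymp \|\rho_b\|^{-1}$. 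Closing the iteration through LDP rather than via a uniform per-step contraction is precisely the Tsujii--Zhang innovation that makes cancellations occurring only on the $t$-dependent set $\Omega(t, R_0)$ suffice for exponential decay of $\mathcal{M}_{\xi, \rho}^k$, and the uniformity in $t \to \infty$ of \cref{prop:LDP} is exactly what this closure requires.
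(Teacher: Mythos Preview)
Your outline captures the right ingredients and the Tsujii--Zhang philosophy of closing the iteration via large deviations. There is, however, a genuine gap in the passage from pointwise cancellation to the product bound. The cancellation you construct on a cylinder $\mathtt{C} \in \Omega^\dagger(t, R_0)$ is $H$-dependent: the unit vector $\omega$, the branch index $j$, and the partner $y$ are all determined by the value of $H$ at $\alpha_0 x$ (cf.\ \cref{eqn:CancellationVector}), and the resulting decay is supported not on all of $\mathtt{C}$ but only on a sub-cylinder $\mathtt{J}_k^H \subset \mathtt{C}$ of relative measure $\geq \mathsf{c}$, one of two candidates selected by the dichotomy in \cref{lem:chiLessThan1}. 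Hence your $\beta_t$ is really $\beta_t^{(j)}$, depending on $\mathcal{M}_{\xi,\rho}^{jm}H$ at step $j$, with $\beta_t^{(j)} \leq e^{-\epsilon'}$ only on a set $\Omega_j \subsetneq \Omega(t, R_0)$. The inequality $\prod_j \beta_t(T^{jm}x) \leq e^{-\epsilon' N_k(x)}$ with $N_k$ counting visits to the \emph{fixed} set $\Omega(t, R_0)$ is therefore unjustified: a visit to $\Omega(t, R_0)$ need not land in $\Omega_j$.

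The paper bridges this in two steps. First, \cref{prop:CancellationCylinder} ensures each $\Omega_j$ is $T^{-m}(\calP_{(b, \rho)})$-measurable and $(\calP_{(b, \rho)}, \mathsf{c})$-dense in $\Omega(t, R_0)$. Second, the conditional-expectation bound in \cref{lem:StochasticDominance} compares the process of visits to the \emph{varying} sets $\Omega_j$ with an i.i.d.\ coin flip and yields a separate LDP for $\#\{j \leq n : \sigma^j(x) \in \Omega_{\tau_j(x)}\}$; combined with \cref{prop:LDP} via \cref{eqn:RelationshipABC}, this gives Property~(3) of \cref{thm:FrameFlowDolgopyat}. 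The stochastic dominance enters precisely here --- relating the $j$-dependent cancellation sets to the fixed set $\Omega(t, R_0)$ --- not merely in invoking an LDP for the latter. A secondary issue: iterating your majorization directly would require Lipschitz control of $\|\mathcal{M}_{\xi,\rho}^m H\|$ that does not follow from that of $\|H\|$; the paper handles this by working in the cone $\mathcal{C}_{(b, \rho)}(\Lambda_+)$ of pairs $(H,h)$ and iterating the Dolgopyat operator on the dominating function $h$ rather than on $\|H\|$ itself (\cref{prop:PreservingCone}).
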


For all $(b, \rho) \in \widehat{M}_0(b_0)$, recall $\Omega(\log\|\rho_b\|,R_0) \subset \Delta_0$ from \cref{subsec:CombinatoricWords}.

\begin{theorem}
\label{thm:FrameFlowDolgopyat}
There exist $\eta \in (0, 1)$, $\kappa \in (0, 1)$, $a_0 > 0$, $b_0>0$, $m \in \N$, and a continuous function $\zeta: [-a_0, a_0] \to \R$ with $\zeta(0) = 1$ such that the following holds. For all $\xi=a+ib \in \C$ with $|a| < a_0$, if $(b, \rho) \in \widehat{M}_0(b_0)$, then for all $H \in \Lip\bigl(\Lambda_+, V_\rho^{\oplus \dim(\rho)}\bigr)$, there exist a sequence of positive functions $\{h_n\}_{n = 0}^\infty \subset \Lip_D(\Lambda_+, \R)$ with $h_0 = \|H\|_{1, \|\rho_b\|}$ and a sequence of closed subsets $\{\Omega_n\}_{n = 1}^\infty$ of $\Omega(\log\|\rho_b\|,R_0)$ such that:
\begin{enumerate}
\item for all $n \in \N$,  we have
\begin{align*}
\bigl\|\mathcal{M}_{\xi, \rho}^{nm}(H)(x)\bigr\|_2 \leq h_n(x) \qquad \text{for all $x \in \Lambda_+$};
\end{align*}
\item for all $n \in \N$, we have
\begin{align*}
h_n^2(x) \leq
\begin{cases}
\eta \mathcal{L}_0^m(h_{n - 1}^2)(x), & x \in \Omega_n, \\
\zeta(a)\mathcal{L}_0^m(h_{n - 1}^2)(x), & x \in \Lambda_+ - \Omega_n;
\end{cases}
\end{align*}
\item for all $n \in \N$, we have
\begin{align*}
\nu\{x \in \Lambda_+: \#\{j \in \N: 1\leq j \leq n, T^{jm}x \in \Omega_j\} < \kappa n\} < 2e^{-\kappa n}.
\end{align*}
\end{enumerate}
\end{theorem}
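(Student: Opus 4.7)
The plan is to carry out a Dolgopyat-type iteration in the spirit of Sarkar--Winter \cite{SW21}, modified as in Tsujii--Zhang \cite{TZ23} because the cancellation produced by LNIC and NCP is only available on the subset $\Omega(\log\|\rho_b\|,R_0)$. Fix $m \geq m_0$ with associated inverse branches $\{\alpha_j\}_{j=0}^{\jj}$ from \cref{prop:LNIC}. The goal is to build $h_n$ inductively, starting from $h_0 = \|H\|_{1,\|\rho_b\|}$, so that $\|\mathcal{M}_{\xi,\rho}^{nm}(H)(x)\|_2 \leq h_n(x)$ pointwise, with the dichotomy (2) realized by taking $\Omega_n$ to be the union of those cylinders $\mathtt{C} \in \Omega^\dagger(\log\|\rho_b\|,R_0)$ on which the cancellation argument below succeeds.

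The off-$\Omega_n$ (trivial) bound comes from the defining sum for $\mathcal{M}_{\xi,\rho}^m$, Cauchy--Schwarz against the positive weights $e^{\FRet_m^{(a)}(\gamma\cdot)}$, unitarity of $\rho_b$, and the identity $\mathcal{L}_a(1) = 1$ built into the normalization of the transfer operator:
\begin{align*}
\bigl\|\mathcal{M}_{\xi,\rho}^m(H)(x)\bigr\|_2^2 \leq \mathcal{L}_a^m\bigl(\|H\|^2\bigr)(x) \leq \zeta(a)\,\mathcal{L}_0^m\bigl(\|H\|^2\bigr)(x),
\end{align*}
where $\zeta(a) = e^{O(ma)}$ controls the discrepancy between $\mathcal{L}_a^m$ and $\mathcal{L}_0^m$ through the analyticity of $a \mapsto \FRet^{(a)}$ on $[-a_0',a_0']$. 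This $\zeta$ is continuous with $\zeta(0) = 1$. Feeding in the inductive hypothesis in place of $\|H\|^2$ gives the coarse inequality $h_n^2 \leq \zeta(a)\,\mathcal{L}_0^m(h_{n-1}^2)$ on all of $\Lambda_+$, which is the off-$\Omega_n$ half of (2).

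The cancellation step on $\Omega_n$ is the heart. Fix a cylinder $\mathtt{C} \in \Omega^\dagger(\log\|\rho_b\|,R_0)$ meeting $\Lambda_\Gamma$ and pick $x_\mathtt{C} \in \mathtt{C} \cap \Lambda_\Gamma$. Since $\mathtt{C} \subset \Omega(\log\|\rho_b\|,R_0)$, \cref{lem:goodpartitioncusp} gives $u_{x_\mathtt{C}}a_{\log\|\rho_b\|} \in \Omega_{R_0}$, so NCP (\cref{prop:NCP}) applies at scale $\epsilon \asymp \diam(\mathtt{C}) \asymp \|\rho_b\|^{-1}$. For a unit vector $\omega \in V_\rho^{\oplus \dim(\rho)}$, \cref{lem:maActionLowerBound} supplies a unit $z_\omega \in \LieA \oplus \LieM$ with $\|d\rho_b(z_\omega)(\omega)\|_2 \geq \varepsilon_1 \|\rho_b\|$; LNIC (\cref{prop:LNIC}) then yields an index $j$ and a unit tangent direction $Z \in \operatorname{T}_{x_\mathtt{C}}(\Delta_0)$ with $|\langle (d\BP_{j,x_\mathtt{C}})_{x_\mathtt{C}}(Z), z_\omega\rangle| \geq \varepsilon_2$. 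NCP furnishes $y_\mathtt{C} \in \mathtt{C} \cap \Lambda_\Gamma$ whose displacement correlates with $Z$ at scale $\epsilon$, and \cref{lem:ComparingExpWithBP} then forces $\rho_b(\BP_j(x_\mathtt{C},y_\mathtt{C}))$ applied to $\omega$ to rotate by a phase bounded away from $1$ by an amount $\gtrsim \varepsilon_1 \varepsilon_2$, uniformly in $\|\rho_b\|$. Pairing the $\alpha_0$- and $\alpha_j$-summands of $\mathcal{M}_{\xi,\rho}^m$ over $\mathtt{C}$ therefore improves the trivial bound by a definite factor. Packaging this via the dense-set-and-majorant construction of Sarkar--Winter (cf. \cite[Section 8]{SW21}), adapted to the countable cylinder alphabet $\Omega^\dagger(\log\|\rho_b\|,R_0)$, yields $h_n^2(x) \leq \eta\,\mathcal{L}_0^m(h_{n-1}^2)(x)$ on $\Omega_n$ for some $\eta \in (0,1)$ which absorbs the loss $\zeta(a)$ for $|a| < a_0$.

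Property (3) is then essentially immediate from LDP: since $\Omega_n \subset \Omega(\log\|\rho_b\|,R_0)$ for every $n$, applying \cref{prop:LDP} with $t = \log\|\rho_b\|$ and the same $m$ gives the desired exponential bound, with the factor $2$ in the statement absorbing measurability overhead if needed. The main obstacle will be the cancellation step: one must choose the partition into cylinders and the pairing $j = j(\omega, x_\mathtt{C})$ uniformly enough in $\omega \in V_\rho^{\oplus \dim(\rho)}$ and across the cylinder structure, so that $\Omega_n$ captures a uniformly positive proportion of the weight and so that the resulting majorants $h_n$ remain in $\Lip_D(\Lambda_+, \R)$. This is precisely the point at which the countable alphabet and the cylinder-dependent scales $\diam(\mathtt{C}) \in [e^{-R_0-t}, e^{R_0-t}]$ make the adaptation of \cite[Section 8]{SW21} nontrivial and force the whole machinery built in \cref{sec:SymbolicModel,sec:TransferOperators,sec:LNIC,sec:NCP,sec:LDP}.
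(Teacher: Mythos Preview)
Your overall outline matches the paper's approach closely: the inductive construction of $(H_n,h_n)$ via a cone of pairs, the Cauchy--Schwarz trivial bound producing $\zeta(a)$, and the LNIC/NCP cancellation on cylinders in $\Omega^\dagger(\log\|\rho_b\|,R_0)$ are all correct in spirit. However, there is a genuine gap in your derivation of Property~(3).

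You write that Property~(3) is ``essentially immediate from LDP: since $\Omega_n \subset \Omega(\log\|\rho_b\|,R_0)$ for every $n$, applying \cref{prop:LDP} \ldots gives the desired exponential bound.'' The inclusion goes the wrong way for this. \cref{prop:LDP} controls $\nu\{x:\#\{j\leq n:T^{jm}x\in\tilde\Omega\}<\kappa n\}$ with $\tilde\Omega=\Omega(\log\|\rho_b\|,R_0)$, but $\Omega_j$ is a \emph{proper} subset of $\tilde\Omega$ that moreover depends on $j$ (through $H_{j-1}$). Knowing that $T^{jm}x$ visits $\tilde\Omega$ often says nothing, by itself, about visits to the smaller varying sets $\Omega_j$. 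The missing ingredient is that each $\Omega_j$ is $(\calP_{(b,\rho)},\mathsf{c})$-dense in $\tilde\Omega$: within every cylinder $\mathtt{J}\in\Omega^\dagger$, the cancellation subcylinder $\mathtt{J}_k^H$ carries a fixed proportion $\mathsf{c}>0$ of the measure (\cref{prop:CancellationCylinder}). The paper then runs a two-step argument (Subsection on stochastic dominance): first use LDP to guarantee many visits to $\tilde\Omega$; then, conditional on the sequence of $\calP$-atoms visited, the events $\{\sigma^k(x)\in\Omega_{\tau_k(x)}\}$ dominate an i.i.d.\ Bernoulli process with success probability $\gtrsim\mathsf{c}$, yielding a second LDP (\cref{lem:StochasticDominance}). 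Combining the two via $A_{n,\kappa\kappa'}\setminus B_{n,\kappa}\subset C_{\lceil\kappa n\rceil,\kappa'}$ gives Property~(3) with the factor $2$ coming from the union bound over these two events. Without this stochastic dominance step the argument does not close.

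A related imprecision: you describe $\Omega_n$ as ``the union of those cylinders $\mathtt{C}\in\Omega^\dagger(\log\|\rho_b\|,R_0)$ on which the cancellation argument succeeds.'' In fact the cancellation argument succeeds on \emph{every} such cylinder (this is what ``dense index set'' means), but only on a subcylinder $\mathtt{J}_k^H\subset\mathtt{J}$ of proportional measure $\mathsf{c}$; it is the union of these subcylinders that forms $\Omega_n$. This is exactly why the extra stochastic step above is needed.
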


The following proof is inspired by \cite[Proposition 3.15]{TZ23} but we use the language of transfer operators instead.

\begin{proof}[Proof that \cref{thm:FrameFlowDolgopyat} implies \cref{thm:SpectralBound}]
Denote by $\tilde{\eta}$, $\kappa$, $\tilde{a}_0$, $b_0$, $m$, and $\zeta$ the constants and function provided by \cref{thm:FrameFlowDolgopyat}. Fix $\eta = \frac{1}{4}\min\{\kappa, -\kappa\log(\tilde{\eta})\}$ and $a_0 \in (0, \tilde{a}_0)$ such that $\sup\{|\log(\zeta(a))|: a \in [-a_0, a_0]\}\leq \eta$. Fix
\begin{align*}
C_0 &= \sup\bigl\{\bigl\|\mathcal{M}_{\xi, \rho}\bigr\|_{\mathrm{op}}^{2m}:|a| \leq a_0, \rho \in \widehat{M}\bigr\} \leq \sup\bigl\{\bigl\|{\mathcal{L}}_\xi\bigr\|_{\mathrm{op}}^{2m}:|a| \leq a_0\bigr\},\\
C &= \sqrt{3C_0},
\end{align*}
viewing the transfer operators as operators on $L^2\bigl(\Lambda_+, V_\rho^{\oplus \dim(\rho)}\bigr)$ and $L^2(\Lambda_+, \R)$ respectively. Let $\xi=a+ib \in \C$ with $|a| < a_0$. Suppose $(b, \rho) \in \widehat{M}_0(b_0)$. Let $k \in \N$ and write $k = nm + l$ for some integers $n \in \Z_{\geq 0}$ and $0 \leq l < m$. Let $H \in \Lip\bigl(\Lambda_+, V_\rho^{\oplus \dim(\rho)}\bigr)$. We then obtain corresponding sequences $\{h_n\}_{n = 0}^\infty$ and $\{\Omega_n\}_{n = 1}^\infty$ provided by \cref{thm:FrameFlowDolgopyat}.

We define the sequence of
functions $\{G_j: \Lambda_+ \to \R\}_{j = 0}^\infty$ recursively by
\begin{align*}
G_0 &= h_0^2, \\
G_j(x) &=
\begin{cases}
\tilde{\eta} G_{j - 1}(x), & x \in T^{-jm}(\Omega_j), \\
\zeta(a)G_{j - 1}(x), & x \in \Lambda_+ - T^{-jm}(\Omega_j),
\end{cases}
\qquad
\text{for all $j \in \N$}.
\end{align*}
We will first show by induction that they satisfy
\begin{align}
\label{eqn:TransferOperatorOnG_j}
\calL_0^{jm}(G_j) \geq h_j^2 \qquad \text{for all $j \in \Z_{\geq 0}$}.
\end{align}
The base case $j = 0$ is trivial. Now let $j \in \N$ and assume \cref{eqn:TransferOperatorOnG_j} holds for $j - 1$. It is immediate from definitions that for all $\gamma \in \calH^m$, we have
\begin{align}
\label{eqn:TransferOperatorOnG_j*}
\mathcal{L}_0^{(j - 1)m}(G_j)(\gamma x) =
\begin{cases}
\tilde{\eta} \mathcal{L}_0^{(j - 1)m}(G_{j - 1})(\gamma x), & \text{for all $x \in \Omega_j$} \\
\zeta(a)\mathcal{L}_0^{(j - 1)m}(G_{j - 1})(\gamma x), & \text{for all $x \in \Lambda_+ - \Omega_j$}.
\end{cases}
\end{align}
For all $x \in \Omega_j$, we use \cref{eqn:TransferOperatorOnG_j*}, the induction hypothesis, and Property~(2) in \cref{thm:FrameFlowDolgopyat}, to get
\begin{align*}
\mathcal{L}_0^{jm}(G_j)(x) &= \sum_{\gamma \in \calH^m} e^{\FRet_m^{(0)}(\gamma x)} \mathcal{L}_0^{(j - 1)m}(G_j)(\gamma x) \\
&= \tilde{\eta} \sum_{\gamma \in \calH^m}  e^{\FRet_m^{(0)}(\gamma x)} \mathcal{L}_0^{(j - 1)m}(G_{j - 1})(\gamma x) \\
&\geq \tilde{\eta} \sum_{\gamma \in \calH^m} e^{\FRet_m^{(0)}(\gamma x)} h_{j - 1}^2(\gamma x) = \tilde{\eta} \mathcal{L}_0^m(h_{j - 1}^2)(x)\geq h_j^2(x).
\end{align*}
For all $x \in \Lambda_+ - \Omega_j$, a similar calculation gives
\begin{align*}
\mathcal{L}_0^{jm}(G_j)(x) &= \sum_{\gamma \in \calH^m} e^{\FRet_m^{(0)}(\gamma x)} \mathcal{L}_0^{(j - 1)m}(G_j)(\gamma x) \\
&=\zeta(a) \sum_{\gamma \in \calH^m} e^{\FRet_m^{(0)}(\gamma x)} \mathcal{L}_0^{(j - 1)m}(G_{j - 1})(\gamma x) \\
&\geq \zeta(a)\sum_{\gamma \in \calH^m} e^{\FRet_m^{(0)}(\gamma x)} h_{j - 1}^2(\gamma x) =\zeta(a) \mathcal{L}_0^m(h_{j - 1}^2)(x)\geq h_j^2(x).
\end{align*}
This establishes \cref{eqn:TransferOperatorOnG_j}.

Now, using Property~(1) in \cref{thm:FrameFlowDolgopyat}, \cref{eqn:TransferOperatorOnG_j}, and $\mathcal{L}_0^*(\nu) = \nu$, we have
\begin{align*}
\bigl\|\mathcal{M}_{\xi, \rho}^k(H)\bigr\|_2^2 &\leq C_0\bigl\|\mathcal{M}_{\xi, \rho}^{nm}(H)\bigr\|_2^2 \leq C_0\|h_n\|_2^2 \\
&\leq C_0\int_{\Lambda_+}\mathcal{L}_0^{nm}(G_n) \, d\nu \\
&= C_0\int_{\Lambda_+} G_n \, d\nu.
\end{align*}
Using $\zeta(a) > \tilde{\eta}$, for all $x\in \{y \in \Lambda_+: \#\{j \in \N: j \leq n, T^{jm}y \in \Omega_j\} \geq \kappa n\}$, we have the bound
\begin{align*}
G_n(x)\leq \zeta(a)^{n - \lceil\kappa n\rceil}\tilde{\eta}^{\lceil\kappa n\rceil}G_0(x) \leq e^{\eta n}\tilde{\eta}^{\kappa n}h_0^2(x)
\end{align*}
while for all $x\in \{y \in \Lambda_+: \#\{j \in \N: j \leq n, T^{jm}y \in \Omega_j\} < \kappa n\}$, we have the trivial bound
\begin{align*}
G_n(x) \leq \zeta(a)^nG_0(x) \leq e^{\eta n} h_0^2(x).
\end{align*}
Using Property~(3) in \cref{thm:FrameFlowDolgopyat}, we have
\begin{align*}
\bigl\|\mathcal{M}_{\xi, \rho}^k(H)\bigr\|_2^2 &\leq C_0\int_{\Lambda_+} G_n \, d\nu \\
&\leq C_0 e^{\eta n} ({\tilde{\eta}}^{\kappa n} \cdot (1 - 2e^{-\kappa n}) + 2e^{-\kappa n}) \|h_0\|_\infty^2 \\
&\leq C^2 e^{-2\eta n} \|H\|_{1, \|\rho_b\|}^2.
\end{align*}
\end{proof}

Our goal for the rest of this section is to prove \cref{thm:FrameFlowDolgopyat}. \cref{lem:LasotaYorke,lem:CylinderDiameterMeasureLowerBound,lem:PartnerPointInZariskiDenseLimitSetForBPBound} are preparatory lemmas.

The following is a Lasota--Yorke type lemma. Many similar lemmas can be found in the literature. But as we are dealing with a countably infinite coding and $V_\rho^{\oplus \dim(\rho)}$-valued functions, we give a proof analogous to that of \cite[Lemma 7.3]{SW21} and \cite[Appendix]{Sto11}.

\begin{lemma}
\label{lem:LasotaYorke}
Let $d$ be any distance function on $\Delta_0$ such that $d_{\mathrm{E}}(x,y)\leq d(x,y)$ for all $x,y\in \Lambda_+$ (e.g., $d = d_{\mathrm{E}}$ or $d = D$). There exists $A_0 > 1$ such that for all $\xi=a+ib \in \mathbb C$ with $|a| < a_0'$, if $(b, \rho) \in \widehat{M}_0(1)$, then for all $k \in \mathbb N$, we have:
\begin{enumerate}
\item if $h \in \Lip_{d}(\Lambda_+, \R)$ and $B>0$ satisfy
\begin{equation*}
|h(x) - h(x')| \leq Bh(x) d(x, x') 
\end{equation*}
for all $x,x'\in \Lambda_+$, then we have
\begin{align*}
|\mathcal{L}_a^k(h)(x) - \mathcal{L}_a^k(h)(x')| \leq A_0(B\lambda^k+1)\mathcal{L}_a^k(h)(x) d(x, x') 
\end{align*}
for all $x, x' \in \Lambda_+$.

\item if $H \in \Lip\bigl(\Lambda_+, V_\rho^{\oplus \dim(\rho)}\bigr)$, $h \in \Lip_d(\Lambda_+, \mathbb R)$ and $B>0$ satisfy 
\begin{equation*}
\|H(x) - H(x')\|_2 \leq Bh(x) d(x, x') 
\end{equation*}
for all $x, x' \in \Lambda_+$, then we have
\begin{align*}
\bigl\|\mathcal{M}_{\xi, \rho}^k(H)(x) - \mathcal{M}_{\xi, \rho}^k(H)(x')\bigr\|_2 \leq A_0\left(B\lambda^k\mathcal{L}_a^k(h)(x) + \|\rho_b\|\mathcal{L}_a^k\|H\|(x)\right) d(x, x')
\end{align*}
for all $x, x' \in \Lambda_+$.
\end{enumerate}
\end{lemma}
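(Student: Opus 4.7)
The plan is to follow the standard Lasota--Yorke template: expand each iterated operator as a sum over inverse branches $\gamma \in \calH^k$, insert a telescoping intermediate term via a Leibniz-style identity, and estimate each piece using the contraction $d(\gamma x, \gamma x') \leq \lambda^k d(x, x')$ together with the uniform Lipschitz bounds on $\FRet^{(a)}$, $\GHol$, and on the unitary representation $\rho_b$.

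For Part~(1), I would write
\begin{align*}
\calL_a^k(h)(x) - \calL_a^k(h)(x') &= \sum_{\gamma\in\calH^k}\bigl(e^{\FRet_k^{(a)}(\gamma x)}-e^{\FRet_k^{(a)}(\gamma x')}\bigr)h(\gamma x)\\
&\quad +\sum_{\gamma\in\calH^k}e^{\FRet_k^{(a)}(\gamma x')}\bigl(h(\gamma x)-h(\gamma x')\bigr).
\end{align*}
The telescoping estimate $|\FRet_k^{(a)}(\gamma x)-\FRet_k^{(a)}(\gamma x')|\leq C_2 d(x,x')$ follows from $\Lip^{\mathrm{e}}(\FRet^{(a)})\leq C_1'$ and $\lambda$-contraction by summing a geometric series; combined with $|e^u-e^v|\leq e^{\max(u,v)}|u-v|$ it bounds the first piece by $A_1\calL_a^k(h)(x)d(x,x')$, where $A_1$ absorbs the constant $e^{C_2\diam(\Delta_0)}$ needed to switch $\gamma x'$ to $\gamma x$ in the exponent. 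The second piece is handled by the hypothesis on $h$ directly together with $\lambda^k$-contraction, yielding $A_1' B\lambda^k \calL_a^k(h)(x)d(x,x')$ after the same exponent-swap.

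For Part~(2), the analogous three-term decomposition is
\begin{align*}
\calM_{\xi,\rho}^k(H)(x)-\calM_{\xi,\rho}^k(H)(x') = {}&\sum_\gamma\bigl(e^{\FRet_k^{(a)}(\gamma x)}-e^{\FRet_k^{(a)}(\gamma x')}\bigr)\rho_b(\GHol^k(\gamma x)^{-1})H(\gamma x)\\
&+\sum_\gamma e^{\FRet_k^{(a)}(\gamma x')}\bigl(\rho_b(\GHol^k(\gamma x)^{-1})-\rho_b(\GHol^k(\gamma x')^{-1})\bigr)H(\gamma x)\\
&+\sum_\gamma e^{\FRet_k^{(a)}(\gamma x')}\rho_b(\GHol^k(\gamma x')^{-1})\bigl(H(\gamma x)-H(\gamma x')\bigr).
\end{align*}
By unitarity of $\rho_b$, the first and third sums reduce to Part~(1)-style estimates, producing $A_1\calL_a^k(\|H\|)(x)d(x,x')$ and $A_1'B\lambda^k\calL_a^k(h)(x)d(x,x')$ respectively. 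For the middle sum, the key inputs are the operator-norm estimate $\|\rho_b(g)-\rho_b(g')\|_{\mathrm{op}}\leq \|\rho_b\|\,d_{AM}(g,g')$ (valid for $g,g'$ sufficiently close, by \cref{lem:LieTheoreticNormBounds} and integration along a geodesic in $AM$) together with a Lipschitz bound $d_{AM}(\GHol^k(\gamma x),\GHol^k(\gamma x'))\leq C_2 d(x,x')$, derived by the same telescoping as for $\FRet_k^{(a)}$.

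The main technical hurdle is the Lipschitz bound for the non-abelian cocycle $\GHol^k$: propagating Lipschitz constants through the iterated product $\GHol(\cdot)\GHol(T\cdot)\cdots\GHol(T^{k-1}\cdot)$ relies on bi-invariance of the chosen metric on the compact factor $M$ (yielding the subadditive estimate $d_M(m_1m_2,m_1'm_2')\leq d_M(m_1,m_1')+d_M(m_2,m_2')$), additivity in the $A$-coordinate, and geometric summation using $\Lip^{\mathrm{e}}(\GHol)\leq C_1'$. Once this is in hand the three pieces combine into the stated estimate, with $A_0$ chosen to absorb $A_1$, $A_1'$, $C_2$, and the factor $1/\delta_{1,\varrho}$ needed to bury the ``pure'' $A_1\calL_a^k(\|H\|)$ term inside the $\|\rho_b\|\calL_a^k(\|H\|)$ term, using $\|\rho_b\|\geq \delta_{1,\varrho}$ guaranteed by the hypothesis $(b,\rho)\in\widehat{M}_0(1)$.
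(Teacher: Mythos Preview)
Your proposal is correct and follows essentially the same approach as the paper: the same two-term split for Part~(1), the same three-term split for Part~(2), the same telescoping estimate $|\FRet_k^{(a)}(\gamma x)-\FRet_k^{(a)}(\gamma x')|\leq C_2 d(x,x')$, and the same absorption of the ``pure'' $\calL_a^k(\|H\|)$ term into the $\|\rho_b\|$ term via $\|\rho_b\|\geq\delta_{1,\varrho}$. The paper defers the Lipschitz bound on the non-abelian cocycle $\GHol^k$ to \cite[Lemma 5.2.3]{Sar22b}, whereas you correctly sketch it via bi-invariance on $M$ and additivity on $A$; otherwise the arguments are the same down to the choice of $A_0$.
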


\begin{proof}
Recalling \cref{eqn:ConstantC1'C2,eqn:Constantdelta1varrho}, fix $A_0 > 4e^{C_2\diam(\Delta_0)}\frac{C_2}{\delta_{1, \varrho}}$. Let $\xi=a+ib \in \mathbb C$ with $|a| < a_0'$ and suppose $(b, \rho) \in \widehat{M}_0(1)$. Let $k \in \N$. Let $x, x' \in \Lambda_+$.

First we derive some bounds. For any $\gamma\in \calH^k$, by \cref{prop:Coding}, we have $d(T^l(\gamma x), T^l(\gamma x')) \leq \lambda^{k - l}d(x, x')$ for all integers $0 \leq l \leq k$. Using $\Lip^{\mathrm{e}}\bigl(\FRet^{(a)}\bigr) \leq C_1'$, we have 
\begin{align}
\label{eqn:FaLipEstimate}
\begin{aligned}
\bigl|\FRet_k^{(a)}(\gamma x) - \FRet_k^{(a)}(\gamma x')\bigr| &\leq \sum_{l = 0}^{k - 1} \bigl|\FRet^{(a)}(T^l(\gamma x)) - \FRet^{(a)}(T^l(\gamma x'))\bigr| \\
&\leq \sum_{l = 0}^{k - 1} C_1' \lambda^{k - l} d(x, x') \leq C_2d(x, x').
\end{aligned}
\end{align}
Along with the inequality that $|1 - e^z| \leq e^{\Re z}|z|$ for all $z \in \C$, we obtain
\begin{align}
\label{eqn:FaExponentialEstimate}
\left|1 - e^{\FRet_k^{(a)}(\gamma x') - \FRet_k^{(a)}(\gamma x)}\right|\leq e^{C_2d(x,x')}C_2d(x,x') \leq \frac{A_0}{2}\delta_{1, \varrho}d(x, x').
\end{align}

To prove Property~(1), suppose $h \in \Lip_{d}(\Lambda_+, \R)$ and $B > 0$ are as in the lemma. Using \cref{eqn:FaLipEstimate,eqn:FaExponentialEstimate}, we have
\begin{align*}
&\left|\mathcal{L}_a^k(h)(x) - \mathcal{L}_a^k(h)(x')\right| \\
\leq{}&\sum_{\gamma \in \calH^k} \left|e^{\FRet_k^{(a)}(\gamma x)}h(\gamma x) - e^{\FRet_k^{(a)}(\gamma x')} h(\gamma x')\right| \\
\leq{}&\sum_{\gamma \in \calH^k} \left(\left|1 - e^{\FRet_k^{(a)}(\gamma x') - \FRet_k^{(a)}(\gamma x)}\right|e^{\FRet_k^{(a)}(\gamma x)} h(\gamma x) + e^{\FRet_k^{(a)}(\gamma x')}|h(\gamma x) - h(\gamma x')|\right) \\
\leq{}&\sum_{\gamma \in \calH^k} \left(A_0d(x, x')e^{\FRet_k^{(a)}(\gamma x)}h(\gamma x) + A_0e^{\FRet_k^{(a)}(\gamma x)}Bh(\gamma x)d(\gamma x, \gamma x')\right) \\
\leq{}&(A_0 + A_0B \lambda^k) d(x, x')\sum_{\gamma \in \calH^k} e^{\FRet_k^{(a)}(\gamma x)}h(\gamma x) \\
={}&A_0(B\lambda^k + 1)\mathcal{L}_a^k(h)(x)d(x, x')
\end{align*}
where all the sums converge due to the exponential tail property (see Property~(5) in \cref{prop:Coding}).

To prove Property~(2), suppose $H \in \Lip\bigl(\Lambda_+, V_\rho^{\oplus \dim(\rho)}\bigr)$, $h \in \Lip_d(\Lambda_+, \mathbb R)$, and $B > 0$ are as in the lemma. A similar but more involved calculation as in \cref{eqn:FaLipEstimate} using $\Lip^{\mathrm{e}}(\GHol) \leq C_1'$ (see \cite[Lemma 5.2.3]{Sar22b}) gives that for any $\gamma\in \calH^k$, we have
\begin{align}
\label{eqn:GLipEstimate}
\begin{aligned}
\|\rho_b(\GHol^k(\gamma x)^{-1}) - \rho_b(\GHol^k(\gamma x')^{-1})\|_{\mathrm{op}} &\leq \|\rho_b\|d_{AM}(\GHol^k(\gamma x), \GHol^k(\gamma x')) \\
&\leq 2C_2\|\rho_b\|d(x, x').
\end{aligned}
\end{align}
Thus, using \cref{eqn:FaLipEstimate,eqn:FaExponentialEstimate,eqn:GLipEstimate}, we have
\begin{align}
\label{eqn:M difference}
&\bigl\|\mathcal{M}_{\xi, \rho}^k(H)(x) - \mathcal{M}_{\xi, \rho}^k(H)(x')\bigr\|_2 \nonumber\\
\leq{}&\sum_{\gamma \in \calH^k} \bigl\|e^{\FRet_k^{(a)}(\gamma x)} \rho_b(\GHol^k(\gamma x)^{-1}) H(\gamma x) - e^{\FRet_k^{(a)}(\gamma x')} \rho_b(\GHol^k(\gamma x')^{-1}) H(\gamma x')\bigr\|_2\nonumber \\
\leq{}&\sum_{\gamma \in \calH^k} \Bigl(\left|1 - e^{\FRet_k^{(a)}(\gamma x') - \FRet_k^{(a)}(\gamma x)}\right|e^{\FRet_k^{(a)}(\gamma x)} \|\rho_b(\GHol^k(\gamma x)^{-1}) H(\gamma x)\|_2\nonumber \\
&{}+ e^{\FRet_k^{(a)}(\gamma x')} \|(\rho_b(\GHol^k(\gamma x)^{-1}) - \rho_b(\GHol^k(\gamma x')^{-1})) H(\gamma x)\|_2 \nonumber\\
&{}+ e^{\FRet_k^{(a)}(\gamma x')} \|\rho_b(\GHol^k(\gamma x')^{-1}) (H(\gamma x) - H(\gamma x'))\|_2\Bigr)\nonumber \\
\leq{}&\sum_{\gamma \in \calH^k} \Bigl(\frac{A_0}{2}\delta_{1, \varrho}d(x, x')e^{\FRet_k^{(a)}(\gamma x)} \|H(\gamma x)\|_2\nonumber \\
&{}+ 2C_2 e^{\FRet_k^{(a)}(\gamma x')} \|\rho_b\| d(x, x') \|H(\gamma x)\|_2 \nonumber\\
&{}+ e^{\FRet_k^{(a)}(\gamma x')} \|H(\gamma x) - H(\gamma x')\|_2\Bigr)\nonumber \\
\leq{}&\sum_{\gamma \in \calH^k} \Bigl(\frac{A_0}{2}\|\rho_b\|d(x, x')e^{\FRet_k^{(a)}(\gamma x)} \|H(\gamma x)\|_2 + \frac{A_0}{2}\|\rho_b\|e^{\FRet_k^{(a)}(\gamma x)} d(x, x') \|H(\gamma x)\|_2\nonumber\\
&{}+ A_0B\lambda^k e^{\FRet_k^{(a)}(\gamma x)} h(\gamma x)d(x, x')\Bigr)\nonumber \\
\leq&A_0\bigl(B\lambda^k\mathcal{L}_a^k(h)(x) + \|\rho_b\|\mathcal{L}_a^k\|H\|(x)\bigr) d(x, x').
\end{align}
\end{proof}

We also need the following lemma.

\begin{lemma}
\label{lem:CylinderDiameterMeasureLowerBound}
For all $r > 0$, there exists $c > 0$ such that for all $x \in \Lambda_\Gamma \cap \Delta_0$, there exists a cylinder $\mathtt{C} \subset B(x,r)$ with $\diam(\mathtt{C}) > c$ and $\nu(\mathtt{C}) > c$.
\end{lemma}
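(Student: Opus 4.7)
The plan is to combine a pointwise existence argument with a finite covering compactness argument. The pointwise claim will be: for each $x \in \Lambda_\Gamma \cap \Delta_0$ there exists \emph{some} cylinder $\mathtt{C} \subset B(x, r/2)$ with $\diam(\mathtt{C}) > 0$ and $\nu(\mathtt{C}) > 0$. Once this is established, uniformity in $x$ (i.e., a common constant $c > 0$) will be extracted from compactness.

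For the pointwise step, fix $x \in \Lambda_\Gamma \cap \Delta_0$ and $r > 0$. If $r \geq 2\diam(\Delta_0)$, then simply take $\mathtt{C} = \Delta_0$, which lies in $B(x, r/2)$ and has positive diameter and positive $\nu$-measure. Otherwise, since $\Lambda_+ \subset \Delta_0$ has full $\mu$-measure and $\mu$ has full support on $\Lambda_\Gamma$, the open set $B(x, r/8) \cap \Delta_0$ has positive $\mu$-measure and therefore intersects $\Lambda_+$; pick any $y$ in this intersection. For this $y$, the nested cylinders $\mathtt{C}_n \ni y$ of length $n$ satisfy $\diam(\mathtt{C}_n) \leq C_{\mathrm{cyl}} \lambda^n \diam(\Delta_0) \to 0$, using Property~(3) of \cref{prop:Coding}, the chain rule, and \cref{lem:CylinderEstimate}. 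Choosing $n$ large enough so that $\diam(\mathtt{C}_n) < 3r/8$ gives $\mathtt{C}_n \subset B(y, 3r/8) \subset B(x, r/2)$, and $\mathtt{C}_n$ has positive $\nu$-measure by \cref{lem:CylinderEstimate}.

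For uniformity, note that $\Lambda_\Gamma \cap \overline{\Delta_0}$ is compact as a closed bounded subset of $\R^d$. Cover it by finitely many open balls $B(x_1, r/4), \dotsc, B(x_N, r/4)$ with centers $x_i \in \Lambda_\Gamma \cap \overline{\Delta_0}$ (chosen in $\Lambda_\Gamma \cap \Delta_0$ when possible). By the pointwise step applied to each $x_i$, obtain a cylinder $\mathtt{C}_i \subset B(x_i, r/2)$ with $\diam(\mathtt{C}_i) > 0$ and $\nu(\mathtt{C}_i) > 0$. Set
\begin{align*}
c := \min_{1 \leq i \leq N} \min\{\diam(\mathtt{C}_i), \nu(\mathtt{C}_i)\} > 0.
\end{align*}
For any $x \in \Lambda_\Gamma \cap \Delta_0$, select $i$ with $x \in B(x_i, r/4)$; then $\mathtt{C}_i \subset B(x_i, r/2) \subset B(x, 3r/4) \subset B(x, r)$, and $\diam(\mathtt{C}_i), \nu(\mathtt{C}_i) > c$.

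The one delicate point will be running the pointwise step for covering centers $x_i$ that happen to lie on $\partial \Delta_0 \cap \Lambda_\Gamma$, where one must ensure $B(x_i, r/8) \cap \Delta_0 \cap \Lambda_+ \neq \varnothing$. Since $\Delta_0$ is an open fundamental domain and $\mu(\partial \Delta_0) = 0$ by the quasi-invariance of $\mu$ (using \cref{eqn:quasi invariance} and that $\partial \Delta_0$ is a countable union of Euclidean-codimension-one faces pushed forward from a lower-dimensional set under a Zariski dense action), the $\mu$-mass in a small ball around such $x_i$ cannot concentrate entirely on $\partial \Delta_0$, and the cylinders $\{\Delta_j\}_{j \in \mathcal{J}}$ accumulate throughout $\overline{\Delta_0}$, giving cylinders arbitrarily close to any boundary point of $\Delta_0$. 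Alternatively, one can simply choose the covering centers $x_i$ in $\Lambda_\Gamma \cap \Delta_0$ (slightly perturbing if necessary) and enlarge the covering radii by a harmless constant factor to still cover $\Lambda_\Gamma \cap \Delta_0$.
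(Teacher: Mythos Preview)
Your proof is correct and follows essentially the same route as the paper's: a pointwise existence of a small cylinder plus a compactness/finite-cover argument to extract a uniform $c$. The paper packages your pointwise step as a citation to \cref{lem:CylinderInOpenSet}, whereas you rederive it by picking $y \in \Lambda_+$ near $x$ and shrinking cylinders; and the paper sidesteps your boundary discussion by directly choosing the finite-cover centers in $\Lambda_\Gamma \cap \Delta_0$ (your ``alternative'' at the end), which is the cleaner way to handle that issue.
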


\begin{proof}
Let $r > 0$. By compactness of $\Lambda_\Gamma \cap \overline{\Delta_0}$, it has a finite cover $\bigl\{B(x_j, r/2)\bigr\}_{j = 1}^n$ for some $\{x_j\}_{ j = 1}^n \subset \Lambda_\Gamma \cap \Delta_0$ and $n \in \N$. By \cref{lem:CylinderInOpenSet}, $B(x_j, r/2)$ contains a cylinder $\mathtt{C}_j$ for all $1 \leq j \leq n$. Fix $c = \min_{1 \leq j \leq n} \min\{\diam(\mathtt{C}_j), \nu(\mathtt{C}_j)\}$. Then the lemma follows because for any $x \in \Lambda_\Gamma \cap \Delta_0$, the ball $B(x, r)$ covers $B(x_j, r/2)$ for some $1 \leq j \leq n$.
\end{proof}

We now fix several fundamental constants for Dolgopyat's method. Recall the reference point $x_0 \in \Lambda_+ \subset \Delta_0$ from \cref{subsec:SymbolicModelForFrameFlows}. Fix $\delta_0 > 0$ such that
\begin{align}
\label{eqn:Constantdelta0}
B(x_0,2\delta_0) \subset \Delta_0.
\end{align}
Recall $\varepsilon_1$ from \cref{lem:maActionLowerBound} and $\varepsilon_2$ fixed after \cref{prop:LNIC}. Fix $R_0 > 0$ provided by \cref{prop:LDP}. Corresponding to $R_0$, fix $\varepsilon_3 \in (0, 1)$ to be the $\eta$ provided by \cref{prop:NCP}. Fix positive constants
\begin{align}
\label{eqn:Constant_b_0}
b_0 &= 1, \\
\label{eqn:ConstantE}
E &> \frac{2A_0}{\delta_{1, \varrho}}, \\
\delta_1 &< \frac{\varepsilon_1\varepsilon_2\varepsilon_3}{14} < \varepsilon_3, \\
\label{eqn:Constantepsilon1}
\epsilon_1 &< \min\left\{\delta_0 C_{\mathrm{cyl}}^{-1} C_{\Delta_0}^{-1} e^{-R_0}, \delta_1^{-1}\delta_0 \cdot 8A_0C_{\mathrm{cyl}} C_{\Delta_0} e^{R_0}, \frac{1}{\delta_1}, \frac{\delta_1}{C_{\BP}^2}, \frac{\delta_1 \delta_{1, \varrho}}{C_{\exp, \BP}}\right\}.
\end{align}
Fix
\begin{align}
\label{eqn:c0}
c_0 > 0
\end{align}
to be the constant provided by \cref{lem:CylinderDiameterMeasureLowerBound} corresponding to $r = \frac{\delta_1\epsilon_1}{8A_0 C_{\mathrm{cyl}}C_{\Delta_0}e^{R_0}}$. Fix
\begin{equation}\label{eqn:c}
\mathsf{c} = c_0 C_{\mathrm{cyl}}^{-1}C_{\Delta_0}^{-1}.
\end{equation}
Fix
\begin{align}
\label{eqn:Constantm}
m &\geq m_0 \text{ such that } \lambda^m < \max\left\{\mathsf{c}e^{-2R_0}, \frac{1}{8A_0}, \frac{1}{8E\epsilon_1}, \frac{\delta_1}{32E}\right\}.
\end{align}
Fix $\{\alpha_j\}_{j = 0}^{\jj} \subset \calH^m$ and the corresponding maps $\{\BP_j: \Delta_0 \times \Delta_0 \to AM\}_{j = 1}^{\jj}$ provided by \cref{prop:LNIC}. Also fix positive constants
\begin{align}
\label{eqn:ConstantT0}
T_0 &= \max\left\{\sup\left\{\bigl\|\FRet_m^{(a)}\bigr|_{\alpha_j\Delta_0}\bigr\|_\infty:|a| \leq a_0' \right\}:j \in \{0, 1, \dotsc, \jj\}\right\}, \\
\label{eqn:Constantmu}
\tau &< \min\left\{\frac{1}{4}, 2Ee^{-R_0} \mathsf{c}C_{\mathrm{cyl}}^{-1} \min_{j \in \{0, 1, \dotsc, \jj\}} \|d\alpha_j\|, \frac{\arccos\left(1 - \frac{(\delta_1 \epsilon_1)^2}{2}\right)^2}{16 \cdot 16e^{2T_0}}\right\}.
\end{align}

The following lemma provides the necessary cancellations for Dolgopyat's method. It is proved as in \cite[Lemma 8.1]{SW21} by combining \cref{prop:LNIC} (LNIC), \cref{prop:NCP} (NCP), and the lower bound in \cref{lem:maActionLowerBound}. However, notice that it is not true for all $x \in \Delta_0$ and an additional constraint coming from NCP is required. Consequently, cancellations only occur on balls centered at such $x$ or its partner point $y$.

\begin{lemma}
\label{lem:PartnerPointInZariskiDenseLimitSetForBPBound}
For all $(b, \rho) \in \widehat{M}_0(b_0)$, $x \in \Lambda_\Gamma \cap \Delta_0 - \overline{B\Bigl(\partial \Delta_0,\frac{\epsilon_1}{\|\rho_b\|}\Bigr)}$ with $u_x a_{\log\|\rho_b\|} \in \Omega_{R_0}$, and $\omega \in V_\rho^{\oplus \dim(\rho)}$ with $\|\omega\|_2 = 1$, there exist $1 \leq j \leq \jj$ and $y \in \Lambda_\Gamma \cap B\Bigl(x,\frac{\epsilon_1}{\|\rho_b\|}\Bigr) - B\Bigl(x,\frac{\delta_1\epsilon_1}{\|\rho_b\|}\Bigr)$ such that
\begin{align*}
\left\|d\rho_b\left(d(\BP_{j, x})_x(y - x)\right)(\omega)\right\|_2 > 7\delta_1\epsilon_1.
\end{align*}
\end{lemma}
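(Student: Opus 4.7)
The plan is to chain together three ingredients---\cref{lem:maActionLowerBound} for the representation, \cref{prop:LNIC} (LNIC) for the tangent direction, and \cref{prop:NCP} (NCP) for the limit set point---and then decompose and estimate. Given a unit $\omega \in V_\rho^{\oplus \dim(\rho)}$, first apply \cref{lem:maActionLowerBound} to obtain a unit $z \in \LieA \oplus \LieM$ with $\|d\rho_b(z)(\omega)\|_2 \geq \varepsilon_1 \|\rho_b\|$. Next, apply \cref{prop:LNIC} at $x$ with $z$ playing the role of the target Lie algebra direction to produce $1 \leq j \leq \jj$ and a unit tangent vector $Z \in \operatorname{T}_x(\Delta_0)$ with $|\langle (d\BP_{j, x})_x(Z), z\rangle| \geq \varepsilon_2$. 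Finally, apply \cref{prop:NCP} at $x$ with direction $Z$ and scale $\epsilon = \epsilon_1/\|\rho_b\|$: the hypothesis $u_x a_{\log\|\rho_b\|} \in \Omega_{R_0}$ together with the geodesic estimate $d(u_x a_t, u_x a_{t+s}) \leq |s|$ shows $u_x a_{-\log\epsilon}$ lies in $\Omega_{R_0 + \log(1/\epsilon_1)}$, which is absorbed into the $R$-constant for NCP so that the fixed $\varepsilon_3$ applies, yielding $y \in \Lambda_\Gamma \cap B(x, \epsilon_1/\|\rho_b\|)$ with $|\langle y - x, Z\rangle| \geq \varepsilon_3 \epsilon_1/\|\rho_b\|$. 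The excision $y \notin B(x, \delta_1\epsilon_1/\|\rho_b\|)$ follows from $\|y - x\| \geq |\langle y - x, Z\rangle| \geq \varepsilon_3\epsilon_1/\|\rho_b\|$ and $\delta_1 < \varepsilon_3$.

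For the quantitative bound, decompose $y - x = \beta Z + w$ with $\beta = \langle y - x, Z\rangle$ and $w \perp Z$ in $\operatorname{T}_x(\Delta_0)$, and $(d\BP_{j, x})_x(Z) = \alpha z + \zeta$ with $\alpha = \langle (d\BP_{j, x})_x(Z), z\rangle$ and $\zeta \perp z$ in $\LieA \oplus \LieM$. Linearity gives
\begin{align*}
(d\BP_{j, x})_x(y - x) = \beta\alpha\, z + \bigl(\beta\zeta + (d\BP_{j, x})_x(w)\bigr),
\end{align*}
and the reverse triangle inequality yields
\begin{align*}
\bigl\|d\rho_b\bigl((d\BP_{j, x})_x(y-x)\bigr)(\omega)\bigr\|_2 \geq |\beta\alpha|\,\|d\rho_b(z)(\omega)\|_2 - \bigl\|d\rho_b\bigl(\beta\zeta + (d\BP_{j, x})_x(w)\bigr)(\omega)\bigr\|_2.
\end{align*}
The leading term is at least $\varepsilon_2 \cdot (\varepsilon_3\epsilon_1/\|\rho_b\|) \cdot \varepsilon_1\|\rho_b\| = \varepsilon_1\varepsilon_2\varepsilon_3\,\epsilon_1 > 14\delta_1\epsilon_1$ by the choice $\delta_1 < \varepsilon_1\varepsilon_2\varepsilon_3/14$, leaving a margin of $7\delta_1\epsilon_1$ to absorb the error.

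The hard part will be controlling the error term by $7\delta_1\epsilon_1$. The naive operator-norm bound $\|\rho_b\|\,\|\beta\zeta + (d\BP_{j, x})_x(w)\| \lesssim C_{\BP}\epsilon_1$ from \cref{lem:LieTheoreticNormBounds} is not immediately dominated by $7\delta_1\epsilon_1$, and the refinement requires passing to the group element via the second-order Taylor expansion of $\rho_b \circ \exp$ (whose quadratic remainder is of order $\|\rho_b\|^2\|(d\BP_{j, x})_x(y-x)\|^2 \lesssim C_{\BP}^2\epsilon_1^2$) together with \cref{lem:ComparingExpWithBP} comparing $\exp$ with $\BP_j$ at second order (remainder of order $C_{\exp, \BP}\epsilon_1^2/\|\rho_b\|$). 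The precisely tuned constraints $\epsilon_1 < \delta_1/C_{\BP}^2$ and $\epsilon_1 < \delta_1\delta_{1,\varrho}/C_{\exp, \BP}$ make each error contribution of order $\delta_1\epsilon_1$, fitting within the $7\delta_1\epsilon_1$ margin. Tracking these cascading constants correctly, along with verifying that the $R$-shift $\log(1/\epsilon_1)$ in the NCP step does not spoil $\varepsilon_3$, is the central technical issue.
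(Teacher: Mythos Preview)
Your overall strategy of chaining \cref{lem:maActionLowerBound}, \cref{prop:LNIC}, and \cref{prop:NCP} matches the paper's. However, the orthogonal-decomposition-plus-reverse-triangle argument has a genuine gap that your proposed fix does not close.

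The error term $d\rho_b\bigl(\beta\zeta + (d\BP_{j,x})_x(w)\bigr)(\omega)$ is \emph{first-order} in $y-x$: its size is of order $\|\rho_b\|\cdot C_{\BP}\cdot \epsilon_1/\|\rho_b\| = C_{\BP}\epsilon_1$, which is \emph{not} dominated by $7\delta_1\epsilon_1$ (since $\delta_1 < 1/14$ while $C_{\BP}$ is not small). The second-order tools you invoke --- Taylor expansion of $\rho_b\circ\exp$ and \cref{lem:ComparingExpWithBP} --- produce corrections of order $\epsilon_1^2$, so they cannot repair a first-order discrepancy. In fact the constraints $\epsilon_1 < \delta_1/C_{\BP}^2$ and $\epsilon_1 < \delta_1\delta_{1,\varrho}/C_{\exp,\BP}$ that you cite are used in the paper precisely for those second-order terms in the proof of \cref{lem:chiLessThan1}, not here.

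The remedy, following \cite[Lemma 8.1]{SW21}, is to avoid the decomposition entirely by passing to dual directions at each step. Write $A_\omega: \LieA\oplus\LieM \to V_\rho^{\oplus\dim(\rho)}$ for $Z' \mapsto d\rho_b(Z')(\omega)$. From \cref{lem:maActionLowerBound} take unit $z$ with $\|A_\omega(z)\|_2 \geq \varepsilon_1\|\rho_b\|$, and form the real linear functional $P(Z') = \Re\langle A_\omega(Z'), A_\omega(z)\rangle/\|A_\omega(z)\|_2$; it satisfies $|P| \leq \|A_\omega(\cdot)\|_2$ and is represented by some $z^* \in \LieA\oplus\LieM$ with $\|z^*\| \geq \varepsilon_1\|\rho_b\|$. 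Now apply \cref{prop:LNIC} with direction $z^*/\|z^*\|$ (not $z$) to get $j$ and unit $Z$ with $|\langle (d\BP_{j,x})_x(Z), z^*\rangle| \geq \varepsilon_2\|z^*\|$. The composite $\ell(v) = P\bigl((d\BP_{j,x})_x(v)\bigr)$ is then a real linear functional on $\R^d$ represented by some $w^*$ with $\|w^*\| \geq \varepsilon_1\varepsilon_2\|\rho_b\|$, and $|\ell(v)| \leq \|L_j(v)\|_2$ where $L_j(v) = d\rho_b\bigl((d\BP_{j,x})_x(v)\bigr)(\omega)$. Finally apply \cref{prop:NCP} with direction $w^*/\|w^*\|$ (not $Z$): the resulting $y$ satisfies
\[
\|L_j(y-x)\|_2 \geq |\ell(y-x)| = |\langle y-x, w^*\rangle| \geq \varepsilon_3\epsilon\,\|w^*\| \geq \varepsilon_1\varepsilon_2\varepsilon_3\epsilon_1 > 14\delta_1\epsilon_1,
\]
with no error term to subtract. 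The annulus condition follows from $\|y-x\| \geq \varepsilon_3\epsilon_1/\|\rho_b\| > \delta_1\epsilon_1/\|\rho_b\|$.

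Your side observation about the $R$-shift (applying NCP at scale $\epsilon = \epsilon_1/\|\rho_b\|$ corresponds to time $\log\|\rho_b\| + \log(1/\epsilon_1)$ rather than $\log\|\rho_b\|$) is a legitimate bookkeeping point that deserves care in the constant choices, but it is separate from the structural gap above.
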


\subsection{Good partitions and Dolgopyat operators}
\label{good partitions}
\begin{definition}[Partition, Atom, Finer, Coarser]
A \emph{$\nu$-measurable partition} $\calP$ of $\Delta_0$ is a set of mutually disjoint Borel subsets $\mathtt{C} \subset \Delta_0$ such that
\[\nu\biggl(\bigcup_{\mathtt{C}\in\calP}\mathtt{C}\biggr)=\nu(\Delta_0). \]
For any point $x \in \Delta_0$, its \emph{atom} is the unique element denoted by $\calP(x) \in \calP$ which contains $x$; if no such element of $\calP$ exists, define $\calP(x)=\varnothing$. For two $\nu$-measurable partitions $\calP$ and $\calQ$, we say $\calQ$ is \emph{finer} than $\calP$ or $\calP$ is \emph{coarser} than $\calQ$, denoted by $\calQ \succeq \calP$, if for all $\mathtt{D} \in \calQ$, there exists $\mathtt{C} \in \calP$ such that $\mathtt{D} \subset \mathtt{C}$.
\end{definition}

For all $(b, \rho) \in \widehat{M}_0(b_0)$, let $\calP_{(b, \rho)}$ be the $\nu$-measurable partition of $\Delta_0$ consisting of maximal cylinders $\mathtt{C} \subset \Delta_0$ with $\diam(\mathtt{C}) \leq \frac{e^{R_0}}{\|\rho_b\|}$. From definitions (see \cref{subsec:CombinatoricWords}), we have
\begin{align}
\label{CancellationSet}
\Omega^\dagger(\log\|\rho_b\|, R_0) = \left\{\mathtt{C} \in \calP_{(b, \rho)}: \diam(\mathtt{C}) \geq \frac{e^{-R_0}}{\|\rho_b\|}\right\}
\end{align}

\begin{lemma}
For all $(b, \rho) \in \widehat{M}_0(b_0)$ and $n \in \N$, we have $T^{-n}(\calP_{(b, \rho)}) \succeq \calP_{(b, \rho)}$.
\end{lemma}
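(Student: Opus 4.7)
The plan is straightforward: I will unpack what an atom of $T^{-n}(\calP_{(b,\rho)})$ looks like in terms of the inverse branches of $T$, verify that such an atom is itself a cylinder whose diameter already satisfies the defining bound of $\calP_{(b,\rho)}$, and then locate its maximal cylinder ancestor satisfying the diameter bound. The conclusion is then essentially a formal consequence of the maximality built into the definition of $\calP_{(b,\rho)}$.

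First, I would observe that the atoms of $T^{-n}(\calP_{(b,\rho)})$ are exactly the sets of the form $\mathtt{D}=\alpha\mathtt{C}'$ with $\alpha\in\calH^n$ and $\mathtt{C}'\in\calP_{(b,\rho)}$. Writing $\mathtt{C}'=\gamma'\Delta_0$ with $\gamma'\in\calH^k$, we get $\mathtt{D}=(\alpha\gamma')\Delta_0$, which is a cylinder of length $n+k$. Using Property~(3) in \cref{prop:Coding} together with the chain rule, we have $\|d\alpha\|\leq\lambda^n\leq 1$, and the estimate derived in the proof of \cref{lem:CylinderEstimate} yields $\diam(\mathtt{D})=\diam(\alpha\mathtt{C}')\leq\|d\alpha\|\diam(\mathtt{C}')\leq\diam(\mathtt{C}')\leq e^{R_0}/\|\rho_b\|$. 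Thus $\mathtt{D}$ is a cylinder whose diameter already falls within the defining bound for $\calP_{(b,\rho)}$.

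Next, I would pass to a maximal ancestor. Writing $\alpha\gamma'=\gamma_{j_1}\cdots\gamma_{j_{n+k}}$, consider the nested chain of ancestor cylinders $\mathtt{D}_s:=\gamma_{j_1}\cdots\gamma_{j_s}\Delta_0$ for $s\in\{0,1,\dotsc,n+k\}$, with the convention $\mathtt{D}_0=\Delta_0$ and $\mathtt{D}_{n+k}=\mathtt{D}$. These are nested with $\mathtt{D}_{s}\subset\mathtt{D}_{s-1}$, and by \cref{lem:CylinderEstimate} their diameters are non-increasing (in fact eventually geometrically decreasing). Since $\diam(\mathtt{D}_{n+k})\leq e^{R_0}/\|\rho_b\|$, there is a minimal $s^{\ast}\in\{0,1,\dotsc,n+k\}$ with $\diam(\mathtt{D}_{s^{\ast}})\leq e^{R_0}/\|\rho_b\|$. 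Set $\mathtt{C}^{\ast}:=\mathtt{D}_{s^{\ast}}$.

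Finally, I would verify that $\mathtt{C}^{\ast}\in\calP_{(b,\rho)}$. If $s^{\ast}=0$, then $\Delta_0$ itself satisfies the bound, so $\calP_{(b,\rho)}=\{\Delta_0\}$ and $\mathtt{C}^{\ast}=\Delta_0\in\calP_{(b,\rho)}$. If $s^{\ast}\geq 1$, then by minimality the strictly larger parent cylinder $\mathtt{D}_{s^{\ast}-1}$ has $\diam(\mathtt{D}_{s^{\ast}-1})>e^{R_0}/\|\rho_b\|$, which is precisely the condition that $\mathtt{C}^{\ast}$ is a maximal cylinder in $\Delta_0$ with diameter at most $e^{R_0}/\|\rho_b\|$; hence $\mathtt{C}^{\ast}\in\calP_{(b,\rho)}$. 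Since $\mathtt{D}\subset\mathtt{C}^{\ast}$, the refinement $T^{-n}(\calP_{(b,\rho)})\succeq\calP_{(b,\rho)}$ follows. There is no substantive obstacle here; the lemma is really a bookkeeping statement that records the fact that applying an inverse branch to a maximal diameter-bounded cylinder only shrinks it, so it still lies inside some maximal diameter-bounded cylinder.
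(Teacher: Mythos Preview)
Your proof is correct and follows essentially the same approach as the paper: show that any atom of $T^{-n}(\calP_{(b,\rho)})$ has diameter at most $e^{R_0}/\|\rho_b\|$ (the paper phrases this as $\diam(\mathtt{C})\leq\diam(T^n(\mathtt{C}))$ via the expanding property), and then invoke maximality to place it inside some element of $\calP_{(b,\rho)}$. The paper simply asserts the last step ``by definition of $\calP_{(b,\rho)}$'' whereas you spell out the chain-of-ancestors argument explicitly, but the substance is identical.
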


\begin{proof}
Let $(b, \rho) \in \widehat{M}_0(b_0)$ and $n \in \N$. Let $\mathtt{C} \in T^{-n}(\calP_{(b, \rho)})$. Then $T^n(\mathtt{C}) \in \calP_{(b, \rho)}$ and so by the expanding property, we have
\begin{align*}
\diam(\mathtt{C}) \leq \diam(T^n(\mathtt{C})) \leq \frac{e^{R_0}}{\|\rho_b\|}.
\end{align*}
Thus, by definition of $\calP_{(b, \rho)}$, there exists a cylinder $\mathtt{D} \in \calP_{(b, \rho)}$ containing $\mathtt{C}$.
\end{proof}

\Cref{prop:CancellationCylinder} will introduce the sub-cylinders where cancellations can take place.

\begin{proposition}
\label{prop:CancellationCylinder}
Let $(b, \rho) \in \widehat{M}_0(b_0)$ and $\mathtt{J} \in \Omega^\dagger(\log\|\rho_b\|, R_0)$. There exists $x_1 \in \mathtt{J} \cap \Lambda_+$ such that $B\Bigl(x_1,\frac{\epsilon_1}{\|\rho_b\|}\Bigr)\subset \mathtt{J}$ and
for all $\omega \in V_\rho^{\oplus \dim(\rho)}$ with $\|\omega\|_2 = 1$, there exist $1 \leq j \leq \jj$ and $x_2 \in \Lambda_+ \cap B\Bigl(x_1,\frac{\epsilon_1}{\|\rho_b\|}\Bigr) - B\Bigl(x_1,\frac{\delta_1\epsilon_1}{\|\rho_b\|}\Bigr)$ such that
\begin{enumerate}
\item\label{itm:BPBound} $\|d\rho_b\left(d(\BP_{j, x_1})_{x_1}(x_2 - x_1)\right)(\omega)\|_2 \geq 7\delta_1\epsilon_1$;
\item 
for all $k \in \{1, 2\}$, there exists a $T^{-m}(\calP_{(b, \rho)})$-measurable subset $\mathtt{J}_k \subset \mathtt{J} \cap B\left(x_k,\frac{\delta_1\epsilon_1}{4A_0\|\rho_b\|}\right)$ such that
\begin{align*}
\diam(\mathtt{J}_k) &> \mathsf{c}\diam(\mathtt{J}), & \nu(\mathtt{J}_k) &> \mathsf{c}\nu(\mathtt{J}).
\end{align*}
\end{enumerate}
\end{proposition}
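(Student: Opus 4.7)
My strategy is to combine \cref{lem:PartnerPointInZariskiDenseLimitSetForBPBound} (the LNIC${}+{}$NCP bound) with the cylinder-containment result \cref{lem:CylinderDiameterMeasureLowerBound}, both pulled back under the contraction $\gamma$ realizing $\mathtt{J} = \gamma\Delta_0$. First, I write $\mathtt{J} = \gamma\Delta_0$ for some $\gamma \in \bigcup_{n \in \N}\calH^n$ and take $x_1 := \gamma x_0$; this lies in $\mathtt{J} \cap \Lambda_+$ by Property~(2) of \cref{prop:Coding} and $x_0 \in \Lambda_+$. From the inclusion $B(x_0,2\delta_0) \subset \Delta_0$ in \cref{eqn:Constantdelta0} together with the uniform distortion bound underlying \cref{lem:CylinderEstimate}, one has $B(x_1, C_{\mathrm{cyl}}^{-1}\|d\gamma\|\delta_0) \subset \gamma B(x_0,\delta_0) \subset \mathtt{J}$. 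Combined with the lower derivative estimate $\|d\gamma\| \geq C_{\Delta_0}^{-1}e^{-R_0}/\|\rho_b\|$ from \cref{lem:DerivativeEstimate} (available since $\mathtt{J} \in \Omega^\dagger(\log\|\rho_b\|,R_0)$) and the choice of $\epsilon_1$ in \cref{eqn:Constantepsilon1}, this will produce the required $B(x_1,\epsilon_1/\|\rho_b\|) \subset \mathtt{J}$.

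Next, the two hypotheses of \cref{lem:PartnerPointInZariskiDenseLimitSetForBPBound} hold at $x_1$: the condition $x_1 \notin \overline{B(\partial\Delta_0,\epsilon_1/\|\rho_b\|)}$ follows from the previous step, and $u_{x_1}a_{\log\|\rho_b\|} \in \Omega_{R_0}$ follows from $x_1 \in \mathtt{J} \subset \Omega(\log\|\rho_b\|,R_0)$ via \cref{lem:goodpartitioncusp}. Given a unit vector $\omega$, invoking the lemma will produce $1 \leq j \leq \jj$ and some $y \in \Lambda_\Gamma$ in the required annulus around $x_1$ with the \emph{strict} inequality $> 7\delta_1\epsilon_1$. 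Since $\Lambda_+$ has full $\nu$-measure and $\nu$ is supported on $\Lambda_\Gamma$, $\Lambda_+$ is dense in $\Lambda_\Gamma$, so I will perturb $y$ within the open annulus to obtain $x_2 \in \Lambda_+$ still satisfying the non-strict inequality demanded by~(1), using continuity of the left-hand side in its spatial argument.

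For~(2), I set $x_k' := \gamma^{-1}x_k \in \Delta_0$ and apply \cref{lem:CylinderDiameterMeasureLowerBound} at $x_k'$ with the value $r := \delta_1\epsilon_1/(8A_0 C_{\mathrm{cyl}} C_{\Delta_0}e^{R_0})$ used to define $c_0$ in \cref{eqn:c0}, producing a cylinder $\mathtt{D}_k \subset B(x_k',r)$ with $\diam(\mathtt{D}_k), \nu(\mathtt{D}_k) > c_0$. I then take $\mathtt{J}_k := \gamma\mathtt{D}_k$. The upper bound $\|d\gamma\| \leq C_{\Delta_0}e^{R_0}/\|\rho_b\|$ from \cref{lem:DerivativeEstimate} combined with the distortion in \cref{lem:CylinderEstimate} places $\gamma B(x_k',r)$, hence $\mathtt{J}_k$, inside $B(x_k,\delta_1\epsilon_1/(4A_0\|\rho_b\|))$; and comparing $\gamma\mathtt{D}_k$ with $\gamma\Delta_0$ via \cref{lem:CylinderEstimate}, together with the inequality $C_{\Delta_0} \geq C_{\mathrm{cyl}}\max\{\diam(\Delta_0),\mu(\Delta_0)\}$ from \cref{eqn:ConstantcDelta0}, yields both ratios $\diam(\mathtt{J}_k)/\diam(\mathtt{J})$ and $\nu(\mathtt{J}_k)/\nu(\mathtt{J})$ bounded below by $c_0 C_{\mathrm{cyl}}^{-1}C_{\Delta_0}^{-1} = \mathsf{c}$ from \cref{eqn:c}.

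The main technical obstacle is the $T^{-m}(\calP_{(b,\rho)})$-measurability of $\mathtt{J}_k$. Because $T^{-m}(\calP_{(b,\rho)}) \succeq \calP_{(b,\rho)}$ (the lemma preceding the proposition) and the atoms of both partitions are themselves cylinders, I plan to remedy this by replacing $\mathtt{J}_k$ with the union of all $T^{-m}(\calP_{(b,\rho)})$-atoms it contains. Such atoms are sub-cylinders of diameter at most on the order of $\lambda^m\|d\gamma\|$, and the stringent choice of $m$ in \cref{eqn:Constantm} (in particular $\lambda^m < \mathsf{c}e^{-2R_0}$) keeps them much smaller than $\mathtt{J}_k$, so this passage costs only a bounded proportional share of diameter and $\nu$-measure, which is absorbed by a universal adjustment of $c_0$ (and thus of $\mathsf{c}$).
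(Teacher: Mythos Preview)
Your overall strategy matches the paper's proof almost exactly: take $x_1=\gamma x_0$, use \cref{lem:DerivativeEstimate} and \cref{eqn:Constantdelta0} for the ball inclusion, invoke \cref{lem:goodpartitioncusp} and \cref{lem:PartnerPointInZariskiDenseLimitSetForBPBound} (plus density of $\Lambda_+$) for $x_2$, and pull back a cylinder from \cref{lem:CylinderDiameterMeasureLowerBound} through $\gamma$ for $\mathtt{J}_k$.

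The one place you diverge from the paper, and where your argument wobbles, is the $T^{-m}(\calP_{(b,\rho)})$-measurability of $\mathtt{J}_k$. You propose to replace $\mathtt{J}_k$ by the union of the $T^{-m}(\calP_{(b,\rho)})$-atoms it contains and then ``absorb'' the loss into ``a universal adjustment of $c_0$''. That is circular: $c_0$ is already fixed by \cref{eqn:c0} before this proposition is stated, and $\mathsf{c}$ and $m$ depend on it, so you cannot adjust it now. The paper avoids this entirely with a one-line diameter computation. Since $\mathtt{J}_k=\gamma\mathtt{D}_k$ is itself a \emph{cylinder}, by the nesting property it is either contained in a single $T^{-m}(\calP_{(b,\rho)})$-atom or is a union of such atoms. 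Using your own lower bound $\diam(\mathtt{J}_k)>\mathsf{c}\diam(\mathtt{J})\geq \mathsf{c}e^{-R_0}/\|\rho_b\|$ and expansion by $T^m$, one gets
\[
\diam(T^m(\mathtt{J}_k)) \geq \lambda^{-m}\diam(\mathtt{J}_k) > \frac{\mathsf{c}\,\lambda^{-m}}{e^{R_0}\|\rho_b\|} > \frac{e^{R_0}}{\|\rho_b\|}
\]
by the choice $\lambda^m<\mathsf{c}e^{-2R_0}$ in \cref{eqn:Constantm}. Hence $T^m(\mathtt{J}_k)$ is a cylinder strictly larger than every atom of $\calP_{(b,\rho)}$, so it is $\calP_{(b,\rho)}$-measurable, and $\mathtt{J}_k$ is $T^{-m}(\calP_{(b,\rho)})$-measurable as it stands. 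No replacement and no adjustment of constants are needed.
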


\begin{proof}
Let $(b, \rho)$ and $\mathtt{J}$ be as in the proposition. Write $\mathtt{J} = \gamma \Delta_0$ for some $\gamma \in \calH^n$ and $n \in \N$. By \cref{CancellationSet,lem:DerivativeEstimate}, we have
\begin{equation}
\label{inequality derivative}
C_{\Delta_0}^{-1} e^{-R_0}\|\rho_b\|^{-1}\leq\|d\gamma\|\leq C_{\Delta_0}e^{R_0} \|\rho_b\|^{-1}.
\end{equation}

Take $x_1 = \gamma x_0 \in \mathtt{J} \cap \Lambda_+$. Recall that $B(x_0,2\delta_0)\subset \Delta_0$ from \cref{eqn:Constantdelta0}. For $\delta_0' := \frac{\delta_0}{C_{\mathrm{cyl}}C_{\Delta_0} e^{R_0}}$, we have
\begin{align*}
B\biggl(x_1,\frac{\epsilon_1}{\|\rho_b\|}\biggr)\subset B\biggl(x_1,\frac{\delta_0'}{\|\rho_b\|}\biggr) \subset \gamma B(x_0,\delta_0) \subset \mathtt{J}
\end{align*}
by \cref{inequality derivative}, a similar argument as in \cref{lem:CylinderEstimate}, and inequality $\epsilon_1<\delta_0'$ (see \cref{eqn:Constantepsilon1}). As $x_1 \in \Omega(\log\|\rho_b\|, R_0)$, \cref{lem:goodpartitioncusp} gives $u_{x_1} a_{\log \|\rho_b\|}\in\Omega_{R_0}$. Let $\omega$ be as in the proposition. Using \cref{lem:PartnerPointInZariskiDenseLimitSetForBPBound} and the fact that $\Lambda_+\subset \Delta_0$ is of full measure, we obtain a point $x_2$ such that 
\begin{equation*}
x_2\in \Lambda_+ \cap B\biggl(x_1,\frac{\epsilon_1}{\|\rho_b\|}\biggr) - B\biggl(x_1,\frac{\delta_1\epsilon_1}{\|\rho_b\|}\biggr) \subset \mathtt{J} \cap \Lambda_+
\end{equation*}
and Property~(1) holds.

Let $k \in \{1, 2\}$. By \cref{lem:CylinderDiameterMeasureLowerBound,eqn:c0}, there exists a cylinder $\mathtt{J}_k' \subset B\Bigl(\gamma^{-1}x_k,\frac{\delta_1\epsilon_1}{8A_0 C_{\mathrm{cyl}}C_{\Delta_0}e^{R_0}}\Bigr)$ with $\diam(\mathtt{J}_k') > c_0$ and $\nu(\mathtt{J}_k') > c_0$. Take $\mathtt{J}_k = \gamma \mathtt{J}_k' \subset B\Bigl(x_k,\frac{\delta_1\epsilon_1}{4A_0\|\rho_b\|}\Bigr)$ where we derive the containment using \cref{inequality derivative,lem:CylinderEstimate}. Again using \cref{lem:CylinderEstimate,eqn:c}, we have
\begin{align}
\diam(\mathtt{J}_k) &\geq C_{\mathrm{cyl}}^{-1}\|d\gamma\|\diam(\mathtt{J}_k') > c_0 C_{\mathrm{cyl}}^{-1}C_{\Delta_0}^{-1}\diam(\mathtt{J}) = \mathsf{c}\diam(\mathtt{J}), \\
\nu(\mathtt{J}_k) &\geq C_{\mathrm{cyl}}^{-1}\|d\gamma\|^{\delta} \nu(\mathtt{J}_k') > c_0 C_{\mathrm{cyl}}^{-1}C_{\Delta_0}^{-1}\nu(\mathtt{J}) = \mathsf{c}\nu(\mathtt{J}).
\end{align}
Using \cref{eqn:Constantm}, we have
\begin{align*}
\diam(T^m(\mathtt{J}_k)) \geq \lambda^{-m}\diam(\mathtt{J}_k) > \mathsf{c}\lambda^{-m}\diam(\mathtt{J}) \geq \frac{\mathsf{c}\lambda^{-m}}{e^{R_0}\|\rho_b\|} > \frac{e^{R_0}}{\|\rho_b\|}
\end{align*}
which implies $T^m(\mathtt{J}_k)$ is $\calP_{(b, \rho)}$-measurable.
\end{proof}

We construct a function indicating at which sub-cylinders we gain decay. 
Let $(b, \rho) \in \widehat{M}_0(b_0)$, $\mathtt{J} \in \Omega^\dagger(\log\|\rho_b\|, R_0)$, and $H \in \Lip\bigl(\Lambda_+, V_\rho^{\oplus \dim(\rho)}\bigr)$. We introduce the following notations:
\begin{itemize}
\item 
denote by $x_1^{\mathtt{J}, H} \in \mathtt{J}$ the $x_1$ provided by \cref{prop:CancellationCylinder} and apply the same proposition to the unit vector
\begin{align}
\label{eqn:CancellationVector}
\omega = \frac{\rho_b\bigl(\GHol^m\bigl(\alpha_0x_1^{\mathtt{J}, H}\bigr)^{-1}\bigr) H\bigl(\alpha_0x_1^{\mathtt{J}, H}\bigr)}{\bigl\|H\bigl(\alpha_0x_1^{\mathtt{J}, H}\bigr)\bigr\|_2} \in V_\rho^{\oplus \dim(\rho)};
\end{align}
\item denote by 
\begin{align}
\label{eqn:CancellationPartnerPoint}
&x_2^{\mathtt{J}, H} \in \Lambda_+ \cap B\left(x_1^{\mathtt{J}, H},\frac{\epsilon_1}{\|\rho_b\|}\right) \subset \mathtt{J} \cap \Lambda_+,\\
\label{eqn:CancellationIndex}
&1 \leq j^{\mathtt{J}, H} \leq \jj, \\
\label{eqn:CancellationCylinder}
&\mathtt{J}_k^H \subset \mathtt{J} \cap B\left(x_k^{\mathtt{J}, H},\frac{\delta_1\epsilon_1}{4A_0\|\rho_b\|}\right),
\end{align}
the corresponding $x_2$, $j$, and $\mathtt{J}_k$ provided by \cref{prop:CancellationCylinder} for all $k \in \{1, 2\}$; 
\item define $\Xi(b, \rho) = \Omega^\dagger(\log\|\rho_b\|, R_0) \times \{1, 2\} \times \{1, 2\}$;
\item define $\Omega_J^H = \bigcup_{(\mathtt{J}, k, l) \in J} \mathtt{J}_k^H$ for all $J \subset \Xi(b, \rho)$;
\item define $\mathtt{J}_{k, 1}^H = \alpha_0 \mathtt{J}_k^H$ and $\mathtt{J}_{k, 2}^H = \alpha_{j^{\mathtt{J}, H}}\mathtt{J}_k^H$ for all $k \in \{1, 2\}$;
\item define the following function on $\Lambda_+$:
\begin{align}
\label{eqn:CancellationIndicatorFunction}
\beta_J^H = \chi_{\Lambda_+} - \tau\sum_{(\mathtt{J}, k, l) \in J} \chi_{\mathtt{J}_{k, l}^H},
\end{align}
\end{itemize}
where we view the characteristic functions $\chi_{\mathtt{J}_{k, l}^H}$ as functions on $\Lambda_+$.

Note that $x_1^{\mathtt{J}, H}$ and $\mathtt{J}_1^H$ do not depend on $H$. Although $\beta_{J}^H$ is defined using characteristic functions, it is indeed Lipschitz with respect to $D$, which is the advantage of the distance function $D$. More precisely, using \cref{lem:CylinderEstimate}, we obtain the following lemma.

\begin{lemma}
Let $(b, \rho) \in \widehat{M}_0(b_0)$, $J \subset \Xi(b, \rho)$, and $H \in \Lip\bigl(\Lambda_+, V_\rho^{\oplus \dim(\rho)}\bigr)$. We have $\beta_J^H\in \Lip_D(\Lambda_+,\mathbb{R})$ with
\begin{align}
\label{eqn:LipDbetaJH}
\Lip_D\bigl(\beta_J^H\bigr) \leq \frac{\tau \|\rho_b\|e^{R_0}}{\mathtt{c}C_{\mathrm{cyl}}^{-1} \min_{j \in \{0, 1, \dotsc, \jj\}} \|d\alpha_j\|}.
\end{align}
\end{lemma}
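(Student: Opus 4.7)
The plan is to exploit the trivial but fundamental fact that the characteristic function of a cylinder is Lipschitz with respect to the distance $D$ with constant controlled by the inverse of the cylinder's diameter. First, I would verify that each $\mathtt{J}_{k, l}^H$ is itself a cylinder: writing $\mathtt{J} = \gamma \Delta_0$ with $\gamma \in \calH^n$, the proof of \cref{prop:CancellationCylinder} produces $\mathtt{J}_k^H = \gamma \mathtt{J}_k'$ for a cylinder $\mathtt{J}_k' \subset \Delta_0$, hence $\mathtt{J}_{k, l}^H = \alpha_l' \mathtt{J}_k^H$ (where $\alpha_1' = \alpha_0$ and $\alpha_2' = \alpha_{j^{\mathtt{J}, H}}$) is a cylinder of length $m + n + \operatorname{length}(\mathtt{J}_k')$.

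Next, I would establish the key combinatorial fact that the collection $\bigl\{\mathtt{J}_{k, l}^H : (\mathtt{J}, k, l) \in J\bigr\}$ consists of mutually disjoint cylinders. For distinct $\mathtt{J}, \mathtt{J}' \in \Omega^\dagger(\log\|\rho_b\|, R_0)$, this follows from the partition property of $\calP_{(b, \rho)}$. Within the same $\mathtt{J}$, distinct $l \in \{1, 2\}$ correspond to the distinct inverse branches $\alpha_0$ and $\alpha_{j^{\mathtt{J}, H}}$ (with $j^{\mathtt{J}, H} \geq 1$), whose images are automatically disjoint. Within the same $\mathtt{J}$ and same $l$, the two balls of radius $\delta_1 \epsilon_1/(4A_0 \|\rho_b\|)$ centered at $x_k^{\mathtt{J}, H}$ (whose distance is at least $\delta_1\epsilon_1/\|\rho_b\|$ by \cref{eqn:CancellationPartnerPoint}) are disjoint since $A_0 > 1$, and so are the cylinders $\mathtt{J}_{k, l}^H$ contained in them.

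By disjointness, for any $x, y \in \Lambda_+$ the sum $\sum_{(\mathtt{J}, k, l) \in J}\bigl(\chi_{\mathtt{J}_{k, l}^H}(x) - \chi_{\mathtt{J}_{k, l}^H}(y)\bigr)$ is nonzero only when exactly one of $x, y$ lies in some (unique) cylinder $\mathtt{J}_{k, l}^H$ while the other lies outside the whole family. In that case, because cylinders in $\Delta_0$ are either nested or disjoint, any cylinder containing both $x$ and $y$ in its closure must properly contain $\mathtt{J}_{k, l}^H$, and therefore $D(x, y) \geq \diam(\mathtt{J}_{k, l}^H)$ by definition of $D$. Consequently,
\begin{equation*}
\bigl|\beta_J^H(x) - \beta_J^H(y)\bigr| \leq \tau \leq \frac{\tau}{\min_{(\mathtt{J}, k, l) \in J}\diam(\mathtt{J}_{k, l}^H)} \cdot D(x, y).
\end{equation*}

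Finally, I would bound the minimum diameter uniformly from below. By \cref{lem:CylinderEstimate} and Property~(2) in \cref{prop:CancellationCylinder},
\begin{equation*}
\diam(\mathtt{J}_{k, l}^H) \geq C_{\mathrm{cyl}}^{-1}\|d\alpha_l'\|\diam(\mathtt{J}_k^H) \geq C_{\mathrm{cyl}}^{-1}\mathsf{c}\|d\alpha_l'\|\diam(\mathtt{J}),
\end{equation*}
and \cref{CancellationSet} together with \cref{lem:DerivativeEstimate} gives $\diam(\mathtt{J}) \geq e^{-R_0}/\|\rho_b\|$. Taking the minimum over $l' \in \{0, 1, \dotsc, \jj\}$ (a finite set) yields exactly the claimed bound. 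The proof is essentially bookkeeping; the only nontrivial input is the disjointness analysis in the previous paragraph, which I expect to be the main item to verify carefully but not a substantial obstacle.
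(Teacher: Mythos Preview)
Your proposal is correct and follows precisely the approach the paper intends: the paper's own proof is the one-line remark ``using \cref{lem:CylinderEstimate}, we obtain the following lemma,'' and your argument spells out exactly that---disjointness of the $\mathtt{J}_{k,l}^H$, the $D$-Lipschitz property of cylinder indicators via the nested-or-disjoint structure, and the diameter lower bound from \cref{lem:CylinderEstimate}, \cref{prop:CancellationCylinder}, and the definition of $\Omega^\dagger(\log\|\rho_b\|,R_0)$. The only cosmetic point is that $\diam(\mathtt{J})\geq e^{-R_0}/\|\rho_b\|$ follows directly from \cref{CancellationSet} without invoking \cref{lem:DerivativeEstimate}.
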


\begin{figure}[h]
\centering
\begin{tikzpicture}[>=stealth]
\definecolor{Sand}{RGB}{248,246,233}

\definecolor{Top}{RGB}{248,246,233}
\definecolor{Bottom}{RGB}{248,246,233}

\draw[black, thick, fill = Sand, fill opacity = 1] (-5, -2) rectangle (-1, 2);
\draw[black, thick, fill = NavyBlue, fill opacity = 0.3] (-2, 0.25) rectangle (-4.25, 1.5);
\draw[black, thick, fill = OliveGreen, fill opacity = 0.3] (-2, -1.5) rectangle (-4.25, -0.25);
\draw[black, thick, pattern = crosshatch, pattern color = NavyBlue] (-2.25, 0.5) rectangle (-3, 1);
\draw[black, thick, pattern = crosshatch dots, pattern color = NavyBlue] (-3.25, 0.75) rectangle (-4, 1.25);
\draw[black, thick, pattern = crosshatch, pattern color = OliveGreen] (-2.25, -1.25) rectangle (-3, -0.75);
\draw[black, thick, pattern = crosshatch dots, pattern color = OliveGreen] (-3.25, -1) rectangle (-4, -0.5);

\draw[black, thick, fill = Sand, fill opacity = 1] (1, -2) rectangle (5, 2);
\draw[black, thick, fill = gray, fill opacity = 0.3] (2, -0.75) rectangle (4.25, 0.75);
\draw[black, thick, pattern = crosshatch dots, pattern color = gray] (2.25, 0.05) rectangle (3, 0.55);
\draw[black, thick, pattern = crosshatch, pattern color = gray] (3.25, -0.55) rectangle (4, -0.05);

\draw[fill = black] (2.4, 0.3) circle  [radius = 0.03];
\draw[fill = black] (3.4, -0.3) circle  [radius = 0.03];

\draw[NavyBlue, ->] (1.75, 0.1) to[out=180,in=0] (-1.75, 0.875);
\draw[OliveGreen, ->] (1.75, -0.1) to[out=180,in=0] (-1.75, -0.875);

\node[below right] at (-5, 2) {\scriptsize $\Delta_0$};
\node[below right] at (1, 2) {\scriptsize $\Delta_0$};
\node[above right] at (2, -0.75) {\scriptsize $\mathtt{J}$};
\node[above right] at (-4.25, 0.25) {\scriptsize $\alpha_0\mathtt{J}$};
\node[above right] at (-4.25, -1.5) {\scriptsize $\alpha_j\mathtt{J}$};
\node[right] at (2.75,0.95) {\scriptsize $\mathtt{J}_1^H$};
\node[right] at (3.75, 0.95) {\scriptsize $\mathtt{J}_2^H$};
\draw[gray,->] (2.75, 0.95) to[out = 180, in = 90] (2.625, 0.6);
\draw[gray,->] (3.75, 0.95) to[out = 180, in = 90] (3.625, 0);
\node[right] at (-3.5, 1.7) {\scriptsize $\mathtt{J}_{1, 1}^H$};
\node[right] at (-2.5,1.7) {\scriptsize $\mathtt{J}_{2, 1}^H$};
\draw[gray,->] (-3.5, 1.7) to[out = 180, in = 90] (-3.625, 1.3);
\draw[gray,->] (-2.5, 1.7) to[out = 180, in = 90] (-2.625, 1.05);
\node[right] at (-3.5, -0.05) {\scriptsize $\mathtt{J}_{1, 2}^H$};
\node[right] at (-2.5,-0.05) {\scriptsize $\mathtt{J}_{2, 2}^H$};
\draw[gray,->] (-3.5, -0.05) to[out = 180, in = 90] (-3.625, -0.45);
\draw[gray,->] (-2.5, -0.05) to[out = 180, in = 90] (-2.625, -0.7);
\node[] at (0, 0.75) {\scriptsize $\alpha_0$};
\node[] at (0, -0.75) {\scriptsize $\alpha_j$};
\node[] at (2.75, 0.3) {\scriptsize $x_1^{\mathtt{J}, H}$};
\node[] at (3.75, -0.3) {\scriptsize $x_2^{\mathtt{J}, H}$};

\begin{axis}[axis x line=center,
axis y line=center,
x axis line style={<->},
xtick distance=1,
xlabel={$\Delta_0$},
ylabel={$\mathbb R$},
xlabel style={right, font = \tiny},
xticklabels={,,},
ylabel style={above, font = \tiny},
ytick={0.75,1},
yticklabels={$1 - \tau$,\smash{\raisebox{1pt}{1}}},
x tick label style={font = \tiny},
y tick label style={font = \tiny, xshift=1.5}, 
xmin=-4.5,
xmax=4.5,
ymin=-0.1,
ymax=1.2,
height=1.3cm,
width=9cm,
scale only axis,
at={(-4.5cm,-3.5cm)}]

\addplot [color=black,dotted,line width=1pt,samples=500,domain=-4.5:-4]{1};
\addplot [color=black,line width=1pt,samples=500,domain=-4:-3]{1};
\addplot [color=black,line width=1pt,samples=500,domain=-3:-2]{0.75};
\addplot [color=black,line width=1pt,samples=500,domain=-2:-1]{1};
\addplot [color=black,line width=1pt,samples=500,domain=-1:0]{1};
\addplot [color=black,line width=1pt,samples=500,domain=0:1]{1};
\addplot [color=black,line width=1pt,samples=500,domain=1:2]{1};
\addplot [color=black,line width=1pt,samples=500,domain=2:3]{0.75};
\addplot [color=black,line width=1pt,samples=500,domain=3:4]{1};
\addplot [color=black,dotted,line width=1pt,samples=500,domain=4:4.5]{1};
\end{axis}

\node[] at (-4.25, -3.65) {\scriptsize $\cdots$};
\node[] at (-3.5, -3.65) {\scriptsize $\mathtt{J}_{1, 1}^H$};
\node[] at (-2.5, -3.65) {\scriptsize $\mathtt{J}_{1, 2}^H$};
\node[] at (-1.5, -3.65) {\scriptsize $\mathtt{J}_{2, 1}^H$};
\node[] at (-0.5, -3.65) {\scriptsize $\mathtt{J}_{2, 2}^H$};
\node[] at (0.5, -3.65) {\scriptsize $\tilde{\mathtt{J}}_{1, 1}^H$};
\node[] at (1.5, -3.65) {\scriptsize $\tilde{\mathtt{J}}_{1, 2}^H$};
\node[] at (2.5, -3.65) {\scriptsize $\tilde{\mathtt{J}}_{2, 1}^H$};
\node[] at (3.5, -3.65) {\scriptsize $\tilde{\mathtt{J}}_{2, 2}^H$};
\node[] at (4.25, -3.65) {\scriptsize $\cdots$};

\end{tikzpicture}
\caption{An illustration of the construction of $\beta_{J}^H$. The shapes and sizes of the cylinders are for simplicity.}
\end{figure}
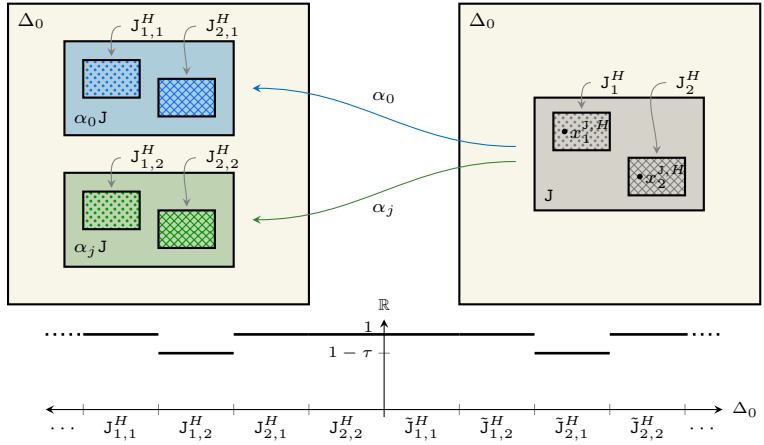

\begin{definition}[Dolgopyat operator]
For all $\xi=a+ib \in \mathbb C$ with $|a| < a_0'$, if $(b, \rho) \in \widehat{M}_0(b_0)$, then for all $J \subset \Xi(b, \rho)$ and $H \in \Lip\bigl(\Lambda_+, V_\rho^{\oplus \dim(\rho)}\bigr)$, we define the \emph{Dolgopyat operator} $\mathcal{N}_{a, J}^H: \Lip_D(\Lambda_+, \R) \to \Lip_D(\Lambda_+, \R)$ by
\begin{align*}
\mathcal{N}_{a, J}^H(h) = \mathcal{L}_a^m\bigl(\beta_J^H h\bigr) \qquad \text{for all $h \in \Lip_D(\Lambda_+, \R)$}.
\end{align*}
\end{definition}

Actually, the fact that $\mathcal{N}_{a,J}^H(h)$ is Lipschitz with respect to $D$ for any $h \in \Lip_D(\Lambda_+, \R)$ can be proved similar to \cref{lem:LasotaYorke}.

\begin{lemma}
\label{lem:dolgopyatoperator}
There exist $a_0 > 0$, $\eta \in (0, 1)$, and $C > 0$ such that for all $\xi=a+ib \in \mathbb C$ with $|a| < a_0$, if $(b, \rho) \in \widehat{M}_0(b_0)$, then for all $J \subset \Xi(b, \rho)$ and $(H, h) \in \Lip\bigl(\Lambda_+, V_\rho^{\oplus \dim(\rho)}\bigr) \times \Lip_D(\Lambda_+, \mathbb R)$, we have
\begin{align*}
\mathcal{N}_{a, J}^H(h)^2(x) \leq
\begin{cases}
\eta \mathcal{L}_0^m(h^2)(x), & x \in \Omega_J^H \\
\zeta(a)\mathcal{L}_0^m(h^2)(x), & x \in \Lambda_+ - \Omega_J^H,
\end{cases}
\end{align*}
where $\zeta: [-a_0, a_0] \to \R$ is a continuous function with $\zeta(0)=1$.
\end{lemma}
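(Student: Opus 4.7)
My plan is to apply Cauchy--Schwarz using $\mathcal{L}_0^m$ as the reference operator rather than $\mathcal{L}_a^m$, so that the target factor $\mathcal{L}_0^m(h^2)$ appears on the right directly. Setting $w_a := e^{\FRet_m^{(a)} - \FRet_m^{(0)}}$ so that $\mathcal{L}_a^m(f) = \mathcal{L}_0^m(w_a f)$, Cauchy--Schwarz with the weights $e^{\FRet_m^{(0)}(\gamma x)}$ yields
\[
(\mathcal{N}_{a,J}^H(h)(x))^2 = \bigl(\mathcal{L}_0^m(w_a \beta_J^H h)(x)\bigr)^2 \leq \mathcal{L}_0^m\bigl(w_a^2 (\beta_J^H)^2\bigr)(x) \cdot \mathcal{L}_0^m(h^2)(x).
\]
The task is thus reduced to bounding the first factor uniformly in $h$: by $\eta < 1$ on $\Omega_J^H$ and by a continuous $\zeta(a)$ with $\zeta(0) = 1$ off it.

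The decisive technical move is the cocycle identity
\[
2\FRet^{(a)} - \FRet^{(0)} - \FRet^{(2a)} = [\log \lambda_{2a} - 2 \log \lambda_a] + G_a - G_a \circ T, \qquad G_a := 2\log h_a - \log h_0 - \log h_{2a},
\]
which follows by direct expansion from $\FRet^{(a)} = -(\delta+a)\Ret - \log\lambda_a + \log h_a - \log h_a \circ T$. Since $a \mapsto \lambda_a$ and $a \mapsto h_a$ are analytic with $\lambda_0 = 1$ and $h_0$ the fixed eigenfunction, both bracketed quantities vanish to first order in $a$ at $a = 0$ and are $O(a^2)$ uniformly on $\Lambda_+$. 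Telescoping over the fixed number $m$ of iterations yields $2\FRet_m^{(a)}(\gamma x) - \FRet_m^{(0)}(\gamma x) - \FRet_m^{(2a)}(\gamma x) = O(a^2)$ uniformly in $\gamma \in \calH^m$ and $x \in \Lambda_+$. This converts the problematic factor into $\mathcal{L}_0^m(w_a^2 (\beta_J^H)^2)(x) = e^{O(a^2)}\, \mathcal{L}_{2a}^m((\beta_J^H)^2)(x)$, circumventing the pointwise failure of naive continuity of $\mathcal{L}_a$ for arbitrary $h$.

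Finally I would decompose $(\beta_J^H)^2 = 1 - (2\tau - \tau^2)\chi_E$ where $E := \bigcup_{(\mathtt{J},k,l) \in J}\mathtt{J}_{k,l}^H$, so that
\[
\mathcal{L}_{2a}^m((\beta_J^H)^2)(x) = \mathcal{L}_{2a}^m(1)(x) - (2\tau - \tau^2)\mathcal{L}_{2a}^m(\chi_E)(x).
\]
For $x \in \Omega_J^H$ there is some $(\mathtt{J},k,l) \in J$ with $x \in \mathtt{J}_k^H$; by construction either $\alpha_0 x \in \mathtt{J}_{k,1}^H \subset E$ (if $l = 1$) or $\alpha_{j^{\mathtt{J},H}} x \in \mathtt{J}_{k,2}^H \subset E$ (if $l = 2$), so the defining property of $T_0$ in \cref{eqn:ConstantT0} gives the contribution $e^{\FRet_m^{(2a)}(\alpha_j x)} \geq e^{-T_0}$, hence $\mathcal{L}_{2a}^m(\chi_E)(x) \geq e^{-T_0}$. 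Combined with a uniform estimate $\mathcal{L}_{2a}^m(1)(x) \leq \zeta_2(a)$ with $\zeta_2(0) = 1$ (from the analyticity of $a \mapsto \mathcal{L}_a$ on $\Lip$ applied to $\mathcal{L}_0(1) = 1$), shrinking $a_0$ so that $e^{O(a^2)}\zeta_2(a) < 1 + \tfrac{1}{2}(2\tau - \tau^2)e^{-T_0}$ for $|a| < a_0$ produces $\eta := 1 - \tfrac{1}{2}(2\tau - \tau^2)e^{-T_0} \in (0,1)$ on $\Omega_J^H$ and $\zeta(a) := e^{O(a^2)}\zeta_2(a)$ off it. The principal obstacle throughout is securing uniformity over the infinitely many inverse branches $\gamma \in \calH^m$, where $\Ret_m(\gamma x)$ is unbounded; the cocycle identity is precisely the mechanism that compresses the entire $a$-dependence into the uniformly $O(a^2)$ quantities $\log\lambda_{2a} - 2\log\lambda_a$ and $G_a$, which survive the infinite sum.
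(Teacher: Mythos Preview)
Your proof is correct and follows essentially the same route as the paper. Both arguments apply Cauchy--Schwarz to factor out $\mathcal{L}_0^m(h^2)$, leaving a term comparable to $\mathcal{L}_{2a}^m\bigl((\beta_J^H)^2\bigr)$ times a continuous factor equal to $1$ at $a=0$; then both use $T_0$ to extract the contribution $e^{-T_0}$ from the distinguished branch $\alpha_j x \in \mathtt{J}_{k,l}^H$ when $x \in \Omega_J^H$. The paper splits Cauchy--Schwarz with the factors $e^{\pm a\Ret_m}$ rather than passing through your cocycle identity for $2\FRet_m^{(a)} - \FRet_m^{(0)} - \FRet_m^{(2a)}$, but the two computations are equivalent up to the same coboundary terms. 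One simplification you overlooked: the normalization of the transfer operators gives $\mathcal{L}_{2a}^m(\chi_{\Lambda_+}) = \chi_{\Lambda_+}$ exactly, so your auxiliary bound $\mathcal{L}_{2a}^m(1)\leq\zeta_2(a)$ is unnecessary and your $O(a^2)$ estimate, while correct, is sharper than needed.
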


\begin{proof}
Recall the constant $T_0$ from \cref{eqn:ConstantT0}. Fix $\eta = \sqrt{1 - \tau e^{-T_0}}$. We introduce the function $\zeta:[-a_0', a_0'] \to \R$ defined by
\begin{align*}
\zeta(a) = (\lambda_a^{-2}\lambda_{2a})^m \left\|\frac{h_0}{h_a}\right\|_\infty \left\|\frac{h_a}{h_0}\right\|_\infty \left\|\frac{h_a}{h_{2a}}\right\|_\infty \left\|\frac{h_{2a}}{h_a}\right\|_\infty
\end{align*}
whose purpose will become clear shortly. Since $\zeta$ is continuous and $\zeta(0) = 1$, we can fix a sufficiently small $a_0 \in (0, a_0'/2)$ such that $\eta\zeta(a) \leq 1$ for all $|a| < a_0$. Let $\xi$, $(b, \rho)$, $J$, and $(H, h)$ be as in the lemma. By the Cauchy--Schwarz inequality, we have
\begin{align*}
\mathcal{N}_{a, J}^H(h)^2 &= \left(\sum_{\gamma \in \calH^m} e^{\FRet_m^{(a)}(\gamma x)} \beta_J^H(\gamma x) h(\gamma x)\right)^2\\
&\leq \left(\sum_{\gamma \in \calH^m} e^{(\FRet_m^{(a)} -a\Ret_m)(\gamma x)} \beta_J^H(\gamma x)^2\right) \left(\sum_{\gamma \in \calH^m} e^{(\FRet_m^{(a)} +a\Ret_m)(\gamma x)} h(\gamma x)^2\right) \\
&\leq \zeta(a)\mathcal{L}_{2a}^m\big(\big(\beta_J^H\big)^2\big) \mathcal{L}_0^m(h^2).
\end{align*}
Note that $\mathcal{L}_{2a}^m(\chi_{\Lambda_+}) = \chi_{\Lambda_+}$ by virtue of the normalization of the transfer operators. Hence, $\mathcal{L}_{2a}^m\big(\big(\beta_J^H\big)^2\big) \leq 1$ which gives the trivial bound $\mathcal{N}_{a, J}^H(h)^2 \leq \zeta(a)\mathcal{L}_0^m(h^2)$. Now, let $x \in \Omega_J^H$. Then $x \in \mathtt{J}_{{k}}^H$ for some $({\mathtt{J}}, {k}, {l}) \in J$. Using $\big(\beta_J^H\big)^2 \leq \chi_{\Lambda_+} - \tau \chi_{{\mathtt{J}}_{{k}, {l}}^H}$ and \cref{eqn:ConstantT0}, we have
\begin{align*}
\mathcal{L}_{2a}^m\big(\big(\beta_J^H\big)^2\big)(x) \leq \mathcal{L}_{2a}^m(\chi_{\Lambda_+})(x) - \tau\mathcal{L}_{2a}^m\Bigl(\chi_{{\mathtt{J}}_{{k}, {l}}^H}\Bigr)(x) \leq 1 - \tau e^{-T_0} = \eta^2.
\end{align*}
Thus, using this in the previous inequality gives
\begin{align*}
\mathcal{N}_{a, J}^H(h)^2(x) \leq \eta^2\zeta(a)\mathcal{L}_0^m(h^2)(x) \leq \eta \mathcal{L}_0^m(h^2)(x).
\end{align*}
\end{proof}

\subsection{Invariance of cone}
First we introduce the following two related notions of density following Tsujii--Zhang \cite{TZ23} and Dolgopyat \cite{Dol98}.

\begin{definition}[Dense]
Let $\calP$ be a $\nu$-measurable partition of $\Delta_0$. A subset $B \subset \Delta_0$ is said to be \emph{$\calP$-measurable} if $\nu\bigl(B \triangle \bigcup_{x\in B} \calP(x)\bigr) = 0$. Furthermore, we say that a $\nu$-measurable subset $A \subset B$ is \emph{$(\calP, c)$-dense} in $B$ for some $c > 0$ if
\begin{align*}
\nu(A \cap \calP(x))>c\nu(\calP(x)) \qquad \text{for $\nu$-almost all $x\in B$}.
\end{align*}
\end{definition}

\begin{definition}[Dense index set]
Let $(b, \rho) \in \widehat{M}_0(b_0)$. We say that $J \subset \Xi(b, \rho)$ is a \emph{dense index set} , if for all $\mathtt{J} \in \Omega^\dagger(\log\|\rho_b\|, R_0)$, there exists $(k, l) \in \{1, 2\} \times \{1, 2\}$ such that $(\mathtt{J}, k, l) \in J$.
\end{definition}

For all $(b, \rho) \in \widehat{M}_0(b_0)$, if $J \subset \Xi(b, \rho)$ is a dense index set, then by \cref{prop:CancellationCylinder}, we have that $\Omega_J^H$ is $(\calP_{(b, \rho)}, \mathsf{c})$-dense in $\Omega(\log\|\rho_b\|, R_0)$ for all $H \in \Lip\bigl(\Lambda_+, V_\rho^{\oplus \dim(\rho)}\bigr)$.

Define $\mathcal{J}(b, \rho) = \{J \subset \Xi(b, \rho): J \text{ is a dense index set}\}$. In this subsection we introduce a cone and prove that it is preserved by a pair consisting of a transfer operator with holonomy and a Dolgopyat operator for some $J\in \mathcal{J}(b, \rho)$. This is the heart of the Dolgopyat's method where we obtain cancellations of the summands of the transfer operator with holonomy via \cref{prop:CancellationCylinder} whose source is \cref{prop:LNIC} (LNIC), \cref{prop:NCP} (NCP), and the lower bound in \cref{lem:maActionLowerBound}.

For all $(b, \rho) \in \widehat{M}_0(b_0)$, define the cone
\newcounter{conerow}
\begin{align}
\label{eqn:Cone}
\mathcal{C}_{(b, \rho)}(\Lambda_+) =
\left\{
\begin{array}{ll}
\multicolumn{2}{l}{(H, h) \in \Lip\bigl(\Lambda_+, V_\rho^{\oplus \dim(\rho)}\bigr) \times \Lip_D(\Lambda_+, \R):}\\
\stepcounter{conerow}(\arabic{conerow}) & h > 0, \\
\stepcounter{conerow}(\arabic{conerow}) & |h(x) - h(x')| \leq E\|\rho_b\|h(x) D(x, x') \,\,\,\text{for all}\,\,\, x, x' \in \Lambda_+, \\
\stepcounter{conerow}(\arabic{conerow}) & \|H(x)\|_2 \leq h(x) \text{ for all },x\in \Lambda_+, \\
\stepcounter{conerow}(\arabic{conerow}) & \|H(x) - H(x')\|_2 \leq E\|\rho_b\|h(x) D(x, x') \,\,\, \text{for all}\,\,\, x, x' \in \Lambda_+
\end{array}
\right\}
\end{align}
using the constant $E$ defined in \cref{eqn:ConstantE}.

\begin{proposition}
\label{prop:PreservingCone}
Let $\xi=a+ib \in \mathbb C$ with $|a| < a_0'$ and suppose $(b, \rho) \in \widehat{M}_0(b_0)$. For all $(H, h) \in \mathcal{C}_{(b, \rho)}(\Lambda_+)$, there exists $J \in \mathcal J(b, \rho)$ such that $\bigl(\mathcal{M}_{\xi, \rho}^m(H), \mathcal{N}_{a, J}^H(h)\bigr) \in \mathcal{C}_{(b, \rho)}(\Lambda_+)$.
\end{proposition}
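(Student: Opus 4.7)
The plan is to construct $J \in \mathcal{J}(b,\rho)$ by selecting, for each cylinder $\mathtt{J} \in \Omega^\dagger(\log\|\rho_b\|, R_0)$, a single index pair $(k^{\mathtt{J},H}, l^{\mathtt{J},H}) \in \{1,2\}^2$ via a Dolgopyat-style trichotomy, and then verify the four cone conditions defining $\mathcal{C}_{(b,\rho)}(\Lambda_+)$ in \cref{eqn:Cone}. Property (1) is immediate: by \cref{eqn:Constantmu} one has $\beta_J^H \geq 1 - \tau > 0$, and $\mathcal{L}_a^m$ preserves positivity, so $\mathcal{N}_{a,J}^H(h) > 0$.

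Properties (2) and (4) will follow from \cref{lem:LasotaYorke}. For (2), I apply \cref{lem:LasotaYorke}(1) to $\beta_J^H h$: combining the cone Lipschitz bound on $h$ with \cref{eqn:LipDbetaJH} and using $\beta_J^H \geq 1-\tau$ to absorb a denominator shows that $\beta_J^H h$ satisfies its hypothesis with some $B = O(\|\rho_b\|)$, and the resulting bound $A_0(B\lambda^m + 1)\mathcal{N}_{a,J}^H(h)(x) D(x,x')$ sharpens to $E\|\rho_b\|\mathcal{N}_{a,J}^H(h)(x) D(x,x')$ by the smallness of $\lambda^m$ in \cref{eqn:Constantm} and the inequality $E\|\rho_b\| \geq 2A_0$ coming from \cref{eqn:ConstantE,eqn:Constantdelta1varrho}. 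Property (4) is checked analogously via \cref{lem:LasotaYorke}(2) applied to $H$, using $\|H\| \leq h$ and $\mathcal{L}_a^m(h) \leq (1-\tau)^{-1}\mathcal{N}_{a,J}^H(h)$ to convert the right-hand side into one involving $\mathcal{N}_{a,J}^H(h)$; the same parameter choices absorb all error terms.

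Property (3), the pointwise inequality $\|\mathcal{M}_{\xi,\rho}^m(H)(x)\|_2 \leq \mathcal{N}_{a,J}^H(h)(x)$, is the heart of the argument and drives the choice of $J$. Since the cylinders $\{\alpha_j\Delta_0\}_{j=0}^{\jj}$ are pairwise disjoint, $\gamma x \in \mathtt{J}_{k,l}^H$ can hold only when $\gamma$ equals $\alpha_0$ (if $l=1$) or $\alpha_{j^{\mathtt{J},H}}$ (if $l=2$) and $x \in \mathtt{J}_k^H$. Thus, for $x$ not lying in any $\mathtt{J}_k^H$ with $(\mathtt{J},k,l) \in J$, every $\beta_J^H(\gamma x)$ equals $1$ and the triangle inequality plus $\|H\| \leq h$ immediately gives (3); for $x \in \mathtt{J}_k^H$ with $(\mathtt{J},k,l_*) \in J$, we instead need the strict improvement
\[
\left\|\sum_{\gamma \in \calH^m} e^{\FRet_m^{(a)}(\gamma x)}\rho_b(\GHol^m(\gamma x)^{-1}) H(\gamma x)\right\|_2 \leq \mathcal{L}_a^m(h)(x) - \tau e^{\FRet_m^{(a)}(\alpha_{l_*} x)} h(\alpha_{l_*} x),
\]
whose slack can come only from the two summands corresponding to $\gamma \in \{\alpha_0, \alpha_{j^{\mathtt{J},H}}\}$.

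To supply this slack for each $\mathtt{J} \in \Omega^\dagger(\log\|\rho_b\|, R_0)$, I will apply the LNIC setup of \cref{prop:CancellationCylinder}: for the unit vector $\omega$ in \cref{eqn:CancellationVector}, part (1) of that proposition yields $\|d\rho_b(Z)(\omega)\|_2 \geq 7\delta_1\epsilon_1$ with $Z = d(\BP_{j^{\mathtt{J},H}, x_1^{\mathtt{J},H}})_{x_1^{\mathtt{J},H}}(x_2^{\mathtt{J},H} - x_1^{\mathtt{J},H})$, and \cref{lem:ComparingExpWithBP} together with the choice of $\epsilon_1$ in \cref{eqn:Constantepsilon1} upgrades this to a quantitative angular separation between $\rho_b(\GHol^m(\alpha_0 x_1^{\mathtt{J},H})^{-1})H(\alpha_0 x_1^{\mathtt{J},H})$ and the appropriate $\GHol^m$-translate of $\rho_b(\GHol^m(\alpha_{j^{\mathtt{J},H}} x_1^{\mathtt{J},H})^{-1})H(\alpha_{j^{\mathtt{J},H}} x_1^{\mathtt{J},H})$. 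I will then branch on whether $\|H(\alpha_l y)\|_2 \leq (1 - 2\tau)h(\alpha_l y)$ holds for some $y \in \{x_1^{\mathtt{J},H}, x_2^{\mathtt{J},H}\}$ and $l \in \{0, j^{\mathtt{J},H}\}$: in the \emph{smallness case} the cone bounds on $H$ and $h$ together with the smallness of $\diam(\mathtt{J}_k^H)$ propagate the gap across all of the corresponding $\alpha_l\mathtt{J}_k^H$ with parameter $1-\tau$, and we set $(k^{\mathtt{J},H}, l^{\mathtt{J},H})$ to the matching pair; in the \emph{oscillation case} both $\alpha_0$- and $\alpha_{j^{\mathtt{J},H}}$-vectors have near-maximal norm and their angular separation makes their vector sum shorter than the sum of norms by at least $\tau$ times the individual magnitude, via an elementary parallelogram estimate. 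The main obstacle is the bookkeeping in the oscillation case: the pointwise LNIC cancellation is established at the specific base point $x_1^{\mathtt{J},H}$, and one must transport it to every $x \in \mathtt{J}_k^H$ using the Lipschitz bounds on $\GHol^m$, $H$, and $h$, while preserving a quantitative slack of size $\tau$. This is precisely why $\delta_1$, $\epsilon_1$, $\tau$, $m$, and $E$ are tightly calibrated in \cref{eqn:ConstantE,eqn:Constantepsilon1,eqn:Constantm,eqn:Constantmu}.
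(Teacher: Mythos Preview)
Your overall architecture matches the paper's proof: Properties~(1), (2), (4) are handled exactly as you describe (the paper packages them as \cref{lem:PreserveProperty2,lem:PreserveProperty4}), and Property~(3) is obtained by choosing, for each $\mathtt{J}\in\Omega^\dagger(\log\|\rho_b\|,R_0)$, one pair $(k,l)$ so that on $\mathtt{J}_k^H$ the two distinguished summands $\gamma\in\{\alpha_0,\alpha_{j^{\mathtt{J},H}}\}$ together already beat $(1-\tau)e^{\FRet_m^{(a)}}h$ on the appropriate branch. The paper formalizes this through the auxiliary ratio functions $\chi_{j,l}^{[\xi,\rho,H,h]}$ and \cref{lem:HTrappedByh,lem:StrongTriangleInequality,lem:chiLessThan1}.

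There is, however, one genuine confusion in your oscillation case that you should fix. You write that LNIC ``upgrades \ldots\ to a quantitative angular separation'' between the two $\GHol^m$-twisted vectors \emph{at the base point $x_1^{\mathtt{J},H}$}, and that the remaining work is to transport this separation across $\mathtt{J}_k^H$. That is not what LNIC gives. The bound $\|d\rho_b(Z)(\omega)\|_2\geq 7\delta_1\epsilon_1$ with $Z=d(\BP_{j,x_1})_{x_1}(x_2-x_1)$ says that $\rho_b(\BP_j(x_1,x_2))\omega$ is far from $\omega$; unwinding $\BP_j$ via \cref{lem:BrinPesinInTermsOfHolonomy} and telescoping as in the proof of \cref{lem:chiLessThan1} yields only the \emph{sum} inequality
\[
\bigl\|\hat{V}_0(x_1)-\hat{V}_j(x_1)\bigr\|_2+\bigl\|\hat{V}_0(x_2)-\hat{V}_j(x_2)\bigr\|_2\;\geq\;4\delta_1\epsilon_1,
\]
so the angular separation is forced at \emph{one} of $x_1^{\mathtt{J},H}$ or $x_2^{\mathtt{J},H}$, not specifically at $x_1^{\mathtt{J},H}$. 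This is precisely why two candidate subcylinders $\mathtt{J}_1^H,\mathtt{J}_2^H$ are carried along: the index $k$ records which anchor point wins. Once you make this correction, the ``transport'' step you describe (using the Lipschitz bounds on $\GHol^m$, $H$, $h$ and the small diameter $\diam(\mathtt{J}_k^H)\leq \frac{\delta_1\epsilon_1}{4A_0\|\rho_b\|}$) is exactly what the paper does. Your smallness threshold $(1-2\tau)$ would also work, but the paper's cleaner dichotomy $\|H\|\leq\frac{3}{4}h$ versus $\|H\|\geq\frac{1}{4}h$ on the whole subcylinder (\cref{lem:HTrappedByh}) avoids a layer of propagation and feeds directly into the relative-size hypothesis of \cref{lem:StrongTriangleInequality}.
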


The goal of the rest of the subsection is to prove \cref{prop:PreservingCone}. Preservation of Property~(1) in \cref{eqn:Cone} is trivial. Preservation of Properties~(2) and (4)  in \cref{eqn:Cone} follow from the following two lemmas. The proofs are similar to \cite[Lemmas 9.1 and 9.2]{SW21}. They are analogous to the original \cite[Proposition 6 and Lemma 11]{Dol98}. Preservation of Property~(3) will be given later.

\begin{lemma}
\label{lem:PreserveProperty2}
For all $\xi=a+ib \in \mathbb C$ with $|a| < a_0'$, if $(b, \rho) \in \widehat{M}_0(b_0)$, and if $h \in \Lip_D(\Lambda_+, \R)$ satisfies Property~(2) in \cref{eqn:Cone}, then for all $J \subset \Xi(b, \rho)$ and $H \in \Lip\bigl(\Lambda_+, V_\rho^{\oplus \dim(\rho)}\bigr)$, $\mathcal{N}_{a, J}^H(h)$ also satisfies Property~(2) in \cref{eqn:Cone}.
\end{lemma}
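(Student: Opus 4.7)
The plan is to apply \cref{lem:LasotaYorke}(1) with $d=D$ and $k=m$ to the positive function $g:=\beta_J^H h$. The key point is to establish a global Lipschitz-type bound $|g(y)-g(y')|\leq B\,g(y)\,D(y,y')$ with $B$ \emph{linear} (not quadratic) in $\|\rho_b\|$; then the Lasota--Yorke factor $\lambda^m$ absorbs $B$ down to $E\|\rho_b\|$ thanks to $\lambda^m<1/(8A_0)$ from \eqref{eqn:Constantm} together with $E\|\rho_b\|\geq E\delta_{1,\varrho}>2A_0$ from \eqref{eqn:ConstantE}.

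I first reduce to the case $h>0$: Property~(2) forces $h\equiv 0$ as soon as $h$ vanishes at a single point, in which case the conclusion is trivial. Then $g\geq(1-\tau)h>0$ and $h\leq g/(1-\tau)\leq(4/3)g$ using $\tau<1/4$ from \eqref{eqn:Constantmu}. The crucial algebraic step is the decomposition
\[
g(y)-g(y')=\bigl(\beta_J^H(y)-\beta_J^H(y')\bigr)h(y)+\beta_J^H(y')\bigl(h(y)-h(y')\bigr),
\]
which places $h(y)$ (rather than $h(y')$) in both summands. Combining $\Lip_D(\beta_J^H)\leq 2E\|\rho_b\|$ (which is exactly the content of \eqref{eqn:LipDbetaJH} together with the choice of $\tau$ in \eqref{eqn:Constantmu}), $\beta_J^H\leq 1$, and Property~(2) for $h$ yields
\[
|g(y)-g(y')|\leq (2+1)E\|\rho_b\|\,h(y)\,D(y,y')\leq 4E\|\rho_b\|\,g(y)\,D(y,y'),
\]
so I may take $B=4E\|\rho_b\|$ in the Lasota--Yorke hypothesis.

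Feeding this into \cref{lem:LasotaYorke}(1) yields
\[
|\mathcal{L}_a^m(g)(x)-\mathcal{L}_a^m(g)(x')|\leq A_0\bigl(4E\|\rho_b\|\lambda^m+1\bigr)\,\mathcal{L}_a^m(g)(x)\,D(x,x').
\]
From $\lambda^m<1/(8A_0)$ one has $4A_0 E\|\rho_b\|\lambda^m\leq E\|\rho_b\|/2$, and from $E>2A_0/\delta_{1,\varrho}$ together with $\|\rho_b\|\geq\delta_{1,\varrho}$ (valid on $\widehat{M}_0(b_0)$ since $b_0=1$) one has $A_0\leq E\|\rho_b\|/2$. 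Adding these yields $A_0(4E\|\rho_b\|\lambda^m+1)\leq E\|\rho_b\|$, which is exactly Property~(2) for $\mathcal{N}_{a,J}^H(h)=\mathcal{L}_a^m(g)$. The only subtlety in this plan is the choice of algebraic decomposition above: placing $h(y')$ instead of $h(y)$ in one summand would force the ratio $h(y')/h(y)$ into the estimate, bounded at best by $1+E\|\rho_b\|D(y,y')$, and this would produce a $B$ quadratic in $\|\rho_b\|$ that $\lambda^m$ could not absorb.
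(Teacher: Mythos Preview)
Your proof is correct and follows essentially the same approach as the paper: the same decomposition of $\beta_J^H h$ placing $h(y)$ with the $\beta$-difference, the same use of \eqref{eqn:LipDbetaJH} and \eqref{eqn:Constantmu} to bound $\Lip_D(\beta_J^H)$ by $2E\|\rho_b\|$, the same passage from $h$ to $(1-\tau)^{-1}\beta_J^H h$ to obtain $B=4E\|\rho_b\|$, and the same application of \cref{lem:LasotaYorke}(1). Your final justification that $A_0(4E\|\rho_b\|\lambda^m+1)\leq E\|\rho_b\|$ via \eqref{eqn:Constantm} and \eqref{eqn:ConstantE} is slightly more explicit than the paper's, and your remark on why the decomposition must put $h(y)$ (not $h(y')$) next to the $\beta$-difference is a nice addition.
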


\begin{proof}
Let $\xi$, $(b, \rho)$, $h$, $J$, and $H$ be as in the lemma. Let $x, x' \in \Lambda_+$. Using \cref{eqn:Constantmu,eqn:LipDbetaJH}, and $\beta_J^H\geq 1-\tau$, we have
\begin{align*}
&\bigl|\bigl(\beta_J^H h\bigr)(x) - \bigl(\beta_J^H h\bigr)(x')\bigr| \leq |h(x) - h(x')| + h(x)\bigl|\beta_J^H(x) - \beta_J^H(x')\bigr| \\
\leq{}&E\|\rho_b\|h(x)D(x, x') + h(x) \cdot \frac{\tau \|\rho_b\|e^{R_0}}{\mathsf{c}C_{\mathrm{cyl}}^{-1} \min_{j \in \{0, 1, \dotsc, \jj\}} \|d\alpha_j\|} \cdot D(x, x') \\
\leq{}&3E\|\rho_b\| \cdot \frac{\beta_J^H(x)}{1 - \tau} \cdot h(x)D(x, x') \\
\leq{}&4E\|\rho_b\|\bigl(\beta_J^H h\bigr)(x)D(x, x').
\end{align*}
Now applying \cref{lem:LasotaYorke,eqn:ConstantE,eqn:Constantm}, we have
\begin{align*}
\bigl|\mathcal{N}_{a, J}^H(h)(x) - \mathcal{N}_{a, J}^H(h)(x')\bigr| &= \bigl|\mathcal{L}_a^m\bigl(\beta_J^H h\bigr)(x) - \mathcal{L}_a^m\bigl(\beta_J^H h\bigr)(x')\bigr| \\
&\leq A_0 (4E\|\rho_b\|\lambda^m + 1) \mathcal{L}_a^m\bigl(\beta_J^H h\bigr)(x) D(x, x') \\
&= E\|\rho_b\|\mathcal{N}_{a, J}^H(h)(x)D(x, x').
\end{align*}
\end{proof}

\begin{lemma}
\label{lem:PreserveProperty4}
For all $\xi=a+ib \in \mathbb C$ with $|a| < a_0'$, if $(b, \rho) \in \widehat{M}_0(b_0)$, and if $(H, h) \in \Lip\bigl(\Lambda_+, V_\rho^{\oplus \dim(\rho)}\bigr) \times \Lip_D(\Lambda_+, \mathbb R)$ satisfies Properties~(3) and (4) in \cref{eqn:Cone}, then for all $J \subset \Xi(b, \rho)$, $\bigl(\mathcal{M}_{\xi, \rho}^m(H), \mathcal{N}_{a, J}^H(h)\bigr)$ also satisfies Property~(4) in \cref{eqn:Cone}.
\end{lemma}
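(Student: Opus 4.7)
The plan is to mirror closely the structure of the preceding proof of \cref{lem:PreserveProperty2}, but this time invoking the vector-valued Lasota--Yorke estimate (Property~(2) of \cref{lem:LasotaYorke}) instead of the scalar one. First I would fix $\xi$, $(b,\rho)$, $(H,h)$, and $J$ as in the statement, fix $x,x' \in \Lambda_+$, and observe that Property~(4) in \cref{eqn:Cone} says precisely that $\|H(x) - H(x')\|_2 \le B h(x) D(x,x')$ with $B = E\|\rho_b\|$. This is exactly the hypothesis needed to invoke \cref{lem:LasotaYorke}(2) at level $k = m$, which yields
\begin{align*}
\bigl\|\mathcal{M}_{\xi, \rho}^m(H)(x) - \mathcal{M}_{\xi, \rho}^m(H)(x')\bigr\|_2 \le A_0\bigl(E\|\rho_b\|\lambda^m \mathcal{L}_a^m(h)(x) + \|\rho_b\|\mathcal{L}_a^m\|H\|(x)\bigr) D(x, x').
\end{align*}

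Next, Property~(3) in \cref{eqn:Cone} gives the pointwise domination $\|H\| \le h$ on $\Lambda_+$, and positivity of $\mathcal{L}_a^m$ then upgrades this to $\mathcal{L}_a^m\|H\|(x) \le \mathcal{L}_a^m(h)(x)$. Substituting this in collapses the right-hand side to $A_0(E\lambda^m + 1)\|\rho_b\|\mathcal{L}_a^m(h)(x) D(x,x')$. At this point the operator acting on $h$ is still $\mathcal{L}_a^m$ rather than $\mathcal{N}_{a,J}^H$, so I would use the pointwise bound $\beta_J^H \ge 1-\tau$ (immediate from \cref{eqn:CancellationIndicatorFunction}), which by positivity of $\mathcal{L}_a^m$ gives $\mathcal{L}_a^m(h) \le (1-\tau)^{-1}\mathcal{N}_{a,J}^H(h)$.

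It then remains to verify the constant inequality $A_0(E\lambda^m + 1)(1-\tau)^{-1} \le E$, which closes the induction on the cone. Using $\tau < 1/4$ from \cref{eqn:Constantmu} and $\lambda^m < 1/(8A_0)$ from \cref{eqn:Constantm}, the first summand $A_0 E \lambda^m$ is bounded by $E/8$, while the second summand $A_0$ is bounded by $E/2$ thanks to $E > 2A_0/\delta_{1,\varrho} \ge 2A_0$ from \cref{eqn:ConstantE}. The resulting bound $(E/8 + E/2) \cdot 4/3 < E$ closes the estimate and yields
\begin{align*}
\bigl\|\mathcal{M}_{\xi, \rho}^m(H)(x) - \mathcal{M}_{\xi, \rho}^m(H)(x')\bigr\|_2 \le E\|\rho_b\| \mathcal{N}_{a,J}^H(h)(x) D(x,x'),
\end{align*}
which is precisely Property~(4) for the pair $(\mathcal{M}_{\xi,\rho}^m(H), \mathcal{N}_{a,J}^H(h))$.

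I do not expect a genuine obstacle here: the proof is essentially a mechanical application of \cref{lem:LasotaYorke}(2) followed by bookkeeping of constants, and the constants were hand-picked in \crefrange{eqn:Constant_b_0}{eqn:Constantmu} precisely so that these arithmetic inequalities close. The only mild subtlety is to remember that Property~(3) is used to convert $\|H\|$ into $h$ before the cancellation factor $\beta_J^H$ is inserted; without this step, one cannot absorb $\mathcal{L}_a^m\|H\|$ into the Dolgopyat operator, since $\mathcal{N}_{a,J}^H$ is defined only on scalar functions.
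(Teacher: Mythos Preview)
Your proposal is correct and follows essentially the same route as the paper's proof: apply \cref{lem:LasotaYorke}(2), use Property~(3) to replace $\mathcal{L}_a^m\|H\|$ by $\mathcal{L}_a^m(h)$, then pass from $\mathcal{L}_a^m(h)$ to $\mathcal{N}_{a,J}^H(h)$ via $\beta_J^H \ge 1-\tau$, and finally check that the constants close using $\lambda^m < 1/(8A_0)$, $E > 2A_0$, and $\tau < 1/4$. The paper's arithmetic is arranged slightly differently (it writes the two summands as $E\|\rho_b\|/8$ and $E\|\rho_b\|/2$ before dividing by $1-\tau$), but the content is identical.
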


\begin{proof}
Let $\xi$, $(b, \rho)$, $(H, h)$, and $J$ be as in the lemma. Let $x, x' \in \Lambda_+$. Applying \cref{lem:LasotaYorke,eqn:ConstantE,eqn:Constantm,eqn:Constantmu}, and $\beta_J^H\geq 1-\tau$, we have
\begin{align*}
&\bigl\|\mathcal{M}_{\xi, \rho}^m(H)(x) - \mathcal{M}_{\xi, \rho}^m(H)(x')\bigr\|_2 \\
\leq{}&A_0\left(E\|\rho_b\|\lambda^m \mathcal{L}_a^m(h)(x) + \|\rho_b\|\mathcal{L}_a^m(\|H\|_2)(x)\right)D(x, x') \\
\leq{}&A_0\left(\frac{E\|\rho_b\|}{8A_0} + \frac{E\|\rho_b\|}{2A_0}\right)\mathcal{L}_a^m(h)(x)D(x, x') \\
\leq{}&\left(\frac{E\|\rho_b\|}{8(1 - \tau)} + \frac{E\|\rho_b\|}{2(1 - \tau)}\right)\mathcal{L}_a^m\bigl(\beta_J^H h\bigr)(x)D(x, x') \\
\leq{}&E\|\rho_b\|\mathcal{N}_{a, J}^H(h)(x)D(x, x').
\end{align*}
\end{proof}

The preservation of Property~(3) in \cref{eqn:Cone} is crucial and this is where the aforementioned cancellations occur. It follows from the next four lemmas. For all $\xi=a+ib \in \mathbb C$ with $|a| < a_0'$, if $(b, \rho) \in \widehat{M}_0(b_0)$, then for all $(H, h) \in \Lip\bigl(\Lambda_+, V_\rho^{\oplus \dim(\rho)}\bigr) \times \Lip_D(\Lambda_+, \R)$, and $1 \leq j \leq\jj$, we define the functions $\chi_{j, 1}^{[\xi, \rho, H, h]}, \chi_{j, 2}^{[\xi, \rho, H, h]}: \Lambda_+ \to \R$ by
\begin{align*}
&\chi_{j, 1}^{[\xi, \rho, H, h]}(x) \\
={}& \frac{\bigl\|e^{\FRet_m^{(a)}(\alpha_0x)} \rho_b(\GHol^m(\alpha_0x)^{-1})H(\alpha_0x) + e^{\FRet_m^{(a)}(\alpha_jx)} \rho_b(\GHol^m(\alpha_jx)^{-1})H(\alpha_jx)\bigr\|_2}{(1 - \tau)e^{\FRet_m^{(a)}(\alpha_0x)}h(\alpha_0x) + e^{\FRet_m^{(a)}(\alpha_jx)}h(\alpha_jx)}, \\
&\chi_{j, 2}^{[\xi, \rho, H, h]}(x) \\
={}& \frac{\bigl\|e^{\FRet_m^{(a)}(\alpha_0x)} \rho_b(\GHol^m(\alpha_0x)^{-1})H(\alpha_0x) + e^{\FRet_m^{(a)}(\alpha_jx)} \rho_b(\GHol^m(\alpha_jx)^{-1})H(\alpha_jx)\bigr\|_2}{e^{\FRet_m^{(a)}(\alpha_0x)}h(\alpha_0x) + (1 - \tau)e^{\FRet_m^{(a)}(\alpha_jx)}h(\alpha_jx)}
\end{align*}
for all $x \in \Lambda_+$. These function are for encoding that when they are at most $1$, cancellations occur among the vectors in the numerator. The second subscript is to indicate for which of the two inverse branches $\alpha_0$ or $\alpha_j$ the cancellations occur.

The following lemma can be proved similar to \cite[Lemma 9.8]{SW21}. It is analogous to the original \cite[Lemma 14]{Dol98}.

\begin{lemma}
\label{lem:HTrappedByh}
Let $(b, \rho) \in \widehat{M}_0(b_0)$. Suppose $(H, h) \in \mathcal{C}_{(b, \rho)}(\Lambda_+)$. Then, for all $(\mathtt{J}, k, l) \in \Xi(b, \rho)$, letting $j=0$ if $l=1$ and $j=j^{\mathtt{J},H}$ if $l=2$, we have
\begin{align*}
\frac{1}{2} \leq \frac{h(\alpha_jx)}{h(\alpha_jx')} \leq 2 \qquad \text{for all $x, x' \in \mathtt{J}_k^H$}
\end{align*}
and also either of the alternatives
\begin{enumerate}
\item $\|H(\alpha_jx)\|_2 \leq \frac{3}{4}h(\alpha_jx)$ for all $x \in \mathtt{J}_k^H$;
\item $\|H(\alpha_jx)\|_2 \geq \frac{1}{4}h(\alpha_jx)$ for all $x \in \mathtt{J}_k^H$.
\end{enumerate}
\end{lemma}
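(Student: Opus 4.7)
The plan is to combine the Lipschitz-type bounds built into the cone $\mathcal{C}_{(b,\rho)}(\Lambda_+)$ with the smallness of $D(\alpha_j x, \alpha_j x')$ for $x, x' \in \mathtt{J}_k^H$. The key geometric input is that since $\alpha_j \in \calH^m$, the $D$-contraction property gives $D(\alpha_j x, \alpha_j x') \leq \lambda^m D(x, x')$, and since $\mathtt{J}_k^H$ is a cylinder contained in the Euclidean ball $B\bigl(x_k^{\mathtt{J},H}, \frac{\delta_1 \epsilon_1}{4 A_0 \|\rho_b\|}\bigr)$, the definition of $D$ forces $D(x, x') \leq \diam(\mathtt{J}_k^H) \leq \frac{\delta_1 \epsilon_1}{2 A_0 \|\rho_b\|}$. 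Therefore
\begin{equation*}
E \|\rho_b\| D(\alpha_j x, \alpha_j x') \;\leq\; \frac{E \lambda^m \delta_1 \epsilon_1}{2A_0},
\end{equation*}
and by the choice of constants (in particular $\lambda^m < \delta_1/(32E)$ from \cref{eqn:Constantm} together with $\delta_1 \epsilon_1 < 1$ from \cref{eqn:Constantepsilon1}), this quantity is bounded by $\tfrac{1}{4}$; call this bound $(\star)$.

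For the first claim, I would apply cone property~(2) directly at the points $\alpha_j x, \alpha_j x' \in \Lambda_+$:
\begin{equation*}
|h(\alpha_j x) - h(\alpha_j x')| \;\leq\; E \|\rho_b\| h(\alpha_j x) D(\alpha_j x, \alpha_j x') \;\leq\; \tfrac{1}{4} h(\alpha_j x)
\end{equation*}
by $(\star)$. This gives $h(\alpha_j x')/h(\alpha_j x) \in [\tfrac{3}{4}, \tfrac{5}{4}]$, hence $\tfrac{1}{2} \leq h(\alpha_j x)/h(\alpha_j x') \leq 2$ as desired. The bound $h(\alpha_j x') \leq 2 h(\alpha_j x)$ from this step will be used in the next.

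For the dichotomy, I would argue by contradiction. If neither (1) nor (2) holds, then there exist $x, x' \in \mathtt{J}_k^H$ with $\|H(\alpha_j x)\|_2 > \tfrac{3}{4} h(\alpha_j x)$ and $\|H(\alpha_j x')\|_2 < \tfrac{1}{4} h(\alpha_j x')$. Combined with $h(\alpha_j x') \leq 2 h(\alpha_j x)$ from the first claim, this yields
\begin{equation*}
\|H(\alpha_j x)\|_2 - \|H(\alpha_j x')\|_2 \;>\; \tfrac{3}{4} h(\alpha_j x) - \tfrac{1}{4} \cdot 2 h(\alpha_j x) \;=\; \tfrac{1}{4} h(\alpha_j x).
\end{equation*}
On the other hand, the reverse triangle inequality together with cone property~(4) and $(\star)$ gives
\begin{equation*}
\|H(\alpha_j x)\|_2 - \|H(\alpha_j x')\|_2 \;\leq\; \|H(\alpha_j x) - H(\alpha_j x')\|_2 \;\leq\; E \|\rho_b\| h(\alpha_j x) D(\alpha_j x, \alpha_j x') \;\leq\; \tfrac{1}{4} h(\alpha_j x),
\end{equation*}
contradicting the strict inequality above. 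Hence at least one of (1), (2) holds.

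The main obstacle is not conceptual but bookkeeping: one has to verify that the stack of choices made in \cref{eqn:ConstantE,eqn:Constantepsilon1,eqn:Constantm} is strong enough to produce $(\star)$ with constant $\tfrac{1}{4}$ (rather than some larger value that would ruin the dichotomy). Once $(\star)$ is established, both claims follow from a single application of the respective cone properties combined with the reverse triangle inequality, so no further ingredients are needed.
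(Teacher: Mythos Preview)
Your proposal is correct and follows precisely the standard argument the paper defers to (\cite[Lemma 9.8]{SW21}, ultimately \cite[Lemma 14]{Dol98}): bound $E\|\rho_b\|D(\alpha_j x,\alpha_j x')$ by a small constant using the cylinder containment $\mathtt{J}_k^H\subset B\bigl(x_k^{\mathtt{J},H},\tfrac{\delta_1\epsilon_1}{4A_0\|\rho_b\|}\bigr)$ together with the $\lambda^m$-contraction of $D$, then read off the ratio bound from cone property~(2) and the dichotomy from cone property~(4) via the reverse triangle inequality. Your bookkeeping for $(\star)$ is fine; either $\lambda^m<\tfrac{\delta_1}{32E}$ or $\lambda^m<\tfrac{1}{8E\epsilon_1}$ from \cref{eqn:Constantm} already yields the needed $\tfrac14$.
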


For any $k \geq 2$, denote by $\Theta(w_1, w_2) = \arccos\left(\frac{\langle w_1, w_2\rangle}{\|w_1\| \cdot \|w_2\|}\right) \in [0, \pi]$ the angle between $w_1, w_2 \in \mathbb R^k - \{0\}$, where we use the standard inner product and norm. The following is a basic lemma in Euclidean geometry (see \cite[Lemma 5.4.9]{Sar22b} for a proof).

\begin{lemma}
\label{lem:StrongTriangleInequality}
Let $k \geq 2$. If $w_1, w_2 \in \mathbb R^k - \{0\}$ such that $\Theta(w_1, w_2) \geq \alpha$ and $\frac{\|w_1\|}{\|w_2\|} \leq L$ for some $\alpha \in [0, \pi]$ and $L \geq 1$, then we have
\begin{align*}
\|w_1 + w_2\| \leq \left(1 - \frac{\alpha^2}{16L}\right)\|w_1\| + \|w_2\|.
\end{align*}
\end{lemma}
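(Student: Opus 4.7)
My plan is to reduce the inequality to a scalar inequality by squaring and then use the law of cosines, which suffices since both sides of the claimed inequality are non-negative (once one checks that $1-\alpha^2/(16L) \ge 0$, which follows from $\alpha \le \pi$ and $L \ge 1$ giving $\alpha^2/(16L) \le \pi^2/16 < 1$). Setting $a = \|w_1\|$, $b=\|w_2\|$, $\theta = \Theta(w_1,w_2)$, and $\epsilon = \alpha^2/(16L)$, the law of cosines gives $\|w_1+w_2\|^2 = a^2 + b^2 + 2ab\cos\theta$, so the claim is equivalent to
\[
((1-\epsilon)a+b)^2 \;-\; (a^2+b^2+2ab\cos\theta) \;\geq\; 0.
\]
Expanding and dividing by $2ab > 0$, this simplifies to
\[
2b(1-\cos\theta) \;\geq\; \epsilon\bigl((2-\epsilon)a + 2b\bigr).
\]

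Next I would handle the two sides separately. For the right-hand side, since $\epsilon \le 1$ and $a \le Lb$ with $L \ge 1$, I bound $(2-\epsilon)a + 2b \le 2a + 2b \le 2Lb + 2b \le 4Lb$, so it suffices to prove $2b(1-\cos\theta) \ge 4Lb\epsilon$, that is,
\[
1 - \cos\theta \;\geq\; 2L\epsilon \;=\; \frac{\alpha^2}{8}.
\]
For the left-hand side, I would use the hypothesis $\theta \ge \alpha$ together with monotonicity of $1-\cos$ on $[0,\pi]$ to reduce to showing $1 - \cos\alpha \ge \alpha^2/8$. This last elementary inequality follows from the half-angle identity $1-\cos\alpha = 2\sin^2(\alpha/2)$ combined with Jordan's inequality $\sin x \ge 2x/\pi$ for $x \in [0,\pi/2]$: applied with $x = \alpha/2$, it yields $1-\cos\alpha \ge 2\alpha^2/\pi^2 \ge \alpha^2/8$, where the last step uses $\pi^2 \le 16$.

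There is no real obstacle here; the only mildly delicate point is choosing the correct constants so that the chain of rough bounds (replacing $(2-\epsilon)a + 2b$ by $4Lb$ and replacing $2/\pi^2$ by $1/8$) leaves exactly the factor $16$ in the denominator. Since $\epsilon = \alpha^2/(16L)$, both slacks are used precisely and the proof closes without further adjustment.
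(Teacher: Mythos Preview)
Your proof is correct. The paper does not give its own proof of this lemma, merely citing \cite[Lemma 5.4.9]{Sar22b}; your law-of-cosines reduction followed by the Jordan inequality bound $1-\cos\alpha = 2\sin^2(\alpha/2) \geq 2\alpha^2/\pi^2 \geq \alpha^2/8$ is exactly the kind of elementary argument one expects for this estimate.
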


Recall from \cref{eqn:CancellationPartnerPoint,eqn:CancellationIndex,eqn:CancellationCylinder} that the index $1 \leq j^{\mathtt{J},H} \leq \jj$ indicates that for $H \in \Lip\bigl(\Delta_0, V_\rho^{\oplus \dim(\rho)}\bigr)$, the inverse branches $\alpha_0$ and $\alpha_{j^{\mathtt{J},H}}$ should be compared in order to obtain cancellations inside $\mathtt{J}$ which occur for either $\mathtt{J}_1^H$ or $\mathtt{J}_2^H$.

\begin{lemma}
\label{lem:chiLessThan1}
Let $\xi=a+ib \in \mathbb C$ with $|a| < a_0'$ and $(b, \rho) \in \widehat{M}_0(b_0)$. Suppose $(H, h) \in \mathcal{C}_{(b, \rho)}(\Lambda_+)$. For all $\mathtt{J} \in \Omega^\dagger(\log\|\rho_b\|, R_0)$, denoting $j=j^{\mathtt{J},H}$, there exists $(k, l) \in \{1, 2\} \times \{1, 2\}$ such that $\chi_{j, l}^{[\xi, \rho, H, h]}(x) \leq 1$ for all $x \in \mathtt{J}_k^H\cap \Lambda_+$.
\end{lemma}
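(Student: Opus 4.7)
Fix $\mathtt{J} \in \Omega^\dagger(\log\|\rho_b\|, R_0)$ and abbreviate $j = j^{\mathtt{J},H}$, $x_k = x_k^{\mathtt{J},H}$, and for $l \in \{0, j\}$ set $w_l(x) = e^{\FRet_m^{(a)}(\alpha_l x)} h(\alpha_l x)$ and $W_l(x) = e^{\FRet_m^{(a)}(\alpha_l x)} \rho_b(\GHol^m(\alpha_l x)^{-1}) H(\alpha_l x)$, so that $\chi_{j,1}^{[\xi,\rho,H,h]} = \|W_0+W_j\|_2 / ((1-\tau)w_0 + w_j)$ and similarly for $\chi_{j,2}^{[\xi,\rho,H,h]}$. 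First I will apply \cref{lem:HTrappedByh} to each pair $(k,l) \in \{1,2\} \times \{0,j\}$, which assigns to every $\alpha_l\mathtt{J}_k^H$ one of the two alternatives ``small'' ($\|H(\alpha_l\,\cdot\,)\|_2 \leq \frac{3}{4}h(\alpha_l\,\cdot\,)$) or ``large'' ($\|H(\alpha_l\,\cdot\,)\|_2 \geq \frac{1}{4}h(\alpha_l\,\cdot\,)$).

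\emph{Easy case.} If the ``small'' alternative occurs on the $\alpha_0$-side of some $\mathtt{J}_k^H$, then the triangle inequality yields, for every $x \in \mathtt{J}_k^H$,
\[
\|W_0(x)+W_j(x)\|_2 \leq \tfrac{3}{4}w_0(x) + w_j(x) \leq (1-\tau)w_0(x) + w_j(x),
\]
using $\tau \leq 1/4$ from \cref{eqn:Constantmu}, and so $\chi_{j,1}^{[\xi,\rho,H,h]} \leq 1$ on $\mathtt{J}_k^H$. A symmetric argument handles the $\alpha_j$-side and yields $\chi_{j,2}^{[\xi,\rho,H,h]} \leq 1$ on $\mathtt{J}_k^H$.

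\emph{Hard case.} It remains to handle the situation in which the ``large'' alternative holds for every $(k,l) \in \{1,2\} \times \{0,j\}$ on the corresponding $\alpha_l\mathtt{J}_k^H$. Here I will exploit the non-integrability provided by \cref{prop:CancellationCylinder}(1). Writing $V_l(x) = \rho_b(\GHol^m(\alpha_l x)^{-1})H(\alpha_l x)$ and $U(x) = \rho_b(\GHol^m(\alpha_0 x)^{-1}\GHol^m(\alpha_j x))$, both $V_0(x)$ and $U(x)V_j(x)$ belong to the range of $\rho_b(\GHol^m(\alpha_0 x)^{-1})$, and a direct cyclic calculation identifies $U(x_2)^{-1}U(x_1)$ with a conjugate of $\rho_b(\BP_j(x_1,x_2))$. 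Since $\|x_1-x_2\| \leq \epsilon_1/\|\rho_b\| < \delta_{AM}/C_{\BP}$ by \cref{eqn:Constantepsilon1}, \cref{lem:ComparingExpWithBP} and \cref{lem:LieTheoreticNormBounds} allow one to approximate this unitary, up to an operator-norm error of order $\|\rho_b\| \cdot C_{\exp,\BP}\|x_1-x_2\|^2 \leq C_{\exp,\BP}\epsilon_1^2/\|\rho_b\| \leq \delta_1\epsilon_1$, by $\exp(d\rho_b(Z))$ with $Z = d(\BP_{j,x_1})_{x_1}(x_2-x_1)$. With $\omega$ as in \cref{eqn:CancellationVector}, \cref{prop:CancellationCylinder}(1) gives $\|d\rho_b(Z)(\omega)\|_2 \geq 7\delta_1\epsilon_1$, so $U(x_2)^{-1}U(x_1)$ rotates the reference vector by an angle at least, say, $6\delta_1\epsilon_1$. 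Consequently the angles $\Theta(W_0(x_k), W_j(x_k))$ for $k=1$ and $k=2$ cannot both vanish: at one of them, say $x_{k^*}$, the angle is at least $\arccos\!\bigl(1 - (\delta_1\epsilon_1)^2/2\bigr)$. Using the ``large'' alternative and $\|\FRet_m^{(a)}|_{\alpha_l\Delta_0}\|_\infty \leq T_0$ of \cref{eqn:ConstantT0}, the ratio $\|W_0(x_{k^*})\|_2/\|W_j(x_{k^*})\|_2$ is bounded by $4e^{2T_0}$, and \cref{lem:StrongTriangleInequality} produces a definite decrement at $x_{k^*}$.

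\emph{Extension to $\mathtt{J}_{k^*}^H$ and main obstacle.} Finally, the Lipschitz bounds Properties~(2) and (4) of $\mathcal{C}_{(b,\rho)}(\Lambda_+)$ together with $\lambda^m < \delta_1/(32E)$ from \cref{eqn:Constantm} and $\diam(\mathtt{J}_{k^*}^H) \leq \delta_1\epsilon_1/(2A_0\|\rho_b\|)$ imply that $w_l$ and $\|W_l\|_2$ oscillate on $\mathtt{J}_{k^*}^H$ by a fraction negligible compared to $\tau$; the decrement thus persists, and the choice of $\tau$ in \cref{eqn:Constantmu}, which matches $\arccos\!\bigl(1 - (\delta_1\epsilon_1)^2/2\bigr)^2/(16 \cdot 16 e^{2T_0})$, is calibrated exactly so that the inequality closes up to $\chi_{j,l}^{[\xi,\rho,H,h]}(x) \leq 1$ on $\mathtt{J}_{k^*}^H$ for the appropriate $l \in \{1,2\}$. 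The delicate step is the hard case: propagating the angular cancellation of the abstract non-integrability through the cascade of approximations (the comparison of $\BP_j$ with its linearization, the non-commutative conjugation of unitaries, and the passage from the partner points $x_1,x_2$ to entire sub-cylinders) and keeping all quadratic and Lipschitz errors strictly below the cancellation gain, which is precisely why the hierarchy of constants $\varepsilon_1, \varepsilon_2, \varepsilon_3, \delta_1, \epsilon_1, E, m, \tau$ was chosen as in \cref{eqn:Constant_b_0}--\cref{eqn:Constantmu}.
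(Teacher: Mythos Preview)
Your proposal is correct and follows essentially the same route as the paper: dispose of the easy case by the triangle inequality when Alternative~(1) of \cref{lem:HTrappedByh} holds for some $(k,l)$, and in the hard case produce an angular gap via the $\BP_j$-linearization, pick the better of $k=1,2$, apply \cref{lem:StrongTriangleInequality}, and propagate over $\mathtt{J}_{k^*}^H$ by Lipschitz control. One small slip worth flagging: your relative-size bound $\|W_0\|_2/\|W_j\|_2 \leq 4e^{2T_0}$ is only valid once $l$ has been chosen so that $h(\alpha_0 x)\leq h(\alpha_j x)$ (or the reverse) at that particular $x$; to make the choice of $l$ uniform over $\mathtt{J}_{k^*}^H$, the paper fixes a reference point $y_0\in\mathtt{J}_{k^*}^H$, picks $l$ according to the sign of $h(\alpha_0 y_0)-h(\alpha_j y_0)$, and then uses the doubling estimate of \cref{lem:HTrappedByh} twice to reach $16e^{2T_0}$---which is exactly why the denominator in \cref{eqn:Constantmu} is $16\cdot 16e^{2T_0}$ rather than $16\cdot 4e^{2T_0}$.
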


\begin{proof}
Let $\xi$, $(b, \rho)$, $(H, h)$, and $\mathtt{J}$ be as in the lemma. To simplify notation, we write $x_k$ for $x_k^{\mathtt{J}, H}$ for all $k \in \{1, 2\}$ and $j$ for $j^{\mathtt{J},H}$. 

Now, if Alternative~(1) in \cref{lem:HTrappedByh} holds for $(\mathtt{J}, k, l) \in \Xi(b, \rho)$ for some $(k, l) \in \{1, 2\} \times \{1, 2\}$, then it can be checked that $\chi_{j, l}^{[\xi, \rho, H, h]}(x) \leq 1$ for all $x \in \mathtt{J}_k^H\cap \Lambda_+$. Otherwise, Alternative~(2) in \cref{lem:HTrappedByh} holds for $(\mathtt{J}, 1, 1),\allowbreak (\mathtt{J}, 1, 2),\allowbreak (\mathtt{J}, 2, 1),\allowbreak (\mathtt{J}, 2, 2) \in \Xi(b, \rho)$. We would like to use \cref{lem:StrongTriangleInequality} but first we need to establish bounds on relative angle and relative size. We start with the former.

Define $\omega_\ell(x) = \frac{H(\alpha_\ell x)}{\|H(\alpha_\ell x)\|_2}$ and $\phi_\ell(x) = \GHol^m(\alpha_\ell x)$ for all $x \in \Lambda_+$ and $\ell \in \{0, j\}$. Let $\ell\in \{0,j\}$. For any two points $y, z \in (\mathtt{J}_1^H \sqcup \mathtt{J}_2^H)\cap\Lambda_+$, without loss of generality, we may assume $\|H(\alpha_\ell y)\|_2\leq \|H(\alpha_\ell  z)\|_2$. Using the sine law, Alternative~(2) in \cref{lem:HTrappedByh}, and \cref{eqn:Cone,eqn:CancellationPartnerPoint,eqn:Constantm}, we have
\begin{align*}
\sin(\Theta(\omega_\ell(y), \omega_\ell(z))) &\leq \frac{\|H(\alpha_\ell y) - H(\alpha_\ell z)\|_2}{\max\{\|H(\alpha_\ell y)\|_2, \|H(\alpha_\ell z)\|_2\}} \\
&\leq \frac{E\|\rho_b\|h(\alpha_\ell z) D(\alpha_\ell y, \alpha_\ell z)}{\|H(\alpha_\ell z)\|_2} \\
&\leq 4E\|\rho_b\| \cdot \frac{\epsilon_1 \lambda^m}{\|\rho_b\|} \\
&\leq 4E\epsilon_1\lambda^m \in (0, 1/2),
\end{align*}
Using the cosine law and \cref{eqn:Constantepsilon1,eqn:Constantm}, we have
\begin{align}
\begin{aligned}
\label{eqn:omega_ellLipschitzBound}
\|\omega_\ell(y) - \omega_\ell(z)\| &= \sqrt{2 - 2\cos(\Theta(\omega_\ell(y), \omega_\ell(z)))} \\
&= \sqrt{2 - 2\sqrt{1 - \sin^2(\Theta(\omega_\ell(y), \omega_\ell(z)))}} \\
&\leq \sqrt{2 - 2\sqrt{1 - (4E\epsilon_1\lambda^m)^2}} \\
&\leq \frac{\delta_1\epsilon_1}{4}.
\end{aligned}
\end{align}
For all $x\in \Lambda_+$ and $\ell\in \{0,j\}$, define
\begin{align*}
V_\ell(x) &= e^{\FRet_m^{(a)}(\alpha_\ell x)} \rho_b(\phi_\ell(x)^{-1})H(\alpha_\ell x), \\
\hat{V}_\ell(x) &= \frac{V_\ell(x)}{\|V_\ell(x)\|_2} = \rho_b(\phi_\ell(x)^{-1})\omega_\ell(x).
\end{align*}
Using \cref{eqn:omega_ellLipschitzBound}, we have
\begin{align*}
&\big\|\hat{V}_0(x_2) - \hat{V}_j(x_2)\big\|_2 \\
={}&\|\rho_b(\phi_0(x_2)^{-1})\omega_0(x_2) - \rho_b(\phi_j(x_2)^{-1})\omega_j(x_2)\|_2 \\
={}&\|\rho_b(\phi_j(x_2)\phi_0(x_2)^{-1})\omega_0(x_2) - \omega_j(x_2)\|_2 \\
\geq{}&\|\rho_b(\phi_j(x_2)\phi_0(x_2)^{-1})\omega_0(x_1) - \omega_j(x_1)\|_2 \\
{}&- \|\rho_b(\phi_j(x_2)\phi_0(x_2)^{-1})\omega_0(x_2) - \rho_b(\phi_j(x_2)\phi_0(x_2)^{-1})\omega_0(x_1)\|_2 \\
{}&- \|\omega_j(x_2) - \omega_j(x_1)\|_2 \\
={}&\|\rho_b(\phi_j(x_2)\phi_0(x_2)^{-1})\omega_0(x_1) - \omega_j(x_1)\|_2 - \|\omega_0(x_2) - \omega_0(x_1)\|_2 \\
{}&- \|\omega_j(x_2) - \omega_j(x_1)\|_2 \\
\geq{}&\|\rho_b(\phi_j(x_2)\phi_0(x_2)^{-1})\omega_0(x_1) - \rho_b(\phi_j(x_1)\phi_0(x_1)^{-1})\omega_0(x_1)\|_2 \\
{}&- \|\rho_b(\phi_j(x_1)\phi_0(x_1)^{-1})\omega_0(x_1) - \omega_j(x_1)\|_2 - \delta_1\epsilon_1 \\
={}&\|\rho_b(\phi_0(x_1)^{-1})\omega_0(x_1) - \rho_b(\phi_0(x_1)^{-1}\phi_0(x_2)\phi_j(x_2)^{-1}\phi_j(x_1)\phi_0(x_1)^{-1})\omega_0(x_1)\|_2 \\
{}&- \|\rho_b(\phi_0(x_1)^{-1})\omega_0(x_1) - \rho_b(\phi_j(x_1)^{-1})\omega_j(x_1)\|_2 - \delta_1\epsilon_1 \\
\geq{}&\|\rho_b(\phi_0(x_1)^{-1})\omega_0(x_1) - \rho_b(\BP_j(x_1, x_2))\rho_b(\phi_0(x_1)^{-1})\omega_0(x_1)\|_2 \\
&{}- \big\|\hat{V}_0(x_1) - \hat{V}_j(x_1)\big\|_2 - \delta_1\epsilon_1.
\end{align*}
Recall that we applied \cref{prop:CancellationCylinder} to the unit vector $\omega = \rho_b(\phi_0(x_1)^{-1})\omega_0(x_1)$ in \cref{eqn:CancellationVector}. Let $Z = d(\BP_{j, x_1})_{x_1}(x_2 - x_1)$. Continuing to bound the first term above, we have
\begin{align*}
&\|\omega - \rho_b(\BP_j(x_1, x_2))(\omega)\|_2 \\
\geq{}&\|\omega - \rho_b(\exp(Z))(\omega)\|_2 - \|\rho_b(\exp(Z))(\omega) - \rho_b(\BP_j(x_1, x_2))(\omega)\|_2 \\
\geq{}&\|\omega - \exp(d\rho_b(Z))(\omega)\|_2 - \|\rho_b\| \cdot d_{AM}(\exp(Z), \BP_j(x_1, x_2)) \\
\geq{}&\|d\rho_b(Z)(\omega)\|_2 - \|\rho_b\|^2 \|Z\|^2 - \|\rho_b\| \cdot d_{AM}(\exp(Z), \BP_j(x_1, x_2)) \\
\geq{}&\|d\rho_b(Z)(\omega)\|_2 - \|\rho_b\|^2 C_{\BP}^2 d(x_1, x_2)^2 - C_{\exp, \BP} \cdot \|\rho_b\| \cdot d(x_1, x_2)^2 \quad\text{(\cref{lem:ComparingExpWithBP})} \\
\geq{}&7\delta_1\epsilon_1 - \delta_1\epsilon_1 - \delta_1\epsilon_1 \geq 5\delta_1\epsilon_1. \qquad \text{(by \cref{prop:CancellationCylinder,eqn:Constantepsilon1} )}
\end{align*}
Hence, we have
\begin{align*}
\big\|\hat{V}_0(x_1) - \hat{V}_j(x_1)\big\|_2 + \big\|\hat{V}_0(x_2) - \hat{V}_j(x_2)\big\|_2 \geq 4\delta_1\epsilon_1.
\end{align*}
This implies that there exists $k\in\{1,2\}$ such that
\begin{equation*}
\big\|\hat{V}_0(x_k) - \hat{V}_j(x_k)\big\|_2 \geq 2\delta_1\epsilon_1.
\end{equation*}
For any $x\in \mathtt{J}^{H}_k\cap \Lambda_+$ and $\ell\in \{0,j\}$, using estimates from \cref{eqn:GLipEstimate,eqn:Constantepsilon1,eqn:omega_ellLipschitzBound}, we have
\begin{align*}
&\big\|\hat{V}_\ell(x_k) - \hat{V}_\ell(x)\big\|_2 \\
\leq{}&\|(\rho_b(\phi_\ell(x_k)^{-1}) - \rho_b(\phi_\ell(x)^{-1}))\omega_\ell(x)\|_2 + \|\rho_b(\phi_\ell(x)^{-1})(\omega_\ell(x_k) - \omega_\ell(x))\|_2 \\
\leq{}&A_0 \|\rho_b\| \cdot \|x - x_k\| + \|\omega_\ell(x_k) - \omega_\ell(x)\|_2 \\
\leq{}&A_0\|\rho_b\| \cdot \frac{\delta_1\epsilon_1}{4A_0\|\rho_b\|} + \frac{\delta_1\epsilon_1}{4} \\
={}&\frac{\delta_1\epsilon_1}{2}.
\end{align*}
Hence for all $x \in \mathtt{J}_k^H\cap \Lambda_+$, we have
\begin{equation*}
\big\|\hat{V}_0(x) - \hat{V}_j(x)\big\|_2 \geq \delta_1\epsilon_1 \in (0, 1).
\end{equation*}
Then using the cosine law, the required bound for relative angle is
\begin{align*}
\Theta(V_0(x), V_j(x)) = \Theta(\hat{V}_0(x), \hat{V}_j(x)) \geq \arccos\left(1 - \frac{(\delta_1\epsilon_1)^2}{2}\right) \in (0, \pi).
\end{align*}

We prove the bound on relative size. Choose any $y_0\in \mathtt{J}^H_k\cap \Lambda_+$. We have either $h(\alpha_0y_0)\leq h(\alpha_jy_0)$ or $h(\alpha_jy_0)\leq h(\alpha_0y_0)$. Let $(\ell, \ell') = (0, j)$ and $l = 1$ for the first case and $(\ell, \ell') = (j, 0)$ and $l = 2$ for the second case. Recalling that $\rho_b$ is a unitary representation, by \cref{lem:HTrappedByh}, we have
\begin{align*}
\frac{\|V_\ell(x)\|_2}{\|V_{\ell'}(x)\|_2} &= \frac{e^{\FRet_m^{(a)}(\alpha_\ell x)}\|H(\alpha_\ell x)\|_2}{e^{\FRet_m^{(a)}(\alpha_{\ell'} x)}\|H(\alpha_{\ell'} x)\|_2} \leq \frac{4e^{\FRet_m^{(a)}(\alpha_\ell x) - \FRet_m^{(a)}(\alpha_{\ell'} x)}h(\alpha_\ell x)}{h(\alpha_{\ell'} x)} \\
&\leq \frac{16e^{2T_0}h(\alpha_\ell y_0)}{h(\alpha_{\ell'} y_0)} \leq 16e^{2T_0}
\end{align*}
for all $x \in \mathtt{J}_k^H\cap \Lambda_+$, which is the required bound on relative size. Now using \cref{lem:StrongTriangleInequality}, \cref{eqn:Constantmu}, and $\|H\| \leq h$ for $\|V_0(x) + V_{j}(x)\|_2$ gives 
\begin{equation*}
\chi_{j, l}^{[\xi, \rho, H, h]}(x) \leq 1 \qquad \text{for all $x \in \mathtt{J}_k^H\cap \Lambda_+$}.
\end{equation*}
\end{proof}

For each $\mathtt{J}\in \Omega^\dagger(\log\|\rho_b\|, R_0) $, we use \cref{lem:chiLessThan1} to find $(k,l)$, whose union gives $J\in \mathcal{J}(b, \rho)$ for $(H,h)$. A straightforward derivation using definitions and \cref{lem:chiLessThan1} gives the following lemma (see the proof of \cite[Lemma 5.3]{Sto11} and \cite[Lemma 9.11]{SW21}).

\begin{lemma}
\label{lem:PreserveProperty3}
For all $\xi=a+ib \in \mathbb C$ with $|a| < a_0'$, if $(b, \rho) \in \widehat{M}_0(b_0)$, and if $(H, h) \in \mathcal{C}_{(b, \rho)}(\Lambda_+)$, then there exists $J \in \mathcal{J}(b, \rho)$ such that $\bigl(\mathcal{M}_{\xi, \rho}^m(H), \mathcal{N}_{a, J}^H(h)\bigr)$ also satisfies Property~(3) in \cref{eqn:Cone}.
\end{lemma}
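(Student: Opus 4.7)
The plan is to construct the index set $J$ cylinder-by-cylinder using Lemma~\ref{lem:chiLessThan1}, then verify the pointwise inequality $\bigl\|\mathcal{M}_{\xi, \rho}^m(H)(x)\bigr\|_2 \leq \mathcal{N}_{a, J}^H(h)(x)$ by splitting into cases based on whether $x$ hits the distinguished sub-cylinders. Concretely, for each $\mathtt{J} \in \Omega^\dagger(\log\|\rho_b\|, R_0)$, Lemma~\ref{lem:chiLessThan1} furnishes a pair $(k_{\mathtt{J}}, l_{\mathtt{J}}) \in \{1,2\} \times \{1,2\}$ such that $\chi_{j^{\mathtt{J},H}, l_{\mathtt{J}}}^{[\xi,\rho,H,h]}(x) \leq 1$ on $\mathtt{J}_{k_{\mathtt{J}}}^H \cap \Lambda_+$. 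Defining $J = \{(\mathtt{J}, k_{\mathtt{J}}, l_{\mathtt{J}}) : \mathtt{J} \in \Omega^\dagger(\log\|\rho_b\|, R_0)\}$ gives a dense index set by construction, i.e., $J \in \mathcal{J}(b, \rho)$.

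Next, fix $x \in \Lambda_+$ and expand both operators as sums over $\gamma \in \calH^m$. Since the cylinders $\{\mathtt{J}_{k_{\mathtt{J}}, l}^H\}$ are mutually disjoint (they sit inside disjoint cylinders $\alpha_\ell\mathtt{J}$ for distinct $\mathtt{J}$, and $\mathtt{J}_{k,1}^H, \mathtt{J}_{k,2}^H$ sit in different inverse branches $\alpha_0\mathtt{J}, \alpha_{j^{\mathtt{J},H}}\mathtt{J}$), the indicator sum in $\beta_J^H$ has at most one nonzero term at any point. If $x \notin \bigcup_{\mathtt{J}} \mathtt{J}_{k_{\mathtt{J}}}^H$, then $\beta_J^H(\gamma x) = 1$ for every $\gamma \in \calH^m$, so $\mathcal{N}_{a,J}^H(h)(x) = \mathcal{L}_a^m(h)(x)$ and the triangle inequality combined with Property~(3) of the cone (i.e., $\|H\|_2 \leq h$) immediately gives the desired bound.

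If instead $x \in \mathtt{J}_{k_{\mathtt{J}}}^H$ for some (necessarily unique) $\mathtt{J}$, write $j = j^{\mathtt{J},H}$. Then $\beta_J^H$ differs from $\chi_{\Lambda_+}$ only at one of the two points $\alpha_0 x$ or $\alpha_j x$ (the one determined by $l_{\mathtt{J}}$), where it takes the value $1-\tau$. For concreteness suppose $l_{\mathtt{J}} = 1$. Isolating the $\gamma = \alpha_0, \alpha_j$ terms and applying the triangle inequality on the remaining terms gives
\begin{align*}
\bigl\|\mathcal{M}_{\xi,\rho}^m(H)(x)\bigr\|_2 &\leq \bigl\|V_0(x) + V_j(x)\bigr\|_2 + \sum_{\gamma \neq \alpha_0, \alpha_j} e^{\FRet_m^{(a)}(\gamma x)} \|H(\gamma x)\|_2,
\end{align*}
where $V_\ell(x) = e^{\FRet_m^{(a)}(\alpha_\ell x)} \rho_b(\GHol^m(\alpha_\ell x)^{-1}) H(\alpha_\ell x)$. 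The inequality $\chi_{j,1}^{[\xi,\rho,H,h]}(x) \leq 1$ from Lemma~\ref{lem:chiLessThan1} bounds the first summand by $(1-\tau) e^{\FRet_m^{(a)}(\alpha_0 x)} h(\alpha_0 x) + e^{\FRet_m^{(a)}(\alpha_j x)} h(\alpha_j x)$, which, together with $\|H\|_2 \leq h$ on the remaining terms, reassembles into exactly $\mathcal{N}_{a,J}^H(h)(x)$. The case $l_{\mathtt{J}} = 2$ is identical with the roles of $\alpha_0$ and $\alpha_j$ swapped.

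This is essentially bookkeeping once Lemma~\ref{lem:chiLessThan1} is in hand; the only subtle point worth double-checking is the disjointness of the sub-cylinders $\{\mathtt{J}_{k,l}^H\}$ across all triples, ensuring that the single factor $(1-\tau)$ appearing in $\beta_J^H$ at a given point cleanly matches the single pair of indices where the $\chi$-bound was invoked. I do not anticipate any genuine obstacle here.
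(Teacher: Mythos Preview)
Your proposal is correct and follows precisely the approach the paper indicates: the paper does not write out a proof but states that the lemma is ``a straightforward derivation using definitions and \cref{lem:chiLessThan1}'' with references to \cite[Lemma 5.3]{Sto11} and \cite[Lemma 9.11]{SW21}, which is exactly the cylinder-by-cylinder construction of $J$ and the two-case pointwise verification you wrote out. The disjointness you flagged is indeed routine, since each $\mathtt{J}_{k_{\mathtt{J}},l_{\mathtt{J}}}^H$ lies in $\alpha_\ell \mathtt{J}$ and the cylinders $\{\alpha_\ell \mathtt{J} : \ell \in \{0,\dots,\jj\},\ \mathtt{J} \in \Omega^\dagger\}$ are pairwise disjoint.
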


Combining \cref{lem:PreserveProperty2,lem:PreserveProperty4,lem:PreserveProperty3} completes the proof of \cref{prop:PreservingCone}.

\subsection{\texorpdfstring{Stochastic dominance and the proof of \cref{thm:FrameFlowDolgopyat}}{Stochastic dominance and the proof of \autoref{thm:FrameFlowDolgopyat}}}
In this subsection we put together the various components to finish proving \cref{thm:FrameFlowDolgopyat}. Property~(3) in \cref{thm:FrameFlowDolgopyat} is derived from \cref{prop:LDP} but the proof also requires the stochastic dominance technique of Tsujii--Zhang \cite[Section 14]{TZ23}.

Let us begin the proof of \cref{thm:FrameFlowDolgopyat} by fixing for the rest of this subsection, $\eta$ and $a_0$ provided by \cref{lem:dolgopyatoperator}, and $\tilde{\kappa}$ to be the $\kappa$ corresponding to $R_0$ provided by \cref{prop:LDP}. We defer fixing $\kappa$ of \cref{thm:FrameFlowDolgopyat} to end of this subsection. Also let $(b, \rho) \in \widehat{M}_0(b_0)$ and $H \in \Lip\bigl(\Lambda_+, V_\rho^{\oplus \dim(\rho)}\bigr)$ for the rest of this subsection. We then inductively define
\begin{align*}
(H_0, h_0) &= \bigl(H, \|H\|_{1, \|\rho_b\|} \cdot \chi_{\Lambda_+}\bigr) \in \mathcal{C}_{(b, \rho)}(\Lambda_+), \\
(H_j, h_j) &= \bigl(\mathcal{M}_{\xi, \rho}^m(H_{j - 1}), \mathcal{N}_{a, J_{j - 1}}^{H_{j - 1}}(h_{j - 1})\bigr) \in \mathcal{C}_{(b, \rho)}(\Lambda_+) \qquad \text{for all $j \in \N$},
\end{align*}
where $J_{j - 1} \in \mathcal{J}(b, \rho)$ is inductively provided by \cref{prop:PreservingCone} corresponding to $(H_{j -1}, h_{j -1}) \in \mathcal{C}_{(b, \rho)}(\Lambda_+)$ for all $j \in \N$. Fix $\Omega_j = \Omega_{J_{j - 1}}^{H_{j - 1}}$ for all $j \in \N$. Properties~(1) and (2) in \cref{thm:FrameFlowDolgopyat} then follow from \cref{lem:dolgopyatoperator,prop:PreservingCone} respectively.

It remains to prove Property~(3) in \cref{thm:FrameFlowDolgopyat}. We present an adaptation of \cite[Section 14]{TZ23} for our setting. To ease notation, set $\calP := \calP_{(b, \rho)}$, $\calP' := T^{-m}(\calP) \succeq \calP$, and the $\calP$-measurable set $\tilde{\Omega} := \Omega(\log\|\rho_b\|,R_0)$. By \cref{prop:CancellationCylinder}, for all $j \in \N$, the set $\Omega_j \subset \tilde{\Omega}$ is $\calP'$-measurable and $(\calP, \mathsf{c})$-dense. To proceed further, we need \cref{lem:StochasticDominance}. Define
\begin{align*}
\begin{aligned}
\tau(x) &= \inf\{j \in \N: T^{jm}(x) \in \tilde{\Omega}\}, \\
\sigma(x) &= T^{\tau(x)m}(x),
\end{aligned}
\qquad
\text{for all $x \in \Lambda_+$},
\end{align*}
and use similar notation as in \cref{eqn:BirkhoffSum}. Let $n \in \N$, $\kappa \in (0, 1)$, and $\kappa' \in (0, 1)$ be arbitrary. We introduce the sets
\begin{align*}
A_{n, \kappa} &= \{x \in \Lambda_+: \#\{j \in \N: j \leq n, T^{jm}(x) \in \Omega_j\} < \kappa n\} \subset \Lambda_+, \\
B_{n, \kappa} &= \{x \in \Lambda_+: \#\{j \in \N: j \leq n, T^{jm}(x) \in \tilde{\Omega}\} < \kappa n\} \subset A_{n, \kappa}, \\
C_{n, \kappa} &= \{x \in \Lambda_+: \#\{j \in \N: j \leq n, \sigma^j(x) \in \Omega_{\tau_j(x)}\} < \kappa n\} \subset \Lambda_+.
\end{align*}
We first derive \cref{eqn:RelationshipABC}, which is a useful relationship between these sets. For all $x \in \Lambda_+ - B_{n, \kappa}$, we have $\sigma^j(x) = T^{\tau_j(x)m}(x) \in \tilde{\Omega}$ for all $1 \leq j \leq \lceil \kappa n\rceil$ but $\tau_{\lceil \kappa n\rceil}(x) \leq n$. Consequently, for all $x \in A_{n, \kappa\kappa'} - B_{n, \kappa}$, we have
\begin{align*}
\#\{j \in \N: j \leq \lceil \kappa n\rceil, \sigma^{j}(x) \in \Omega_{\tau_j(x)}\} \leq \#\{j \in \N: j \leq n, T^{jm}(x) \in \Omega_j\} < \kappa \kappa' n.
\end{align*}
Thus, we obtain
\begin{align}
\label{eqn:RelationshipABC}
A_{n, \kappa\kappa'} - B_{n, \kappa} \subset C_{\lceil \kappa n\rceil, \kappa'}.
\end{align}
Now, Property~(3) in \cref{thm:FrameFlowDolgopyat} amounts to obtaining appropriate control of $\nu(A_{n, \kappa\kappa'})$. Since we can already control $\nu(B_{n, \kappa})$ by \cref{prop:LDP}, it suffices to gain control of $\nu(C_{\lceil \kappa n\rceil, \kappa'})$. This is the content of the following lemma.

\begin{lemma}
\label{lem:StochasticDominance}
There exists $\kappa \in (0, 1)$ such that $\nu(C_{n, \kappa}) < e^{-\kappa n}$ for all $n \in \N$.
\end{lemma}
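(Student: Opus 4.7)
The plan is to derive this lemma, following \cite[Section 14]{TZ23}, from a Chernoff--Hoeffding inequality after establishing a stochastic dominance for the Bernoulli process $X_j(x) := \chi_{\Omega_{\tau_j(x)}}(\sigma^j(x))$. Introduce on $(\Lambda_+, \nu)$ the filtration $\{\calF_j\}_{j \geq 0}$ where $\calF_j$ is generated by the stopping times $\tau_1, \dotsc, \tau_j$ together with the $\calP'$-atoms of $\sigma^1(x), \dotsc, \sigma^j(x)$. An atom $A$ of $\calF_j$ is indexed by a choice of integers $t_1 < \dotsc < t_j$ and a choice of $\calP'$-atoms $C_1, \dotsc, C_j \subset \tilde\Omega$, and is a union of cylinders of bounded common length in the natural Markov coding of $T$.

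The key step is the conditional estimate
\[
\nu(X_{j+1} = 1 \mid \calF_j) \geq \mathsf{c}' \qquad \text{$\nu$-a.s., with $\mathsf{c}' := C_{\mathrm{cyl}}^{-1}\mathsf{c}$.}
\]
To verify it, fix an atom $A$ of $\calF_j$ and further partition it by the value $t$ of $\tau_{j+1}$ and by the $\calP'$-atom $C$ in which $\sigma^{j+1}(x)$ lies; each such piece is of the form $\gamma_\alpha\Delta_0$ with $\gamma_\alpha$ an inverse branch of some length $n_\alpha$, on which $T^{n_\alpha}$ is a bijection onto $\Delta_0$ with distortion bounded by $C_{\mathrm{cyl}}$ (Properties~(3) and~(4) of \cref{prop:Coding} combined with \cref{lem:CylinderEstimate}). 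Hence the $\nu$-pushforward of $\nu|_{\gamma_\alpha\Delta_0}/\nu(\gamma_\alpha\Delta_0)$ is comparable to $\nu/\nu(\Delta_0)$ up to $C_{\mathrm{cyl}}$. Since $\Omega_{t_j+t}$ is $\calP'$-measurable and $(\calP, \mathsf{c})$-dense in $\tilde\Omega$, its relative measure in any $\calP$-atom of $\tilde\Omega$ is at least $\mathsf{c}$; pulling this back through $T^{n_\alpha}$ gives $\nu(\{X_{j+1}=1\}\cap \gamma_\alpha\Delta_0) \geq \mathsf{c}' \nu(\gamma_\alpha\Delta_0)$, and summing over $\alpha$ and over the pieces yields the claim.

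Consequently, $\{X_j\}_{j\geq 1}$ stochastically dominates an i.i.d.\ Bernoulli$(\mathsf{c}')$ sequence with respect to $\nu$. A standard Hoeffding-type bound then gives, for any $\kappa' \in (0, \mathsf{c}')$,
\[
\nu\!\left(\sum_{j=1}^n X_j < \kappa' n\right) \leq e^{-2(\mathsf{c}' - \kappa')^2 n}.
\]
Choosing $\kappa \in (0, 1)$ such that $\kappa \leq \kappa'$ and $\kappa \leq 2(\mathsf{c}' - \kappa')^2$ (for instance, $\kappa' = \mathsf{c}'/2$ and then $\kappa = \min\{\mathsf{c}'/2, \mathsf{c}'^2/2\}$) yields $\nu(C_{n,\kappa}) < e^{-\kappa n}$ for every $n \in \N$, as required.

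The main technical obstacle is the management of the filtration $\calF_j$ in the presence of cusp excursions: the cylinders that make up an atom of $\calF_j$ have lengths depending on the (possibly very large) return times $\tau_i$. The crucial input here is that Property~(4) of \cref{prop:Coding} provides distortion bounds for inverse branches of $T$ that are uniform in the length of the inverse branch, which in turn makes the constant $\mathsf{c}'$ in the conditional estimate independent of $j$ and of the choice of atom of $\calF_j$.
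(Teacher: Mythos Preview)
Your approach is the same as the paper's: establish a uniform conditional lower bound $\nu(X_{j+1}=1 \mid \calF_j) \geq \text{const}$ via bounded distortion and the $(\calP,\mathsf{c})$-density of $\Omega_j$ in $\tilde\Omega$, then invoke stochastic domination by an i.i.d.\ Bernoulli sequence and a large-deviation (Chernoff--Hoeffding) bound. The paper carries this out with the coarser filtration $\calQ_k := \sigma^{-k}(\calP)$, whose atoms are of the form $\gamma C$ with $\gamma$ an inverse branch of length $\tau_k(x)m$ and $C \in \calP$, $C \subset \tilde\Omega$; the conditional expectation $\mathbb{E}(\chi_{\{X_k=1\}}\mid\calQ_k)$ at $x$ is then $\nu(\gamma(\Omega_{\tau_k}\cap C))/\nu(\gamma C) \geq C_{\mathrm{cyl}}^{-2}\mathsf{c}$.

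Your write-up of this step is internally inconsistent, and as written the key inequality does not follow. You say you further partition an atom of $\calF_j$ by ``the $\calP'$-atom $C$ in which $\sigma^{j+1}(x)$ lies,'' but then describe each piece as $\gamma_\alpha\Delta_0$ and compare the pushforward to $\nu/\nu(\Delta_0)$. These three things do not match: a piece determined by the $\calP'$-atom of $\sigma^{j+1}$ maps onto that $\calP'$-atom, not onto $\Delta_0$; conditioning on the $\calP'$-atom of $\sigma^{j+1}$ already determines $X_{j+1}$ (since each $\Omega_j$ is $\calP'$-measurable), so on such a piece the ratio is $0$ or $1$, not $\geq\mathsf{c}'$; and if the pieces really mapped onto all of $\Delta_0$, the $(\calP,\mathsf{c})$-density of $\Omega_j$ \emph{in $\tilde\Omega$} would not give you the lower bound you claim. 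What you actually need---and what the paper uses---is the $\calP$-atom of $\sigma^{j+1}$, so the pieces are $\gamma_\alpha C$ with $C\in\calP$, $C\subset\tilde\Omega$; then the density hypothesis applies on $C$, and the two-sided distortion estimate from \cref{lem:CylinderEstimate} costs $C_{\mathrm{cyl}}^{-2}$, not $C_{\mathrm{cyl}}^{-1}$. With that correction your argument goes through and coincides with the paper's.
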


\begin{proof}
Fix $\kappa = \frac{\mathsf{c}}{C_{\mathrm{cyl}}^2}$. Define the set $\Upsilon_k = \{x \in \Delta_0: \sigma^k(x) \in \Omega_{\tau_k(x)}\}$ for all $k \in \N$ and the partition $\calQ_k = \sigma^{-k}(\calP)$ for all $k \in \Z_{\geq 0}$. We first argue that $\Upsilon_k$ is $\calQ_{k + 1}$-measurable for all $k \in \N$. For any atom $\calQ_k(x)$, for some $x \in \Delta_0$, there exists an inverse branch $\gamma \in \calH^{\tau_k(x)}$ and $x' \in \Delta_0$ such that $\calQ_k(x) = \gamma\calQ_0(x')$. Note that $\tau_k$ is constant on $\calQ_k(x)$. Thus, we have $\Upsilon_k \cap \calQ_k(x) = \gamma(\Omega_{\tau_k(x)} \cap \calQ_0(x'))$ where $\Omega_{\tau_k(x)} \cap \calQ_0(x')$ is $\calP'$-measurable as desired. Since $\Omega_{\tau_k(x)}$ is $(\calQ_0, \mathsf{c})$-dense, using \cref{lem:CylinderEstimate}, we estimate the expected value
\begin{align*}
\mathbb E(\chi_{\Upsilon_k} | \calQ_k)(x) = \frac{\nu(\Upsilon_k \cap \calQ_k(x))}{\nu(\calQ_k(x))} \geq \frac{\nu(\Omega_{\tau_k(x)} \cap \calQ_0(x'))}{C_{\mathrm{cyl}}^2\nu(\calQ_0(x'))} > \frac{\mathsf{c}}{C_{\mathrm{cyl}}^2} = \kappa.
\end{align*}
Thus our stochastic process is dominated by an i.i.d. coin flipping process (which is a Markov chain) with rate $\kappa$ which is known to satisfy LDP again by transfer operator techniques (see \cite[Section 3.1]{DZ10}).
\end{proof}

Fix $\tilde{\kappa}'$ to be the $\kappa$ provided by \cref{lem:StochasticDominance}. Fix $\kappa = \tilde{\kappa}\tilde{\kappa}'$ for \cref{thm:FrameFlowDolgopyat}. Then, using \cref{eqn:RelationshipABC} and LDP in \cref{prop:LDP,lem:StochasticDominance}, we have
\begin{align*}
\nu(A_{n, \kappa}) &= \nu(A_{n, \tilde{\kappa}\tilde{\kappa}'}) \leq \nu(B_{n, \tilde{\kappa}}) + \nu(C_{\lceil \tilde{\kappa} n\rceil, \tilde{\kappa}'}) < e^{-\tilde{\kappa} n} + e^{-\tilde{\kappa}' n} \leq 2e^{-\kappa n}
\end{align*}
which completes the proof of Property~(3) in \cref{thm:FrameFlowDolgopyat}.

\section{Exponential mixing of the frame flow}
\label{sec:ExponentialMixing}
With \cref{cor:factormap} available, we use a convolution argument using a bump function as in the proofs of \cite[Theorem 3.1]{LP22} and \cite[Theorem 3.1.4]{Sar22b} to obtain \cref{thm:ExponentialMixing} from the exponential mixing of the semiflow $\{\hat{T}_t\}_{t \geq 0}$ on $\Lambda^\Ret\times M$ with respect to the measure $\hat{\nu}^\Ret\otimes m^{\mathrm{Haar}}$. By integrating out the strong stable direction as in \cite[Section 8.2]{AGY06} or \cite[Section 10.3]{SW21}, it suffices to prove exponential mixing for the semiflow $\{T_t\}_{t \geq 0}$ on $\Lambda_+^\Ret\times M$. More precisely, we first define the space
\begin{equation*}
\Lambda_+^{\Ret}\times M=\{(x,m,s)\in \Lambda_+\times M \times \mathbb{R}:0\leq s<\Ret(x)\}.
\end{equation*}
Since the holonomy map $\mathscr{H}:\Delta_{\sqcup}\times \Lambda_-\to M$ is independent of the contracting direction by \cref{lem:stable direction}, we can regard $\mathscr{H}$ as a map on $\Delta_{\sqcup}$. Define the semiflow $\{T_t\}_{t\geq 0}$ by
\begin{equation*}
T_t(x,m,s)=(T^nx,\Hol^n(x)m,s+t-\Ret_n(x))
\end{equation*}
for all $(x,m,s)\in \Lambda_+^{\Ret}\times M$, where $n \in \Z_{\geq 0}$ such that $0\leq s+t-\Ret_n(x)<\Ret(T^nx)$. Recall that we equipped $\Lambda^{\Ret}\times M$ with the measure $d(\hat{\nu}^{\Ret}\otimes m^{\mathrm{Haar}}):=d\hat{\nu}\, dm \, d\Leb/\bar{\Ret}$, where $\Leb$ is the Lebesgue measure and $\bar{\Ret}=\hat{\nu}(\Ret)$. Similarly, on $\Lambda_+^{\Ret}\times M$, we consider the measure $\nu^{\Ret}\otimes m^{\mathrm{Haar}}$ defined by $d\bigl({\nu}^{\Ret}\otimes m^{\mathrm{Haar}}\bigr):=d{\nu} \, dm \, d\Leb/\bar{\Ret}$. Compared to the symbolic frame flow model $\bigl(\Lambda^\Ret,\{\hat{T}_t\}_{t \geq 0},\hat{\nu}^\Ret\bigr)$,  the dynamical system $\bigl(\Lambda_+^\Ret,\{{T}_t\}_{t \geq 0},{\nu}^\Ret\bigr)$ is to forget the contracting direction.

We introduce some norms. Denote by $\nabla_M$ the Levi-Civita connection on $M$. For a function $\phi: \Lambda_+^\Ret\times M \to \R$ differentiable in the $\mathbb{R}$ and $M$ coordinates, we define the norms
\begin{align*}
\|\phi\|_{C^r_M}
={}&\sup\{\|\nabla_M^k\phi \|_{\infty}: 0 \leq k \leq r\},\\
\|\phi\|_{C^r_MC^1_\R},
={}&\sup\{\|\nabla_M^k\partial_t\phi \|_{\infty}+\|\nabla_M^k\phi \|_{\infty}: 0 \leq k \leq r\}.
\end{align*}
For any function $\psi: \Lambda_+^\Ret\times M \to \R$, we define the Lipschitz norm
\begin{equation*}
\|\psi\|_{\Lip}=\|\psi\|_{\infty}+\sup\left\{\frac{|\psi(x,m,t)-\psi(x',m',t')|}{d_{\mathrm{E}}(x,x')+d_M(m,m')+|t-t'|}: (x,m,t)\neq (x',m',t')\right\}.
\end{equation*}

\begin{theorem}\label{thm:semiflowexponential}
There exist $C>0$, $\eta>0$, and $r\in\N$ such that for all functions $\phi, \psi: \Lambda_+^\Ret\times M \to \R$ with bounded $C^r_MC^1_\R$ norm and bounded Lipschitz norm respectively, and $t>0$, we have
\begin{multline*}
\left|\int_{\Lambda_+^\Ret\times M} (\phi \circ {T}_t) \cdot\psi \, \dd\bigl({\nu}^\Ret\otimes m^{\mathrm{Haar}}\bigr)- \bigl({\nu}^\Ret\otimes m^{\mathrm{Haar}}\bigr)(\phi) \cdot \bigl({\nu}^\Ret\otimes m^{\mathrm{Haar}}\bigr)(\psi)\right|\\
\leq Ce^{-\eta t}\|\phi\|_{C^r_MC^1_\R}\|\psi\|_{\Lip}.
\end{multline*}
\end{theorem}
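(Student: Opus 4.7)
The plan is to reduce the correlation integral for the semiflow on $\Lambda_+^\Ret \times M$ to spectral bounds for the transfer operators with holonomy, namely \cref{thm:SpectralBound}, combined with the already known exponential mixing of the geodesic semiflow on $\Lambda_+^\Ret$ from Li--Pan \cite{LP22}. The general framework is the standard Pollicott--Ruelle scheme adapted to the setting of a suspension semiflow with a fibered $M$-extension: decompose along $\widehat M$ by Peter--Weyl, take a Laplace transform in the flow direction, relate the resulting resolvent to $(\Id - \calM_{\xi,\rho})^{-1}$, and shift contours into the resolvent-free strip supplied by \cref{thm:SpectralBound}.

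First, I would decompose $\phi = \sum_{\rho \in \widehat M} \phi_\rho$ by projecting onto the isotypical components of the $M$-action; since $\phi$ is $C^r_M$ in the fiber direction, the Peter--Weyl coefficients satisfy $\|\phi_\rho\|_\infty, \|\partial_t \phi_\rho\|_\infty \ll \|\phi\|_{C^r_M C^1_\R}\cdot(1+\|\rho\|)^{-r}$ for $r$ large, so the sum over $\rho$ converges absolutely with enough to spare. For each fixed $\rho \in \widehat{M}$, the correlation of $\phi_\rho \circ T_t$ against $\psi$ can, after writing it as an integral over $\Lambda_+$ of matrix coefficients and unfolding the suspension with the help of the inverse branches in $\calH^n$, be expressed as a Laplace transform on the vertical line $\Re \xi = 0$ applied to a quantity of the form $\langle \calM_{\xi,\rho}^n F_\psi, G_{\phi_\rho}\rangle$ where $F_\psi$ and $G_{\phi_\rho}$ are explicit functions whose Lipschitz norms are controlled by $\|\psi\|_{\Lip}$ and by $\|\phi_\rho\|_{C^r_M C^1_\R}$ together with $\|\rho_b\|$. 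Summing over $n$ turns this into the resolvent $(\Id - e^{-\xi}\calM_{\xi,\rho})^{-1}$, analytic in $\{\Re \xi > 0\}$; the task is to meromorphically continue it across $\Re \xi = 0$ with acceptable norm bounds.

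For $\rho \in \widehat M_0$ with arbitrary $b \in \R$, or for $\rho = 1$ with $|b| > b_0$, i.e.\ for $(b,\rho) \in \widehat M_0(b_0)$, I would invoke \cref{thm:SpectralBound} together with the Lasota--Yorke type \cref{lem:LasotaYorke} to obtain operator norm bounds of the form $\|\calM_{\xi,\rho}^n\|_{\Lip \to L^2} \ll e^{-\eta n}(1+\|\rho_b\|)^{O(1)}$ on the strip $|\Re \xi| < a_0$; then $(\Id - e^{-\xi}\calM_{\xi,\rho})^{-1}$ extends analytically to this strip with at most polynomial growth in $\|\rho_b\|$. For the remaining trivial piece $\rho = 1$, $|b| \leq b_0$, the scalar resolvent $(\Id - e^{-\xi}\calL_\xi)^{-1}$ is already understood via the spectral gap of $\calL_0$ plus the meromorphic continuation from \cref{lem:1-lz} used in \cref{sec:LDP}: its unique pole at $\xi = 0$ has been subtracted off by the normalization, and the resulting quantity produces exactly the product $\nu^{\Ret}(\phi)\nu^{\Ret}(\psi)$ via a residue. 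A Paley--Wiener contour shift from the line $\Re \xi = 0$ to $\Re \xi = -a_0/2$ then yields the exponential factor $e^{-(a_0/2)t}$, provided we can integrate the resolvent against $e^{\xi t}$ along the shifted contour.

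The main obstacle is controlling that contour integral uniformly in $\rho$: the norm bound from \cref{thm:SpectralBound} has a factor $\|H\|_{1,\|\rho_b\|}$ which, when unpacked, gives polynomial growth in $|b|+\|\rho\|$, and this must be absorbed by the decay of Fourier/Peter--Weyl coefficients of $\phi$, forcing the choice of $r$ in $C^r_M C^1_\R$. Concretely, I would choose $r$ strictly larger than the dimension of $\widehat M$'s growth rate plus the polynomial losses from $\|\rho_b\|^{O(1)}$ in the resolvent bound, and rely on the $C^1_\R$ regularity of $\phi$ to integrate by parts once in $b$, gaining a factor $(1+|b|)^{-1}$ which is what saves absolute integrability on the shifted line. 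A secondary technical point is that the symbolic system has a countable alphabet, so all sums over $\calH^n$ must be justified by the exponential tail property in Property~(5) of \cref{prop:Coding}; this is already present in the construction of $\calM_{\xi,\rho}$ and its Lasota--Yorke estimates, so it propagates through the argument without additional work. Combining everything and summing over $\rho$ gives the claimed bound $Ce^{-\eta t}\|\phi\|_{C^r_MC^1_\R}\|\psi\|_{\Lip}$ for some (possibly smaller) $\eta > 0$.
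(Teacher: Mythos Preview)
Your proposal is correct and follows essentially the same approach as the paper: Peter--Weyl decomposition in the $M$-direction, Laplace transform in the flow direction, reduction to resolvent bounds for $\calM_{\xi,\rho}$ via \cref{thm:SpectralBound} for $(b,\rho)\in\widehat{M}_0(b_0)$ and via the spectral gap/meromorphic continuation of \cref{lem:1-lz} for the remaining bounded scalar piece, followed by a Paley--Wiener contour shift. The paper packages the countable-alphabet modifications (unbounded $\Ret$, Lipschitz-only regularity in $\Lambda_+$) into \cref{lem:ExtractNormOfLaplaceTransformDecay}, which supplies exactly the $L^2$ and $\|\cdot\|_{1,\|\rho_b\|}$ bounds on the Laplace transforms $\hat\phi_\xi$ and $\calM_{\xi,\rho}(\hat\psi_{-\xi,\rho})$ with the $(1+|b|)^{-1}$ gain from integration by parts that you identify, and then defers the rest to the argument of \cite[Lemma~10.3]{SW21}.
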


The proof is similar to \cite[Lemma 10.3]{SW21}, and the main difference is that we utilize an infinite symbolic coding with an unbounded return time map and the function is only Lipschitz in the direction $\Lambda_+$. We write only the required modifications in detail.

Let $\phi \in C(\Lambda_+^\Ret\times M, \mathbb R)$ and $\xi \in \mathbb C$. Define $\hat{\phi}_\xi \in B(\Lambda_+, L^2(M, \mathbb C))$ by
\begin{align*}
\hat{\phi}_\xi(x)(m) = \int_0^{\Ret(x)} \phi(x, m, t)e^{-\xi t} \, d\Leb(t) \qquad \text{for all $m \in M$ and $x \in \Lambda_+$}.
\end{align*}
We can decompose it further as $\hat{\phi}_\xi(x) = \sum_{\rho \in \widehat{M}} \hat{\phi}_{\xi, \rho}(x) \in \operatorname*{\widehat{\bigoplus}}_{\rho \in \widehat{M}} V_\rho^{\oplus \dim(\rho)}$ for all $x \in \Lambda_+$. Let $\rho \in \widehat{M}$. Defining $\phi_\rho \in C(\Lambda_+^\Ret\times M, \mathbb R)$ by the projection $\phi_\rho(x, \cdot, t) = [\phi(x, \cdot, t)]_\rho \in V_\rho^{\oplus \dim(\rho)}$ for all $x \in \Lambda_+$ and $t \in \mathbb R_{\geq 0}$, we have
\begin{align*}
\hat{\phi}_{\xi, \rho}(x)(m) = \int_0^{\Ret(x)} \phi_\rho(x, m, t) e^{-\xi t} \, d\Leb(t) \qquad \text{for all $m \in M$ and $x \in \Lambda_+$}.
\end{align*}

The following lemma serves as \cite[Lemma 10.2]{SW21} in our setting.
\begin{lemma}
\label{lem:ExtractNormOfLaplaceTransformDecay}
There exist $C > 0$ and $a_0 > 0$ such that for all $\rho \in \widehat{M}$, functions $\phi, \psi: \Lambda_+^\Ret\times M \to \R$ with bounded $C^r_MC^1_\R$ norm and bounded Lipschitz norm respectively, and $\xi=a+ib \in \mathbb C$ with $|a| \leq a_0$, we have
\begin{align*}
\left(\int_{\Lambda_+} \big\|\hat{\phi}_\xi(x)\big\|_{C^r_M}^2 \, \dd\nu(x)\right)^{\frac{1}{2}} &\leq C \frac{\|\phi\|_{C^r_MC^1_\R}}{\max\{1, |b|\}},\\
\big\|\mathcal{M}_{\xi, \rho}\big(\hat{\psi}_{-\xi, \rho}\big)\big\|_{1, \|\rho_b\|} &\leq C \frac{\|{\psi}\|_{\Lip}}{\max\{1, |b|\}}.
\end{align*}
\end{lemma}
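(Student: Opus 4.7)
The plan is to integrate by parts in the flow-time variable $t$ to extract the oscillatory decay factor $1/\max\{1,|b|\}$, and then to control the resulting countable sums over inverse branches using the exponential tail property (Property~(5) of \cref{prop:Coding}) together with the disjoint-union bound $\sum_{\gamma\in\calH}\|d\gamma\|^\delta\lesssim 1$ (a consequence of $\mu(\Delta_0)=\sum_\gamma\mu(\gamma\Delta_0)$ and \cref{lem:CylinderEstimate}). I fix $a_0$ small enough that $4a_0<\epsilon_0$, which makes every sum of the form $\sum_\gamma e^{\FRet^{(a)}(\gamma x)}(1+\Ret(\gamma x))^p e^{s\Ret(\gamma x)}$ with $|s|\leq 2a_0$ and $p\geq 0$ finite uniformly in $x\in\Lambda_+$.

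For the first inequality and $|b|\geq 1$, I integrate by parts in $t$ under $\nabla_M^k$ (which commutes with the $t$-integral) to obtain
\[
\nabla_M^k\hat\phi_\xi(x)(m)=\tfrac{1}{\xi}\bigl[\nabla_M^k\phi(x,m,\Ret(x))e^{-\xi\Ret(x)}-\nabla_M^k\phi(x,m,0)\bigr]+\tfrac{1}{\xi}\int_0^{\Ret(x)}\nabla_M^k\partial_t\phi(x,m,t)e^{-\xi t}\,dt.
\]
The pointwise estimate $\|\hat\phi_\xi(x)\|_{C^r_M}\leq (C/|b|)(1+\Ret(x))e^{|a|\Ret(x)}\|\phi\|_{C^r_MC^1_\R}$ then yields the $L^2(\nu)$ bound via the exponential tail ($\nu$ being equivalent to $\mu$ with bounded density). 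The case $|b|<1$ is immediate from the trivial bound $\|\hat\phi_\xi(x)\|_{C^r_M}\leq\Ret(x)e^{|a|\Ret(x)}\|\phi\|_{C^r_M}$.

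For the sup-norm part of the second inequality with $|b|\geq 1$, I integrate by parts in the $t$-integral defining $\hat\psi_{-\xi,\rho}(\gamma x)$ and substitute into $\mathcal{M}_{\xi,\rho}$; this yields three sums, each with a prefactor $1/\xi$. In the first, the endpoint oscillation $e^{\xi\Ret(\gamma x)}$ combines with $e^{-ib\Ret(\gamma x)}$ inside $\rho_b(\GHol(\gamma x)^{-1})$ and with $\FRet^{(a)}(\gamma x)$ to leave the unnormalized transfer operator at the PS parameter, giving a uniform bound $\mathcal{L}_{\delta\Ret}(h_a)(x)/(\lambda_a h_a(x))\lesssim 1$ via $\sum_\gamma\|d\gamma\|^\delta\lesssim 1$. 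The second collapses to $\mathcal{L}_a(\|\psi_\rho(\cdot,\cdot,0)\|_{L^2(M)})(x)\leq\|\psi\|_\infty$ thanks to $\mathcal{L}_a(1)=1$. The third picks up $\|\partial_t\psi\|_\infty\leq\|\psi\|_{\Lip}$ and, after $\int_0^{\Ret(\gamma x)}e^{at}\,dt\leq\Ret(\gamma x)e^{|a|\Ret(\gamma x)}$, reduces to the uniformly bounded sum $\sum_\gamma e^{\FRet^{(a)}(\gamma x)+|a|\Ret(\gamma x)}\Ret(\gamma x)$. The case $|b|<1$ follows directly without IBP from the pointwise bound $\|\hat\psi_{-\xi,\rho}(\gamma x)\|_{L^2(M)}\leq\|\psi\|_\infty\Ret(\gamma x)e^{|a|\Ret(\gamma x)}$ and the same exponential-tail sum.

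For the Lipschitz part I feed into the computation behind \cref{lem:LasotaYorke}(2) (cf.~\cref{eqn:M difference}) the within-cylinder estimate
\[
\|\hat\psi_{-\xi,\rho}(\gamma x)-\hat\psi_{-\xi,\rho}(\gamma x')\|_{L^2(M)}\leq C\|\psi\|_{\Lip}(1+\Ret(\gamma x))e^{|a|\Ret(\gamma x)}\,d_{\mathrm{E}}(x,x'),
\]
obtained by splitting the difference into a common-range contribution (bounded using the contraction $d(\gamma x,\gamma x')\leq\lambda d_{\mathrm{E}}(x,x')$ and the $t$-Lipschitz constant of $\psi$) and a range-mismatch contribution of length $|\Ret(\gamma x)-\Ret(\gamma x')|\leq C_1 d_{\mathrm{E}}(x,x')$ from Property~(4) of \cref{prop:Coding}. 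Together with the IBP sup-norm input $\|\hat\psi_{-\xi,\rho}(\gamma x)\|_{L^2(M)}\lesssim\|\psi\|_{\Lip}(1+\Ret(\gamma x))e^{|a|\Ret(\gamma x)}/|\xi|$, the three-term expansion used in the proof of \cref{lem:LasotaYorke}(2) gives $\Lip(\mathcal{M}_{\xi,\rho}(\hat\psi_{-\xi,\rho}))\lesssim\|\psi\|_{\Lip}\bigl((1+\|\rho_b\|)/|\xi|+1\bigr)$; dividing by $\max\{1,\|\rho_b\|\}$ and invoking $\|\rho_b\|\geq|b|$ from \cref{lem:LieTheoreticNormBounds} absorbs the expression into $C\|\psi\|_{\Lip}/\max\{1,|b|\}$. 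The main technical obstacle throughout is precisely the uniform-in-$x$ convergence of sums of type $\sum_\gamma e^{\FRet^{(a)}(\gamma x)}(1+\Ret(\gamma x))^pe^{s\Ret(\gamma x)}$ with $|s|<\epsilon_0/2$, which rests on the exponential tail property and constitutes the sole point where the countable symbolic coding in the cusp setting departs materially from the compact Markov-section arguments in Sarkar--Winter.
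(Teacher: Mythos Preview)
Your proposal is correct and follows essentially the same approach as the paper: integration by parts in $t$ to extract the $1/\max\{1,|b|\}$ decay, combined with the exponential tail property to control the resulting sums over inverse branches, and the three-term expansion from the proof of \cref{lem:LasotaYorke} for the Lipschitz part. The only organizational difference is in the sup-norm estimate of the second inequality: the paper first bounds $\|\hat\psi_{-\xi,\rho}(x)\|_2$ pointwise via integration by parts (exactly as for $\phi$), obtaining $\|\hat\psi_{-\xi,\rho}(x)\|_2\lesssim\frac{\max\{1,\Ret(x)\}e^{a_0\Ret(x)}}{\max\{1,|b|\}}\|\psi\|_{\Lip}$, and only then applies $\mathcal{M}_{\xi,\rho}$ using unitarity of $\rho_b$; you instead substitute the integrated-by-parts expression directly into the transfer operator and exploit the cancellation $e^{ib\Ret(\gamma x)}\cdot e^{-ib\Ret(\gamma x)}=1$ between the endpoint term and the scalar part of $\rho_b(\GHol(\gamma x)^{-1})$. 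Both routes land on sums of the form $\sum_\gamma e^{\FRet^{(a)}(\gamma x)}(1+\Ret(\gamma x))^pe^{s\Ret(\gamma x)}$ controlled by the exponential tail, so the difference is cosmetic rather than substantive. One small caveat: since $\psi$ is only Lipschitz, your $\partial_t\psi$ should be read as the a.e.\ derivative of an absolutely continuous function, which is enough for the integration-by-parts formula to hold with $\|\partial_t\psi\|_\infty\leq\|\psi\|_{\Lip}$.
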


\begin{proof}
Recall the constants $C_2 > 0$ from \cref{eqn:ConstantC1'C2} and $A_0 > 0$ from \cref{lem:LasotaYorke}. Fix $C_5=\frac{9\max\{1, C_2\}}{a_0^2} \int_{\Lambda_+} e^{(\epsilon_0/2)\Ret(x)} \, \dd\nu(x)<\infty$ and $C = 2C_5(1 + A_0)$. Let $a_0=\epsilon_0/8$.
We show the first inequality. If $|b| \leq 1$, from the definition of $\hat{\phi}_\xi(x)$ we have
\begin{align*}
\int_{\Lambda_+} \big\|\hat{\phi}_\xi(x)\big\|_{C^r_M}^2 \, d\nu(x) \leq \int_{\Lambda_+} e^{a_0\Ret(x)}\|{\phi}(x)\|_{C^r_M}^2 \, d\nu(x) \leq C_5 \|\phi\|_{C^{r}_{M}C^1_{\mathbb{R}}}^2.
\end{align*}
If $|b| \geq 1$, integrating by parts gives
\begin{align*}
&\nabla^k_{M} \hat{\phi}_\xi(x) = \int_0^{\Ret(x)} \nabla^k_{M} \phi(x, \cdot, t)e^{-\xi t} \, d\Leb(t) \\
={}&\left[-\frac{1}{\xi} \nabla^k_{M} \phi(x, \cdot, t)e^{-\xi t}\right]_{t = 0}^{t \nearrow \Ret(x)} + \frac{1}{\xi}\int_0^{\Ret(x)} \left.\frac{d}{dt'}\right|_{t' = t} \nabla^k_{M} \phi(x, \cdot, t') \cdot e^{-\xi t} \, d\Leb(t)
\end{align*}
for all $x \in \Lambda_+$ and $0 \leq k \leq r$. Hence using the exponential tail property (see Property~(5) in \cref{prop:Coding}),
\begin{align*}
&\int_{\Lambda_+} \big\|\hat{\phi}_\xi(x)\big\|_{C^r_M}^2 \, d\nu(x)\\
\leq{}&\frac{1}{|b|^2} \int_{\Lambda_+} \left(2\|{\phi}(x)\|_{C^r_M} e^{a_0\Ret(x)} + \|{\phi}(x)\|_{C^{r}_{M}C^1_{\mathbb{R}}} \Ret(x)e^{a_0\Ret(x)}\right)^2 \, d\nu(x) \\
\leq{}&C_5 \frac{\|{\phi}\|_{C^{r}_{M}C^1_{\mathbb{R}}}^2}{|b|^2}.
\end{align*}

Now we show the second inequality.
\begin{align*}
\mathcal{M}_{\xi, \rho}\big(\hat{\psi}_{-\xi, \rho}\big)(x) = \sum_{\gamma\in\calH } e^{\FRet^{(a)}(\gamma x)} \rho_b(\GHol(\gamma x)^{-1}) \hat{\psi}_{-\xi, \rho}(\gamma x)
\end{align*}
for all $x \in \Lambda_+$. We first bound the $L^\infty$ norm. Using similar estimates as for the first proven inequality, we have
\begin{align}
\label{eqn:L2BoundFor_psi_hat}
\left\|\hat{\psi}_{-\xi, \rho}(x)\right\|_2 \leq \left\|\hat{\psi}_{-\xi}(x)\right\|_2 \leq \left\|\hat{\psi}_{-\xi}(x)\right\|_\infty \leq  \frac{\max\{1, \Ret(x)\}e^{a_0\Ret(x)}\|{\psi}\|_{\Lip}}{\max\{1, |b|\}}.
\end{align}
So, as $\rho_b$ is a unitary representation, we have
\begin{align*}
\big\|\mathcal{M}_{\xi, \rho}\big(\hat{\psi}_{-\xi, \rho}\big)\big\|_\infty \leq  \frac{\|{\psi}\|_{\Lip}}{\max\{1, |b|\}}\mathcal{L}_a\bigl(e^{2a_0\Ret}\bigr) \leq  \frac{\|{\psi}\|_{\Lip}}{\max\{1, |b|\}}\mathcal{L}_{a - 2a_0}(\chi_{\Lambda_+}).
\end{align*}
Again by the exponential tail property,
\begin{align*}
\big\|\mathcal{M}_{\xi, \rho}\big(\hat{\psi}_{-\xi, \rho}\big)\big\|_\infty \leq C_5 \frac{\|{\psi}\|_{\Lip}}{\max\{1, |b|\}}.
\end{align*}

Next, we deal with the Lipschitz norm. By the same computation as in the proof of Property~(2) in \cref{lem:LasotaYorke} for $H=\hat{\psi}_{-\xi, \rho}$, we obtain \cref{eqn:M difference} and we show how to estimate the three terms in the last but two equation. Similar to the previous estimate, in the first two terms, we can use \cref{eqn:L2BoundFor_psi_hat} to estimate $\|H(\gamma x)\|_2$ and obtain
\begin{align*}
&\sum_{\gamma \in \calH} \Bigl(\frac{A_0}{2}\delta_{1, \varrho}d_{\mathrm{E}}(x, x')e^{\FRet^{(a)}(\gamma x)} \|H(\gamma x)\|_2 + 2C_2e^{\FRet^{(a)}(\gamma x')} \|\rho_b\| d_{\mathrm{E}}(x, x') \|H(\gamma x)\|_2\Bigl)  \\
\leq{}&A_0 \cdot \frac{1 + \|\rho_b\|}{\max\{1,|b|\}} \cdot \|\psi\|_{\Lip} \cdot d_{\mathrm{E}}(x,x')\mathcal{L}_{a - 2a_0}(\chi_{\Lambda_+})\\
\leq{}&C_5A_0 \cdot \frac{1 + \|\rho_b\|}{\max\{1,|b|\}}\cdot\|\psi\|_{\Lip} \cdot d_{\mathrm{E}}(x,x').
\end{align*}

For the last term, we first need to estimate $\|H(\gamma x)-H(\gamma x')\|_2$. Without loss of generality, we may assume $\Ret(\gamma x)\leq \Ret(\gamma x')$. Then
\begin{align*}
&\| H(\gamma x)-H(\gamma x')\|_2=\left\| \hat{\psi}_{-\xi, \rho}(\gamma x)-\hat{\psi}_{-\xi, \rho}(\gamma x')\right\|_2\\
\leq{}&|\Ret(\gamma x)-\Ret(\gamma x')|\cdot\|\psi_\rho\|_2e^{a_0\Ret(\gamma x)}\\
{}&+\int_{0}^{\Ret(\gamma x)}\|\psi_\rho(\gamma x,t)-\psi_\rho(\gamma x',t)\|_2e^{a_0t} \, d\Leb(t) \\
\leq{}&\bigl(C_2e^{a_0\Ret(\gamma x')}+\Ret(\gamma x)e^{a_0\Ret(\gamma x)}\bigr)\|\psi\|_{\Lip} d_{\mathrm{E}}(x,x'),
\end{align*}
by an estimate as in \cref{eqn:FaLipEstimate}. Thus, by a similar calculation as above, we have
\begin{align*}
\sum_{\gamma \in \calH} e^{\FRet^{(a)}(\gamma x)} \|H(\gamma x)-H(\gamma x')\|_2 \leq C_5\|\psi\|_{\Lip} d_{\mathrm{E}}(x,x')
\end{align*}
Summing all the terms, we obtain
\[ \frac{1}{\max\{1, \|\rho_b\|\}}\Lip\left(\mathcal{M}_{\xi, \rho}\left(\hat{\psi}_{-\xi, \rho}\right)\right)\leq C_5(1 + 2A_0)\frac{\|\psi\|_{\Lip}}{\max\{1, |b|\}}. \]
The proof is completed by using the definition of $\|\cdot\|_{1,\|\rho_b\|}$ and \cref{lem:LieTheoreticNormBounds}.
\end{proof}

We can prove an analog of \cite[Lemma 10.3]{SW21} by replacing $\sup_{u\in U}\|\hat{\phi}_\xi(u) \|_{C^r_M} $ with $\left(\int_{\Lambda_+} \big\|\hat{\phi}_\xi(x)\big\|_{C^r_M}^2 \, \dd\nu(x)\right)^{\frac{1}{2}}$ and following the same argument. This gives exponential mixing of the semiflow, i.e., \cref{thm:semiflowexponential}.

\appendix
\section{Analyticity of the complex transfer operator}
\label{sec:appendix}
Recall that for $s\in \mathbb{C}$, we introduced the complex transfer operator $L_s:=\mathcal{L}_{(\delta+s)\Ret}:\Lip(\Lambda_+,\mathbb{C})\to \Lip(\Lambda_+,\mathbb{C})$ defined by, for $u\in \Lip(\Lambda_+, \C)$ and $x\in \Lambda_+$,
\begin{equation*}
L_s(u)(x)=\sum_{\gamma\in \calH} e^{-(\delta+s)\Ret(\gamma x)} u(\gamma x).
\end{equation*}
An operator-valued map on a subset of $\C$ is said to be analytic (or holomorphic) if it is Fr\'{e}chet differentiable. In this appendix, we prove \cref{lem:analyticity}. The proof is similar to \cite[Lemma 11.17]{BQ16}, and we include it for completeness of the paper.

\begin{lemma}
\label{lem:analyticity}
For any $s\in \mathbb{C}$ with $\Re{s}>-\frac{\epsilon_0}{2}$, the complex transfer operator $L_s$ depends analytically on $s$.
\end{lemma}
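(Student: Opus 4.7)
The plan is to exhibit $L_s$ as the absolutely and locally uniformly convergent sum $L_s = \sum_{\gamma \in \calH} L_{s,\gamma}$ of entire operator-valued functions $L_{s,\gamma}: \Lip(\Lambda_+, \C) \to \Lip(\Lambda_+, \C)$ defined by $L_{s,\gamma}(u)(x) = e^{-(\delta+s)\Ret(\gamma x)} u(\gamma x)$. Each individual summand $L_{s,\gamma}$ is visibly entire in $s$: its formal derivative in $s$ is the bounded operator $u \mapsto -\Ret(\gamma \cdot) \, L_{s,\gamma}(u)$, and $\Ret(\gamma \cdot)$ is bounded and Lipschitz on $\Delta_0$ by Properties~(3) and (4) of \cref{prop:Coding}. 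Once the series is shown to converge in the operator norm on $\Lip(\Lambda_+, \C)$, locally uniformly in $s$ on the half-plane $\{\Re s > -\epsilon_0/2\}$, the limit $L_s$ is automatically analytic there by the Banach-space-valued Weierstrass theorem.

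The crucial quantitative estimate to establish is
\[
\|L_{s,\gamma}\|_{\mathrm{op}(\Lip)} \leq C(1 + |s|) \|d\gamma\|^{\delta + \Re s},
\]
valid uniformly for $s$ in any compact subset of $\C$. This is a routine product-rule computation resting on three ingredients: (i) since $e^{-\Ret(\gamma x)} = \|(d\gamma)_x\| \asymp \|d\gamma\|$ uniformly in $x$ by Property~(4) of \cref{prop:Coding} (cf.\ \cref{equ:d gamma x}), one has $\|e^{-(\delta + s)\Ret(\gamma \cdot)}\|_\infty \asymp \|d\gamma\|^{\delta + \Re s}$; (ii) $\Lip\bigl(e^{-(\delta + s)\Ret(\gamma \cdot)}\bigr) \leq C_1|\delta + s| \cdot C\|d\gamma\|^{\delta + \Re s}$, again from Property~(4) together with the chain rule; and (iii) $\Lip(u \circ \gamma) \leq \|d\gamma\| \Lip(u) \leq \Lip(u)$ from the uniform contraction in Property~(3). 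The linear growth in $|s|$, coming from differentiating the weight $e^{-(\delta+s)\Ret(\gamma\cdot)}$ in $x$, is harmlessly absorbed on compact $s$-sets.

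The summation and conclusion rest on the exponential tail property. Arguing as in the proof of \cref{lem:CylinderEstimate}, the quasi-invariance of the PS measure yields $\mu(\gamma \Delta_0) \asymp \|d\gamma\|^\delta$, and the estimate $e^{-\Ret(\gamma\cdot)} \asymp \|d\gamma\|$ from the previous paragraph then translates $e^{\epsilon_0 \Ret} \in L^1(\Delta_{\sqcup}, \mu)$ into $\sum_{\gamma \in \calH} \|d\gamma\|^{\delta - \epsilon_0} < \infty$. Since $\|d\gamma\| \leq \lambda < 1$ for every $\gamma \in \calH$, any $s$ with $\Re s > -\epsilon_0/2 > -\epsilon_0$ satisfies $\|d\gamma\|^{\delta + \Re s} \leq \|d\gamma\|^{\delta - \epsilon_0}$, and consequently
\[
\sum_{\gamma \in \calH} \|L_{s,\gamma}\|_{\mathrm{op}(\Lip)} \leq C(1 + |s|) \sum_{\gamma \in \calH} \|d\gamma\|^{\delta - \epsilon_0} < \infty,
\]
locally uniformly in $s$ on $\{\Re s > -\epsilon_0/2\}$, completing the proof. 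There is no genuine obstacle here; the $\epsilon_0/2$ margin between the statement's half-plane and the natural convergence boundary $\Re s = -\epsilon_0$ leaves ample room for all three Lipschitz-norm bounds to close without any borderline analysis.
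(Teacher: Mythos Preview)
Your proof is correct, and it takes a cleaner route than the paper's. The paper establishes analyticity by directly expanding $L_{s+\theta}$ as a power series in $\theta$: it defines operators $L_{s,\theta,m}(u)(x) = \theta^m \sum_{\gamma} (-\Ret(\gamma x))^m e^{-(\delta+s)\Ret(\gamma x)} u(\gamma x)$ and checks that $\sum_{m\geq 0} \frac{1}{m!} L_{s,\theta,m}$ converges absolutely in the Lipschitz operator norm to $L_{s+\theta}$ for $|\theta|$ small. This involves tracking a double sum over $m$ and $\gamma$, interchanging the order of summation, and separately bounding three Lipschitz cross-terms $A_m, B_m, C_m$ for each $m$. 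The underlying analytic input is the same as yours, namely the summability $\sum_{\gamma\in\calH} e^{-(\delta-\epsilon_0)\|\Ret\circ\gamma\|_\infty} < \infty$ (equivalently your $\sum_\gamma \|d\gamma\|^{\delta-\epsilon_0} < \infty$), derived from the exponential tail.

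Your decomposition $L_s = \sum_{\gamma\in\calH} L_{s,\gamma}$ into entire single-branch operators, combined with the Weierstrass theorem, short-circuits the explicit Taylor expansion: once you have the uniform operator-norm bound $\|L_{s,\gamma}\|_{\mathrm{op}} \leq C(1+|s|)\|d\gamma\|^{\delta+\Re s}$ on compacta, locally uniform convergence is immediate and the analyticity follows. The paper's approach has the minor advantage of producing the Taylor coefficients explicitly (hence the formula for $\partial_s L_s|_{s=0}$ used later in \cref{lem:1-lz} is close at hand), but your argument reaches the same conclusion with noticeably less bookkeeping.
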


Note that Property~(4) in \cref{prop:Coding} states that there exists $C_1>0$ such that for any $\gamma \in \calH$ and for any $x\in \Delta_0$, $\|(d(\log \|d\gamma\|))_x\|<C_1$. Note that $\Ret(\gamma x)=-\log \|(d\gamma)_x\|$. As a result, there exists $C_6>1$ such that for any $\gamma\in\calH$ and any $x\in \Delta_0$,
\begin{gather*}
\frac{1}{C_6}\|\Ret\circ \gamma\|_{\infty} \leq \Ret(\gamma x)\leq C_6 \|\Ret\circ\gamma\|_{\infty},\\
\|\Ret\circ \gamma\|_{\infty}-C_6 \leq \Ret(\gamma x)\leq \|\Ret\circ\gamma\|_{\infty}+C_6.
\end{gather*}
Also Property~(5) in \cref{prop:Coding} states that there exists $\epsilon_0\in(0,1)$ such that $\int_{\Lambda_+}e^{\epsilon_0 \Ret}<\infty$. Using the quasi-invariant property of $\mu$, we have
\begin{equation}
\label{eqn:convergence tail}
\sum_{\gamma\in \mathcal{H}}e^{-(\delta-\epsilon_0)\|\Ret\circ \gamma\|_{\infty}}<\infty.
\end{equation}

\begin{proof}[Proof of \cref{lem:analyticity}]
We fix $s\in \mathbb{C}$ with $\Re{s}>-\frac{\epsilon_0}{2}$. Pick any $\theta\in \mathbb{C}$ with 
$C_6|\theta|<\frac{\epsilon_0}{2}$.

For $m\in \Z_{\geq 0}$, we introduce the operator $L_{s,\theta,m}: \Lip(\Lambda_+,\mathbb{C}) \to \Lip(\Lambda_+,\mathbb{C})$ given by, for $u\in \Lip(\Lambda_+,\mathbb{C})$ and $x\in \Lambda_+$,
\begin{equation*}
L_{s,\theta,m}(u)(x)=\theta^m\sum_{\gamma\in \calH} (-\Ret(\gamma x))^m e^{-(\delta+s)\Ret(\gamma x)} u(\gamma x).
\end{equation*}
Note that for $m=0$, this operator is equal to $L_{s}$. Since for any $\gamma\in \calH$ and $x\in \Lambda_+$,
\begin{equation}
\label{eqn:power series}
e^{-(\delta+s+\theta)\Ret(\gamma x)}=\sum_{m=0}^{\infty}\frac{1}{m !}(-\theta \Ret(\gamma x))^{m}e^{-(\delta+s)\Ret(\gamma x)},
\end{equation}
to get the analyticity of $L$ in a neighborhood of $s$, it suffices to check that for all $u\in \Lip(\Lambda_+,\mathbb{C})$, we have
\begin{enumerate}
\item $\sum_{m=0}^{\infty}\frac{1}{m!} L_{s,\theta,m}(u)(x)$ converges pointwise to $L_{s+\theta}(u)(x)$ for any $x\in \Lambda_+$;
\item the absolute convergence of the series
\begin{equation}
\label{absolute convergence}
\sum_{m=0}^{\infty} \frac{1}{m!} \lVert L_{s,\theta,m}(u)\rVert_{\Lip}\leq M \lVert u\rVert_{\Lip},
\end{equation}
for some constant $M$ independent of $u$ and $\theta$.
\end{enumerate}
We start with the claim: for any $x\in \Lambda_+$, the sum
\begin{equation}
\label{power series}
\sum_{m=0}^{\infty} \sum_{\gamma\in \calH} \frac{1}{m!} |\theta|^m \Ret(\gamma x)^m e^{-(\delta-\frac{\epsilon_0}{2})\Ret(\gamma x)}
\end{equation} 
converges to $\sum_{\gamma \in \calH} e^{-(\delta-\frac{\epsilon_0}{2}-|\theta|)\Ret(\gamma x)}$.

We prove the claim. For any $n\in \mathbb{N}$, we can switch the order of summation:
\begin{align*}
\sum_{m=0}^{n}\sum_{\gamma\in \calH} \frac{1}{m!} |\theta|^m \Ret(\gamma x)^m e^{-(\delta-\frac{\epsilon_0}{2})\Ret(\gamma x)}
= \sum_{\gamma\in \calH}\sum_{m=0}^n \frac{1}{m!} |\theta|^m \Ret(\gamma x)^m e^{-(\delta-\frac{\epsilon_0}{2})\Ret(\gamma x)},
\end{align*}
where the convergence of the series on the left can be deduced from \cref{eqn:power series,eqn:convergence tail} and the assumption that $C_6|\theta|<\frac{\epsilon_0}{2}$.
We estimate
\begin{align}
\label{finite power series}
&\left|\sum_{\gamma\in \calH}\sum_{m=0}^n\frac{1}{m!} |\theta|^m \Ret(\gamma x)^m e^{-(\delta-\frac{\epsilon_0}{2})\Ret(\gamma x)}-\sum_{\gamma\in \calH}e^{-(\delta-\frac{\epsilon_0}{2}-|\theta|)\Ret(\gamma x)}\right|\nonumber\\
={}&\left|\sum_{\gamma\in \calH} \left(\sum_{m=0}^n \frac{1}{m!}|\theta|^m \Ret(\gamma x)^m-e^{|\theta|\Ret(\gamma x)}\right)e^{-(\delta-\frac{\epsilon_0}{2})\Ret(\gamma x)}\right|.
\end{align}
Fix any $\epsilon>0$. It follows from the exponential tail property (see Property~(5) in \cref{prop:Coding}) that there exists $M>1$ such that
\begin{equation}
\label{exponential tail power series}
\sum_{\gamma\in \calH, \Ret(\gamma x)>M}e^{-(\delta-\frac{\epsilon_0}{2}-|\theta|)\Ret(\gamma x)}<\epsilon.
\end{equation}
At the same time, note that for all sufficiently large $n\in \mathbb{N}$, we have the uniform convergence
\begin{equation}
\label{ez}
\left|\sum_{m=0}^{n}\frac{1}{m!}z^m-e^z\right|<\epsilon
\end{equation}
for any $z\in \mathbb{C}$ with $|z|<M$. Now we can see that
\[\eqref{finite power series}\ll \epsilon\]
by dividing the sum $\sum_{\gamma\in \calH}$ into $\sum_{\gamma\in \calH, \Ret(\gamma x)\leq M}$ and $\sum_{\gamma\in \calH,\Ret(\gamma x)>M}$ and applying \cref{exponential tail power series,ez}. This finishes the proof of the claim.

Coming back to statement (1), it can be shown as in the convergence of \eqref{power series}. Next, we show the absolute convergence in \cref{absolute convergence}. We bound the sup norm: for any $x\in \Lambda_+$,
\begin{align*}
\left|\sum_{m=0}^{\infty} \frac{1}{m!}L_{s,\theta,m}(u)(x)\right| &\leq \|u\|_{\infty}\sum_{m=0}^{\infty}\frac{1}{m!} \sum_{\gamma\in \calH} |\theta|^{m}\Ret(\gamma x)^m e^{-(\delta-\frac{\epsilon_0}{2})\Ret(\gamma x)}\\
&\ll \|u\|_{\infty} \sum_{\gamma\in \calH} e^{-(\delta-\epsilon_0)\|\Ret\circ\gamma\|_{\infty}}.
\end{align*}
Hence, $\left|\sum_{m=0}^{\infty}\frac{1}{m!}L_{s,\theta,m}(u)\right|_{\infty}$ is bounded by a uniform multiple of $\|u\|_{\infty}$.
It remains to bound the Lipschitz seminorm. For any $m\in \mathbb{N}$ and any $x,y\in \Lambda_+$:
\begin{align*}
&\frac{L_{s,\theta},m(u)(x)-L_{s,\theta,m}(u)(y)}{d_{\operatorname{E}}(x,y)}=A_m+B_m+C_m,\,\,\, \text{where}\\
& A_m=\theta^m\sum_{\gamma\in \calH}\frac{(-\Ret(\gamma x))^m-(-\Ret(\gamma y))^m}{d_{\operatorname{E}}(x,y)} e^{-(\delta+s)\Ret(\gamma x)} u(\gamma x),\\
& B_m=\theta^m \sum_{\gamma\in \calH} (-\Ret(\gamma y))^{m} \frac{e^{-(\delta+s)\Ret(\gamma x)}-e^{-(\delta+s)\Ret(\gamma y)}}{d_{\operatorname{E}}(x,y)} u(\gamma x),\\
&C_m=\theta^m \sum_{\gamma\in \calH} (-\Ret(\gamma y))^m e^{-(\delta+s)\Ret(\gamma y)} \frac{u(\gamma x)-u(\gamma y)}{d_{\operatorname{E}}(x,y)}.
\end{align*}
Since for any $a,b\in \mathbb{C}$,
\begin{equation*}
|a^m-b^m|\leq m \max\{|a|,|b|\}^{m-1} |a-b|,
\end{equation*}
we get
\begin{align*}
|A_m|&\ll \|u\|_{\infty}|\theta|^m\sum_{\gamma\in \calH} m (C_6 \|\Ret\circ \gamma \|_{\infty})^{m-1} \frac{|\Ret(\gamma x)-\Ret(\gamma y)|}{d_{\operatorname{E}}(x,y)} e^{-(\delta+\Re s)\|\Ret\circ \gamma\|_{\infty}}\\
&\ll \|u\|_{\infty} \sum_{\gamma\in \calH} m |\theta|^m (C_6 \|\Ret \circ \gamma \|_{\infty})^{m-1} e^{-(\delta+\Re s)\|\Ret\circ \gamma\|_{\infty}}.
\end{align*}

To continue, we use the power series definition of the exponential function in the form of $e^{z}=\sum_{m=1}^{\infty}\frac{m}{m!}z^{m-1}$ for any $z\in \mathbb{C}$. We get
\begin{align*}
\sum_{m=1}^{\infty}\frac{1}{m!}|A_m| &\ll \sum_{m=1}^{\infty} \sum_{\gamma\in \calH} \frac{m}{m!} |\theta|^{m} (C_6\|\Ret\circ \gamma \|_{\infty})^{m-1} e^{-(\delta+\Re s)\|\Ret\circ \gamma\|_{\infty}} \|u\|_{\infty}\\
&\ll \|u\|_{\infty} \sum_{\gamma\in \calH} e^{-(\delta+\Re s-C_6|\theta|)\|\Ret\circ \gamma\|_{\infty}},
\end{align*}
where the equality is proved as in the convergence of \eqref{power series}. So, we have shown that the sum $\sum_{m=1}^{\infty}\frac{1}{m!}A_m$ is bounded by a uniform multiple of $\|u\|_{\infty}$.

For $B_m$, we use the following inequality: for any $a,b\in \mathbb{C}$,
\begin{equation*}
|e^a-e^b|\leq \max \bigl\{e^{\Re a}, e^{\Re b}\bigr\} |a-b|.
\end{equation*}
We get
\begin{align*}
|B_m|&\ll |\theta|^m \sum_{\gamma\in \calH} (\Ret(\gamma y))^m e^{-(\delta+\Re s)\|\Ret\circ\gamma\|_{\infty}} \frac{|(\delta+s)\Ret(\gamma x)-(\delta+s)\Ret(\gamma y)|}{d_{\operatorname{E}}(x,y)} \|u\|_{\infty}\\
&\ll \sum_{\gamma\in \calH} |\theta|^m (C_6\|\Ret\circ \gamma \|_{\infty})^m e^{-(\delta+\Re s)\|\Ret\circ \gamma\|_{\infty}} \|u\|_{\infty}.
\end{align*}
Hence, we have
\begin{equation*}
\sum_{m=0}^{\infty} \frac{1}{m!} |B_m|\ll \|u\|_{\infty} \sum_{\gamma\in \calH} e^{-(\delta+\Re s-C_6|\theta|)\|\Ret\circ \gamma\|_{\infty}},
\end{equation*}
which bounds the sum on the left by a uniform multiple of $\|u\|_{\infty}$.

Finally, since each $\gamma\in \calH$ acts on $\Lambda_+$ by contraction, we have
\begin{equation*}
\frac{u(\gamma x)-u(\gamma y)}{ d_{\operatorname{E}}(x,y)}=\frac{u(\gamma x)-u(\gamma y)}{d_{\operatorname{E}}(\gamma x, \gamma y)} \cdot \frac{d_{\operatorname{E}}(\gamma x,\gamma y)}{d_{\operatorname{E}}(x,y)}\leq \Lip(u).
\end{equation*}
Hence, 
\begin{equation*}
|C_m|\ll |\theta|^m \sum_{\gamma \in \calH} |C_6\|\Ret\circ \gamma\|_{\infty}|^m e^{-(\delta+\Re s)\|\Ret\circ \gamma\|_{\infty}} \Lip(u),
\end{equation*}
which implies
\begin{equation*}
\sum_{m=0}^{\infty}\frac{1}{m!}|C_m| \leq \Lip(u)\sum_{\gamma\in \calH} e^{-(\delta+\Re s-C_6|\theta|)\|\Ret\circ \gamma\|_{\infty}},
\end{equation*}
again bounding the sum on the left by a uniform multiple of $\Lip(u)$.
\end{proof}

\nocite{*}
\bibliographystyle{alpha_name-year-title}
\bibliography{References}

\end{document}